\def\blx@maxline{77}
\numberwithin{equation}{section}
\newcommand{\XProcess}{\mathbf{X}}
\newcommand{\VertexXProcess}{\mathbf{X}_{\mathrm{hc}}}
\newcommand{\DiscreteVertexXProcess}{\mathbf{{X}}_{\mathrm{v}}}
\newcommand{\RateLambda}{{\uplambda}}
\newcommand{\Speed}{{\boldsymbol\upxi}}
\newcommand{\RoadblockSet}{\mathbf{B}}
\newcommand{\RoadblockProb}{\mathbf{p}}
\newcommand{\HeightFunction}{\mathfrak{h}}
\newcommand{\LimitShape}{\mathscr{H}}
\newcommand{\HahnDistribution}{{\boldsymbol{\upvarphi}}^{\mathrm{hc}}}
\newcommand{\ContinuousHahnDistribution}{{\boldsymbol\upvarphi}}
\newcommand{\SpeedEssRange}{\boldsymbol{\Xi}}
\newcommand{\SpeedEssRangeCirc}{\boldsymbol{\Xi}^\circ}
\renewcommand{\Re}{\mathop{\mathrm{Re}}}
\renewcommand{\Im}{\mathop{\mathrm{Im}}}
\newtheorem{proposition}{Proposition}[section]
\newtheorem{lemma}[proposition]{Lemma}
\newtheorem{theorem}[proposition]{Theorem}
\theoremstyle{definition}
\newtheorem{definition}[proposition]{Definition}
\newtheorem{remark}[proposition]{Remark}
\begin{document}
\title{Inhomogeneous exponential jump model}

\author[A. Borodin]{Alexei Borodin}\address{A. Borodin, Department of Mathematics, Massachusetts Institute of Technology, 77 Massachusetts ave., Cambridge, MA 02139, USA, and Institute for Information Transmission Problems, Bolshoy Karetny per. 19, Moscow, 127994, Russia}\email{borodin@math.mit.edu}

\author[L. Petrov]{Leonid Petrov}\address{L. Petrov, University of Virginia, Department of Mathematics, 141 Cabell Drive, Kerchof Hall, P.O. Box 400137, Charlottesville, VA 22904, USA, and Institute for Information Transmission Problems, Bolshoy Karetny per. 19, Moscow, 127994, Russia}\email{lenia.petrov@gmail.com}

\date{}

\begin{abstract}
	We introduce and study the inhomogeneous exponential jump model --- an
	integrable stochastic interacting particle system on the continuous half line
	evolving in continuous time.
	An important feature of the system is the presence of arbitrary spatial
	inhomogeneity on the half line which does not break the integrability.
	We completely characterize the macroscopic limit shape and asymptotic
	fluctuations of the height function (=~integrated current) in the model.
	In particular, we explain how the presence of inhomogeneity may lead to macroscopic
	phase transitions in the limit shape such as shocks or traffic jams. 
	Away from these singularities the asymptotic fluctuations of the height
	function around its macroscopic limit shape are governed by the GUE
	Tracy--Widom distribution.
	A surprising result is that while the limit shape is discontinuous at a
	traffic jam caused by a macroscopic slowdown in the inhomogeneity, fluctuations on both
	sides of such a traffic jam still have the GUE Tracy--Widom distribution (but
	with different non-universal normalizations).

	The integrability of the model comes from the fact that it is a degeneration
	of the inhomogeneous stochastic higher spin six vertex models studied
	earlier in \cite{BorodinPetrov2016inhom}. 
	Our results on fluctuations are obtained via an asymptotic analysis of
	Fredholm determinantal formulas arising from contour integral expressions for
	the $q$-moments in the stochastic higher spin six vertex model.
	We also discuss ``product-form'' translation invariant stationary
	distributions of the exponential jump model which lead to an alternative
	hydrodynamic-type heuristic derivation of the macroscopic limit shape.
\end{abstract}

\maketitle

\setcounter{tocdepth}{1}
\tableofcontents
\setcounter{tocdepth}{3}

\section{Introduction}
\label{sec:introduction}

\subsection{Background}

The study of nonequilibrium stochastic interacting particle systems in one space
dimension (together with applications to traffic models and other settings) has
been successful for several past decades
\cite{macdonald1968bioASEP},
\cite{Spitzer1970}, 
\cite{Liggett1985}, 
\cite{spohn1991large},
\cite{helbing2001trafficSurvey}.
A prototypical example of a particle system modeling traffic on a one-lane road
is TASEP (Totally Asymmetric Simple Exclusion Process) in which particles
evolve on $\mathbb{Z}$.
Important questions about interacting particle systems include describing their
asymptotic (long-time and large-scale) behavior, and understanding how this
behavior depends on the initial condition.
Since early days hydrodynamic-type methods 
have been applied to answer these questions
(%
	e.g., 
	\cite{Andjel1984}, 
	\cite{Rezakhanlou1991hydrodynamics},
	\cite{seppalainen1999existence}%
), 
which allowed to establish laws of large numbers for asymptotic particle
locations and integrated particle currents.

The introduction of exact algebraic (``integrable'') techniques into the study
of interacting particle systems pioneered in \cite{johansson2000shape} brought
results on asymptotics of fluctuations (i.e., the next order of asymptotics
after the law of large numbers).
In particular, Johansson \cite{johansson2000shape} showed that the fluctuations
in the TASEP started from a densely packed (``step'') initial configuration
are governed by the GUE Tracy--Widom distribution from the random matrix theory
\cite{tracy_widom1994level_airy}.
This and related fluctuation results contribute to a general belief that driven
interacting particle systems with an exclusion mechanism belong (under mild
conditions) to the Kardar--Parisi--Zhang universality class 
\cite{FerrariSpohnHandbook},
\cite{CorwinKPZ}, 
\cite{QuastelSpohnKPZ2015}.

Studying asymptotics of interacting particle systems by means of exact formulas
have brought much progress over the last two decades. 
At the same time, these methods have certain limitations
even when applied to a single particular particle system such as TASEP.
For example, proving asymptotic results for general (arbitrary) initial data
is typically quite hard
(cf.{} however the case of TASEP in \cite{matetski2017kpz}).
Another type of questions which has been evading integrable methods
is the asymptotic behavior of systems in spatially inhomogeneous environment
(also referred to as systems with defects or site-disordered systems).
Spatial inhomogeneity should be contrasted with another type of inhomogeneity
under which each particle has its own speed parameter (such as the jump rate in
TASEP). 
Systems with particle-dependent inhomogeneity
often\footnote{%
	But not always: a notable open problem is to find an integrable structure in
	ASEP (a two-sided generalization of TASEP) with particle-dependent jump rates.%
} 
possess the same 
integrable structure as their homogeneous counterparts
(%
	in the case of TASEP this structure comes from Schur processes 
	\cite{okounkov2001infinite},
	\cite{okounkov2003correlation},
	\cite{BorFerr2008DF}%
).
This integrability
then leads to fluctuation results for systems with particle-dependent inhomogeneity
(%
	e.g., 
	\cite{BaikBBPTASEP}, 
	\cite{BorodinEtAl2009TwoSpeed}, 
	\cite{Duits2011GFF}, 
	\cite{barraquand2015phase}%
).

Interacting particle systems with spatial inhomogeneity are connected to
situations naturally arising in traffic models, and have been the subject
of numerical simulations
(%
	e.g., 
	\cite{krug1996phase},
	\cite{krug1999simulation},
	\cite{krug2000phase},
	\cite{dong2008understanding}, 
	see also \cite{helbing2001trafficSurvey}%
).
Moreover, such particle systems were extensively studied by hydrodynamic
methods and other techniques, which has led to law of large numbers type
results for various models including TASEP
\cite{Landim1996hydrodynamics}, 
\cite{seppalainen1999existence},
\cite{Seppalainen_Discont_TASEP_2010},
\cite{blank2011exclusion}, 
\cite{blank2012discrete}.
A notable hard problem in this direction is the effect 
of a ``slow bond'' (i.e., a selected site jumping over which requires longer waiting time)
on global characteristics of the system such as the current.
In one case this problem was recently resolved in \cite{Basuetal2014_slowbond}
(%
	see also 
	\cite{seppalainen2001slow_bond},
	\cite{costin2012blockage}%
).

\subsection{Inhomogeneous exponential jump model}

We introduce and study 
the inhomogeneous exponential jump model ---
an integrable interacting particle system on $\mathbb{R}_{>0}$
with a rather general spatial
inhomogeneity governed by a piecewise continuous speed function $\Speed(x)$, $x\in \mathbb{R}_{\ge0}$. 
Let us briefly describe this system (see \Cref{sub:definition_of_the_model} below
for a detailed definition in full generality). 
Initially the configuration of particles $\XProcess(0)$ in $\mathbb{R}_{>0}$ is empty,
and at any time $t\ge0$ the particle configuration $\XProcess(t)$ on $\mathbb{R}_{>0}$
is a finite collection of finite particle stacks.
That is, one location on $\mathbb{R}_{>0}$ can be occupied by several particles.
In continuous time one particle can become active and 
leave a stack at a given location $x$ with rate 
$\Speed(x)(1-q^{\#\left\{ \textnormal{number of particles in the stack} \right\}})$
(here $0<q<1$ is a fixed parameter),
or a new active particle can be added to the system at location $0$ at rate 
$\Speed(0)$.\footnote{%
	Here and below we say that a certain event has rate $\mu>0$ if it repeats
	after independent random time intervals which have exponential distribution
	with rate $\mu$ (and mean $\mu^{-1}$). 
	That is, $\mathbb{P}(\textnormal{time between occurrences}>\Delta t) =e^{-\mu
	\Delta t}$. These independent exponentially distributed random times are also assumed
	independent from the rest of the system.%
}
In continuous time almost surely only one particle can become active.
The active particle desires to travel to the right by an exponentially distributed random
distance with mean $1/\RateLambda$ (where $\RateLambda>0$ is a parameter of the model),
but it may be stopped by other particles it encounters along the way. 
Namely, the active particle jumps over each sitting particle with probability $q$, 
or else is stopped and joins the corresponding stack of particles.
See \Cref{fig:expon_model_def} for an illustration 
(and for now assume that $\RoadblockProb(b)=1$ in that picture).

Let the height function
$\HeightFunction_{\XProcess(t)}(x)$
be the number of particles in our configuration 
at time $t\ge0$ which are weakly to the right of the location $x\in\mathbb{R}_{>0}$.
For each $t$ this is a random nonincreasing function in $x$.
Moreover, 
$\HeightFunction_{\XProcess(t)}(x)$
weakly increases in $t$ for fixed $x$.

The inhomogeneous exponential jump model is integrable in the sense that 
we are able to explicitly compute observables 
$\mathbb{E}\bigl((1-\zeta H)^{-1}(1-\zeta Hq)^{-1}(1-\zeta Hq^{2})^{-1}\ldots \bigr)$,
$H=q^{\HeightFunction_{\XProcess(t)}(x)}$
(where $x,t>0$ are arbitrary and $\zeta\in \mathbb{C}\setminus\mathbb{R}_{\ge0}$
is a parameter of the observable),
in a Fredholm determinantal form (see \Cref{thm:exponential_Fredholm} below).
This Fredholm determinant is amenable to asymptotics and 
opens a way to study law of large numbers and fluctuations
of the inhomogeneous exponential jump model.

\subsection{Asymptotic behavior}

We are interested in describing the asymptotic behavior of the 
inhomogeneous exponential jump model in terms of its height function
$\HeightFunction_{\XProcess(t)}(x)$ as $\RateLambda\to+\infty$
and the time scales as $t=\tau\RateLambda$, $\tau>0$.
We assume that $q\in(0,1)$ and the speed function $\Speed(\cdot)$ are fixed.
When $\RateLambda$ grows, the expected distance of individual jumps of the particles
goes to zero, while more and more particles are added to the system with time. 
Our asymptotic results are the following:
\begin{enumerate}[\quad\bf1.]
	\item We show that there exists a limit shape $\LimitShape(\tau,x)$ such that
		$\lim_{\RateLambda\to+\infty}
		\RateLambda^{-1}\HeightFunction_{\XProcess(\tau\RateLambda)}(x)
		=\LimitShape(\tau,x)$ in probability for any $\tau,x>0$.
		The limit shape is determined
		by $\Speed(\cdot)$ and $q$ in terms of integrals of
		$q$-polygamma functions and their inverses.

	\item We also present an informal hydrodynamic-type argument
		(%
			relying on constructing a one-parameter family of translation invariant stationary
			distributions for the homogeneous exponential jump model
			with arbitrary density $\rho>0$, and computing 
			the associated particle current $j(\rho)$%
		)
		suggesting that the limit shape $\LimitShape(\tau,x)$ should satisfy a
		partial differential equation in $\tau$ and $x$. 
		This equation is similar to
		the one considered for TASEP in inhomogeneous environment,
		see \cite{Seppalainen_Discont_TASEP_2010} and references therein.
		We then explicitly verify that $\LimitShape(\tau,x)$ described in terms of $q$-polygamma
		functions satisfies this equation.
		
	\item Our main result is that the inhomogeneous exponential jump model
		belongs to the Kardar--Parisi--Zhang universality class, that is, the fluctuations
		of the random height function around the limit shape have scale $\RateLambda^{\frac{1}{3}}$
		and are governed by the GUE Tracy--Widom distribution $F_2$:
		\begin{equation*}
				\lim_{\RateLambda\to+\infty}
				\mathbb{P}
				\left( 
					\frac{\HeightFunction_{\XProcess(\tau\RateLambda)}(x)-
					\RateLambda \LimitShape(\tau,x)}
					{
						\RateLambda^{\frac{1}{3}}
						c(\tau,x)
					}
					\ge -r
				\right)=F_2(r),\qquad r\in\mathbb{R}.
		\end{equation*}
		Here $x\in(0,x_e(\tau))$, where $x_e(\tau)$ is the asymptotic location
		of the rightmost particle in the system, i.e.,
		$\LimitShape(\tau,x)=0$
		for all $x\ge x_e(\tau)$.
\end{enumerate}

We obtain the limit shape and the fluctuation results 
simultaneously by analyzing the Fredholm
determinant of \Cref{thm:exponential_Fredholm} 
by the steepest descent method.
Because the model depends on an arbitrary speed function $\Speed(\cdot)$, 
we had to make sure that this analysis does not impose unnecessary 
restrictions on the generality of this function.
This presented the main technical challenges of our work.

One of the most striking features of our asymptotic results is a new type of
phase transitions caused by a sufficient decrease in the speed function
$\Speed(\cdot)$ on an interval.
Namely, at the beginning of such a decrease the limit shape
$\LimitShape(\tau,x)$ becomes discontinuous (leading to what we call a traffic jam), 
but the asymptotic fluctuations of the height
function on both sides of this traffic jam 
(and at the location of the traffic jam itself!) have scale
$\RateLambda^{\frac{1}{3}}$ and the GUE Tracy--Widom distribution,
but with different non-universal normalizations.
Computer simulations also suggest that these Tracy--Widom fluctuations on both sides
of a traffic jam are independent. 
A finer analysis of the fluctuation behavior in a neighborhood 
of a traffic jam will be the subject of a future work.

In fact, we also consider a slightly more general situation 
when the inhomogeneous space might contain deterministic roadblocks
which capture particles with some fixed probabilities.
The presence of these roadblocks leads to 
shocks in the limit shape and
phase transitions in the fluctuation exponent and fluctuation distribution
of Baik--Ben Arous--P\'ech\'e type \cite{BBP2005phase}.
This phase transition is known to appear in
interacting particle systems with particle-dependent inhomogeneity
(e.g., see \cite{BaikBBPTASEP}, \cite{barraquand2015phase})
and in other related situations (cf. \cite{AmolBorodin2016Phase}).
We refer to \Cref{thm:main_theorem_on_fluctuations} below for a complete
formulation of the results on asymptotics of fluctuations, and to
\Cref{sub:phase_transitions_short_discussion} for more discussion and examples
of phase transitions arising for various choices of the speed function
$\Speed(\cdot)$ and the configuration of the roadblocks.

Let us emphasize that the ability to analyze 
an interacting particle system with spatial inhomogeneity 
to the point of asymptotic fluctuations 
comes from new integrable tools
developed in \cite{BorodinPetrov2016inhom}
for the inhomogeneous six vertex model.
It seems that earlier methods of Integrable Probability
are not directly applicable to such particle systems 
with spatial inhomogeneity.

\subsection{Remark. Model for $q=0$}
The inhomogeneous exponential jump model has a natural degeneration for
$q=0$ which changes the behavior of the particle system in two aspects.
First, for $q=0$ particles leave each stack and become active at rate
$\Speed(x)$ (where $x$ is the location of this stack), independently of the
number of particles in the stack.
Second, moving particles cannot fly over sitting particles, so one can say that
the particles are ordered and the process preserves this ordering.

Although the $q=0$ process is simpler than the one for $q\in(0,1)$, the
methods of the present paper are not directly applicable to rigorously obtaining
asymptotics of fluctuations in the $q=0$ case.
However, the $q\in(0,1)$ results in the present paper have natural $q=0$
degenerations which are proven in a companion paper \cite{KnizelPetrov2017}
using a different approach based on a connection with Schur measures
(which in turn follows from the coupling construction of \cite{OrrPetrov2016}).

This need for a different approach for $q=0$ should be compared to the
situation of ASEP and $q$-TASEP vs TASEP.
Namely, the asymptotic analysis of ASEP or $q$-TASEP by means of Fredholm
determinants (see \cite{TW_ASEP2} and \cite{FerrariVeto2013}, respectively)
does not survive the limit transition to the TASEP.
On the other hand, some of the ASEP and $q$-TASEP results (most notably, on
the GUE Tracy--Widom fluctuations of the height function) remain valid in the
TASEP case and were established earlier in 
\cite{johansson2000shape} 
by a different method
which can also be traced to Schur measures.

\subsection{Outline}
In \Cref{sec:model_and_results} we define the inhomogeneous exponential jump
model in full generality, and describe the main results of the paper.
In \Cref{sec:from_vertex_to_expon} we show how formulas for the stochastic
higher spin six vertex model from \cite{BorodinPetrov2016inhom} lead to a
Fredholm determinantal formula for the $q$-Laplace transform of the height
function of the exponential model.
In \Cref{sec:asymptotic_analysis} we perform the asymptotic analysis of the
Fredholm determinant and prove the main results.
Necessary formulas pertaining to $q$-digamma and $q$-polygamma functions are
given in \Cref{sec:appendix_q}.
In \Cref{sec:appendix_stationary_distributions} we discuss translation
invariant stationary distributions of our particle systems, and perform
computations leading to an alternative hydrodynamic-type heuristic derivation
of the macroscopic limit shape in the inhomogeneous exponential jump model.

\subsection{Acknowledgments}

The authors are grateful to discussions with
Guillaume Barraquand,
Ivan Corwin,
Michael Blank,
Tomohiro Sasamoto, 
Herbert Spohn,
Kazumasa Takeuchi,
and
Jon Warren.
The research was carried out in part during the authors' participation in the
Kavli Institute for Theoretical Physics program ``New approaches to
non-equilibrium and random systems: KPZ integrability, universality,
applications and experiments'', and consequently was partially supported by the
National Science Foundation PHY11-25915.
A.~B.\ is supported by the National Science Foundation grant
DMS-1607901 and by Fellowships of the Radcliffe Institute for Advanced Study and the Simons
Foundation.

\section{Model and main results}
\label{sec:model_and_results}

\subsection{Definition of the model}
\label{sub:definition_of_the_model}

The \emph{inhomogeneous exponential jump model} is a continuous time Markov
process $\left\{ \XProcess(t) \right\}_{t\ge0}$ on the space of finite particle
configurations in $\mathbb{R}_{>0}:=\left\{ y\in\mathbb{R}\colon y>0 \right\}$, that is,
\begin{equation*}
	\mathrm{Conf}_{\bullet}(\mathbb{R}_{>0}):=\left\{ \left( x_1\ge x_2\ge \ldots \ge x_k \right)\colon 
	\textnormal{$x_i\in \mathbb{R}_{>0}$ and $k\in\mathbb{Z}_{\ge0}$ is arbitrary} \right\}.
\end{equation*}
Note that several particles can occupy the same point of $\mathbb{R}_{>0}$.
Denote by $\varnothing \in\mathrm{Conf}_{\bullet}(\mathbb{R}_{>0})$ the empty particle
configuration (having $k=0$), and let the initial configuration of the
exponential jump model be $\XProcess(0)=\varnothing$. 
For later convenience, let us also assume that there is an infinite stack of
particles at location $0$.

For $X\in\mathrm{Conf}_{\bullet}(\mathbb{R}_{>0})$ and $x\in\mathbb{R}_{\ge0}$ denote
by $\eta(x)=\eta_X(x)\in\mathbb{Z}_{\ge0}$ the number of particles of the
configuration $X$ at location $x$. Next, define the \emph{height function}
associated with $X\in\mathrm{Conf}_{\bullet}(\mathbb{R}_{>0})$ by
\begin{equation}
	\HeightFunction(x)=\HeightFunction_X(x):=
	\#\left\{ \textnormal{particles $x_i\in X$ which are $\ge x$} \right\}
	\in\mathbb{Z}_{\ge0}.
	\label{height_function_definition}
\end{equation}
The height function is weakly decreasing in $x$,
$\HeightFunction_{X}(0)=+\infty$ (due to the infinite stack at $0$), and
$\lim_{x\to+\infty}\HeightFunction_{X}(x)=0$.

The inhomogeneous exponential jump model $\XProcess(t)$ on
$\mathrm{Conf}_{\bullet}(\mathbb{R}_{>0})$ depends on the following data:
\begin{equation}
	\label{Expmodel_data_assumptions}
	\hspace{-5pt}
	\parbox{.88\textwidth}{
		\begin{itemize}
		\item Main ``\emph{quantization}'' parameter $q\in(0,1)$;
		\item \emph{Jumping distance} parameter $\RateLambda>0$;
		\item \emph{Speed function} $\Speed(x)$, $x\in\mathbb{R}_{\ge0}$, which is
			positive, piecewise continuous, has left and right limits in
			$\mathbb{R}_{\ge0}$, and 
			is bounded away from $0$ and~$+\infty$.
		\item Discrete set $\RoadblockSet\subset\mathbb{R}_{>0}$ without
			accumulation points in $\mathbb{R}_{\ge0}$ (however, $\RoadblockSet$ can be
			infinite) and a function $\RoadblockProb\colon \RoadblockSet\to(0,1)$. Points
			belonging to $\RoadblockSet$ will be referred to as \emph{roadblocks}.
	\end{itemize}
	}
\end{equation}
Under the Markov process $\XProcess(t)$ particles randomly jump to the right in
continuous time. Let us begin by describing how particles ``wake up'' and start
moving. First, new particles enter the system (leaving the infinite stack at location
$0$) at rate $\Speed(0)$.
Next, if at some time $t>0$ there are particles at a location
$x\in\mathbb{R}_{>0}$, then one particle decides to leave this location at
rate $\Speed(x)(1-q^{\eta_{\XProcess(t)}(x)})$. Almost surely at each
time moment at most one particle can start moving, and 
waking up events at different locations are independent.

Each particle which wakes up at some time moment $t\ge0$ instantaneously jumps
to the right by a random distance according to the distribution
\begin{multline}\label{jumpmodel_traveling_particle}
	\mathbb{P}
	\left( 
		\textnormal{%
			the moving particle travels 
			distance $\ge y$
			$\big\vert$
			it started at $x\in\mathbb{R}_{\ge0}$%
		}
	\right)
	\\=
	e^{- \RateLambda y}q^{\HeightFunction_{\XProcess(t)}(x+)-\HeightFunction_{\XProcess(t)}(x+y)}
	\prod_{b\in\RoadblockSet,\,x<b<x+y}\RoadblockProb(b),
\end{multline}
where $y>0$ is arbitrary and the height function above corresponds to the
configuration $\XProcess(t)$ before the moving particle started its jump.  
In words, distribution \eqref{jumpmodel_traveling_particle} means that the
moving particle's desired travel distance has exponential distribution with
rate $\RateLambda$ (and mean $\RateLambda^{-1}$). 
However, in the process of its move the particle has a chance to be stopped by
other particles or by roadblocks it flies over. 
Namely, the moving particle flies past each sitting particle with probability
$q$ per particle (indeed,
$\HeightFunction_{\XProcess(t)}(x+)-\HeightFunction_{\XProcess(t)}(x+y)$ is the
number of such sitting particles strictly between $x$ and $x+y$), and flies past each
roadblock $b\in\RoadblockSet$ with probability $\RoadblockProb(b)$. 
Note that to fly past a roadblock at $b\in\RoadblockSet$ the moving particle
must also pass all particles possibly sitting at $b$, with probability $q$ per
particle. 
See \Cref{fig:expon_model_def} for an illustration.

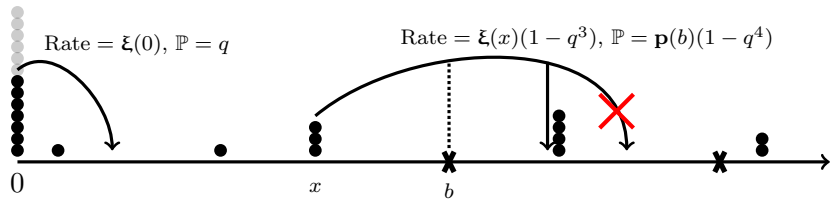
\begin{figure}[htbp]
	\begin{tikzpicture}
		[scale=.9, very thick]
		\def\eps{.17}
		\draw[->] (0,0) -- (12,0);
		\node[below] at (0,0) {$0$};
		\draw[fill] (0,\eps) circle (2pt);
		\draw[fill] (0,2*\eps) circle (2pt);
		\draw[fill] (0,3*\eps) circle (2pt);
		\draw[fill] (0,4*\eps) circle (2pt);
		\draw[fill] (0,5*\eps) circle (2pt);
		\draw[fill] (0,6*\eps) circle (2pt);
		\draw[fill] (0,7*\eps) circle (2pt);
		\draw[fill, opacity=.2] (0,8*\eps) circle (2pt);
		\draw[fill, opacity=.2] (0,9*\eps) circle (2pt);
		\draw[fill, opacity=.2] (0,10*\eps) circle (2pt);
		\draw[fill, opacity=.2] (0,11*\eps) circle (2pt);
		\draw[fill, opacity=.2] (0,12*\eps) circle (2pt);
		\draw[fill, opacity=.2] (0,13*\eps) circle (2pt);
		\draw[fill] (3,\eps) circle (2pt);
		\draw[fill] (4.4,2*\eps) circle (2pt);
		\draw[fill] (4.4,3*\eps) circle (2pt);
		\draw[fill] (.6,\eps) circle (2pt);
		\draw[fill] (4.4,\eps) circle (2pt);
		\node [below] at (4.4,-.15) {\scriptsize $x$};
		\draw[fill] (8,\eps) circle (2pt);
		\draw[fill] (8,2*\eps) circle (2pt);
		\draw[fill] (8,3*\eps) circle (2pt);
		\draw[fill] (8,4*\eps) circle (2pt);
		\draw[fill] (11,\eps) circle (2pt);
		\draw[fill] (11,2*\eps) circle (2pt);
		\draw[->] (0,8*\eps) to [in=90,out=40] (1.4,\eps) node[right, yshift=40pt, xshift=-30pt]
		{\scriptsize$\textnormal{Rate}=\Speed(0)$,
		$\mathbb{P}=q$};
		\draw[->] (4.4,4*\eps) to [in=90,out=40] (9,\eps);
		\draw[->] (8-\eps,8.7*\eps) to (8-\eps,\eps) node[above, yshift=34pt, xshift=15pt]
		{\scriptsize$\textnormal{Rate}=\Speed(x)(1-q^{3})$, $\mathbb{P}=\RoadblockProb(b)(1-q^{4})$};
		\draw[color=red, ultra thick] (8.6,.5)--++(.5,.5);
		\draw[color=red, ultra thick] (8.6,1)--++(.5,-.5);
		\draw[line width=2] (6.3,-.15)--++(.15,.3);
		\draw[line width=2] (6.45,-.15)--++(-.15,.3);
		\draw[line width=2] (10.3,-.15)--++(.15,.3);
		\draw[line width=2] (10.45,-.15)--++(-.15,.3);
		\node [below] at (6.375,-.15) {\scriptsize $b$};
		\draw[densely dotted] (6.375,1.2*\eps)--++(0,1.3);
	\end{tikzpicture}
	\caption{%
		Two possible jumps in the inhomogeneous exponential jump model
		$\XProcess(t)$ (but only one can occur at an instance of continuous time). 
		The left jump has rate $\Speed(0)$ and the desired travel distance has
		exponential distribution with parameter $\RateLambda$. 
		The moving particle flies over one sitting particle with probability~$q$. 
		The right jump has rate $\Speed(x)(1-q^{3})$, the desired travel distance
		has the same exponential distribution, and the moving particle joins the
		stack of $4$ other particles with probability $\RoadblockProb(b)(1-q^{4})$. 
		Here $\RoadblockProb(b)$ corresponds to flying over the roadblock at $b$,
		and $1-q^{4}$ is the complementary probability to flying over the stack of
		$4$ sitting particles.%
	}
	\label{fig:expon_model_def}
\end{figure}

\begin{remark}
	\label{rmk:lambda_parameter_depends_on_x}
	The parameter $\RateLambda$ can in fact depend on the location $x$, too.
	However, this dependence can be eliminated by a change of variables (cf.
	\Cref{rmk:enough_lambda_constant}), and so without loss of generality we can
	and will consider constant $\RateLambda$.
\end{remark}

\subsection{Hydrodynamics}
\label{sub:intro_hydrodynamics}

\begin{figure}[htpb]
	\centering
	\begin{tikzpicture}
		[scale=1]
		\node[anchor=north east] at (0,0) {\includegraphics[width=.8\textwidth]{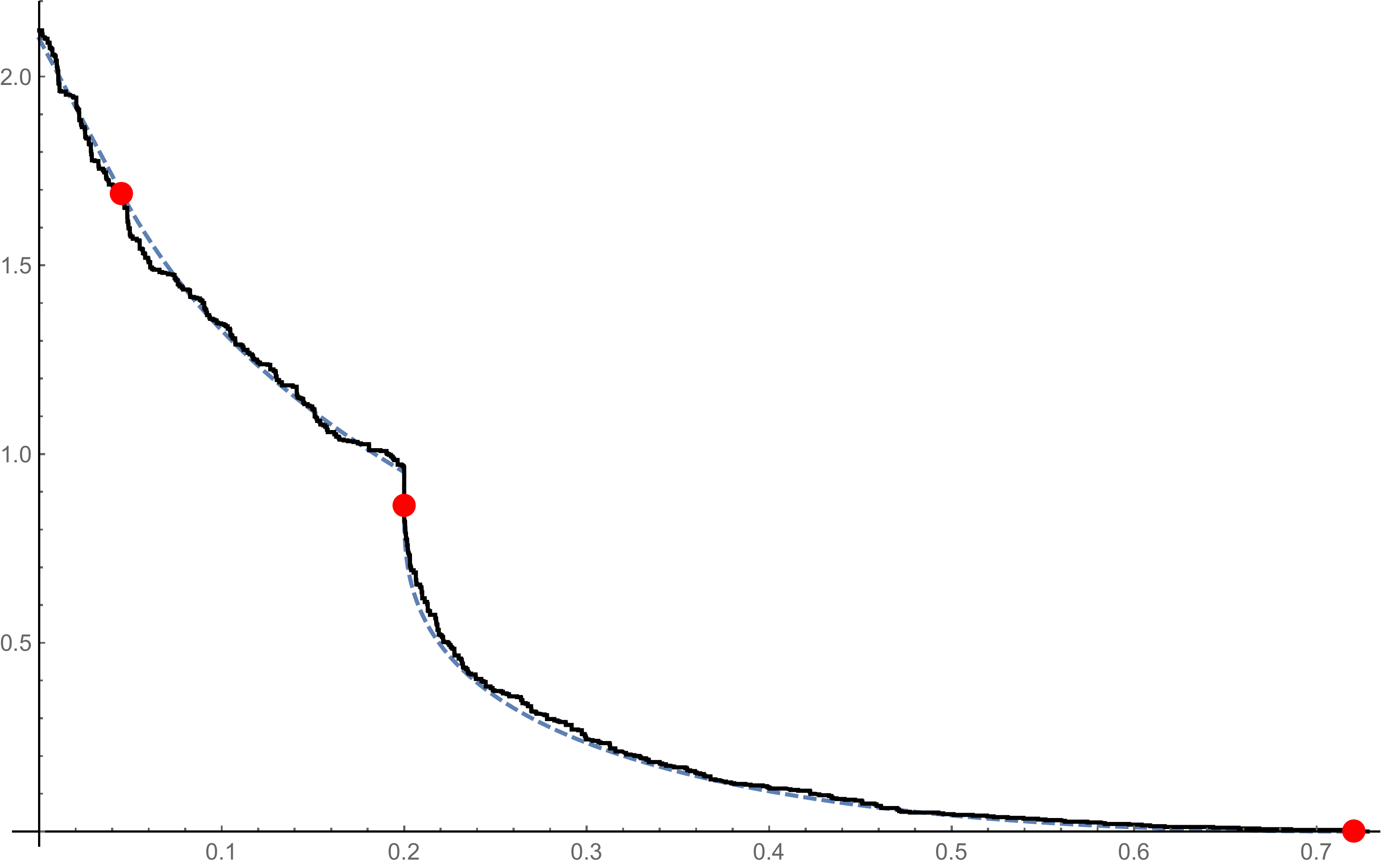}};
		\node[anchor=north east,rectangle,draw=black] at (0,0) {\includegraphics[width=.48\textwidth]{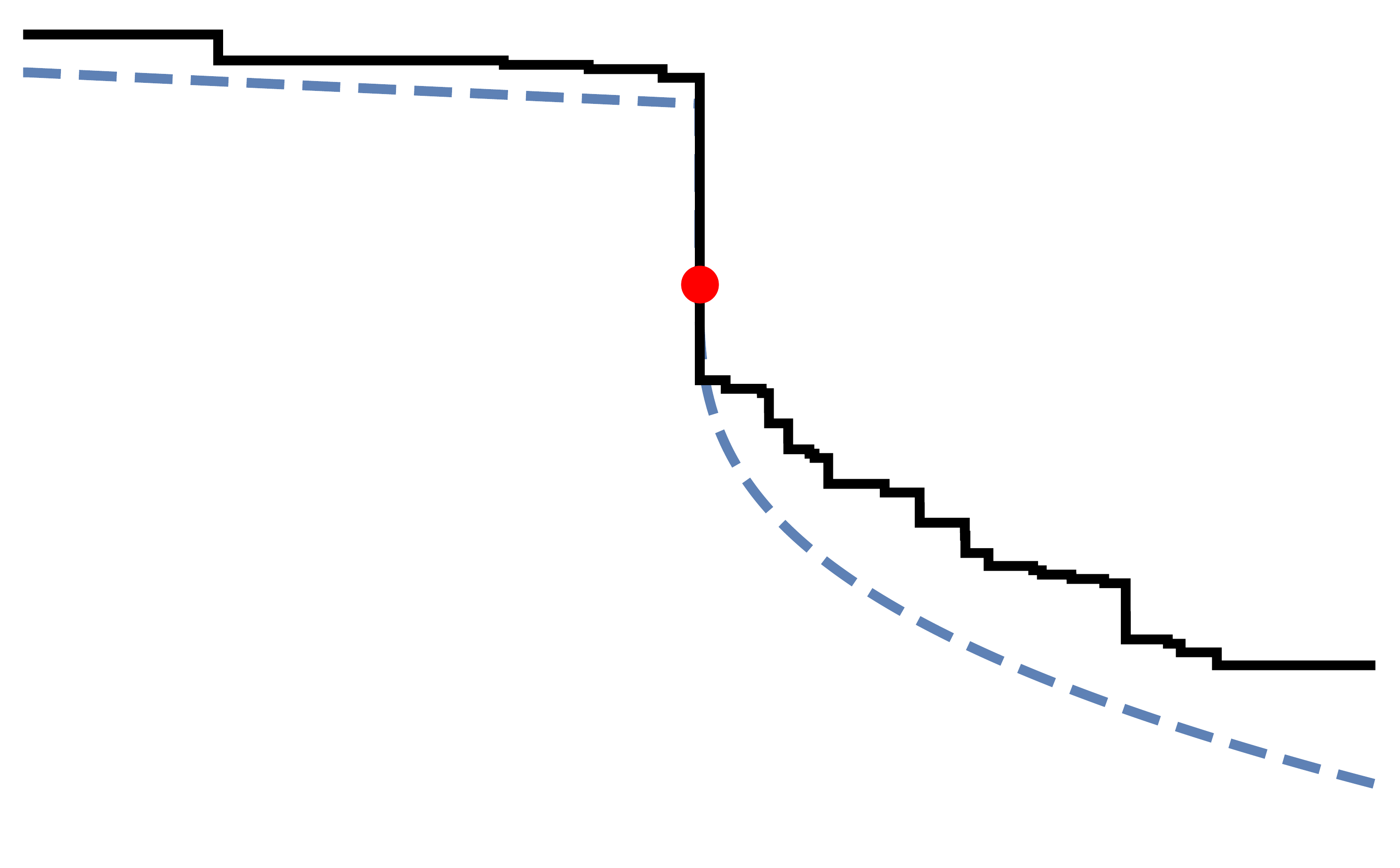}};
		\node[anchor=west] at (-12.32,-1) {Gaussian fluctuations};
		\node[anchor=west] at (-11.9,-1.85) {BBP transition point};
		\node[anchor=west] at (-10.9,-2.9) {\parbox{.16\textwidth}{Tracy--Widom fluctuations}};
		\node[anchor=west] at (-11.45,-4.87) {traffic jam};
		\node[anchor=west] at (-6.15,-1.69) {traffic jam};
		\node[anchor=west] at (-7,-7) {Tracy--Widom fluctuations};
		\draw[very thick] (-11.953,-7.82)--++(0,-.2) node [below, yshift=1] {$\sigma$};
		\draw[very thick] (-9.302,-7.82)--++(0,-.2);
		\node at (0.1,-7.93) {$x$};
	\end{tikzpicture}
	\caption[]{Example of a random height function (solid),
		its limit shape approximation $\LimitShape(\tau,x)$
		(dashed), and a zoom at $x=0.2$ showing the behavior around the traffic jam.
		The parameters are $q=\frac{1}{2}$, $\tau=3$, and the speed function is
		$
			\Speed(x)
			=
			0.7\cdot \mathbf{1}_{x=0}
			+
			\mathbf{1}_{0<x<0.2}
			+
			0.4\cdot\mathbf{1}_{x\ge0.2}
		$
		(%
			here and below $\mathbf{1}_{A}$ stands for the indicator function of an
			event $A$%
		).
		Solid red dots indicate points where a curved shape is tangent to 
		the adjacent linear part.
		\\
		The value $\Speed(0)=0.7<1$ plays the role of a roadblock and creates a
		singularity, namely,
		the fluctuation exponents and 
		the fluctuation behavior undergoes a
		Baik--Ben Arous--P\'ech\'e phase transition 
		at $x=\sigma\approx 0.045$.
		Moreover, $\LimitShape(\tau,x)$ is linear for $0\le x\le \sigma$.
		The discontinuous decrease in $\Speed(x)$
		at $x=0.2$ leads to a traffic jam, namely,
		the limit shape is discontinuous at $x=0.2$ but 
		fluctuations on both sides of $0.2$ are governed by the GUE Tracy--Widom distribution.
	}
	\label{fig:intro_limit_shape}
\end{figure}

The main goal of this paper is to perform an asymptotic analysis of the
inhomogeneous exponential jump model $\XProcess(t)$ in the regime as
$\RateLambda\to+\infty$ and the continuous time grows as $t=\tau\RateLambda$,
where $\tau\ge0$ is the fixed rescaled time.
In this regime the mean desired travel distance of the jumping particles
goes to zero, while more and more particles enter the system
as time grows.
In this regime one expects that the rescaled height function 
\eqref{height_function_definition} converges (in probability) to a deterministic
\emph{limit shape}, 
$
	\RateLambda^{-1}\HeightFunction_{\XProcess(\tau\RateLambda)}(x)
	\to
	\LimitShape(\tau,x)
$.
An example of such a limit shape is given in \Cref{fig:intro_limit_shape}.

At least in the case of no roadblocks, a partial differential equation for the
limit shape $\LimitShape(\tau,x)$ can be written down by looking at the
hydrodynamic behavior similarly to the treatment of driven interacting
particle systems in one space dimension in, e.g., 
\cite{Andjel1984},
\cite{Rezakhanlou1991hydrodynamics}, 
\cite{Landim1996hydrodynamics},
\cite{Seppalainen_Discont_TASEP_2010}.
To write such an equation for our model, we need the following notation:
\begin{equation*}
	\phi_n(w):=\sum_{k=1}^{\infty}\frac{k^nw^k}{1-q^k},\qquad |w|<1,
	\qquad 
	n\in\mathbb{Z}_{\ge0}.
\end{equation*}
This function can be analytically continued to a meromorphic function of $w\in\mathbb{C}$,
see \Cref{sec:appendix_q} for details.
\begin{theorem}
	\label{thm:PDE}
	Let there be no roadblocks,
	and $\Speed(x)$ be continuous at $x=0$.
	Then the limit shape $\LimitShape(\tau,x)$
	(whose existence follows from \Cref{thm:main_theorem_on_fluctuations}
	below)
	satisfies the following
	equation:
	\begin{equation}
		\frac{\partial\LimitShape(\tau,x)}{\partial x}
		=
		-
		\phi_1\left( \frac{1}{\Speed(x)}
		\frac{\partial\LimitShape(\tau,x)}{\partial \tau}\right),
		\label{intro_PDE_height_function}
	\end{equation}
	with initial condition
	$\LimitShape(0,x)=0$ ($x>0$)
	and boundary condition
	$\LimitShape(\tau,0)=+\infty$ ($\tau\ge0$).
\end{theorem}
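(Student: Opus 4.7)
The plan is to verify the PDE directly, by differentiating the explicit parametric description of $\LimitShape(\tau,x)$ that comes from the Fredholm determinantal steepest-descent analysis behind \Cref{thm:main_theorem_on_fluctuations}. The hydrodynamic computation carried out in \Cref{sec:appendix_stationary_distributions} motivates the form of the equation: if one posits local equilibration near $(\tau,x)$ to a translation-invariant stationary distribution with density $\rho=-\partial_x\LimitShape$, and identifies $\partial_\tau\LimitShape$ with the rightward current through $x$, then the current--density relation $j(\rho)=\Speed(x)\,\phi_1^{-1}(\rho)$ for the homogeneous model with speed $\Speed(x)$ becomes exactly $-\partial_x\LimitShape=\phi_1\bigl(\partial_\tau\LimitShape/\Speed(x)\bigr)$.

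For the rigorous step, I would first read off from the steepest-descent analysis an implicit parametrization of $\LimitShape(\tau,x)$ by the double critical point $w^{\star}=w^{\star}(\tau,x)$ of the action governing the Fredholm determinant. This critical point satisfies a transcendental equation that couples $\tau$, $x$, and $\Speed(\cdot)|_{[0,x]}$ through integrals of $q$-digamma-type quantities, and $\LimitShape(\tau,x)$ is the critical value of the action at $w^{\star}$. Second, differentiating in $\tau$ and in $x$, the fact that $w^{\star}$ is a critical point of the action eliminates the contributions from $\partial_\tau w^{\star}$ and $\partial_x w^{\star}$, so that $\partial_\tau\LimitShape$ and $\partial_x\LimitShape$ reduce to closed-form expressions in $w^{\star}$ (and, for the latter, in $\Speed(x)$).

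Third, I would use the series expansion $\phi_1(w)=\sum_{k\ge 1}\frac{k\,w^k}{1-q^k}$ together with the $q$-polygamma identities collected in \Cref{sec:appendix_q} to check that $-\phi_1\bigl(\partial_\tau\LimitShape/\Speed(x)\bigr)$ matches the closed-form expression for $\partial_x\LimitShape$ on the nose. The initial condition $\LimitShape(0,x)=0$ is immediate from $\XProcess(0)=\varnothing$, and the boundary condition $\LimitShape(\tau,0)=+\infty$ reflects the infinite stack at the origin built into the definition of the model; both are preserved in the scaling limit.

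The main obstacle is the clean matching in the third step: the representation of $\LimitShape$ produced by steepest descent is naturally an integral over $[0,x]$, whereas $\phi_1$ enters the PDE as a pointwise evaluation at the single site $x$. Reconciling the two amounts to recognizing that the integrand differentiates in $x$ to precisely a $\phi_1$-image of the $\tau$-derivative, i.e., to a nontrivial identity between derivatives of $q$-digamma-type functions and the functions $\phi_n$. If the direct manipulation is unwieldy, a workaround is to first establish the PDE when $\Speed$ is piecewise constant (so that the critical point equation simplifies on each plateau) and then pass to general piecewise continuous $\Speed$ by an approximation argument, using continuity of $\LimitShape$ away from discontinuities of $\Speed$.
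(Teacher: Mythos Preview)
Your approach is correct and is essentially the paper's: use the explicit formula $\LimitShape(\tau,x)=\tau\upomega^\circ_{\tau,x}-\int_0^x\phi_1\bigl(\upomega^\circ_{\tau,x}/\Speed(y)\bigr)dy$ (your $w^\star$ is the paper's $\upomega^\circ_{\tau,x}$), differentiate in $\tau$ and $x$, and observe that the critical-point equation $\tau\upomega^\circ=\int_0^x\phi_2(\upomega^\circ/\Speed(y))\,dy$ kills the terms involving $\partial_\tau\upomega^\circ$ and $\partial_x\upomega^\circ$, leaving $\partial_\tau\LimitShape=\upomega^\circ_{\tau,x}$ and $\partial_x\LimitShape=-\phi_1\bigl(\upomega^\circ_{\tau,x}/\Speed(x)\bigr)$; substituting the first into the second gives the PDE.

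The ``main obstacle'' you flag is not one: the pointwise $\phi_1$ at $x$ appears automatically as the boundary term from Leibniz's rule when you differentiate $\int_0^x\phi_1\bigl(\upomega^\circ_{\tau,x}/\Speed(y)\bigr)dy$ with respect to the upper limit $x$, and the leftover integral term (coming from $\partial_x\upomega^\circ$ inside the integrand, via $\phi_1'=w^{-1}\phi_2$) is exactly what cancels against $\tau\,\partial_x\upomega^\circ$. No nontrivial $q$-polygamma identity beyond $w\phi_1'(w)=\phi_2(w)$ is needed, and the piecewise-constant workaround is unnecessary.
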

We require that $\Speed(x)$ is continuous at $x=0$ to avoid the
singularity near $x=0$ as in \Cref{fig:intro_limit_shape}.
\begin{proof}[Heuristic argument for \Cref{thm:PDE}]
	We first present a heuristic hydrodynamic-type argument leading to equation
	\eqref{intro_PDE_height_function}. 
	Later in \Cref{sub:intro_limit_shape} we will verify this equation using an
	explicit expression for $\LimitShape(\tau,x)$ arising from asymptotic
	analysis of the Fredholm determinantal formula for the $q$-Laplace transform
	of the height function $\HeightFunction_{\XProcess(t)}$ of the exponential
	jump model.
	Details and necessary computations pertaining to the hydrodynamic approach 
	may be found in \Cref{sec:appendix_stationary_distributions}.
	
	The hydrodynamic argument is based on the following assertions:
	\begin{enumerate}[\quad$\bullet$]
		\item (\emph{existence of limit shape})
			The limit (in probability)
			$\LimitShape(\tau,x)=\lim_{\RateLambda\to+\infty}
			\RateLambda^{-1}\HeightFunction_{\XProcess(\tau\RateLambda)}(x)$
			exists, and
			$\rho(\tau,x):=-\frac{\partial}{\partial x}\LimitShape(\tau,x)\in[0,+\infty]$ 
			exists for any $x\in\mathbb{R}_{\ge0}$.
			Clearly, $\rho(\tau,x)$ has the meaning of the limiting density (at
			location $x\in\mathbb{R}_{\ge0}$ at scaled time $\tau\ge0$) of particles in
			the inhomogeneous exponential jump model.

		\item (\emph{local stationarity})
			Locally at each $x\in\mathbb{R}_{>0}$ where $\rho(\tau,x)< +\infty$
			the asymptotic behavior of our particle system (%
				as $\RateLambda\to+\infty$ and under the rescaling of the space around
				$x$ by $\RateLambda$%
			) is described by 
			a translation invariant stationary distribution\footnote{%
				By stationary we mean distributions which do not change under the
				corresponding stochastic evolution, and translation invariance means
				invariance under spatial translations of $\mathbb{R}$.%
			}
			on $\mathrm{Conf}^{\sim}_{\bullet}(\mathbb{R})$, the space of (possibly
			countably infinite) particle configurations in $\mathbb{R}$ with multiple
			particles per location allowed.

		\item (\emph{classification of translation invariant stationary distributions})
			All distributions on $\mathrm{Conf}^{\sim}_{\bullet}(\mathbb{R})$ which are
			translation invariant and stationary under the homogeneous version of the
			exponential jump model with speed $\Speed(x)\equiv \Speed$ and
			$\RateLambda=1$ depend on one real parameter $c\ge0$ and are 
			given by the marked Poisson processes $\mathfrak{m}_{c,1}$
			defined in \Cref{sub:appendix_expon_model}.
			That is, a random configuration under $\mathfrak{m}_{c,1}$
			is obtained by taking the standard Poisson process on $\mathbb{R}$
			of intensity $\phi_0(c)$, and independently 
			putting $j\ge1$ particles at each point of this Poisson process 
			with probability
			$\ContinuousHahnDistribution_c(j)=
			\frac{1}{\phi_0(c)}\frac{c^j}{1-q^j}$.
	\end{enumerate}

	We prove the first assertion about the limit shape
	using exact formulas (\Cref{thm:main_theorem_on_fluctuations}),
	and do not prove local stationarity.
	We also do not prove the full
	classification of translation invariant stationary
	distributions, but establish
	its weaker version (\Cref{statement:cont_invariant}) that the marked Poisson
	process $\mathfrak{m}_{c,1}$ is indeed stationary under the 
	homogeneous exponential jump model on $\mathbb{R}$
	(which exists for a certain class of initial configurations, see
	\Cref{sub:appendix_preliminaries}). 

	One can compute (\Cref{sub:appendix_expon_model})
	the particle density and the particle
	current (sometimes also called the particle flux)
	associated with $\mathfrak{m}_{c,1}$, they have the form:
	\begin{equation*}
		\rho(c)=\phi_1(c),\qquad j(c)=\Speed c.
	\end{equation*}
	From \Cref{statement:Phi_n_behavior} it readily follows that 
	$\phi_1\colon[0,1]\to[0,+\infty]$ is one to one and increasing, and so the 
	particle current in the local stationary regime 
	depends on the density as 
	\begin{equation*}
		j(\rho)=\Speed \phi_1^{-1}(\rho).
	\end{equation*}
	One then expects that the limiting density satisfies
	\begin{equation}
		\frac{\partial}{\partial \tau}\rho(\tau,x)
		+
		\frac{\partial}{\partial x}j\bigl( \rho(\tau,x) \bigr)
		=0
		\quad 
		\Leftrightarrow
		\quad 
		\frac{\partial}{\partial \tau}\rho(\tau,x)
		+
		\frac{\partial}{\partial x}
		\Bigl[
			\Speed(x)\phi_1^{-1}(\rho(\tau,x))
		\Bigr]=0,
	\label{intro_PDE_density}
	\end{equation}
	with initial condition $\rho(0,x)=0$ ($x>0$) and boundary condition
	$\rho(\tau,0)=+\infty$ ($\tau\ge0$). 

	Using the fact that
	\begin{equation*}
		\LimitShape(\tau,x)=\int_{x}^{+\infty}\rho(\tau,u)du,
	\end{equation*}
	and integrating \eqref{intro_PDE_density} from $x$ to $+\infty$,
	we obtain
	\begin{equation*}
		\frac{\partial\LimitShape(\tau,x)}{\partial\tau}=\Speed(x)\phi_1^{-1}\bigl(\rho(\tau,x)\bigr).
	\end{equation*}
	Dividing this by $\Speed(x)$ and applying $\phi_1$ to both sides leads to the desired equation
	\eqref{intro_PDE_height_function}.
\end{proof}

\subsection{Limit shape}
\label{sub:intro_limit_shape}

Let us now present an explicit expression for the limit shape $\LimitShape(\tau,x)$ 
of the height function in the inhomogeneous exponential jump model
in full generality of \Cref{sub:definition_of_the_model}, i.e., 
with possible roadblocks.
We start with some notation. 

\begin{definition}[Essential ranges]
	\label{def:essential_ranges}
	Denote for $x>0$:
	\begin{equation}
		\SpeedEssRange_x:=
		\{\Speed(0)\}
		\cup\bigcup_{b\in\RoadblockSet\colon 0<b<x}\{\Speed(b)\}
		\cup
		\mathsf{EssRange}\{\Speed(y)\colon 0< y <x\},
		\qquad 
		\mathcal{W}_x:=\min \SpeedEssRange_x,
		\label{range_of_Speed_notation}
	\end{equation}
	where $\mathsf{EssRange}$ stands for the \emph{essential range}, i.e., the set of all
	points for which the preimage of any neighborhood under $\Speed$ has positive
	Lebesgue measure. 
	Note that we include values of $\Speed$ corresponding to $0$ and the roadblocks
	even if they do not belong to the essential range. 
	These latter values play a special role because 
	there are infinitely many particles at $0$, and each of the locations
	$b\in\RoadblockSet$ contains at least one particle with nonzero probability. 
	Let also
	\begin{equation}
		\SpeedEssRangeCirc_x:=\mathsf{EssRange}\{\Speed(y)\colon 0< y <x\},
		\qquad 
		\mathcal{W}_x^\circ:=\min\SpeedEssRangeCirc_x.
		\label{essential_range_of_Speed_notation}
	\end{equation}
	Clearly, $\mathcal{W}_x\le \mathcal{W}_x^\circ$ for all $x$.
\end{definition}

\begin{definition}[Edge]
	\label{def:edge_of_the_limit_shape}
	Fix $\tau\ge0$, and let $x_e=x_e(\tau)\ge0$ be the unique solution to the equation
	\begin{equation}
		\tau=\int_{0}^{x_e}\frac{dy}{(1-q)\Speed(y)}.
		\label{edge_of_the_limit_shape_equation}
	\end{equation}
	This solution is well-defined since the integrand is positive and bounded
	away from zero. 
	Clearly, $x_e(0)=0$, and $x_e(\tau)$ increases with $\tau$. 
	We call $x_e$ the \emph{edge of the limit shape}.

	This name can be informally justified as follows. Instead of looking at the
	rightmost particle in our exponential jump model, consider the model with
	just one particle. Then $(1-q)\Speed(y)$ is the rate with which this particle
	decides to leave a location $y$, and $\frac1{(1-q)\Speed(y)}$ is the mean
	time this single particle spends at $y$. In the limit as $\RateLambda\to+\infty$
	(i.e., as the travel distance goes to zero), the integral in the right-hand
	side of \eqref{edge_of_the_limit_shape_equation} 
	represents the scaled time it takes to reach
	location $x_e$. Equating this time with $\tau$ determines the asymptotic
	location of this single particle. 

	Let us also denote for all $x>0$:
	\begin{equation}
		\tau_e(x):=
		\int_0^x\frac{dy}{(1-q)\Speed(y)};
		\label{edge_of_the_limit_shape_tau_of_x}
	\end{equation}
	this is the time at which the location $x$ becomes the edge.
\end{definition}

\begin{proposition}
	\label{statement:TW_root_exists_and_is_unique}
	Fix $\tau>0$. For any $x\in(0,x_e)$, the equation
	\begin{equation}
		\label{critical_points_second_equation}
		\tau w=\int_0^x \phi_2\left( \frac{w}{\Speed(y)} \right)dy
	\end{equation}
	on $w\in(0,\mathcal{W}_x^\circ)$
	has a unique root which we denote by $\upomega^\circ_{\tau,x}$.
	For a fixed $x$ the function $\tau\mapsto \upomega^\circ_{\tau,x}$
	is strictly increasing, and
	$\upomega^\circ_{\tau_e(x),x}=0$,
	$\lim_{\tau\to+\infty}\upomega^\circ_{\tau,x}=\mathcal{W}_x^\circ$.
	Moreover, for a fixed $\tau$ the function $x\mapsto \upomega^\circ_{\tau,x}$
	is strictly decreasing, and
	$\upomega^\circ_{\tau,x_e(\tau)}=0$.
\end{proposition}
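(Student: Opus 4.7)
\medskip
\noindent\textbf{Proof proposal.} Set
\begin{equation*}
	F(w;\tau,x):=\int_{0}^{x}\phi_2\!\left(\frac{w}{\Speed(y)}\right)dy-\tau w,
	\qquad w\in[0,\mathcal{W}_x^\circ),
\end{equation*}
so that the equation in \eqref{critical_points_second_equation} reads $F(w;\tau,x)=0$. The plan is to reduce the proposition to four elementary properties of $F$ in $w$: (a) $F(0;\tau,x)=0$; (b) $F(\,\cdot\,;\tau,x)$ is strictly convex on $(0,\mathcal{W}_x^\circ)$; (c) $F_w(0;\tau,x)=\tau_e(x)-\tau$; (d) $F(w;\tau,x)\to+\infty$ as $w\uparrow\mathcal{W}_x^\circ$. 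Here (a) is immediate from $\phi_2(0)=0$, (b) follows by differentiating under the integral and using strict convexity of $\phi_2$ on $(0,1)$ (a property of $\phi_2$ to be imported from \Cref{sec:appendix_q}), and (c) reduces to the identity $\phi_2'(0)=1/(1-q)$, comparing with the definition \eqref{edge_of_the_limit_shape_tau_of_x} of $\tau_e(x)$.

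For property (d), I would use the definition of the essential range: since $\mathcal{W}_x^\circ\in\mathsf{EssRange}\{\Speed(y)\colon 0<y<x\}$, the set $E_\epsilon:=\{y\in(0,x)\colon \Speed(y)<\mathcal{W}_x^\circ+\epsilon\}$ has positive Lebesgue measure for every $\epsilon>0$. Combined with the fact that $\phi_2(u)\to+\infty$ as $u\uparrow 1$ (again cited from \Cref{sec:appendix_q}), this is enough to force $F(w;\tau,x)\to+\infty$ along $w\uparrow\mathcal{W}_x^\circ$; piecewise continuity of $\Speed$ makes this step clean by providing a one-sided continuity point on $(0,x)$ at which $\Speed$ is close to $\mathcal{W}_x^\circ$ on a neighborhood. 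Granting (a)--(d), existence and uniqueness of $\upomega^\circ_{\tau,x}$ is immediate: for $x\in(0,x_e(\tau))$ one has $\tau_e(x)<\tau$, hence $F_w(0)<0$, so by strict convexity $F$ first decreases from $0$ to a unique negative minimum and then increases strictly to $+\infty$, crossing zero exactly once in $(0,\mathcal{W}_x^\circ)$.

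For monotonicity in $\tau$, I would use strict convexity again: at the root, strict convexity combined with $F(0)=F(\upomega^\circ)=0$ forces $F_w(\upomega^\circ;\tau,x)>0$. Since $\partial_\tau F=-w<0$, the implicit function theorem gives $\partial_\tau\upomega^\circ_{\tau,x}>0$. The boundary identities $\upomega^\circ_{\tau_e(x),x}=0$ and $\upomega^\circ_{\tau,x_e(\tau)}=0$ follow because at these values $F_w(0)=0$, so by strict convexity $F>0$ on $(0,\mathcal{W}_x^\circ)$ and the root degenerates to $0$. For $\lim_{\tau\to+\infty}\upomega^\circ_{\tau,x}=\mathcal{W}_x^\circ$, monotonicity gives a limit $\upomega^*\le\mathcal{W}_x^\circ$; if $\upomega^*<\mathcal{W}_x^\circ$, then $\int_0^x\phi_2(\upomega^\circ_{\tau,x}/\Speed(y))dy$ remains bounded by some finite constant as $\tau\to\infty$, but $\tau\upomega^\circ_{\tau,x}\to+\infty$ since $\upomega^\circ_{\tau,x}$ is bounded below by $\upomega^\circ_{\tau_e(x)+1,x}>0$, contradicting $F(\upomega^\circ_{\tau,x};\tau,x)=0$.

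For strict decrease in $x$, note that for $x_1<x_2$ in $(0,x_e(\tau))$ one has $F(w;\tau,x_2)-F(w;\tau,x_1)=\int_{x_1}^{x_2}\phi_2(w/\Speed(y))\,dy>0$ for every $w>0$ for which both sides are defined. Split into two cases: if $\upomega^\circ_{\tau,x_1}\ge\mathcal{W}_{x_2}^\circ$ then $\upomega^\circ_{\tau,x_2}<\mathcal{W}_{x_2}^\circ\le\upomega^\circ_{\tau,x_1}$ trivially; otherwise $F(\upomega^\circ_{\tau,x_1};\tau,x_2)>F(\upomega^\circ_{\tau,x_1};\tau,x_1)=0$, and since the unique positive root of $F(\,\cdot\,;\tau,x_2)$ lies on its increasing branch past the minimum, we conclude $\upomega^\circ_{\tau,x_1}>\upomega^\circ_{\tau,x_2}$. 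The main technical obstacle is property (d): one must check carefully, using the essential range definition together with the piecewise continuity hypothesis on $\Speed$, that the integral really blows up at $\mathcal{W}_x^\circ$ (in particular that one cannot have $\Speed$ approach $\mathcal{W}_x^\circ$ only on a set too thin to force divergence); everything else is a standard convex-analysis argument.
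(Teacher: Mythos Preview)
Your proposal is correct and follows essentially the same route as the paper, which rewrites \eqref{critical_points_second_equation} as $\tau=\partial_w\Phi_1(w\mid x)$ and appeals to strict convexity of $\Phi_1$ (via the inequality $\phi_3-\phi_2>0$, verified explicitly as \eqref{phi_3_minus_phi_2_positive}) to get uniqueness directly from monotonicity of $\Phi_1'$ rather than from a zero-crossing argument for your convex $F$; note that your hypothesis ``$\phi_2$ strictly convex'' amounts to $\phi_4-\phi_3>0$, which is true but is not among the facts recorded in \Cref{sec:appendix_q}, so you would need to check it separately. The paper, like you, asserts the blow-up at $\mathcal{W}_x^\circ$ without a detailed argument and declares the remaining monotonicity and limit claims straightforward, whereas you spell those out.
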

Note that equation \eqref{critical_points_second_equation} 
can have other roots outside the interval $w\in(0,\mathcal{W}_x^\circ)$.
We prove \Cref{statement:TW_root_exists_and_is_unique} in 
\Cref{sub:limit_shape_formulas}.
We are now in a position to describe the limit shape:
\begin{definition}[Limit shape]
	\label{def:limit_shape_definition_H_tau_x}
	The limit shape $\LimitShape(\tau,x)$ for $(\tau,x)\in\mathbb{R}_{\ge0}^2$ 
	is defined as follows:
	\begin{equation}
		\label{limit_shape_definition_H_tau_x}
		\LimitShape(\tau,x)
		:=
		\begin{cases}
			+\infty,  & \textnormal{if $x=0$ and $\tau\ge0$};
               \\
			0,        & \textnormal{if $x\ge x_e(\tau)$};
               \\
			\displaystyle
			\tau\min\bigl(\upomega^\circ_{\tau,x},\mathcal{W}_x\bigr)
			-
			\int_{0}^x\phi_1\biggl( \frac{\min (\upomega^\circ_{\tau,x},\mathcal{W}_x )}{\Speed(y)} \biggl)dy
			,
			&\textnormal{if $\tau>0$ and $0<x<x_e(\tau)$}.
		\end{cases}
	\end{equation}
\end{definition}

From the very definition of $\LimitShape(\tau,x)$ it is possible
to deduce the following properties one naturally expects of a limiting
height function (see \Cref{sub:limit_shape_formulas}
for the proof of \Cref{statement:properties_of_limit_shape_after_definition}):

\begin{proposition}
	\label{statement:properties_of_limit_shape_after_definition}
	For any fixed $\tau>0$, the function $x\mapsto \LimitShape(\tau,x)$ of
	\Cref{def:limit_shape_definition_H_tau_x} is left continuous, decreasing for
	$x\in\mathbb{R}_{>0}$, strictly decreasing for $x\in(0,x_e(\tau))$, 
	and $\LimitShape(\tau,x_e(\tau))=0$.
\end{proposition}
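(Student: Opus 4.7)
The plan is to reformulate $\LimitShape(\tau,x)$ variationally and derive all four properties from that formulation in a uniform way. Define, for $\tau>0$ and $x\in(0,x_e(\tau))$,
\begin{equation*}
F_x(w):=\tau w-\int_0^x\phi_1\!\left(\tfrac{w}{\Speed(y)}\right)dy,\qquad w>0.
\end{equation*}
Using the identity $\phi_1'(z)=\phi_2(z)/z$ and $\phi_1''(z)\ge 0$ recalled in \Cref{sec:appendix_q}, the map $w\mapsto F_x(w)$ is strictly concave, satisfies $F_x(0)=0$ and $F_x'(0)=\tau>0$, and by \Cref{statement:TW_root_exists_and_is_unique} its unique critical point on $(0,\mathcal{W}_x^\circ)$ is $\upomega^\circ_{\tau,x}$. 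Hence the maximum of $F_x$ over $(0,\mathcal{W}_x]$ is attained precisely at $w=\min(\upomega^\circ_{\tau,x},\mathcal{W}_x)$, yielding
\begin{equation*}
\LimitShape(\tau,x)=\max_{0<w\le\mathcal{W}_x}F_x(w),
\end{equation*}
which matches \Cref{def:limit_shape_definition_H_tau_x}.

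Given this envelope representation, monotonicity is immediate: $\mathcal{W}_x$ is nonincreasing in $x$, and for any fixed $w$ the function $x\mapsto F_x(w)$ is nonincreasing since $\phi_1\ge 0$ on $\mathbb{R}_{\ge 0}$. For strict decrease on $(0,x_e(\tau))$, take $x_1<x_2<x_e(\tau)$ and let $w_2:=\min(\upomega^\circ_{\tau,x_2},\mathcal{W}_{x_2})$; this is strictly positive by \Cref{statement:TW_root_exists_and_is_unique} and $\mathcal{W}_{x_2}>0$. Since $\mathcal{W}_{x_1}\ge\mathcal{W}_{x_2}$, the value $w_2$ is admissible at $x_1$, and
\begin{equation*}
\LimitShape(\tau,x_2)=F_{x_2}(w_2)=F_{x_1}(w_2)-\int_{x_1}^{x_2}\phi_1\!\left(\tfrac{w_2}{\Speed(y)}\right)dy<F_{x_1}(w_2)\le\LimitShape(\tau,x_1),
\end{equation*}
because $w_2>0$ and $\Speed$ is bounded, so the integrand is strictly positive on a set of positive measure. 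Extending to $x_2=x_e(\tau)$: by the very definition $\LimitShape(\tau,x_e(\tau))=0$, while for $x_1<x_e(\tau)$ the previous display combined with the continuity in Step~3 below gives $\LimitShape(\tau,x_1)>0$.

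For left continuity at $x\in(0,x_e(\tau)]$, take $x_n\uparrow x$. The sets $\SpeedEssRange_{x_n}$ are nested and exhaust $\SpeedEssRange_x$ (for the roadblock contributions this is the condition $b<x\iff b<x_n$ for large $n$; for the essential range this uses continuity of Lebesgue measure on the shrinking complements and closedness of the essential range). Since $\mathcal{W}_{x_n}$ is nonincreasing and bounded by elements of the closed set $\SpeedEssRange_x$, $\mathcal{W}_{x_n}\downarrow\mathcal{W}_x$. Continuity of $x\mapsto\upomega^\circ_{\tau,x}$ follows from the implicit equation \eqref{critical_points_second_equation} together with joint continuity of $(w,x)\mapsto\int_0^x\phi_2(w/\Speed(y))dy$ and strict monotonicity of that integrand in $w$ at the root. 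Consequently $w_n^*:=\min(\upomega^\circ_{\tau,x_n},\mathcal{W}_{x_n})\to w^*(x)$, and then $F_{x_n}(w_n^*)\to F_x(w^*(x))$ by dominated convergence, since $\Speed$ is bounded away from $0$ and $\infty$ so $\phi_1(w/\Speed(y))$ stays bounded uniformly for $w$ in a compact set bounded away from $\mathcal{W}_x^\circ$. This gives $\LimitShape(\tau,x_n)\to\LimitShape(\tau,x)$. Finally, $\LimitShape(\tau,x_e(\tau))=0$ is immediate from \Cref{def:limit_shape_definition_H_tau_x}, and consistency with the formula in the interior is the fact that $\upomega^\circ_{\tau,x}\to 0$ as $x\uparrow x_e(\tau)$ by \Cref{statement:TW_root_exists_and_is_unique}, so $w^*(x)\to 0$ and the envelope tends to $\tau\cdot 0-\int_0^{x_e(\tau)}\phi_1(0)\,dy=0$.

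The main technical obstacle is Step~3, specifically showing that $\mathcal{W}_{x_n}\to\mathcal{W}_x$ as $x_n\uparrow x$. Roadblocks produce genuine downward jumps of $\mathcal{W}_x$ at points $b$ where $\Speed(b)<\mathcal{W}_b$ (explaining why the function is only left continuous, not continuous), and one must carefully verify that no such drop can occur exactly at the limiting $x$ itself — this is why the definition of $\SpeedEssRange_x$ uses the strict inequality $b<x$, ensuring that the left limit captures the previous value. All other arguments, including the implicit-function step for $\upomega^\circ_{\tau,x}$ and the dominated convergence for $F_{x_n}$, are routine once the bounds on $\Speed$ are invoked.
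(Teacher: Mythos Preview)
Your proof is correct and takes a genuinely different route from the paper's. You recast $\LimitShape(\tau,x)$ as the constrained maximum $\max_{0<w\le\mathcal{W}_x}F_x(w)$ of a strictly concave function, so that monotonicity in $x$ follows almost for free from the two monotonicities (of the constraint $\mathcal{W}_x$ and of the objective $F_x(w)$ at fixed $w$) --- a clean envelope argument. The paper instead computes $\LimitShape(\tau,x)-\LimitShape(\tau,x')$ directly, applies the mean value theorem to $w\mapsto\Phi_1(w\mid x)$, and compares the resulting derivative with $\tau$ using the critical-point equation at $\upomega^\circ_{\tau,x}$ and the convexity of $\Phi_1$. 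Your approach is more conceptual and needs less computation; the paper's is more hands-on and sidesteps the mild issue of whether $F_x(\mathcal{W}_x)$ is finite when $\mathcal{W}_x=\mathcal{W}_x^\circ$ (though in that case the Gaussian phase cannot occur, so the maximizer is interior and the issue is moot).

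Two small corrections. First, $F_x'(0)=\tau-\tau_e(x)$, not $\tau$, since $\phi_1'(0)=1/(1-q)\ne 0$; what you actually use is $F_x'(0)>0$, which holds precisely because $x<x_e(\tau)$. Second, the sets $\SpeedEssRange_{x_n}$ need not literally exhaust $\SpeedEssRange_x$ --- if $\Speed$ strictly decreases to its essential infimum as $y\uparrow x$, that infimum lies in $\SpeedEssRangeCirc_x$ but in no $\SpeedEssRangeCirc_{x_n}$ --- yet the conclusion $\mathcal{W}_{x_n}\downarrow\mathcal{W}_x$ is correct. It follows by case analysis: $\mathcal{W}_x$ is either $\Speed(0)$, some roadblock value $\Speed(b)$ with $b<x$, or $\mathcal{W}_x^\circ$, and in each case one checks $\mathcal{W}_{x_n}\le\mathcal{W}_x+\epsilon$ for large $n$. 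The paper simply asserts this left continuity without further detail, so you are in good company.
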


The law of large numbers stating that $\LimitShape(\tau,x)$ of
\Cref{def:limit_shape_definition_H_tau_x} is indeed the limit of
the rescaled random height function
$\RateLambda^{-1}\HeightFunction_{\XProcess(\tau\RateLambda)}(x)$ as
$\RateLambda\to+\infty$ would follow from \Cref{thm:main_theorem_on_fluctuations}
below. Modulo this law of large numbers, we can check that the
limit shape satisfies the partial differential equation
\eqref{intro_PDE_height_function} explained above
via hydrodynamic-type arguments:

\begin{proof}[Proof of \Cref{thm:PDE} modulo \Cref{thm:main_theorem_on_fluctuations}]
	When there are no roadblocks, we have
	$\mathcal{W}_x=\mathcal{W}_x^\circ>\upomega^\circ_{\tau,x}$, and so the limit
	shape is given for $0<x<x_e(\tau)$ by 
	\begin{equation*}
		\LimitShape(\tau,x)=\tau
		\upomega^\circ_{\tau,x} -\int_0^x
		\phi_1\biggl(
			\frac{\upomega^\circ_{\tau,x}}{\Speed(y)}
		\biggr)dy. 
	\end{equation*}
	One can directly check by differentiating this expression 
	(see \Cref{statement:PDE_verification} for details on computations) 
	that this function satisfies \eqref{intro_PDE_height_function}, as desired.
\end{proof}

\begin{remark}
	\label{rmk:Gaussian_PDE_remark}
	When $\upomega^\circ_{\tau,x}>\mathcal{W}_x$, one can
	write down partial differential equations for
	$\LimitShape(\tau,x)$ different from \eqref{intro_PDE_height_function}
	using the explicit formula \eqref{limit_shape_definition_H_tau_x}. 
	Fix $x\in\mathbb{R}_{>0}$ and assume that there are no roadblocks at $x$. 
	In particular, this implies that $\mathcal{W}_x$ is constant in a neighborhood of $x$.
	Then in this neighborhood we have:
	\begin{equation*}
		\frac{\partial\LimitShape(\tau,x)}{\partial\tau}
		=
		\mathcal{W}_x,\qquad 
		\frac{\partial\LimitShape(\tau,x)}{\partial x}
		=
		-
		\phi_1
		\left( \frac{\mathcal{W}_x}{\Speed(x)} \right).
	\end{equation*}
	These equations are simpler than 
	\eqref{intro_PDE_height_function} and
	in particular imply that the speed of growth of $\LimitShape(\tau,x)$
	at $x$ is constant. Moreover, if $\Speed$ is constant in a neighborhood of $x$,
	then the limit shape is linear in this neighborhood
	(cf. the leftmost part of the limit shape in \Cref{fig:intro_limit_shape}).
\end{remark}

\subsection{Asymptotic fluctuations}
\label{sub:asymptotics_intro}

To formulate our main result
on fluctuations of the random height function we need 
to recall the standard notation
of limiting fluctuation distributions.
We define the \emph{Fredholm determinant} $\det(1+K)$ corresponding to a
kernel $K(z,w)$ on a certain contour $\boldsymbol\gamma$ in the complex plane
via the expansion
\begin{equation}\label{Fredholm_expansion}
	\det(1+K)
	=
	1
	+
	\sum_{M=1}^{\infty}
	\frac{1}{M!}
	\int\limits_{\boldsymbol\gamma}\frac{dz_1}{2\pi\mathbf{i}}
	\ldots\int\limits_{\boldsymbol\gamma}\frac{dz_M}{2\pi\mathbf{i}}
	\,\mathop{\mathrm{det}}\limits_{i,j=1}^{M}\bigl[K(z_i,z_j)\bigr].
\end{equation}
One may regard \eqref{Fredholm_expansion} as a formal series, but we will be
interested in situations when it converges numerically.
In particular, this happens when $K$ is trace class.
We refer to \cite{Bornemann_Fredholm2010} for a detailed discussion.

\begin{definition}
	\label{def:fluctuation_distributions}
	\begin{enumerate}[\quad\bf1.]
	\item 
	The GUE Tracy--Widom distribution 
	function
	\cite{tracy_widom1994level_airy}
	is defined as 
	\begin{equation*}
		F_2(r):=\det\left( 1-\mathsf{K}_{\mathrm{Ai}} \right)_{L^2(r,+\infty)},
	\end{equation*}
	where 
	$\mathsf{K}_{\mathrm{Ai}}(v,v')$ is the Airy kernel:
	\begin{equation*}
		\mathsf{K}_{\mathrm{Ai}}(v,v')
		=
		\frac{1}{(2\pi\mathbf{i})^2}
		\int_{e^{-2\pi\mathbf{i}/3}\infty}^{e^{2\pi\mathbf{i}/3}\infty}dw
		\int_{e^{-\pi\mathbf{i}/3}\infty}^{e^{\pi\mathbf{i}/3}\infty}dz\,
		\frac{1}{z-w}
		\exp\left\{ \frac{z^3}{3}-\frac{w^3}{3}-zv+wv' \right\},
	\end{equation*}
	where the integration contours do not intersect.

	\item The Baik--Ben Arous--P\'ech\'e (BBP)
	distribution function introduced in \cite{BBP2005phase}
	is defined for any $m\in\mathbb{Z}_{\ge1}$ and $\mathbf{b}=(b_1,\ldots,b_m )\in\mathbb{R}^m$
	as
	\begin{equation*}
		F_{\mathrm{BBP},m,\mathbf{b}}(r):=
		\det\left( 1-\mathsf{K}_{\mathrm{BBP},m,\mathbf{b}} \right)_{L^2(r,+\infty)},
	\end{equation*}
	where the kernel has the form
	\begin{equation*}
		\mathsf{K}_{\mathrm{BBP},m,\mathbf{b}}(v,v')
		=
		\frac{1}{(2\pi\mathbf{i})^2}
		\int_{e^{-2\pi\mathbf{i}/3}\infty}^{e^{2\pi\mathbf{i}/3}\infty}dw
		\int_{e^{-\pi\mathbf{i}/3}\infty}^{e^{\pi\mathbf{i}/3}\infty}dz\,
		\frac{1}{z-w}
		\exp\left\{ \frac{z^3}{3}-\frac{w^3}{3}-zv+wv' \right\}
		\prod_{i=1}^{m}\frac{z-b_i}{w-b_i}.
	\end{equation*}
	Here the integration contours do not intersect and pass to the right of 
	the points $b_1,\ldots,b_m $. When $m=0$, the BBP distribution 
	coincides with the GUE Tracy--Widom distribution.
	
	\item Let $G_m(r)$ be the distribution function of 
	the largest eigenvalue of a $m\times m$ GUE random matrix
	$H=[H_{ij}]_{i,j=1}^{m}$, $H^*=H$, 
	$\Re H_{ij}\sim \mathcal{N}\bigl(0,\frac{1+\mathbf{1}_{i=j}}{2}\bigr)$, $i\ge j$, 
	$\Im H_{ij}\sim \mathcal{N}\bigl(0,\frac{1}{2}\bigr)$, $i>j$.
	When $m=1$, this is the standard Gaussian distribution.
	\end{enumerate}
\end{definition}

\begin{definition}[Phases of the limit shape]
	\label{def:phases_of_the_limit_shape_TW_Gaussian}
	Depending on the cases coming from taking the minimum in 
	\eqref{limit_shape_definition_H_tau_x}, we say that a point $(\tau,x)$
	belongs to one of the phases
	according to the following table:
	\begin{equation*}
		\begin{tabular}{l|l}
			$(\tau,x)$ is in the
			Tracy--Widom phase& $\upomega^\circ_{\tau,x}<\mathcal{W}_x$
			\\\hline
			$(\tau,x)$ is a 
			transition point&$\upomega^\circ_{\tau,x}=\mathcal{W}_x$
			\\\hline
			$(\tau,x)$ is in the 
			Gaussian phase&$\upomega^\circ_{\tau,x}>\mathcal{W}_x$
		\end{tabular}
	\end{equation*}
	Note that the root $\upomega^\circ_{\tau,x}\in(0,\mathcal{W}_x^\circ)$ 
	afforded by \Cref{statement:TW_root_exists_and_is_unique}
	does not depend on which phase the point $(\tau,x)$ is in,
	and also does not depend on the 
	roadblocks or the value $\Speed(0)$.
	On the other hand, the definition of 
	$\mathcal{W}_x$ in \eqref{range_of_Speed_notation}
	includes the values of $\Speed$ at $0$ and at all 
	roadblocks $b\in\RoadblockSet$, $b<x$.

	If $(\tau,x)$ is a transition point or is in the Gaussian phase,
	denote
	\begin{equation}
		\label{m_x_multiplicity_definition_intro}
		m_x
		:=
		\#
		\Bigl\{ 
			y\in\left\{ 0 \right\}\cup\left\{ b\in\RoadblockSet\colon 0<b<x \right\} 
			\colon \Speed(y)=\mathcal{W}_x
		\Bigr\}.
	\end{equation}
\end{definition}

\begin{remark}
	\label{rmk:stays_in_Gaussian_phase_forever}
	\Cref{statement:TW_root_exists_and_is_unique} implies that if a point $(\tau,x)$ is in
	the Gaussian phase, then it ``stays there forever'': for any $\tau'>\tau$ the
	point $(\tau',x)$ is in the Gaussian phase as well.
\end{remark}

\begin{theorem}[Asymptotic fluctuations]
	\label{thm:main_theorem_on_fluctuations}
	\begin{enumerate}[\quad\bf1.]
		\item If $(\tau,x)$ is in the Tracy--Widom phase, then
			\begin{equation*}
				\lim_{\RateLambda\to+\infty}
				\mathbb{P}
				\left( 
					\frac{\HeightFunction_{\XProcess(\tau\RateLambda)}(x)-
					\RateLambda \LimitShape(\tau,x)}
					{
						\RateLambda^{\frac{1}{3}}
						\upomega^\circ_{\tau,x}d^{\mathrm{TW}}_{\tau,x}
					}
					\ge -r
				\right)=F_2(r),\qquad r\in\mathbb{R},
			\end{equation*}
			where $d^{\mathrm{TW}}_{\tau,x}$ depends on the parameters of the model as in
			\eqref{dispersion_TW}.
		\item If $(\tau,x)$ is a transition point, then
			\begin{equation*}
				\lim_{\RateLambda\to+\infty}
				\mathbb{P}
				\left( 
					\frac{\HeightFunction_{\XProcess(\tau\RateLambda)}(x)-
					\RateLambda \LimitShape(\tau,x)}
					{
						\RateLambda^{\frac{1}{3}}
						\upomega^\circ_{\tau,x}d^{\mathrm{TW}}_{\tau,x}
					}
					\ge -r
				\right)=F_{\mathrm{BBP},m_x,\mathbf{b}}(r),
				\qquad r\in\mathbb{R},
			\end{equation*}
			where $m_x$ is given in \eqref{m_x_multiplicity_definition_intro},
			$\mathbf{b}=(0,0,\ldots ,0)$,
			and $d^{\mathrm{TW}}_{\tau,x}$ is given by \eqref{dispersion_TW}.
		\item If $(\tau,x)$ is in the Gaussian phase, then
			\begin{equation*}
				\lim_{\RateLambda\to+\infty}
				\mathbb{P}
				\left( 
					\frac{\HeightFunction_{\XProcess(\tau\RateLambda)}(x)-
					\RateLambda \LimitShape(\tau,x)}
					{
						\RateLambda^{\frac{1}{2}}
						\mathcal{W}_x d^{\mathrm{G}}_{\tau,x}
					}
					\ge -r
				\right)
				=
				G_{m_x}(r),\qquad r\in\mathbb{R},
			\end{equation*}
			where $m_x$ is given in \eqref{m_x_multiplicity_definition_intro}
			and $d^{\mathrm{G}}_{\tau,x}$ is given by \eqref{dispersion_G}.
	\end{enumerate}
\end{theorem}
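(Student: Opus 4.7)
The plan is to start from the Fredholm determinantal formula for the $q$-Laplace transform of $H := q^{\HeightFunction_{\XProcess(\tau\RateLambda)}(x)}$ provided by \Cref{thm:exponential_Fredholm}, and to extract the distributional asymptotics of the height function via steepest descent analysis of this Fredholm determinant. The passage from $q$-Laplace transform convergence to CDF convergence is by now standard: setting $\zeta = -q^{-\RateLambda \LimitShape(\tau,x)-\RateLambda^{\alpha} c(\tau,x) r}$ with $\alpha=\frac{1}{3}$ or $\frac{1}{2}$ depending on the phase, the observable $\mathbb{E}\bigl(\prod_{k\ge0}(1-\zeta Hq^k)^{-1}\bigr)$ concentrates on the indicator $\mathbf{1}_{\HeightFunction\ge \RateLambda \LimitShape + \RateLambda^{\alpha}c r}$ as $\RateLambda\to+\infty$, so I focus entirely on the Fredholm determinant asymptotics.

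The kernel in the Fredholm determinant has the schematic form $K(z,w)\sim \int \frac{dz}{z-w}\,\frac{e^{\RateLambda S(z)}}{e^{\RateLambda S(w)}}\times(\text{prefactor})$, where the leading phase function $S$ is built from the speed function and roadblock data. The critical point equation $S'(w)=0$, after setting up the correct scaling, becomes precisely \eqref{critical_points_second_equation}, whose unique root in $(0,\mathcal{W}_x^\circ)$ is $\upomega^\circ_{\tau,x}$. First I would rigorously identify $S$ (and check its double-critical-point condition matches the definition of $\upomega^\circ_{\tau,x}$, together with the $\tau$- and $x$-shifts producing precisely the centering $\RateLambda\LimitShape(\tau,x)$), then deform the $z$- and $w$-contours to pass through the saddle in the standard ``Airy-type'' way, with the $z$-contour going to infinity in directions $e^{\pm\pi\mathbf{i}/3}$ and the $w$-contour in directions $e^{\pm2\pi\mathbf{i}/3}$. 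In the \textbf{Tracy--Widom phase} $\upomega^\circ_{\tau,x}<\mathcal{W}_x$, the saddle lies strictly inside the analyticity domain dictated by the poles at the values of $\Speed$, so after the change of variables $z\mapsto \upomega^\circ_{\tau,x}+\RateLambda^{-1/3}d^{\mathrm{TW}}_{\tau,x}z$ (with analogous for $w$) the cubic term $\frac{S'''(\upomega^\circ_{\tau,x})}{6}z^3$ survives and yields the Airy kernel, giving the $F_2$ limit.

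At a \textbf{transition point} the saddle $\upomega^\circ_{\tau,x}$ collides with the singularity at $\mathcal{W}_x$ stemming from poles/factors of the form $(z-\Speed(y))$ for the $m_x$ special values $y\in\{0\}\cup(\RoadblockSet\cap(0,x))$ where $\Speed$ attains $\mathcal{W}_x$. After the same Airy rescaling these factors contribute a residual product $\prod_{i=1}^{m_x}\frac{z-b_i}{w-b_i}$ with $b_i=0$, converting the Airy kernel into the BBP kernel $\mathsf{K}_{\mathrm{BBP},m_x,(0,\ldots,0)}$. In the \textbf{Gaussian phase} $\upomega^\circ_{\tau,x}>\mathcal{W}_x$, the true critical point of $S$ lies outside the allowed contour region, so the contour is pinned at $w=\mathcal{W}_x$; the leading asymptotics come not from a saddle but from the residues at the $m_x$ poles corresponding to the minimal speed value, and a Gaussian rescaling $\RateLambda^{-1/2}$ produces the GUE minor distribution $G_{m_x}$ as in the BBP analysis for $\upomega^\circ_{\tau,x}$ on the ``wrong side'' of $\mathcal{W}_x$. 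The explicit normalizations $d^{\mathrm{TW}}_{\tau,x}$ and $d^{\mathrm{G}}_{\tau,x}$ come from $S'''(\upomega^\circ_{\tau,x})$ and $S''(\mathcal{W}_x)$, respectively, computable as integrals of $q$-polygamma functions against $\Speed$ using the identities in \Cref{sec:appendix_q}.

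The main obstacle I expect is \emph{not} the formal saddle-point computation but the contour engineering under an essentially arbitrary piecewise continuous speed function. Concretely, I need to produce a steep-descent contour $\boldsymbol{\gamma}$ for $\Re S$ that (i) encloses all the values $\Speed(y)$ for $y\in(0,x)$, $\Speed(0)$, and the roadblock values $\Speed(b)$ for $b\in\RoadblockSet\cap(0,x)$; (ii) passes through the saddle $\upomega^\circ_{\tau,x}$ at the correct angles; and (iii) along which $\Re S$ has a strict maximum at the saddle and decays uniformly in $\RateLambda$, with quantitative tail estimates to apply dominated convergence to the Fredholm series \eqref{Fredholm_expansion}. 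The proof that $\Re S$ attains its maximum at $\upomega^\circ_{\tau,x}$ (and nowhere else on the contour) requires monotonicity/convexity of $\phi_2$ on $(0,1)$ together with the uniqueness statement in \Cref{statement:TW_root_exists_and_is_unique}, and must be maintained uniformly as $x$ varies and as $\Speed$ has arbitrary jumps and essential-range behavior. Establishing trace-class bounds and thereby the genuine convergence of the Fredholm determinant (rather than a term-by-term limit) under these weak hypotheses on $\Speed$ is the step I expect to consume most of the technical work.
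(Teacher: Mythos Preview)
Your strategy is correct and matches the paper's: start from \Cref{thm:exponential_Fredholm}, plug in $\zeta=-q^{-\RateLambda\LimitShape+r\RateLambda^\beta}$, identify the phase function (the paper's $G_{\tau,x}$), deform to steep descent/ascent contours through the critical point, and read off Airy/BBP/GUE kernels locally. One technical point you have not anticipated: the kernel $K_\zeta$ is written as an integral in $\mathsf{u}\in D_w$, and after the substitution $z=q^{\mathsf{u}}w$ the integrand carries a factor $1/\sin\bigl(\pi\log(z/w)/\log q\bigr)$; when you deform the $z$-contour to the steep ascent circle of radius $\upomega_{\mathsf{cr}}$ you cross finitely many poles at $z=q^nw$, $n=1,\dots,N_w$ (with $N_w\sim\log|w|$), and you must show these residues are exponentially negligible in $\RateLambda$ uniformly over the $w$-contour---this is handled in the paper via \Cref{statement:linear_part,statement:circle_part,prop:extra_negligible} and is a separate piece of work beyond the steep-descent estimates you describe. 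Also, the paper's global $w$-contour is the V-shaped $C_{\upomega_{\mathsf{cr}},\pi/4}$ and the global $z$-contour is a full circle about the origin, with the Airy-type directions appearing only locally; your description of the $w$-contour as ``enclosing'' the speed values is slightly off (the poles lie to its right).
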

\begin{remark}
	\label{rmk:more_BBP}
	By changing $x$, $\tau$, and $\Speed$ on scales $\RateLambda^{-\frac{1}{3}}$ and
	$\RateLambda^{-\frac{2}{3}}$ one can obtain different (in particular, nonzero)
	parameters $\mathbf{b}$
	in the BBP distribution in the second part of \Cref{thm:main_theorem_on_fluctuations},
	but for simplicity we will not discuss this.
\end{remark}

\subsection{Traffic jams}
\label{sub:phase_transitions_short_discussion}

\Cref{thm:main_theorem_on_fluctuations} shows that
points where $\upomega^\circ_{\tau,x}=\mathcal{W}_x$
correspond to phase transitions in the fluctuation exponents and
the fluctuation behavior.
Note that the limit shape $\LimitShape(\tau,x)$ is continuous (in $x$) at these
points. 
On the other hand, the presence of spatial inhomogeneity
(coming from changes in the speed function $\Speed$
as well as from roadblocks)
makes it possible for $\LimitShape(\tau,x)$ to become discontinuous.
We will call such discontinuity points the \emph{traffic jams}
as they correspond to macroscopic buildup of
particles. 
An example of a traffic jam is given in \Cref{fig:intro_limit_shape}.

Let us discuss two mechanisms for creating traffic jams.
Fix $\tau>0$ and $\sigma\in(0,x_e(\tau))$ such that
there are no roadblocks at $\sigma$ and, moreover, 
$\Speed(x)$ is continuous at $x=\sigma$.
Then $\LimitShape(\tau,x)$ is also continuous at $x=\sigma$.
A traffic jam at $\sigma$ can be created by either:
\begin{enumerate}[\quad$\bullet$]
	\item Inserting a new roadblock at $\sigma$, i.e., 
		modifying $\RoadblockSet^{\vee}:=\RoadblockSet\cup\left\{ \sigma \right\}$,
		taking
		\begin{equation*}
			\Speed^{\vee}(x)
			:=
			\Speed(x)\mathbf{1}_{x\ne\sigma}
			+
			\alpha\,
			\mathbf{1}_{x=\sigma},
			\qquad 
			0<\alpha<\upomega^\circ_{\tau,x},
		\end{equation*}
		and arbitrary $\RoadblockProb(\sigma)\in(0,1)$.
		Then the limit shape $\LimitShape^{\vee}$ of the modified model
		will have a traffic jam at $\sigma$ with Gaussian phase to the right of it.
	\item Inserting a slowdown in the speed function, i.e., 
		changing the values of $\Speed(x)$ on a whole interval $(\sigma,\sigma_1)$ to the right of $\sigma$:
		\begin{equation*}
			\Speed^{\vee}(x)
			:=
			\Speed(x)\mathbf{1}_{x\notin (\sigma,\sigma_1)}
			+\kappa\,
			\mathbf{1}_{x\in(\sigma,\sigma_1)},
			\qquad 
			0<\kappa<\upomega^\circ_{\tau,x}.
		\end{equation*}
		Then the modified limit shape $\LimitShape^{\vee}$ will have a traffic jam at $\sigma$
		and the Tracy--Widom phase to the right of it.
\end{enumerate}
Clearly, $\LimitShape^{\vee}(\tau,x)=\LimitShape(\tau,x)$ for all $x<\sigma$.
See \Cref{fig:LS_jam_examples} for examples.
Observe that if a roadblock does not lead to a traffic jam 
(i..e, if $\alpha\ge \upomega^\circ_{\tau,x}$), then it does not
change the limit shape at all.\footnote{%
	Though for $\alpha=\upomega^\circ_{\tau,x}$
	this roadblock changes the fluctuation distribution
	at $x=\sigma$ from the GUE Tracy--Widom to a BBP one.%
} 
On the other hand, if a slowdown (or a speedup) 
does not make the limit shape discontinuous
(i.e., if $\kappa\ge\upomega^\circ_{\tau,x}$)
then its derivatives at $x=\sigma$ may become discontinuous.

\begin{figure}[htbp]
	\begin{tabular}{ll}
		(a)&(b)\\
		\hspace{20pt}\includegraphics[width=.37\textwidth]{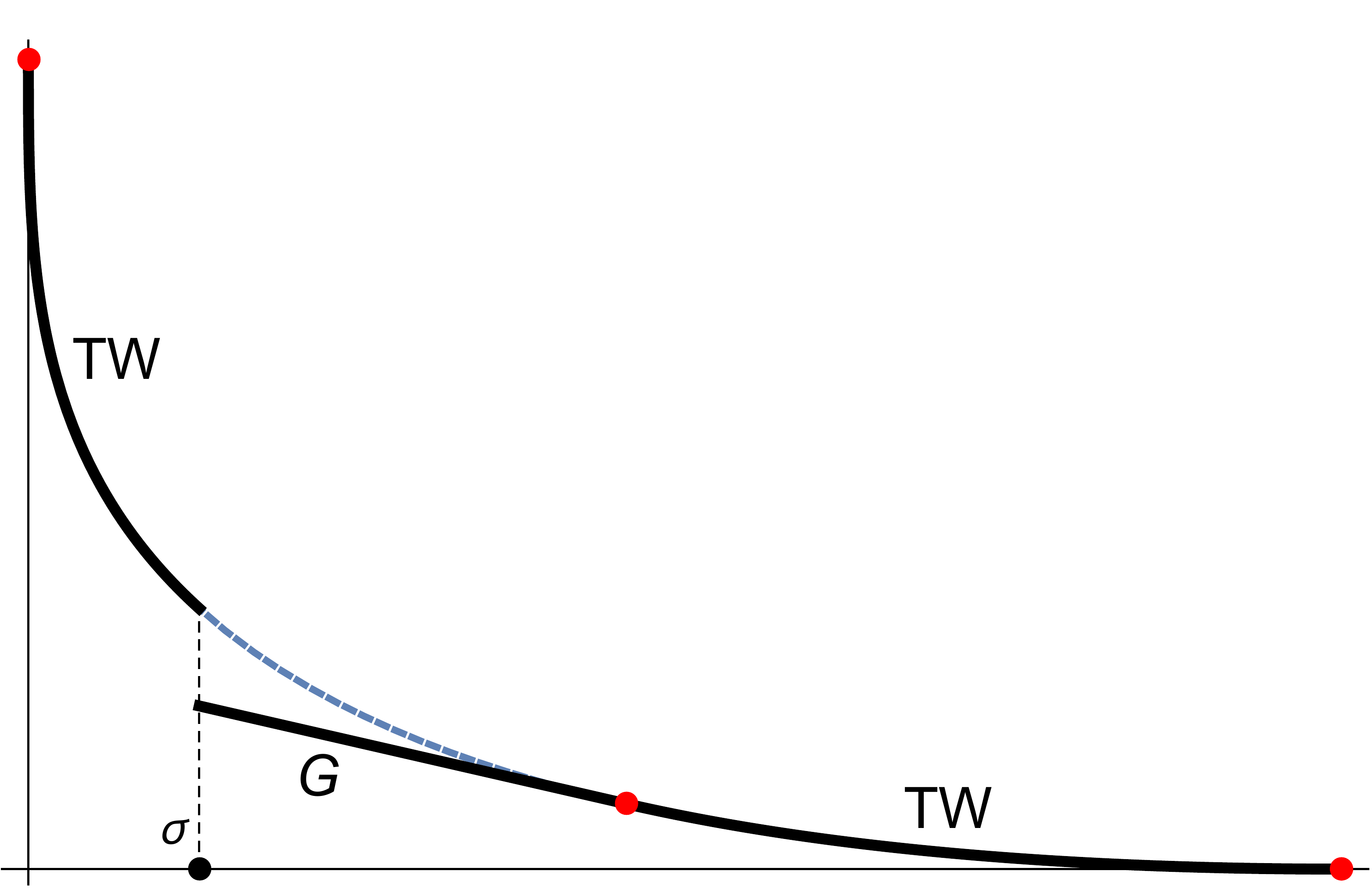}
		&
		\hspace{20pt}\includegraphics[width=.37\textwidth]{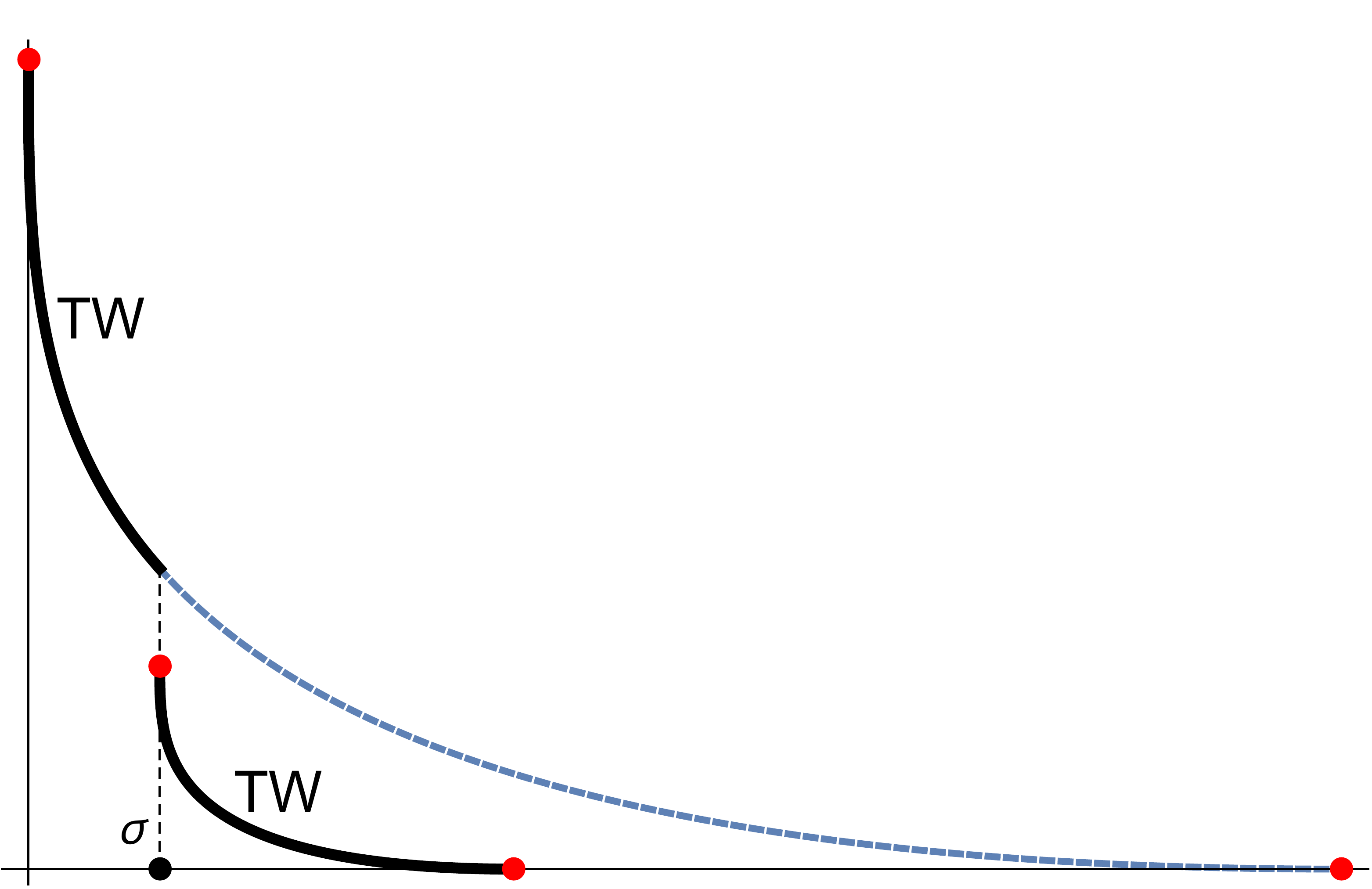}
		\\
		(c)&(d)\\
		\hspace{20pt}\includegraphics[width=.37\textwidth]{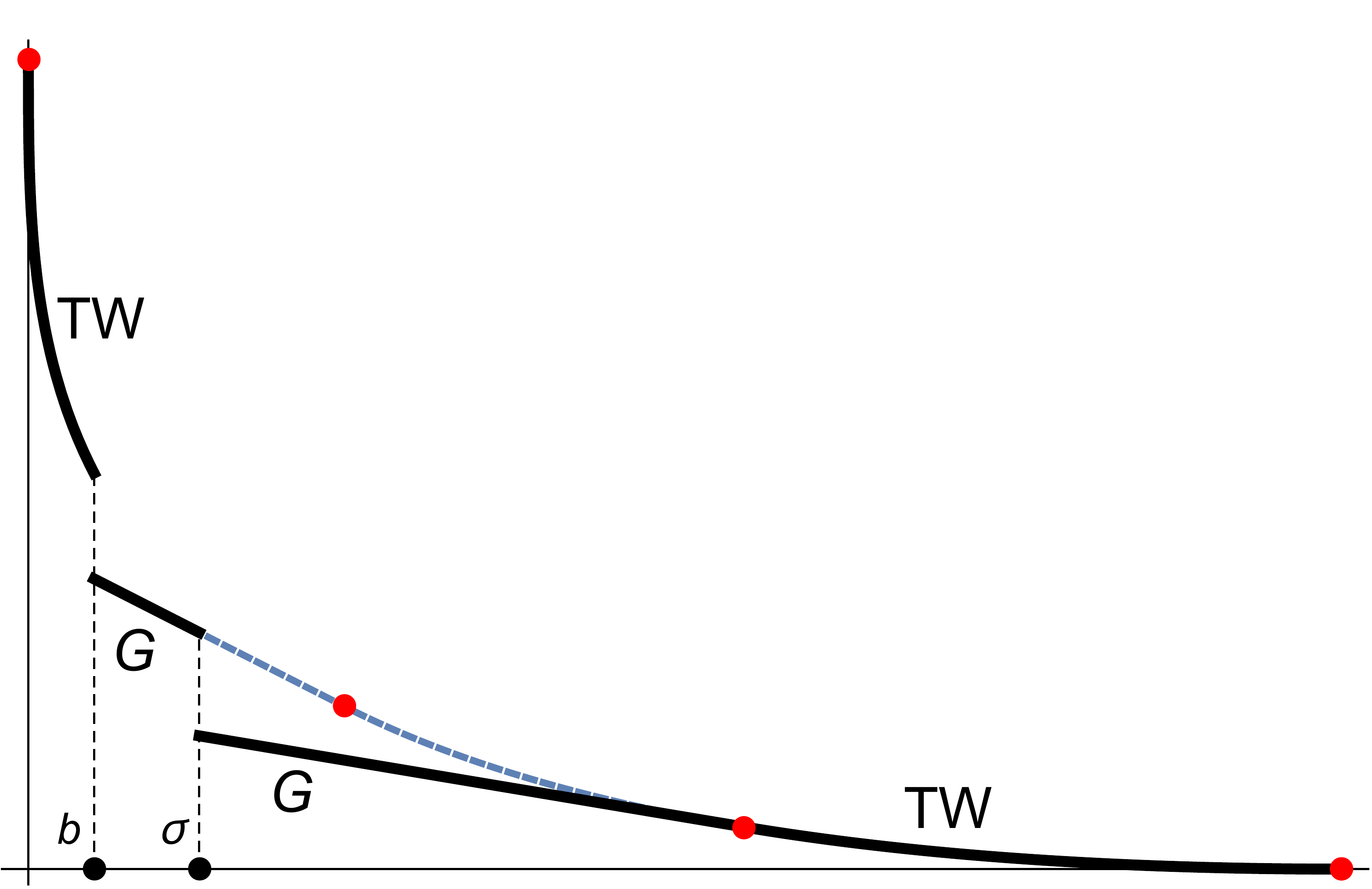}
		&
		\hspace{20pt}\includegraphics[width=.37\textwidth]{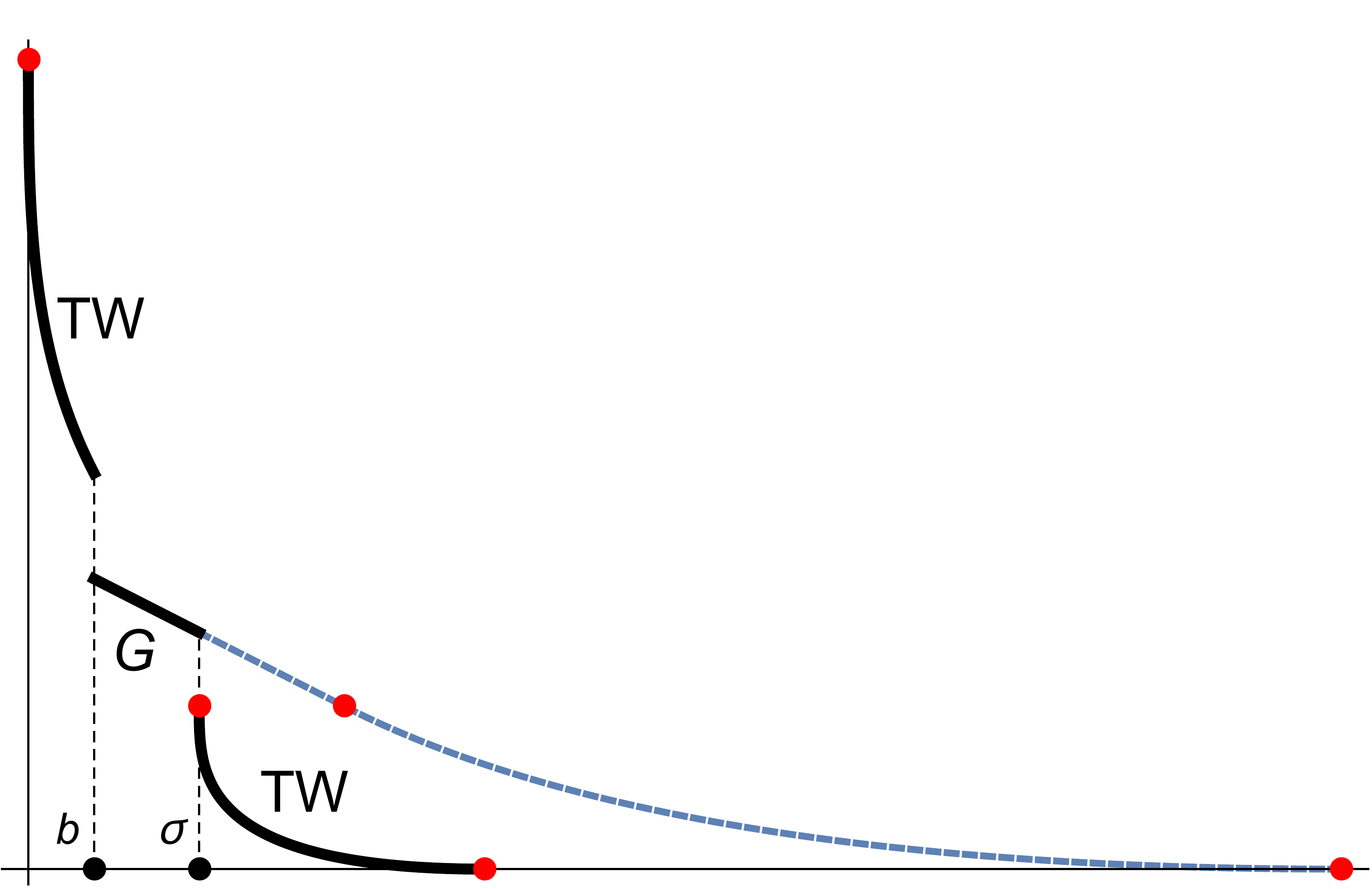}
	\end{tabular}
	\caption{
		Examples of traffic jams obtained by inserting a roadblock at $\sigma$
		(a,c) or a slowdown on $(\sigma,+\infty)$ (b,d). 
		There is an additional roadblock at $b$ in pictures (c,d) generating a
		Gaussian phase before the jam at $\sigma$. 
		The speed function $\Speed(x)$ in all examples is piecewise constant.
		Dashed and solid curves are limit shapes before and after creating a
		traffic jam at $\sigma$, respectively. 
		Tracy--Widom and Gaussian phases related to solid curves are also
		indicated.
		Solid red dots stand for points where a 
		curved shape is tangent to a straight line.%
	}
	\label{fig:LS_jam_examples}
\end{figure}

While 
computer experiments suggest that the fluctuations 
on both sides of a traffic jam (of any of the two above types)
are uncorrelated,
the microscopic behavior of particles 
is expected to be very different depending on the type of the jam.
Namely, if a jam at $\sigma$ is caused by a roadblock then there should
be one very large stack of particles located precisely at $\sigma$.
On the other hand, if a jam is caused by a slowdown then the buildup
of particles should happen to the right of $\sigma$ (but very close to it),
and locations of large stacks of particles there will be random.
A more detailed study of the behavior of the model
close to a traffic jam will be performed in a future publication.

\subsection{Pre-limit Fredholm determinant}
\label{sub:Fredholm_intro}

The starting point of our asymptotic analysis is the following Fredholm
determinantal formula for the $q$-Laplace transform of the height function
$\HeightFunction_{\XProcess(t)}$ of the exponential jump model (before the
$\RateLambda\to+\infty$ limit):

\begin{theorem}
	\label{thm:exponential_Fredholm}
	Fix $t>0$ and $x\in\mathbb{R}_{>0}$.
	We have for any $\zeta\in\mathbb{C}\setminus\mathbb{R}_{\ge0}$:\footnote{%
		Throughout the paper we use the $q$-Pochhammer symbol notation
		$(z;q)_{m}=\prod_{i=0}^{m-1}(1-zq^{i})$, $m\in\mathbb{Z}_{\ge0}$. 
		Since $0<q<1$ it makes sense for $m=+\infty$, too.%
	}
	\begin{equation}
		\label{Fredholm_for_exponential_model_section2_main_formula}
		\mathop{\mathbb{E}}
		\frac{1}{
		\bigl(
			\zeta q^{\HeightFunction_{\XProcess(t)}(x)};q
		\bigr)_{\infty}}
		=
		\det
		\bigl(
			1+K_{\zeta}
		\bigr)_{L^2(C_{a,\varphi})},
	\end{equation}
	where the contour $C_{a,\varphi}$ is given in
	\Cref{def:c_a_phi_contour_definition} below
	with
	$0<a<\mathcal{W}_x$ and
	$\varphi\in(0,\pi/2)$. 
	The kernel $K_\zeta$ in \eqref{Fredholm_for_exponential_model_section2_main_formula} is
	\begin{equation}
		\label{Fredholm_for_exponential_model_section2_main_formula_kernel}
		K_{\zeta}(w,w'):=\frac{1}{2\pi\mathbf{i}}\int_{D_w}
		\Gamma(-\mathsf{u})\Gamma(1+\mathsf{u})(-\zeta)^{\mathsf{u}}
		\frac{g(w)}{g(q^{\mathsf{u}}w)}\frac{d\mathsf{u}}{q^{\mathsf{u}}w-w'},
		\qquad 
		w,w'\in C_{a,\varphi},
	\end{equation}
	with the contour $D_w$ as in \Cref{def:d_w_contour_definition} below,
	and
	\begin{equation}
	\label{g_function_in_Fredholm_for_exponential_model_section2}
		g(w):=
		\frac{1}{(w/\Speed(0);q)_{\infty}}
		\prod_{b\in \RoadblockSet,\,b<x}
		\frac{(w \RoadblockProb(b)/\Speed(b);q)_\infty}
		{(w/\Speed(b);q)_\infty}
		\exp
		\biggl\{
			-tw
			+
			\RateLambda
			\int_0^x\phi_0
			\biggl(
				\frac{w}{\Speed(y)}
			\biggr)dy
		\biggr\}.
	\end{equation}
\end{theorem}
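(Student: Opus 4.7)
The plan is to derive \eqref{Fredholm_for_exponential_model_section2_main_formula} as a continuous-space degeneration of an analogous Fredholm determinantal identity for the inhomogeneous stochastic higher spin six vertex model established in \cite{BorodinPetrov2016inhom}. That discrete model lives on the lattice $\mathbb{Z}_{\geq 1}$ with site-dependent inhomogeneity parameters $\xi_k$ and spin parameters $s_k$, and its height function $\mathfrak{h}^{\mathrm{vert}}_N(t)$ admits a Fredholm determinantal expression for $\mathbb{E}\bigl((\zeta q^{\mathfrak{h}^{\mathrm{vert}}_N(t)};q)_\infty^{-1}\bigr)$ whose kernel has precisely the shape \eqref{Fredholm_for_exponential_model_section2_main_formula_kernel}, except that $g(w)$ is replaced by a discrete analogue $g_{\mathrm{vert}}(w)=e^{-tw}\prod_{k=1}^{N}\frac{1-s_k w/\xi_k}{1-w/\xi_k}$.

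First, the degeneration must be set up. I would place $N\approx x\RateLambda$ lattice sites at positions $y_k=k/\RateLambda\in(0,x]$, assign bulk inhomogeneities $\xi_k=\Speed(y_k)$, and tune the spin parameters $s_k$ to zero at a rate calibrated with $\RateLambda\to\infty$ so that the discrete particle dynamics converge to the exponential jump model of \Cref{sub:definition_of_the_model}. In this scaling the geometric waiting-time and jump-length distributions of the vertex model degenerate into the continuous-time rates $\Speed(x)(1-q^{\eta(x)})$ together with an $\mathrm{Exp}(\RateLambda)$ desired travel distance and the ``pass-through with probability $q$ per particle'' mechanism. The distinguished sites---the origin and the roadblocks $\RoadblockSet$---are held at fixed macroscopic positions and are not dissolved into the continuum, so their inhomogeneity and spin parameters survive the limit as individual factors, with the spin at a roadblock encoding the pass-through probability $\RoadblockProb(b)$.

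Next, I would take the $\RateLambda\to\infty$ limit inside the Fredholm expression. Splitting $g_{\mathrm{vert}}(w)$ into boundary, roadblock, and bulk pieces yields the explicit $q$-Pochhammer factors of \eqref{g_function_in_Fredholm_for_exponential_model_section2} from the first two, while the bulk product is tuned via its logarithm so that, through a Riemann-sum limit, it produces the exponential integral $\exp\bigl\{\RateLambda\int_0^x\phi_0(w/\Speed(y))\,dy\bigr\}$. The $\mathsf{u}$-integral carrying the $\Gamma$-function weights and the Cauchy-type factor $1/(q^{\mathsf{u}}w-w')$ is preserved verbatim from the pre-limit kernel. The contour restriction $a<\mathcal{W}_x$ inherits directly from the pre-limit constraint that the $w$-contour enclose none of the $\xi_k$; in the limit this becomes the requirement that $C_{a,\varphi}$ lie to the left of all singularities of $g$ on the positive real axis, i.e.\ to the left of $\Speed(0)$, $\Speed(b)$ for $b\in\RoadblockSet\cap(0,x)$, and the essential range of $\Speed$ on $(0,x)$.

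The main technical obstacle will be justifying the interchange of the $\RateLambda\to\infty$ limit with the Fredholm determinant. This requires (i) showing that the pre-limit kernels are uniformly trace class on a family of contours converging to $C_{a,\varphi}$; (ii) bounding each term in the Fredholm expansion \eqref{Fredholm_expansion} uniformly in $\RateLambda$ by a summable-in-$M$ sequence, so that dominated convergence applies term by term; and (iii) verifying that with $a<\mathcal{W}_x$ the functions $g(w)$ and $g_{\mathrm{vert}}(w)$ remain bounded on $C_{a,\varphi}$ uniformly in $\RateLambda$, while the $\mathsf{u}$-contour $D_w$ can be chosen to avoid all singularities of the integrand in the limit. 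The convergence of the left-hand side of \eqref{Fredholm_for_exponential_model_section2_main_formula} then follows from distributional convergence of the discrete height functions to $\HeightFunction_{\XProcess(t)}(x)$ together with the uniform boundedness of $1/(\zeta q^{\HeightFunction};q)_\infty$ for $\zeta\in\mathbb{C}\setminus\mathbb{R}_{\geq 0}$ and $q\in(0,1)$.
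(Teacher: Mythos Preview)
Your proposal contains two substantive errors that would prevent the argument from going through.

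First, you have confused the limiting parameter. The theorem is stated for a \emph{fixed} $\RateLambda>0$; the parameter that goes to infinity in the paper's derivation is the inverse lattice spacing $\varepsilon^{-1}$, not $\RateLambda$. The scaling in the paper is $k=\lfloor\varepsilon^{-1}x\rfloor$ with $\varepsilon\searrow0$, and the spin parameters are set to $\mathsf{s}_i^2=e^{-\varepsilon\RateLambda}\to1$ (so $\mathsf{s}_i\to-1$), \emph{not} $\mathsf{s}_k\to0$. Sending $\mathsf{s}_k\to0$ would kill the pass-through mechanism entirely: in the half-continuous model the moving particle passes a stack with probability $\mathsf{s}_j^2 q^{\eta(j)}$, and it is precisely $\mathsf{s}_j^2=e^{-\varepsilon\RateLambda}$ that produces the exponential travel distance $e^{-\RateLambda y}$ in the continuum. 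Your $\RateLambda\to\infty$ scaling belongs to the asymptotic analysis of Section~\ref{sec:asymptotic_analysis}, not to the proof of this theorem.

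Second, you assume that \cite{BorodinPetrov2016inhom} supplies a Fredholm determinant of the desired shape as a black box. It does not: what that paper provides are nested-contour formulas for the $q$-moments $\mathbb{E}\,q^{\ell\HeightFunction}$. The conversion to a Fredholm determinant is a substantial part of the present proof and proceeds via (i) unnesting the contours onto a common infinite contour $C_{a,\varphi}$ (Propositions~\ref{statement:q_moments_nested_infinite_contours}--\ref{statement:q_moments_infinite_single_contour}), (ii) summing the resulting $q$-moment formulas into a $q$-Laplace transform and recognizing a Fredholm expansion on $\mathbb{Z}_{>0}\times C_{a,\varphi}$ (Proposition~\ref{statement:first_Fredholm_formula}), and (iii) applying a Mellin--Barnes summation to replace the $\mathbb{Z}_{>0}$ sum by the $\mathsf{u}$-integral over $D_w$, followed by analytic continuation in $\zeta$ (Proposition~\ref{statement:second_Fredholm_formula_}). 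Only after all of this is done for the half-continuous model does the paper take the $\varepsilon\searrow0$ limit, which turns the finite product $\prod_{j=1}^{k-1}(-w\mathsf{s}_j\upxi_j^{-1};q)_\infty/(-w\mathsf{s}_j^{-1}\upxi_j^{-1};q)_\infty$ in $g^{\mathrm{hc}}$ into the exponential $\exp\{\RateLambda\int_0^x\phi_0(w/\Speed(y))\,dy\}$ via a Riemann sum. Your proposed $g_{\mathrm{vert}}(w)=e^{-tw}\prod_k(1-s_kw/\xi_k)/(1-w/\xi_k)$ does not match the actual pre-limit function, which involves infinite $q$-Pochhammer symbols rather than single linear factors.
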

We prove \Cref{thm:exponential_Fredholm} in \Cref{sec:from_vertex_to_expon}.
The asymptotic analysis of the Fredholm determinantal formula 
\eqref{Fredholm_for_exponential_model_section2_main_formula} performed in
\Cref{sec:asymptotic_analysis} leads to our main result,
\Cref{thm:main_theorem_on_fluctuations}.

\section{From a vertex model to the exponential model}
\label{sec:from_vertex_to_expon}

In this section we explain how the inhomogeneous exponential jump model defined
in \Cref{sub:definition_of_the_model} arises as a degeneration of the
stochastic higher spin six vertex model studied in
\cite{Borodin2014vertex}, \cite{CorwinPetrov2015},
\cite{BorodinPetrov2016inhom}.
We also employ the $q$-moment formulas obtained in the latter paper to prove
our main pre-limit Fredholm determinantal formula
(\Cref{thm:exponential_Fredholm}).

\subsection{Stochastic higher spin six vertex model}
\label{sub:stochastic_higher_spin_definition}

For future convenience we describe the stochastic higher spin six vertex model
in the language of particle systems.
Moreover, to avoid unnecessary parameters and limit transitions we 
only focus on the time-homogeneous model with the step Bernoulli 
boundary condition (about the name see \Cref{rmk:on_boundary_conditions} below).
This model is a discrete time Markov chain 
$\{\DiscreteVertexXProcess(\mathfrak{t})\}_{\mathfrak{t}=0,1,\ldots }$
on the space of finite particle configurations 
$\mathrm{Conf}_{\bullet}(\mathbb{Z}_{\ge1})$
depending on the parameters
\begin{equation}
\label{stochastic_vertex_model_parameters}
	q\in(0,1);
	\qquad 
	u\in(0,+\infty);
	\qquad 
	\Speed(0)\in(0,+\infty); 
	\qquad 
	\upxi_i\in(0,+\infty), \quad \mathsf{s}_i\in(-1,0), \quad i\in\mathbb{Z}_{\ge1},
\end{equation}
such that $\upxi_i$ and $\mathsf{s}_i$ are uniformly bounded away from the 
endpoints of their corresponding intervals.
The parameter $\Speed(0)$ will eventually become the rate at which
new particles are added in the inhomogeneous exponential jump model,
so we can already use this notation here.

\begin{definition}[Stochastic higher spin six vertex model]
	\label{def:vertex_model}
	Under $\DiscreteVertexXProcess(\mathfrak{t})$
	during each step of the discrete time, the particle configuration
	$\eta=(\eta(1),\eta(2),\ldots )$ (where $\eta(i)\ge0$ is the number of
	particles at location $i\in\mathbb{Z}_{\ge1}$) is randomly updated to
	\begin{equation*}
		\eta'
		=
		(\eta'(1),\eta'(2),\eta'(3),\ldots )
		=
		\left( \eta(1)-h(1)+h(0),\eta(2)-h(2)+h(1),\eta(3)-h(3)+h(2),\ldots  \right),
	\end{equation*}
	where $h(0)\in\left\{ 0,1 \right\}$ is the number of new particles entering the system,
	and $h(i)\in\left\{ 0,1 \right\}$, $i\ge1$, is the 
	number of particles which moved from location $i$ to location $i+1$ during this time step 
	(so particles move only to the right).
	The update propagates from left to right and is governed by the following probabilities
	(where $i=1,2,\ldots $):
	\begin{align}
		\label{stochastic_weights_definition1}
		\mathbb{P}\left( h(0)=k \right)
		&=
		\frac{\Speed(0)u}{1+\Speed(0)u}\mathbf{1}_{k=1}
		+
		\frac{1}{1+\Speed(0)u}\mathbf{1}_{k=0};
		\\
		\label{stochastic_weights_definition2}
		\mathbb{P}\left( h(i)=k\mid h(i-1)=0 \right)
		&=
		\frac{-\upxi_i\mathsf{s}_iu(1-q^{\eta(i)})}{1-\upxi_i\mathsf{s}_iu}\mathbf{1}_{k=1}
		+
		\frac{1-\upxi_i\mathsf{s}_iu q^{\eta(i)}}{1-\upxi_i\mathsf{s}_iu}\mathbf{1}_{k=0}
		;
		\\
		\mathbb{P}\left( h(i)=k\mid h(i-1)=1 \right)
		&=
		\frac{\mathsf{s}_i^2q^{\eta(i)}-\upxi_i\mathsf{s}_iu}{1-\upxi_i\mathsf{s}_iu}\mathbf{1}_{k=1}
		+
		\frac{1-\mathsf{s}_i^2q^{\eta(i)}}{1-\upxi_i\mathsf{s}_iu}\mathbf{1}_{k=0}.
		\label{stochastic_weights_definition3}	
	\end{align}
	Because the configurations are finite
	and all weights in \eqref{stochastic_weights_definition2}--\eqref{stochastic_weights_definition3}
	are uniformly bounded away from $0$ and $1$, 
	almost surely there exists $M$ such that $h(j)=0$ for all $j>M$, 
	which means that the random update $\eta\mapsto\eta'$ eventually stops.
	See \Cref{fig:vertex_weights_stoch}.
\end{definition}

\begin{figure}[htbp]
	\def\epsx{.12}
	\begin{tabular}{l|c|c|c|c}
		probabilities 
		&
			 $\dfrac{1-\upxi_i \mathsf{s}_i u q^{\eta(i)}}{1- \upxi_i \mathsf{s}_iu}\rule{0pt}{22pt}$
     & $\dfrac{-\upxi_i \mathsf{s}_i u(1-q^{\eta(i)})}{1-\upxi_i \mathsf{s}_i u}$
     & $\dfrac{\mathsf{s}^{2}_i q^{\eta(i)}-\upxi_i \mathsf{s}_i u}{1-\upxi_i \mathsf{s}_iu}$
     & $\dfrac{1-\mathsf{s}^{2}_i q^{\eta(i)}}{1-\upxi_i \mathsf{s}_i u}$
			\phantom{\Bigg|}
		\\\hline
		\raisebox{20pt}{moves in $\DiscreteVertexXProcess(\mathfrak{t})$}
		&
		\begin{tikzpicture}
			[scale=1,very thick]
			\draw[->] (.3,0)--(1.7,0);
			\node [below] at (1,-.07) {$i$};
			\draw (1,-.1)--++(0,.2);
			\draw[fill] (1,.25) circle (2.5pt);
			\draw[fill] (1,.47) circle (2.5pt);
			\draw[fill] (1,.69) circle (2.5pt);
			\node[anchor=west, rotate=90] at (0,-.7) {\small$h(i-1)=0$};
			\node[anchor=west, rotate=90] at (2,-.7) {\ \small$h(i)=0$};
		\end{tikzpicture}
&
		\begin{tikzpicture}
			[scale=1,very thick]
			\draw[->] (.3,0)--(1.7,0);
			\node [below] at (1,-.07) {$i$};
			\draw (1,-.1)--++(0,.2);
			\draw[fill] (1,.25) circle (2.5pt);
			\draw[fill] (1,.47) circle (2.5pt);
			\draw[] (1,.69) circle (2.5pt) node (fl) {};
			\draw [->] (fl) to [out=70,in=160] (1.8,.9);
			\node[anchor=west, rotate=90] at (0,-.7) {\small$h(i-1)=0$};
			\node[anchor=west, rotate=90] at (2,-.7) {\ \small$h(i)=1$};
		\end{tikzpicture}
&
		\begin{tikzpicture}
			[scale=1,very thick]
			\draw[->] (.3,0)--(1.7,0);
			\node [below] at (1,-.07) {$i$};
			\draw (1,-.1)--++(0,.2);
			\draw[fill] (1,.25) circle (2.5pt);
			\draw[fill] (1,.47) circle (2.5pt);
			\draw[fill] (1,.69) circle (2.5pt) node (fl) {};
			\draw [->] (fl) to [out=70,in=160] (1.8,.9);
			\draw [->] (0.2,.9) to [out=20,in=110] (fl);
			\node[anchor=west, rotate=90] at (0,-.7) {\small$h(i-1)=1$};
			\node[anchor=west, rotate=90] at (2,-.7) {\ \small$h(i)=1$};
		\end{tikzpicture}
&
		\begin{tikzpicture}
			[scale=1,very thick]
			\draw[->] (.3,0)--(1.7,0);
			\node [below] at (1,-.07) {$i$};
			\draw (1,-.1)--++(0,.2);
			\draw[fill] (1,.25) circle (2.5pt);
			\draw[fill] (1,.47) circle (2.5pt);
			\draw[fill] (1,.69) circle (2.5pt);
			\draw (1,.91) circle (2.5pt) node (fl) {};
			\draw [->] (0.2,.98) to [out=40,in=130] (fl);
			\node[anchor=west, rotate=90] at (0,-.7) {\small$h(i-1)=1$};
			\node[anchor=west, rotate=90] at (2,-.7) {\ \small$h(i)=0$};
		\end{tikzpicture}
\\\hline
\raisebox{45pt}{\parbox{.15\textwidth}{vertex\\configurations}}
&
		{\scalebox{1}{\begin{tikzpicture}
		[scale=1, very thick]
		\node[anchor=north] (i1) at (0,-1) {$\eta(i)$};
		\node (j1) at (-1,0) {$0$};
		\node[anchor=south] (i2) at (0,1) {$\eta'(i)=\eta(i)$};
		\node (j2) at (1,0) {$0$};
		\draw[densely dotted] (j1) -- (j2);
		\foreach \shi in {(0,0), (\epsx,0), (-\epsx,0)}
		{\begin{scope}[shift=\shi]
			\node (shi1) at (0,-1) {};
			\node (shi2) at (0,1) {};
			\draw[->] (shi1) -- (shi2);
			\draw[->] (shi1) --++ (0,.5);
		\end{scope}}
		\end{tikzpicture}}}
&
		{\scalebox{1}{\begin{tikzpicture}
		[scale=1, very thick]
		\node[anchor=north] (i1) at (0,-1) {$\eta(i)$};
		\node (j1) at (-1,0) {$0$};
		\node[anchor=south] (i2) at (0,1) {$\eta'(i)=\eta(i)-1$};
		\node (j2) at (1,0) {$1$};
		\foreach \shi in {(0,0), (-\epsx,0)}
		{\begin{scope}[shift=\shi]
			\node (shi1) at (0,-1) {};
			\node (shi2) at (0,1) {};
			\draw[->] (shi1) -- (shi2);
			\draw[->] (shi1) --++ (0,.5);
		\end{scope}}
		\foreach \shi in {(\epsx,0)}
		{\begin{scope}[shift=\shi]
			\node (shi1) at (0,-1) {};
			\node (shi2) at (0,1) {};
			\draw[->] (shi1) -- (0,0) -- (j2);
			\draw[->] (shi1) --++ (0,.5);
		\end{scope}}
		\draw[densely dotted] (j1) -- (j2);
		\end{tikzpicture}}}
&
		{\scalebox{1}{\begin{tikzpicture}
		[scale=1, very thick]
		\node (i1shm1) at (-\epsx,-1) {};
		\node (i1sh1) at (\epsx,-1) {};
		\node (j1) at (-1,0) {$1$};
		\node[anchor=north] at (0,-1) {$\eta(i)$};
		\node (i1) at (0,-1) {};
		\node[anchor=south] at (0,1) {$\eta'(i)=\eta(i)$};
		\node (i2) at (0,1) {};
		\draw[densely dotted] (j1) -- (j2);
		\draw[densely dotted] (i1) -- (i2);
		\node (i2shm1) at (-\epsx,1) {};
		\node (i2sh1) at (\epsx,1) {};
		\node (j2) at (1,0) {$1$};
		\draw[->] (i1shm1) -- (-\epsx,-\epsx) -- (0,\epsx) -- (i2);
		\draw[->] (i1) -- (0,-\epsx) -- (\epsx,\epsx) -- (i2sh1);
		\draw[->] (i1) --++ (0,.5);
		\draw[->] (j1) -- (-\epsx*1.5,0)--++(.5*\epsx,.5*\epsx) -- (i2shm1);
		\draw[->] (i1sh1) -- (\epsx,-.5*\epsx)--++(.5*\epsx,.5*\epsx) -- (j2);
		\draw[->] (j1) --++ (.5,0);
		\draw[->] (i1sh1) --++ (0,.5);
		\draw[->] (i1shm1) --++ (0,.5);
		\end{tikzpicture}}}
&
		{\scalebox{1}{\begin{tikzpicture}
		[scale=1, very thick]
		\node[anchor=north] (i1) at (0,-1) {$\eta(i)$};
		\node (j1) at (-1,0) {$1$};
		\node[anchor=south] (i2) at (0,1) {$\eta'(i)=\eta(i)+1$};
		\node (i2shm2) at (-3/2*\epsx,1) {};
		\node (j2) at (1,0) {$0$};
		\draw[densely dotted] (j1) -- (j2);
		\draw[->] (j1) -- (-3/2*\epsx,0) -- (i2shm2);
		\foreach \shi in {(1/2*\epsx,0), (3/2*\epsx,0), (-1/2*\epsx,0)}
		{\begin{scope}[shift=\shi]
			\node (shi1) at (0,-1) {};
			\node (shi2) at (0,1) {};
			\draw[->] (shi1) -- (shi2);
			\draw[->] (shi1) --++ (0,.5);
		\end{scope}}
		\draw[->] (j1) --++ (.5,0);
		\end{tikzpicture}}}
	\end{tabular}
	\caption{%
		Probabilities
		\eqref{stochastic_weights_definition2}--\eqref{stochastic_weights_definition3}
		of individual moves in the particle system
		$\DiscreteVertexXProcess(\mathfrak{t})$ and their interpretation in terms
		of vertex weights: vertical arrows correspond to particles and horizontal
		arrows --- to their moves during one step of the discrete time.
	}
	\label{fig:vertex_weights_stoch}
\end{figure}

Probabilities
\eqref{stochastic_weights_definition2}--\eqref{stochastic_weights_definition3}
have a very special property which justifies their definition: they satisfy (a
version of) the Yang--Baxter equation.
We do not reproduce it here and refer to \cite{Mangazeev2014},
\cite{Borodin2014vertex}, \cite{CorwinPetrov2015},
\cite{BorodinPetrov2016_Hom_Lectures}, \cite{BorodinPetrov2016inhom} for
details in the context of $U_q(\widehat{\mathfrak{sl}_2})$ vertex models, and
to \cite{baxter2007exactly} for a general background. 
The Yang--Baxter equation is a key tool used in \cite{BorodinPetrov2016inhom}
to compute averages of certain observables of the higher spin six vertex model
in a contour integral form which we recall in
\Cref{sub:q_moment_formulas_for_vertex_and_half-cont_vertex} below.

Setting $\mathsf{s}_i^2\equiv 1/q$ turns
$\DiscreteVertexXProcess(\mathfrak{t})$ into the stochastic six vertex model in
which at most one particle per location $i\in\mathbb{Z}_{\ge1}$ is allowed. 
If a new particle from $i-1$ decides to move on top of a particle already
present at $i$, then the particle at $i$ gets displaced to the right (i.e., if
$\eta(i)=1$ and $h(i-1)=1$, then $h(i)=1$ with probability $1$).
This model was introduced in \cite{GwaSpohn1992}, and its asymptotic behavior
was investigated in \cite{BCG6V}, \cite{AmolBorodin2016Phase},
\cite{Amol2016Stationary}, \cite{borodin2016stochastic_MM}. 

\subsection{Remark. Boundary conditions}
	\label{rmk:on_boundary_conditions}
	
	The step Bernoulli boundary condition
	for $\DiscreteVertexXProcess(\mathfrak{t})$
	corresponds to particles entering the system at location $1$
	independently at each time step with probability
	$\Speed(0)u/(1+\Speed(0)u)$, see \eqref{stochastic_weights_definition1}.
	The term 
	follows \cite{AmolBorodin2016Phase} where 
	this type of boundary condition for the stochastic six vertex model
	was connected with the step Bernoulli
	(also sometimes called half stationary) initial configuration for the ASEP
	\cite{TW_ASEP4}.
	Formulas for the $q$-moments are also available for the step boundary
	condition corresponding to $\mathbb{P}\left( h(0)=1 \right)=1$ in
	\eqref{stochastic_weights_definition1} (i.e., a new particle enters at
	location $1$ at every time step), see \cite{BorodinPetrov2016inhom} or
	\cite{OrrPetrov2016}.

	The step boundary condition on $\mathbb{Z}_{\ge1}$
	can be degenerated to the step Bernoulli one on $\mathbb{Z}_{\ge2}$
	in two ways (related via the fusion procedure \cite{KulishReshSkl1981yang},
	\cite{CorwinPetrov2015}). 
	One way involves sending $\mathsf{s}_1\to0$ and $\upxi_1\to+\infty$ such that
	$\alpha=-\upxi_1\mathsf{s}_1>0$ is fixed. 
	In this limit we get the system with the step Bernoulli boundary condition on
	$\mathbb{Z}_{\ge2}$, in which the role of $\Speed(0)$ is played by $\alpha$.
	
	Another way \cite[Section 6.6]{BorodinPetrov2016inhom}
	deals with time-homogeneous parameters $u_1,u_2,\ldots $ (with $u_{\mathfrak{t+1}}$
	used during time step $\mathfrak{t}\to\mathfrak{t}+1$). 
	Specializing the first $K$ of them into a geometric progression, formally
	putting $\upxi_1=\mathsf{s}_1$, and sending $K\to+\infty$ places infinitely
	many particles at location $1$. 
	Probabilities
	\eqref{stochastic_weights_definition2}--\eqref{stochastic_weights_definition3}
	with $\eta(1)=+\infty$ reduce to \eqref{stochastic_weights_definition1}
	with the role of $\Speed(0)$ played by $-\mathsf{s}_1^2$ (which is assumed positive),
	and this also leads to a system with the step Bernoulli boundary condition
	on $\mathbb{Z}_{\ge2}$.
	
	This second way suggests that in the process
	$\DiscreteVertexXProcess(\mathfrak{t})$ on $\mathbb{Z}_{\ge1}$ described in
	\Cref{sub:stochastic_higher_spin_definition} one can for convenience put an
	infinite stack of particles at location $0$, similarly to 
	the exponential model (see \Cref{sub:definition_of_the_model}).

\subsection{Half-continuous vertex model}
\label{sub:continuous_vertex_coordinate}

Let us take the limit as $u\searrow0$ and $\mathfrak{t}\to+\infty$ so that
$u\mathfrak{t}\to t\in\mathbb{R}_{\ge0}$. Here $t$ is the rescaled continuous
time. The expansions as $u\searrow0$ of probabilities
\eqref{stochastic_weights_definition1}--\eqref{stochastic_weights_definition3}
are
\begin{align}
	\label{stochastic_weights_definition_expansions1}
	\mathbb{P}\left( h(0)=k \right)
	&=
	\left( \Speed(0)u+O(u^2) \right)
	\mathbf{1}_{k=1}
	+
	\left( 1+O(u) \right)
	\mathbf{1}_{k=0};
	\\
	\label{stochastic_weights_definition_expansions2}
	\mathbb{P}\left( h(i)=k\mid h(i-1)=0 \right)
	&=
	\left( 
		-\upxi_i\mathsf{s}_iu(1-q^{\eta(i)})+O(u^2)
	\right)
	\mathbf{1}_{k=1}
	+
	\left( 1+O(u) \right)
	\mathbf{1}_{k=0}
	;
	\\
	\mathbb{P}\left( h(i)=k\mid h(i-1)=1 \right)
	&=
	\left( \mathsf{s}_i^2q^{\eta(i)}+O(u) \right)
	\mathbf{1}_{k=1}
	+
	\left( 1-\mathsf{s}_i^2q^{\eta(i)}+O(u) \right)
	\mathbf{1}_{k=0}.
	\label{stochastic_weights_definition_expansions3}	
\end{align}
These expansions imply that the $u\searrow0$ limit leads to the following
system:
\begin{definition}[Half-continuous vertex model]
	\label{def:half_continuous_system}
	The \emph{half-continuous stochastic higher spin six vertex model} (or the
	\emph{half-continuous vertex model}, for short) is a continuous time Markov
	process $\left\{ \VertexXProcess(t) \right\}_{t\ge0}$ on
	$\mathrm{Conf}_{\bullet}(\mathbb{Z}_{\ge1})$ defined as follows.
	Initially all locations $i\in\mathbb{Z}_{\ge1}$ are empty, and there 
	is an infinite stack of particles at location $0$.
	Almost surely at most one particle can ``wake up'' and start moving in an
	instance of continuous time. 
	Particles wake up by either leaving the infinite stack at $0$ at rate
	$\Speed(0)$, or by leaving a stack of $\eta(i)$ particles at some location 
	$i\in\mathbb{Z}_{\ge1}$ at rate $\upxi_i(-\mathsf{s}_i)(1-q^{\eta(i)})$.
	The wake up events at different locations are independent
	and have exponential waiting times.

	Every particle which wakes up at some time moment $t\ge0$ 
	then instantaneously jumps to the 
	right according to the following probability:
	\begin{multline}
		\label{jumping_distribution_of_moving_particle_in_discrete_system}
		\mathbb{P}
		\bigl( 
			\textnormal{the moving particle ends exactly at location
			$j>i$} 
			\mid 
			\textnormal{it started at 
			$i\in\left\{ 0 \right\}\cup\mathbb{Z}_{\ge1}$}
		\bigr)
		\\
		=
		\mathsf{s}_{i+1}^2\ldots \mathsf{s}_{j-1}^{2}
		q^{\eta_{\VertexXProcess(t)}(i+1)+\ldots +
		\eta_{\VertexXProcess(t)}(j-1)}
		\left( 1-\mathsf{s}_j^2
		q^{\eta_{\VertexXProcess(t)}(j)} \right),
	\end{multline}
	where $j>i$ is arbitrary and the quantities $\eta_{\VertexXProcess(t)}$
	correspond to the configuration of particles before the moving
	particle started its jump. 
	In words, to move past any location $k>i$
	the moving particle flips a coin with probability of success
	$\mathsf{s}_k^2 q^{\eta_{\VertexXProcess(t)}(k)}$.
	If the coin comes up a success, the particle continues to move, otherwise it
	stops at location $k$.

	The process $\VertexXProcess(t)$ depends on $q$, $\Speed(0)$,
	and the parameters $\upxi_i$, $\mathsf{s}_i$, where $i\ge1$.
\end{definition}

The fact that $\VertexXProcess(t)$ is a continuous time limit of
$\DiscreteVertexXProcess(\mathfrak{t})$ follows by a standard application of a
Poisson-type limit theorem, much like how a discrete time random 
walk on $\mathbb{Z}$ with jumps~$\in\{0,1\}$ and with small
probability of a jump by $1$ can be
approximated in the continuous time limit by a Poisson jump process. 
Indeed, this is because up to any time $\mathfrak{t}=\lfloor tu^{-1}\rfloor$
the total number of particles in $\DiscreteVertexXProcess(\mathfrak{t})$ can be
approximated by a Poisson random variable with parameter $\Speed(0)t$, and
conditioned on having a given number of particles the discrete time finite
particle system is approximated by the corresponding continuous time finite
particle system.

\begin{remark}
	A similar half-continuous limit of the 
	stochastic six vertex model (the case $\mathsf{s}_i^2\equiv1/q$)
	was considered recently in \cite{BorodinBufetovWheeler2016},
	\cite{Ghosal2017KPZ},
	see also
	\cite{BCG6V}.
\end{remark}

\subsection{Limit to continuous space}
\label{sub:limit_to_continuous_space}

Consider the scaling limit of discrete to continuous space,
$\mathbb{Z}_{\ge1}\ni i\mapsto i\varepsilon
\in \mathbb{R}_{>0}$ as $\varepsilon\searrow0$. 
Let us show that under this limit the 
process $\VertexXProcess(t)$ becomes the inhomogeneous exponential jump model
$\XProcess(t)$ described in \Cref{sub:definition_of_the_model}. 
Let us first describe the scaling of parameters of $\VertexXProcess(t)$
assuming that the parameters 
$\RateLambda$, $\Speed(x)$, 
$\RoadblockSet$, and $\RoadblockProb(b)$, $b\in\RoadblockSet$,
of $\XProcess(t)$ are given.
Denote 
\begin{equation}
	\label{scale_discrete_to_continuous_space1}
	\RoadblockSet^{\varepsilon}:=
	\left\{ 
		\lfloor \varepsilon^{-1}b \rfloor \colon b\in \RoadblockSet
	\right\}
	\subset\mathbb{Z}_{\ge1},
\end{equation}
and put
\begin{equation}
	\label{scale_discrete_to_continuous_space2}
	\begin{array}{lll}
		\mathsf{s}_i^2=e^{-\varepsilon \RateLambda}
		,&\qquad 
		-\upxi_i\mathsf{s}_i=\Speed(i\varepsilon)
		,&\qquad 
		\textnormal{for}\quad 
		i\in\mathbb{Z}_{\ge1}\setminus \RoadblockSet^{\varepsilon};
		\\[3pt]
		\mathsf{s}_i^2=\RoadblockProb(b)
		,&\qquad 
		-\upxi_i\mathsf{s}_i=\Speed(b)
		,&\qquad 
		\textnormal{for}\quad 
		i=\lfloor \varepsilon^{-1}b \rfloor \in\RoadblockSet^{\varepsilon}.
	\end{array}
\end{equation}
Denote this $\varepsilon$-dependent process by $\VertexXProcess^{\varepsilon}(t)$.
Because the rates at which new particles are added to both
$\VertexXProcess^{\varepsilon}(t)$ and $\XProcess(t)$ are the same, we can
couple these processes so that they have the same Poisson number of particles
at each time $t\ge0$.
This reduces the question of convergence of
$\{\VertexXProcess^{\varepsilon}(t)\}_{t\ge0}$ to $\{\XProcess(t)\}_{t\ge0}$
to finite particle systems.
Because $\Speed(x)$ is piecewise continuous with left and right limits, 
the rates at
which particles decide to start moving in $\VertexXProcess^{\varepsilon}$ 
are close to those in $\XProcess$.
To conclude the convergence of $\{\VertexXProcess^{\varepsilon}(t)\}_{t\ge0}$
to $\{\XProcess(t)\}_{t\ge0}$ it remains to observe that the limit as
$\varepsilon\searrow0$ of traveling probabilities
\eqref{jumping_distribution_of_moving_particle_in_discrete_system} gives rise to
\eqref{jumpmodel_traveling_particle}, which is straightforward.

Let us summarize the development of 
\Cref{sub:stochastic_higher_spin_definition,rmk:on_boundary_conditions,sub:continuous_vertex_coordinate,sub:limit_to_continuous_space}:

\begin{proposition}
	\label{statement:limit_of_processes_to_exponential_model}
	Under the sequence of limit transitions
	\begin{equation*}
		\big\{ \DiscreteVertexXProcess(\mathfrak{t}) \big\}_{\mathfrak{t}=0,1,\ldots }
		\xrightarrow[
			\parbox{.19\textwidth}{%
				speed up time as $\mathfrak{t}=\lfloor tu^{-1} \rfloor $,
				$u\searrow0$%
			}
		]{}
		\big\{ \VertexXProcess(t) \big\}_{t\ge0}
		\xrightarrow[
			\parbox{.34\textwidth}{%
				rescale
				parameters $\upxi_i$, $\mathsf{s}_i$, $i\ge1$,
				as in 
				\eqref{scale_discrete_to_continuous_space1}--\eqref{scale_discrete_to_continuous_space2},
				and rescale the space
				$\mathbb{Z}_{\ge1}$ to $\mathbb{R}_{>0}$
			}
		]{}
		\big\{ \XProcess(t) \big\}_{t\ge0}
	\end{equation*}
	the stochastic higher spin six vertex model with the step Bernoulli boundary
	condition described in \Cref{sub:stochastic_higher_spin_definition} converges
	to the inhomogeneous
	exponential jump model defined in \Cref{sub:definition_of_the_model}.
\end{proposition}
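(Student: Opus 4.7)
The plan is to verify the two indicated limit transitions separately: first, the $u\searrow 0$ passage from the discrete-time chain $\DiscreteVertexXProcess$ to the continuous-time half-continuous process $\VertexXProcess$, and then the $\varepsilon\searrow 0$ rescaling of the spatial lattice $\mathbb{Z}_{\ge1}$ to the half-line $\mathbb{R}_{>0}$ which turns $\VertexXProcess$ into $\XProcess$.

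For the first limit, I would read off the infinitesimal generator of the candidate continuous-time process from the expansions \eqref{stochastic_weights_definition_expansions1}--\eqref{stochastic_weights_definition_expansions3}. These say that within one discrete time step the probability that a wake-up event fires at $0$ is $\Speed(0)u+O(u^{2})$, the probability of a wake-up at site $i\ge 1$ with stack height $\eta(i)$ is $\upxi_i(-\mathsf{s}_i)(1-q^{\eta(i)})u+O(u^{2})$, and, conditionally on a wake-up at $i$, the landing site has law \eqref{jumping_distribution_of_moving_particle_in_discrete_system} up to corrections of order $u$. A standard Poisson-type limit theorem then identifies the law of $\DiscreteVertexXProcess(\lfloor tu^{-1}\rfloor)$ as $u\searrow 0$ with $\VertexXProcess(t)$ from \Cref{def:half_continuous_system}. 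The one subtlety is that infinitely many sites are simultaneously updated at each discrete step; to handle this I would truncate to sites $\{1,\ldots,M\}$, apply the Poisson approximation on that finite window, and then let $M\to\infty$ using that the configuration is almost surely supported on finitely many sites at any fixed $t$.

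For the second limit, I would implement the coupling sketched in the text. Since both $\VertexXProcess^{\varepsilon}$ and $\XProcess$ inject new particles at rate $\Speed(0)$, I couple their boundary Poisson arrivals so that at every time $t$ the total number of particles present matches, uniformly in $\varepsilon$; conditioning on this number and on the sequence of arrival times reduces everything to a finite particle system. On this finite event one needs to verify two convergences: (a) the wake-up rate at the lattice point $i=\lfloor x/\varepsilon\rfloor$ converges to the exponential jump model rate $\Speed(x)(1-q^{\eta(x)})$, which is immediate from \eqref{scale_discrete_to_continuous_space2} together with the piecewise continuity of $\Speed$; and (b) the traveling-distance law \eqref{jumping_distribution_of_moving_particle_in_discrete_system} converges weakly to \eqref{jumpmodel_traveling_particle}. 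For (b) one plugs in $\mathsf{s}_i^{2}=e^{-\varepsilon\RateLambda}$ at regular sites and $\mathsf{s}_i^{2}=\RoadblockProb(b)$ at roadblocks; the product of the $\mathsf{s}^{2}$ factors then limits to $e^{-\RateLambda y}\prod_{b\in\RoadblockSet,\,x<b<x+y}\RoadblockProb(b)$, the $q$-factor reflects precisely the particles strictly between $x$ and $x+y$, and the trailing $(1-\mathsf{s}_j^{2}q^{\eta(j)})$ plays the role of the exponential stopping density in the continuous limit.

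The main obstacle I expect lies in step (b). For a generic landing site one has $\eta(j)=0$, so the stopping factor is $1-e^{-\varepsilon\RateLambda}\sim\varepsilon\RateLambda$, and showing that the sum of these tiny per-site probabilities over a macroscopic interval converges to the exponential distribution of rate $\RateLambda$ is a Riemann-sum approximation which must be controlled uniformly in $\varepsilon$, uniformly over the finitely many particles kept by the coupling, and uniformly over the finitely many roadblocks in any bounded window (the last uses that $\RoadblockSet$ has no accumulation points by \eqref{Expmodel_data_assumptions}). Once this uniformity is in hand, weak convergence of finite-dimensional distributions of $\VertexXProcess^{\varepsilon}$ to those of $\XProcess$ follows, the Markov structure upgrades this to convergence of processes, and composition with the first limit yields the proposition.
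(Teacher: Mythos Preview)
Your proposal is correct and follows essentially the same route as the paper: for the first limit the paper invokes a Poisson-type approximation and reduces to a finite particle system (by conditioning on the Poisson number of arrivals rather than your spatial truncation, but the idea is the same), and for the second limit it uses exactly the coupling of boundary arrivals you describe, together with convergence of wake-up rates and of the traveling-distance law. Your discussion of the Riemann-sum approximation in step~(b) is more detailed than what the paper provides, which simply calls this convergence ``straightforward.''
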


\subsection{$q$-moments for the half-continuous model}
\label{sub:q_moment_formulas_for_vertex_and_half-cont_vertex}

Here and in
\Cref{sub:rewriting_q_moments,sub:Fredholm_half_continuous,sub:proof_of_exponential_Fredholm} 
below we prove \Cref{thm:exponential_Fredholm}.
Except passing to the continuous space in the last step, the proof 
is quite similar to the treatment of $q$-moments and
$q$-Laplace transforms performed in \cite{BorodinCorwinFerrari2012},
\cite{FerrariVeto2013}, \cite{barraquand2015phase}. 
However, to make the present paper self-contained we discuss all the necessary
steps, which are as follows:
\begin{enumerate}[\quad\bf1.]
	\item First, in this subsection
		we recall a nested contour integral formula for the $q$-moments of the height 
		function of the stochastic higher spin six vertex model,
		and take a (straightforward) limit to the $q$-moments
		in the half-continuous model.
	\item In \Cref{sub:rewriting_q_moments} we rewrite these $q$-moments
		for the half-continuous model in terms of contour integrals
		over certain infinite contours (which will be convenient for 
		asymptotic analysis in \Cref{sec:asymptotic_analysis}).
	\item In \Cref{sub:Fredholm_half_continuous}
		we turn the $q$-moment formulas for the half-continuous
		model into a Fredholm determinantal formula for this model.
		This requires some technical work to justify choices of integration
		contours which will be optimal for asymptotics.
	\item Finally, in \Cref{sub:proof_of_exponential_Fredholm} we pass to the
		limit to the continuous space, and obtain a Fredholm determinantal
		formula for the $q$-Laplace transform of the height function in the
		inhomogeneous exponential jump model.
\end{enumerate}

Let the height function $\HeightFunction_{\VertexXProcess(t)}(k)$,
$k\in\mathbb{Z}_{\ge0}$, of the process on the discrete space be defined
similarly to \eqref{height_function_definition}.  We have
$\HeightFunction_{\VertexXProcess(t)}(0)=+\infty$, and
$\HeightFunction_{\VertexXProcess(t)}(M)=0$ for $M$ large enough.
The next proposition is our first step towards 
\Cref{thm:exponential_Fredholm}.

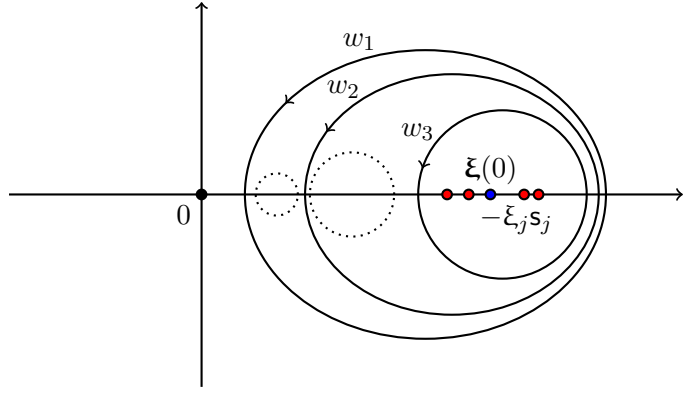
\begin{figure}[htpb]
	\centering
	\begin{tikzpicture}
		[scale=3.2,thick]
		\def\pt{0.02}
		\def\q{.5}
		\def\ss{.56}
		\draw[->, thick] (-.8,0) -- (2,0);
	  	\draw[->, thick] (0,-.8) -- (0,.8);
			\draw[fill=red] (1.34,0) circle (\pt) 
				node [below, xshift=-3pt] {$-\upxi_j\mathsf{s}_j$};
	  	\draw[fill=red] (1.11,0) circle (\pt);
			\draw[fill=red] (1.02,0) circle (\pt);
			\draw[fill=blue] (1.2,0) circle (1*\pt) 
			node [above,xshift=0] {$\Speed(0)$};
	  	\draw[fill=red] (1.4,0) circle (\pt);
	  	\draw[fill] (0,0) circle (\pt) node [below left] {$0$};
			\draw[thick,decoration={markings,mark=at position 0.45 with {\arrow{>}}},
			postaction={decorate}] (1.25,0) circle (.35) 
				node [below,xshift=-32,yshift=31] {$w_3$};
			\draw[dotted,thick] (1.25*\q,0) circle (.35*\q);
			\draw[dotted,thick] (1.25*\q*\q,0) circle (.35*\q*\q);
			\draw[thick,decoration={markings,mark=at position 0.42 with {\arrow{>}}},
			postaction={decorate}] (1.04,0) ellipse (.61 and .5) 
				node [below,xshift=-41,yshift=47] {$w_2$};
			\draw[thick,decoration={markings,mark=at position 0.4 with {\arrow{>}}},
			postaction={decorate}]
				(.93,0) ellipse (.75 and .6)
				node [below,xshift=-25,yshift=65] {$w_1$};
	\end{tikzpicture}
	\caption{
		A possible choice of nested integration
		contours in \eqref{nested_discrete_contours_q_moments} for 
		$\ell=3$. The contours $qw_3$ and $q^2w_3$ are shown dotted.
	}
	\label{fig:nested_discrete_contours_q_moments}
\end{figure}

\begin{proposition}
	\label{statement:first_moment_formula}
	Assume that the parameters of the 
	half-continuous model $\VertexXProcess(t)$ satisfy
	\begin{equation}
	\label{condition_inf_q_sup_on_discrete_parameters}
		\min\left( 
			\Speed(0),
			\inf_{j\ge1}\left\{ -\upxi_j\mathsf{s}_j \right\} 
		\right)
		>
		q\cdot 
		\min\left( 
			\Speed(0),
			\sup_{j\ge1}\left\{ -\upxi_j\mathsf{s}_j \right\} 
		\right).
	\end{equation}
	Then the $q$-moments of the height function of $\VertexXProcess(t)$
	at any $k\in\mathbb{Z}_{\ge1}$ are given by 
	\begin{multline}
		\label{nested_discrete_contours_q_moments}
		\mathop{\mathbb{E}}
		q^{\ell \HeightFunction_{\VertexXProcess(t)}(k)}
		=
		(-1)^{\ell}q^{\frac{\ell(\ell-1)}2}
		\oint \frac{dw_1   }{2\pi\mathbf{i}}\ldots 
		\oint \frac{dw_\ell}{2\pi\mathbf{i}}
		\prod_{1\le A<B\le \ell}\frac{w_A-w_B}{w_A-qw_B}
		\\
		\times
		\prod_{i=1}^{\ell}
		\left( 
			\frac{e^{(q-1)tw_i}}{w_i(1-w_i/\Speed(0))}\prod_{j=1}^{k-1}
			\frac{\upxi_j\mathsf{s}_j+\mathsf{s}_j^2w_i}
			{\upxi_j\mathsf{s}_j+w_i}
		\right),
	\end{multline}
	where $\ell=1,2,\ldots $, the integration contours 
	are positively oriented simple closed curves which
	encircle the poles $\Speed(0)$ and
	$-\upxi_j\mathsf{s}_j$, $j=1,2,\ldots $, but not $0$, 
	and the contour for $w_A$ contains the contour for $qw_B$ for all $A<B$
	(see \Cref{fig:nested_discrete_contours_q_moments}).
\end{proposition}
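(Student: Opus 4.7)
The plan is to obtain \eqref{nested_discrete_contours_q_moments} as a continuous-time degeneration of the nested contour integral $q$-moment formula for the discrete-time stochastic higher spin six vertex model $\DiscreteVertexXProcess(\mathfrak{t})$ proved in \cite{BorodinPetrov2016inhom}. The first step is to invoke that paper's formula for $\mathbb{E}\bigl[q^{\ell\HeightFunction_{\DiscreteVertexXProcess(\mathfrak{t})}(k)}\bigr]$ under the step-Bernoulli boundary condition \eqref{stochastic_weights_definition1}. Its integrand has exactly the structure of \eqref{nested_discrete_contours_q_moments} except that the continuous-time factor $\prod_i e^{(q-1)tw_i}$ is replaced by $\prod_i R(u,w_i)^{\mathfrak{t}}$, where $R(u,w)$ is a rational function (built from the $\mathfrak{t}$-fold action of a single-row transfer operator at spectral parameter $u$) satisfying $R(0,w)=1$ and $\partial_u R(0,w)=(q-1)w$. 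Assumption \eqref{condition_inf_q_sup_on_discrete_parameters} is the geometric condition ensuring that the fixed poles $\Speed(0)$ and $\{-\upxi_j\mathsf{s}_j\}_{j=1}^{k-1}$ admit a family of $q$-nested contours $\gamma_A\supset q\gamma_B$ ($A<B$) simultaneously encircling all of them; moreover these contours can be chosen independently of $u$ for $u$ small, since the extra poles coming from the denominator of $R(u,w_i)$ lie near $1/u\to+\infty$ and stay outside any fixed bounded contour.

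The second step is to let $u\searrow 0$ with $\mathfrak{t}=\lfloor tu^{-1}\rfloor$. On the probabilistic side, the Poissonization argument outlined in \Cref{sub:continuous_vertex_coordinate} (and formalized in \Cref{statement:limit_of_processes_to_exponential_model}) gives convergence in finite-dimensional distributions $\DiscreteVertexXProcess(\lfloor tu^{-1}\rfloor)\to\VertexXProcess(t)$. Because $q\in(0,1)$ the observable $q^{\ell\HeightFunction(k)}$ is bounded in $(0,1]$, so bounded convergence yields $\mathbb{E}\bigl[q^{\ell\HeightFunction_{\DiscreteVertexXProcess(\mathfrak{t})}(k)}\bigr]\to\mathbb{E}\bigl[q^{\ell\HeightFunction_{\VertexXProcess(t)}(k)}\bigr]$. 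On the analytic side, expanding the logarithm gives
\[
\mathfrak{t}\log R(u,w_i)=\mathfrak{t}\bigl((q-1)uw_i+O(u^2)\bigr)\longrightarrow (q-1)tw_i,
\]
and this convergence is uniform in $w_i$ on the (fixed, compact) nested contours. Since the remaining factors of the integrand are independent of $u$, dominated convergence over the $\ell$-fold product of contours delivers the right-hand side of \eqref{nested_discrete_contours_q_moments}.

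The main technical obstacle is to pin down a single $u$-independent choice of nested contours valid both for the discrete formula and for the limit passage. Condition \eqref{condition_inf_q_sup_on_discrete_parameters} is exactly what makes such a choice possible: it ensures that the fixed poles cluster within a region around which a $q$-nested family of contours can be drawn, bounded and bounded away from $0$ and from the $u$-dependent poles. A secondary bookkeeping point is the translation of the vertex-weight normalizations used in \cite{BorodinPetrov2016inhom} into the particle-system conventions of \Cref{def:vertex_model}; this is what produces the boundary prefactor $(w_i(1-w_i/\Speed(0)))^{-1}$ together with the bulk factor $\prod_{j=1}^{k-1}\tfrac{\upxi_j\mathsf{s}_j+\mathsf{s}_j^2 w_i}{\upxi_j\mathsf{s}_j+w_i}$ appearing in \eqref{nested_discrete_contours_q_moments}.
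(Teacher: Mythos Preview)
Your proposal is correct and follows essentially the same route as the paper: start from the nested contour $q$-moment formula for the discrete-time model in \cite{BorodinPetrov2016inhom}, then pass to the continuous-time limit $u\searrow 0$, $\mathfrak{t}=\lfloor tu^{-1}\rfloor$ using bounded convergence on the probabilistic side and uniform convergence of the integrand on fixed compact contours on the analytic side. The paper makes your ``bookkeeping'' step explicit---it quotes \cite[Corollary 10.3]{BorodinPetrov2016inhom} in its original variables (poles at $\upxi_j\mathsf{s}_j<0$, time factor $\bigl(\tfrac{1-quw_i}{1-uw_i}\bigr)^{\mathfrak{t}}$), then performs the cancellation of the $j=0$ boundary factor against $1/(1-w_i)$ and the change of variables $w_i\mapsto -w_i$ to arrive at the form \eqref{nested_discrete_contours_q_moments}---but this is exactly the translation you flagged.
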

\begin{proof}
	We start with the formula of
	\cite[Corollary 10.3]{BorodinPetrov2016inhom} which
	gives $q$-moments of the height function of the 
	discrete time
	stochastic higher spin six vertex model
	with the step Bernoulli boundary condition on $\mathbb{Z}_{\ge2}$.
	However, since our process $\DiscreteVertexXProcess(\mathfrak{t})$
	lives on $\mathbb{Z}_{\ge1}$, we need 
	to shift the parameters by $1$.
	Thus, the formula yields for any $\ell\ge1$ and $k\ge0$:
	\begin{multline}
		\label{q_moments_old_step_Bernoulli_discrete}
		\mathop{\mathbb{E}}
		q^{\ell \HeightFunction_{\DiscreteVertexXProcess(\mathfrak{t})}(k)}
		=
		(-1)^{\ell}q^{\frac{\ell(\ell-1)}2}
		\oint \frac{dw_1   }{2\pi\mathbf{i}}\ldots 
		\oint \frac{dw_\ell}{2\pi\mathbf{i}}
		\prod_{1\le A<B\le \ell}\frac{w_A-w_B}{w_A-qw_B}
		\\
		\times
		\prod_{i=1}^{\ell}
		\left( 
			\frac{1}{w_i(1-w_i)}\prod_{j=0}^{k-1}
			\frac{\upxi_j-\mathsf{s}_jw_i}{\upxi_j-\mathsf{s}_j^{-1}w_i}
			\left( \frac{1-quw_i}{1-uw_i} \right)^{\mathfrak{t}}
		\right),
	\end{multline}
	where $\upxi_0\equiv \mathsf{s}_0$ 
	(and $-\mathsf{s}_0^2=\Speed(0)>0$), the integration
	contours are positively oriented and closed, 
	encircle $\left\{ \upxi_i\mathsf{s}_i \right\}_{i=0,1,\ldots }$,
	leave outside $0$, $1$ and $u^{-1}$, and the contour
	for $w_A$ contains the contour for $qw_B$ for all $A<B$.
	These contours exist thanks to 
	\eqref{condition_inf_q_sup_on_discrete_parameters}. 
	Formula \eqref{q_moments_old_step_Bernoulli_discrete}
	is obtained from a $q$-moment formula for the 
	step boundary condition \cite[Theorem 9.8]{BorodinPetrov2016inhom}
	via the second of the limit transitions mentioned in
	\Cref{rmk:on_boundary_conditions}.

	Let us now assume that $k\ge1$ since the case $k=0$ 
	will not be needed for our asymptotic analysis.
	When $k\ge1$ we get the following cancellation:
	\begin{equation*}
		\frac{1}{1-w}\frac{\upxi_0-\mathsf{s}_0w}{\upxi_0-\mathsf{s}_0^{-1}w}
		=
		\frac{\mathsf{s}_0^2}{\mathsf{s}_0^2-w}
		=
		\frac{1}{1+w/\Speed(0)}.
	\end{equation*}
	Let us change the variables as $w_i\to -w_i$.
	Then \eqref{q_moments_old_step_Bernoulli_discrete}
	turns into
	\begin{multline*}
		\mathop{\mathbb{E}}
		q^{\ell \HeightFunction_{\DiscreteVertexXProcess(\mathfrak{t})}(k)}
		=
		(-1)^{\ell}q^{\frac{\ell(\ell-1)}2}
		\oint \frac{dw_1   }{2\pi\mathbf{i}}\ldots 
		\oint \frac{dw_\ell}{2\pi\mathbf{i}}
		\prod_{1\le A<B\le \ell}\frac{w_A-w_B}{w_A-qw_B}
		\\
		\times
		\prod_{i=1}^{\ell}
		\left( 
			\frac{1}{w_i(1-w_i/\Speed(0))}\prod_{j=1}^{k-1}
			\frac{\upxi_j\mathsf{s}_j+\mathsf{s}_j^2w_i}
			{\upxi_j\mathsf{s}_j+w_i}
			\left( \frac{1+quw_i}{1+uw_i} \right)^{\mathfrak{t}}
		\right),
	\end{multline*}
	with the integration contours as in
	\Cref{fig:nested_discrete_contours_q_moments}.
	Putting $\mathfrak{t}=\lfloor tu^{-1} \rfloor $ and taking $u\searrow0$ (note
	that this is allowed by the integration contours) leads to the desired
	formula for the $q$-moments of the height function of 
	the half-continuous model $\VertexXProcess(t)$.
\end{proof}

\subsection{Rewriting $q$-moment formulas}
\label{sub:rewriting_q_moments}

Let us introduce new integration contours:
\begin{definition}
	\label{def:c_a_phi_contour_definition}
	Let $C_{a,\varphi}$,
	$a\in(0,+\infty)$, $\varphi\in(0,\pi/2)$, be the contour including two
	rays at the angles $\pm\varphi$ with the real line which meet at the point $a$,
	and such that the imaginary part decreases along this contour. Namely, 
	\begin{equation}
		\label{c_a_phi_contour_definition}
		C_{a,\varphi}:=\big\{ 
			a-\mathbf{i} y e^{\mathbf{i}\varphi\mathop{\mathrm{sgn}}(y)}\colon y\in\mathbb{R}
		\big\}.
	\end{equation}
	See \Cref{fig:infinite_contour} for an illustration.
\end{definition}

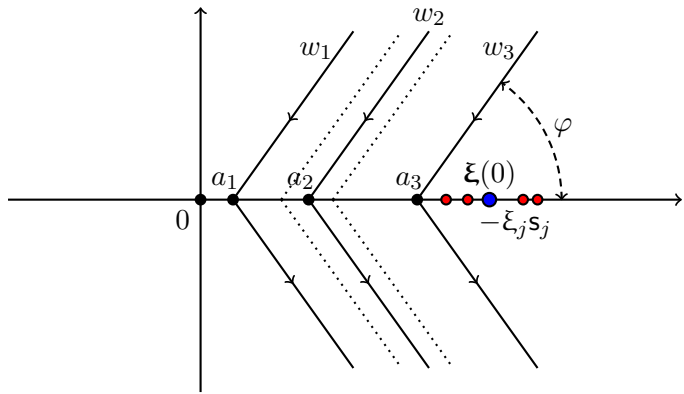
\begin{figure}[htbp]
	\centering
	\begin{tikzpicture}
		[scale=3.2,thick]
		\def\pt{0.02}
		\def\q{.61}
		\def\ss{.56}
		\draw[->] (-.8,0)--(2,0);
		\draw[->] (0,-.8)--(0,.8);
		\draw[fill=red] (1.34,0) circle (\pt) 
			node [below, xshift=-3pt] {$-\upxi_j\mathsf{s}_j$};
		\draw[fill=red] (1.11,0) circle (\pt);
		\draw[fill=red] (1.02,0) circle (\pt);
		\draw[fill=blue] (1.2,0) circle (1.4*\pt) 
		node [above,xshift=0] {$\Speed(0)$};
		\draw[fill=red] (1.4,0) circle (\pt);
		\draw[fill] (0,0) circle (\pt) node [below left] {$0$};
		\draw[decoration={markings,mark=at position 0.5 with {\arrow{<}}},
			postaction={decorate}] (.9,0)--++(.5,.7);
		\draw[decoration={markings,mark=at position 0.5 with {\arrow{>}}},
			postaction={decorate}] (.9,0)--++(.5,-.7);
		\draw[decoration={markings,mark=at position 0.5 with {\arrow{<}}},
			postaction={decorate}] (.9*\q-0.1,0)--++(.5,.7);
		\draw[decoration={markings,mark=at position 0.5 with {\arrow{>}}},
			postaction={decorate}] (.9*\q-0.1,0)--++(.5,-.7);
		\draw[decoration={markings,mark=at position 0.5 with {\arrow{>}}},
			postaction={decorate}] (.9*\q*\q-0.2,0)--++(.5,-.7);
		\draw[decoration={markings,mark=at position 0.5 with {\arrow{<}}},
			postaction={decorate}] (.9*\q*\q-0.2,0)--++(.5,.7);
		\draw[dotted] (.9*\q*\q,0)--++(.5,.7);
		\draw[dotted] (.9*\q*\q,0)--++(.5,-.7);
		\draw[dotted] (.9*\q,0)--++(.5,.7);
		\draw[dotted] (.9*\q,0)--++(.5,-.7);
		\draw[fill] (.9,0) circle (\pt) node [above, xshift=-3pt] {$a_3$};
		\draw[fill] (.9*\q-.1,0) circle (\pt) node [above, xshift=-3pt] {$a_2$};
		\draw[fill] (.9*\q*\q-.2,0) circle (\pt) node [above, xshift=-3pt] {$a_1$};
		\draw[densely dashed,
			decoration={
				markings,
				mark=at position 0.045 with {\arrow{<}},
				mark=at position 1 with {\arrow{>}}
			},
			postaction={decorate}
		] 
			(1.5,0) arc (0:54.5:.6);
		\node at (1.505,.3) {$\varphi$};
		\node at (.38+.1,.5+.12) {$w_1$};
		\node at (.95,.76) {$w_2$};
		\node at (1.14+.1,.5+.12) {$w_3$};
	\end{tikzpicture}
	\caption{%
		A possible choice of contours $C_{a_i,\varphi}$, $i=1,2,3$,
		in \Cref{statement:q_moments_nested_infinite_contours}
		for $\ell=3$. The contours
		$q C_{a_3,\varphi}$ and $q^2 C_{a_3,\varphi}$
		are shown dotted.
		The contour 
	}
	\label{fig:infinite_contour}
\end{figure}

\begin{proposition}
	\label{statement:q_moments_nested_infinite_contours}
	If the parameters of the half-continuous
	vertex model satisfy \eqref{condition_inf_q_sup_on_discrete_parameters},
	then the $q$-moments $\mathop{\mathbb{E}}q^{\ell
	\HeightFunction_{\VertexXProcess(t)}(k)}$, $k,\ell\ge1$, $t>0$, are given by the
	same formula as in the right-hand side of
	\eqref{nested_discrete_contours_q_moments},
	but with $w_j$ integrated over $C_{a_j,\varphi}$, where $\varphi\in(0,\pi/2)$
	and $a_1,\ldots,a_{\ell}$ are such that
	\begin{equation*}
		0<a_j<qa_{j+1}, \quad j=1,\ldots, \ell-1;
		\qquad 
		a_\ell<
		\min\left( 
			\Speed(0),
			\inf_{j\ge1}\left\{ -\upxi_j\mathsf{s}_j \right\} 
		\right).
	\end{equation*}
\end{proposition}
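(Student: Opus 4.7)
The strategy is to deform each of the $\ell$ nested closed contours in \eqref{nested_discrete_contours_q_moments} to the corresponding infinite contour $C_{a_j,\varphi}$, via approximation by truncated closed contours. The whole argument is driven by the rapid decay of $e^{(q-1)tw_j}$ as $\Re w_j\to+\infty$, valid because $q<1$ and $t>0$.

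For large $R$ I would set $\Gamma_j^R:=\bigl(C_{a_j,\varphi}\cap\{|w|\le R\}\bigr)\cup A_j^R$, where $A_j^R$ is the arc of $\{|w|=R\}$ that closes the contour through the positive real axis, oriented counterclockwise. The hypothesis $a_j<\min\bigl(\Speed(0),\inf_i\{-\upxi_i\mathsf{s}_i\}\bigr)$ together with the boundedness of $\{\Speed(0)\}\cup\{-\upxi_i\mathsf{s}_i\}_{i\ge 1}$ (which follows from the uniform bounds on $\upxi_i,\mathsf{s}_i$ in \eqref{stochastic_vertex_model_parameters}) implies that for $R$ large $\Gamma_j^R$ encloses all of these poles while leaving $0$ outside, and the nesting $\Gamma_A^R\supset q\Gamma_B^R$ for $A<B$ follows from $a_A<qa_B$. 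Hence $\{\Gamma_j^R\}$ is a legitimate choice of closed nested contours in \Cref{statement:first_moment_formula}, so \eqref{nested_discrete_contours_q_moments} holds with $\Gamma_j^R$ in place of the closed contour for each $j$.

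It remains to send $R\to\infty$. On the arc $A_j^R$ one has $\arg w_j\in[-(\pi/2-\varphi),\pi/2-\varphi]$, so $\Re w_j\ge R\sin\varphi$ and $|e^{(q-1)tw_j}|\le e^{-(1-q)tR\sin\varphi}$; the remaining factors (the Vandermonde-like cross ratios, the prefactor $1/(w_j(1-w_j/\Speed(0)))$, and the products $(\upxi_i\mathsf{s}_i+\mathsf{s}_i^2 w_j)/(\upxi_i\mathsf{s}_i+w_j)$) grow at most polynomially in $R$ thanks to the uniform bounds on the parameters. Consequently each arc contribution vanishes in the limit, and the same exponential estimate yields absolute convergence of the integrals over the unbounded contours $C_{a_j,\varphi}$, giving the claimed formula.

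The main obstacle is the bookkeeping in the second step: since integration takes place in $\ell$-dimensional space, one must exhibit a joint isotopy of the original closed contours to $\{\Gamma_j^R\}_{j=1}^{\ell}$ that never meets a cross-pole $w_A=qw_B$ for $A<B$. This reduces to the geometric observation that the strict inequalities $a_A<qa_B$ force $C_{a_A,\varphi}$ and $qC_{a_B,\varphi}=C_{qa_B,\varphi}$ to be disjoint (one V-shape lying strictly to the left of the other), a condition that is preserved under the truncation at $|w|=R$ for every $R$ large enough, and which can be maintained throughout by deforming the variables one at a time from $j=\ell$ down to $j=1$.
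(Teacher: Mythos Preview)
Your proposal is correct and follows essentially the same approach as the paper's proof, which simply cites \cite[Lemma~4.10]{BorodinCorwinFerrari2012} and notes that one expands the closed contours to the right one by one, with the exponential factor $e^{(q-1)t\sum_i w_i}$ making the far-right contributions negligible. Your version is more explicit about the truncation-by-arcs construction and the verification of the nesting condition, but the core mechanism---exponential decay as $\Re w_j\to+\infty$---is identical.
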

\begin{proof}
	This statement is established in the same way as \cite[Lemma
	4.10]{BorodinCorwinFerrari2012}, by expanding the closed contours for
	$w_1,\ldots, w_\ell$ in \eqref{nested_discrete_contours_q_moments} one by one
	to the right, and using the fact that the exponent $e^{(q-1)t\sum_{i}w_i}$ in
	the integrand makes the integrals over the far right parts of the contours
	negligible.
\end{proof}
\begin{proposition}
	\label{statement:q_moments_infinite_single_contour}
	The $q$-moments $\mathop{\mathbb{E}}q^{\ell
	\HeightFunction_{\VertexXProcess(t)}(k)}$, $k,\ell\ge1$, $t>0$, of the
	half-continuous vertex model can be rewritten in the following form with all
	integrals over one and the same contour $C_{a,\varphi}$ with
	$\varphi\in(0,\pi/2)$ and 
	$0<a<\min
	\bigl( 
		\Speed(0),
		\inf_{j\ge1}\left\{ -\upxi_j\mathsf{s}_j \right\} 
	\bigr)$:
	\begin{multline}
		\label{q_moments_infinite_single_contour}
		\mathop{\mathbb{E}}q^{\ell
		\HeightFunction_{\VertexXProcess(t)}(k)}
		=
		(q;q)_{\ell}
		\sum_{\substack{\mu\vdash \ell
		\\\mu=1^{m_1}2^{m_2}\ldots }}
		\frac{1}{m_1!m_2!\cdots }
		\frac{1}{(2\pi\mathbf{i})^{l(\mu)}}
		\int\ldots \int
		\mathop{\mathrm{det}}\limits_{i,j=1}^{l(\mu)}
		\left[ \frac{1}{w_iq^{\mu_i}-w_j} \right]
		\\\times
		\prod_{j=1}^{l(\mu)}f^{\mathrm{hc}}(w_j)f^{\mathrm{hc}}(qw_j)\ldots f^{\mathrm{hc}}(q^{\mu_j-1}w_j)dw_j,
	\end{multline}
	where the sum is over all partitions $\mu$ of $\ell$ 
	(i.e., $\ell=\sum_{i}\mu_i$) having 
	$m_1$ parts equal to $1$, $m_2$ parts equal to $2$, etc., 
	$l(\mu)$ denotes the number of nonzero parts in $\mu$, 
	and
	\begin{equation}
		\label{f_function_in_moments_definition}
		f^{\mathrm{hc}}(w):=\frac{e^{(q-1)tw}}{1-w/\Speed(0)}\prod_{j=1}^{k-1}
		\frac{\upxi_j\mathsf{s}_j+\mathsf{s}_j^2 w}{\upxi_j\mathsf{s}_j+w}.
	\end{equation}
\end{proposition}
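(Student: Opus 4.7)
The plan is to start from the nested-contour formula of Proposition \ref{statement:q_moments_nested_infinite_contours} and to collapse all $\ell$ contours to the single contour $C_{a,\varphi}$, carefully tracking the residues encountered along the way. Specifically, I will expand $C_{a_1,\varphi}$ outward until it coincides with $C_{a_\ell,\varphi}=C_{a,\varphi}$, then do the same for $C_{a_2,\varphi}$, and so on. When the contour for $w_A$ crosses the contour for $w_B$ with $A<B$, the only new singularity encountered in the integrand is the simple pole of $(w_A-qw_B)^{-1}$ at $w_A=qw_B$; the poles of $f^{\mathrm{hc}}$ at $\Speed(0)$ and at the points $-\upxi_j\mathsf{s}_j$ all lie to the right of $C_{a,\varphi}$ by the hypothesis on $a$, so they never obstruct the deformation.

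Iterating this residue extraction, each surviving term corresponds to a grouping of $\{1,\dots,\ell\}$ into ``strings,'' where inside a string of size $k$ the variables are specialized to a geometric progression $w,qw,\dots,q^{k-1}w$ governed by a single free variable $w$ that remains integrated over $C_{a,\varphi}$. The multiset of string lengths forms a partition $\mu\vdash\ell$ with $m_k$ parts equal to $k$. The intra-string contribution of $\prod_{A<B}(w_A-w_B)/(w_A-qw_B)$ telescopes to $(q;q)_{\mu_i}$ per string, while $1/(m_1!m_2!\cdots)$ accounts for the unordered-versus-ordered overcounting of strings of equal length; together these yield the combinatorial prefactor $(q;q)_\ell/(m_1!m_2!\cdots)$. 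The cross-string portion of the Cauchy-type product, after the substitutions $w_A=q^{a}w_i$ with $i$ indexing the string and $0\le a<\mu_i$, reduces to a rational expression that by the Cauchy determinant identity collapses to $\det_{i,j=1}^{l(\mu)}[(w_iq^{\mu_i}-w_j)^{-1}]$, and the per-variable factors $f^{\mathrm{hc}}(w_A)$ together with the $1/w_A$ factors in \eqref{nested_discrete_contours_q_moments} regroup into $\prod_j f^{\mathrm{hc}}(w_j)f^{\mathrm{hc}}(qw_j)\cdots f^{\mathrm{hc}}(q^{\mu_j-1}w_j)$, yielding exactly \eqref{q_moments_infinite_single_contour}. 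A combinatorial identity of precisely this shape has appeared in, e.g., the proof of the analogous Fredholm rewriting in \cite{BorodinCorwinFerrari2012}, and essentially the same computation applies here.

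The main obstacle will be the detailed residue bookkeeping in the two steps above, in particular matching the signs and powers of $q$ coming from iterated residues of the rational factors and verifying the emergence of the Cauchy determinant from what initially looks like a complicated product of binomial ratios. A secondary technical point is the rigorous justification of contour deformations along the unbounded contours $C_{a_j,\varphi}$; this is handled by the decay $|e^{(q-1)tw}|=e^{-(1-q)t\Re w}$ together with $\Re w\to+\infty$ along the rays of $C_{a,\varphi}$ (recall $\varphi\in(0,\pi/2)$), which guarantees absolute convergence of all integrals and absence of boundary contributions at infinity, in the same spirit as the proof of Proposition \ref{statement:q_moments_nested_infinite_contours}.
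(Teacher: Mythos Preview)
Your proposal is correct and is precisely the argument underlying the references the paper cites: the paper's own proof is simply ``This is \cite[Proposition 3.2.1]{BorodinCorwin2011Macdonald} or \cite[Proposition 7.4]{BorodinCorwinPetrovSasamoto2013},'' and the contour-unnesting / string-residue computation you outline is exactly what those propositions establish. Your sketch of the mechanism (strings from iterated residues at $w_A=qw_B$, the Cauchy determinant from cross-string factors, and the combinatorial prefactor from counting orderings) matches the standard derivation, and the decay of $e^{(q-1)tw}$ along $C_{a,\varphi}$ indeed handles the unbounded-contour issue.
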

\begin{proof}
	This is \cite[Proposition 3.2.1]{BorodinCorwin2011Macdonald}
	or 
	\cite[Proposition 7.4]{BorodinCorwinPetrovSasamoto2013}. 
\end{proof}

The $q$-moment formula in \Cref{statement:q_moments_infinite_single_contour}
implies that we can drop condition
\eqref{condition_inf_q_sup_on_discrete_parameters} on the parameters
$\Speed(0)$, $\upxi_j, \mathsf{s}_j$, $j\ge1$ (which was present in contour
	integral formulas with bounded contours coming from
\cite{BorodinPetrov2016inhom}).
From now on we only assume that $\Speed(0)>0$, $\upxi_j>0$ are uniformly
bounded away from $0$ and $+\infty$, and $\mathsf{s}_j\in(-1,0)$ are uniformly
bounded away from $-1$ and $0$, and with these assumptions the $q$-moment
formula \eqref{q_moments_infinite_single_contour} continues to hold.

\subsection{Fredholm determinantal formulas for the half-continuous model}
\label{sub:Fredholm_half_continuous}

Our first Fredholm determinantal formula follows by taking a generating function
of the $q$-moments given in \Cref{statement:q_moments_infinite_single_contour}:
\begin{proposition}
	\label{statement:first_Fredholm_formula}
	Fix $t>0$ and $k\in\mathbb{Z}_{\ge1}$. 
	For any
	$0<a<\min
	\bigl( 
		\Speed(0),
		\inf_{j\ge1}\left\{ -\upxi_j\mathsf{s}_j \right\} 
	\bigr)$,
	$\varphi\in(0,\pi/2)$, and $\zeta\in \mathbb{C}$ with sufficiently
	small $|\zeta|$ we have
	\begin{equation}
		\label{first_Fredholm_formula}
		\mathop{\mathbb{E}}
		\frac{1}{
		\bigl(
			\zeta q^{\HeightFunction_{\VertexXProcess(t)}(k)};q
		\bigr)_{\infty}}
		=
		\det
		\bigl(
			1+K_{\zeta}^{(1),\mathrm{hc}}
		\bigr)_{L^2(\mathbb{Z}_{>0}\times C_{a,\varphi})},
	\end{equation}
	where the kernel $K_{\zeta}^{(1),\mathrm{hc}}$
	is given by 
	\begin{equation}
	\label{first_Fredholm_formula_kernel}
	K_{\zeta}^{(1),\mathrm{hc}}(n_1,w_1;n_2,w_2)=
		\frac{\zeta^{n_1}f^{\mathrm{hc}}(w_1)f^{\mathrm{hc}}(qw_1)\cdots f^{\mathrm{hc}}(q^{n_1-1}w_1)}
	{q^{n_1}w_1-w_2},
	\end{equation}
	with $f^{\mathrm{hc}}(w)$ defined in \eqref{f_function_in_moments_definition}.
\end{proposition}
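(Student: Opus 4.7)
The plan is to obtain \eqref{first_Fredholm_formula} from the $q$-moment formula \eqref{q_moments_infinite_single_contour} by a standard generating-function-to-Fredholm-determinant manipulation, following the scheme of \cite{BorodinCorwin2011Macdonald} (see also \cite{BorodinCorwinFerrari2012,BorodinCorwinPetrovSasamoto2013}).

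First, I would use the $q$-exponential identity
\begin{equation*}
\frac{1}{(\zeta q^h;q)_{\infty}}
=\sum_{\ell=0}^{\infty}\frac{\zeta^{\ell}}{(q;q)_{\ell}}\,q^{\ell h},\qquad |\zeta|<1,
\end{equation*}
applied with $h=\HeightFunction_{\VertexXProcess(t)}(k)\in\mathbb{Z}_{\ge 0}$. Since $0\le q^h\le 1$, the left-hand side of \eqref{first_Fredholm_formula} is a bounded analytic function of $\zeta\in\mathbb{C}\setminus\mathbb{R}_{\ge 0}$, and for $|\zeta|$ sufficiently small Fubini allows swapping the sum and the expectation. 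Plugging in the formula from \Cref{statement:q_moments_infinite_single_contour} yields
\begin{equation*}
\mathop{\mathbb{E}}\frac{1}{(\zeta q^{\HeightFunction_{\VertexXProcess(t)}(k)};q)_{\infty}}
=\sum_{\ell=0}^{\infty}\zeta^{\ell}\sum_{\substack{\mu\vdash \ell\\\mu=1^{m_1}2^{m_2}\ldots}}\frac{1}{m_1!m_2!\cdots}\frac{1}{(2\pi\mathbf{i})^{l(\mu)}}\int\cdots\int\mathop{\mathrm{det}}_{i,j=1}^{l(\mu)}\!\Bigl[\tfrac{1}{w_i q^{\mu_i}-w_j}\Bigr]\prod_{j=1}^{l(\mu)}\!\Bigl(\!\prod_{r=0}^{\mu_j-1}\!f^{\mathrm{hc}}(q^{r}w_j)\!\Bigr)dw_j.
\end{equation*}

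Next, I would reorganize the double sum (first over $\ell$, then over $\mu\vdash \ell$) as a single sum over all partitions $\mu$, with $\zeta^{|\mu|}$ distributed as $\zeta^{\mu_j}$ onto each block. Collecting the $l(\mu)$-fold integral and viewing the outer sum over the number of parts $l(\mu)=M$ and the inner sum over $(\mu_1,\ldots,\mu_M)\in\mathbb{Z}_{\ge 1}^M$ (unordered, with the $\frac{1}{m_1!m_2!\cdots}$ factor now becoming $\frac{1}{M!}$ after re-ordering parts), this becomes
\begin{equation*}
\sum_{M=0}^{\infty}\frac{1}{M!}\sum_{n_1,\ldots,n_M\ge 1}\frac{1}{(2\pi\mathbf{i})^{M}}\int_{C_{a,\varphi}}\!\!\!\cdots\int_{C_{a,\varphi}}\!\!\mathop{\mathrm{det}}_{i,j=1}^{M}\!\Bigl[\tfrac{\zeta^{n_i}\prod_{r=0}^{n_i-1}f^{\mathrm{hc}}(q^{r}w_i)}{q^{n_i}w_i-w_j}\Bigr]\,dw_1\cdots dw_M,
\end{equation*}
where I have pulled the factors $\zeta^{n_i}$ and $\prod_{r} f^{\mathrm{hc}}(q^{r}w_i)$ inside the determinant along the $i$-th row (these depend only on $i$, so this is legal). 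Recognizing the right-hand side as the Fredholm expansion \eqref{Fredholm_expansion} on $L^{2}(\mathbb{Z}_{>0}\times C_{a,\varphi})$, with $(n,w)$ as the integration/summation variable and kernel exactly \eqref{first_Fredholm_formula_kernel}, gives the claimed identity.

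The main technical obstacle is convergence: I must verify that for $|\zeta|$ small enough the Fredholm series converges absolutely and the rearrangement above is justified. For this, note that along $C_{a,\varphi}$ the factor $e^{(q-1)tw}$ in $f^{\mathrm{hc}}(w)$ decays like $\exp\{-(1-q)t|w|\cos\varphi\}$ (since $q<1$ and $\Re w\to+\infty$ along the rays), while the other factors of $f^{\mathrm{hc}}$ are uniformly bounded on $C_{a,\varphi}$ because $a$ is chosen smaller than $\Speed(0)$ and $\inf_j\{-\upxi_j\mathsf{s}_j\}$ so that none of the poles of $f^{\mathrm{hc}}$ lie on or to the left of the contour. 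This decay uniformly controls $\prod_{r=0}^{n-1} f^{\mathrm{hc}}(q^{r}w)$ in $n$ and makes the kernel trace class on $L^{2}(\mathbb{Z}_{>0}\times C_{a,\varphi})$ after choosing $|\zeta|$ small enough to offset the factor $\zeta^{n}$. Once absolute convergence and the rearrangement are established on a small disk around $\zeta=0$, the identity \eqref{first_Fredholm_formula} extends to the full range of $\zeta$ claimed by analyticity of both sides, which will be the payoff in later sections.
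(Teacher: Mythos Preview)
Your proposal is correct and follows essentially the same route as the paper: apply the $q$-binomial theorem to expand the $q$-Laplace transform, insert the single-contour $q$-moment formula \eqref{q_moments_infinite_single_contour}, reorganize the sum over partitions into a sum over $M$ and $(n_1,\ldots,n_M)\in\mathbb{Z}_{\ge1}^M$, and recognize the Fredholm expansion. Two small remarks: the paper makes the convergence argument fully explicit by bounding $\bigl|\prod_{r=0}^{n-1}f^{\mathrm{hc}}(q^rw)\bigr|\le B_1^{n}e^{-c\,\Re w}$ and invoking Hadamard's inequality for the $M\times M$ Cauchy-type determinant (you gesture at the first bound but omit the second); and your final sentence about analytic continuation in $\zeta$ is unnecessary here, since the proposition only asserts the identity for small $|\zeta|$---the extension to $\zeta\in\mathbb{C}\setminus\mathbb{R}_{\ge0}$ is carried out in the next proposition after passing to the Mellin--Barnes form of the kernel.
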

The Fredholm determinant of a kernel on $\mathbb{Z}_{>0}\times C_{a,\varphi}$
is defined similarly to \eqref{Fredholm_expansion}, but along with integrating
determinants of $K_{\zeta}^{(1),\mathrm{hc}}$ of sizes $M=1,2,\ldots $
in the continuous variables $w_1,\ldots, w_M\in C_{a,\varphi}$ we also sum them over the
discrete variables $n_1,\ldots, n_M\in\mathbb{Z}_{>0}$.
See also \eqref{first_Fredholm_formula_written_out} below.
\begin{proof}
	We first use the $q$-binomial theorem 
	\cite[(1.3.2)]{GasperRahman} to write
	\begin{equation*}
		\mathop{\mathbb{E}}\frac{1}{(\zeta q^{\HeightFunction_{\VertexXProcess(t)}(k)};q)_{\infty}}
		=
		\sum_{\ell=0}^{\infty}
		\frac{\zeta^{\ell}\mathop{\mathbb{E}}
		q^{\ell\HeightFunction_{\VertexXProcess(t)}(k)}}{(q;q)_{\ell}}.
	\end{equation*}
	Because $q^{\ell\HeightFunction_{\VertexXProcess(t)}(k)}\le1$, 
	the series converges for small enough
	$|\zeta|$, which justifies the interchange of the summation and the
	expectation.
	Using the formula of \Cref{statement:q_moments_infinite_single_contour} for 
	$\mathop{\mathbb{E}}q^{\ell\HeightFunction_{\VertexXProcess(t)}(k)}$
	we can reorganize the above summation 
	(see \cite[Proposition 3.2.8]{BorodinCorwin2011Macdonald} for details) 
	and write 
	\begin{multline}
	\label{Fredholm_first_expansion_need_to_justify_by_estimates}
		\mathop{\mathbb{E}}\frac{1}{(\zeta q^{\HeightFunction_{\VertexXProcess(t)}(k)};q)_{\infty}}
		=
		\sum_{M=0}^{\infty}
		\frac{1}{M!}
		\sum_{n_1,\ldots,n_M\in\mathbb{Z}_{>0}}
		\int_{C_{a,\varphi}}\ldots\int_{C_{a,\varphi}}
		\mathop{\mathrm{det}}\limits_{i,j=1}^{M}
		\left[ \frac{1}{q^{n_i}w_i-w_j} \right]
		\\
		\times
		\prod_{j=1}^{M}
		\zeta^{n_j}f^{\mathrm{hc}}(w_j)f^{\mathrm{hc}}(qw_j)
		\ldots f^{\mathrm{hc}}(q^{n_j-1}w_j)\frac{dw_j}{2\pi\mathbf{i}}.
	\end{multline}
	This coincides with the Fredholm expansion of 
	$
	\det
	\bigl(
		1+K_{\zeta}^{(1),\mathrm{hc}}
	\bigr)_{L^2(\mathbb{Z}_{>0}\times C_{a,\varphi})}
	$
	in the right-hand side of \eqref{first_Fredholm_formula}
	with the kernel given by \eqref{first_Fredholm_formula_kernel}.
	To finish the proof and show that identity
	\eqref{first_Fredholm_formula} holds numerically
	for small $|\zeta|$, we need to justify that the expansion in
	\eqref{Fredholm_first_expansion_need_to_justify_by_estimates}
	converges absolutely.
	First, observe that $f^{\mathrm{hc}}(q^nw)$ given by \eqref{f_function_in_moments_definition}
	is bounded on the contour $C_{a,\varphi}$ uniformly in $n\ge0$,
	and $f^{\mathrm{hc}}(w)$ decays exponentially 
	as $|w|$ grows. This implies that\footnote{%
		Here and below $B,B_1,B_2,\ldots $ and $c,c',c'',c_1,c_2,\ldots $ are positive
		constants which may depend on $q$, the parameters of the models, or the
		data in the formulation of the statements (such as the angle $\varphi$,
		etc.).%
	}
	\begin{equation}
	\label{Fredholm_first_expansion_need_to_justify_by_estimates_1}
		|f^{\mathrm{hc}}(w_j)f^{\mathrm{hc}}(qw_j)\ldots f^{\mathrm{hc}}(q^{n_j-1}w_j)|\le B_1^{n_j}e^{-c \Re(w_j)}.
	\end{equation}
	Next, observe that $\min_{w_1,w_2\in C_{a,\varphi}}|q^nw_1-w_2|=(1-q^n)a\sin\varphi
	\ge(1-q)a\sin \varphi$
	for any $n\in \mathbb{Z}_{>0}$, and so by the Hadamard's inequality
	we have
	\begin{equation*}
		\left|
			\mathop{\mathrm{det}}\limits_{i,j=1}^{M}
			\left[ \frac{1}{q^{n_i}w_i-w_j} \right]
		\right|
		\le
		B_2^{M}M^{M/2}.
	\end{equation*}
	Therefore, the sum of integrals of the absolute values in the right-hand side of 
	\eqref{Fredholm_first_expansion_need_to_justify_by_estimates}
	can be estimated as
	\begin{equation*}
		\sum_{M\ge0}\frac{B_2^MM^{M/2}}{M!}
		\left( \sum_{n\ge1}B_1^n\zeta^n B_3 \right)^{M},
	\end{equation*}
	where $B_3$ arises from integrating the exponent in 
	\eqref{Fredholm_first_expansion_need_to_justify_by_estimates_1}.
	The inside geometric series in $n$
	converges for sufficiently small $|\zeta|$,
	and the series in $M$ converges thanks to the factorial
	in the denominator. This completes the proof.
\end{proof}

Following \cite{BorodinCorwinFerrari2012}, 
let us define another contour which will play a role in 
the next Fredholm determinantal formula.
\begin{definition}
	\label{def:d_w_contour_definition}
	For $R,d>0$ let
	\begin{equation}
		\label{d_R_d_contour_definition}
		D_{R,d}:=
		\bigl(R-\mathbf{i}\infty, R-\mathbf{i}d\bigr]
		\cup
		\bigl(R-\mathbf{i}d,\tfrac{1}{2}-\mathbf{i}d\bigr]
		\cup
		\bigl(\tfrac{1}{2}-\mathbf{i}d,\tfrac{1}{2}+\mathbf{i}d\bigr]
		\cup
		\bigl(\tfrac{1}{2}+\mathbf{i}d,R+\mathbf{i}d\bigr]
		\cup
		\bigl(R+\mathbf{i}d,R+\mathbf{i}\infty\bigr),
	\end{equation}
	oriented so that the imaginary part does not decrease along $D_{R,d}$.
	Now, for every $w\in C_{a,\varphi}$, 
	where $a>0$, $\varphi\in(0,\pi/2)$, let us choose $R(w),d(w)$ such that:
	\begin{enumerate}[\quad$\bullet$]
		\item For any $\mathsf{u}\in D_{R(w),d(w)}$ we have
			$\arg(w(q^\mathsf{u}-1))\in(\pi/2+b,3\pi/2-b)$, where $b=\pi/4-\varphi/2$.
		\item For any $\mathsf{u}\in D_{R(w),d(w)}$ the point $q^{\mathsf{u}}w$ 
			stays to the left of the contour $C_{a,\varphi}$.
	\end{enumerate}
	Denote the resulting contour $D_{R(w),d(w)}$ simply by $D_w$.
	By \cite[Remark 4.9]{BorodinCorwinFerrari2012}, the contour $D_w$
	exists, and for large $|w|$ it suffices to take $d(w)\sim B_d|w|^{-1}$ and 
	$R(w)\sim B_R\log|w|$ for some constants $B_d,B_R>0$.
	See \Cref{fig:infinite_contour_D_definition} for an illustration.
\end{definition}

\begin{figure}[htbp]
	\centering
	\begin{tikzpicture}
		[scale=3.2, thick]
		\def\pt{0.02}
		\def\q{.61}
		\def\ss{.56}
		\draw[->] (-.1,0)--(1.7,0);
		\draw[->] (0,-.8)--(0,.8);
		\draw[
			very thick,
			decoration={
				markings,
				mark=at position 0.1 with {\arrow{>}},
				mark=at position 0.37 with {\arrow{>}},
				mark=at position 0.52 with {\arrow{>}},
				mark=at position 0.64 with {\arrow{>}},
				mark=at position 0.9 with {\arrow{>}},
			},
			postaction={decorate}
		] 
		(1,-.7)--(1,-.1)--(.2,-.1)--(.2,.1)--(1,.1)--(1,.7);
		\node at (1.17,.6) {$D_{R,d}$};
		\draw[<->] (1.2,-.1)--(1.2,.1) node [above, xshift=5pt] {$2d$};
		\draw[dotted] (1,.1)--++(.3,0);
		\draw[dotted] (1,-.1)--++(.3,0);
		\draw[<->] (0,-.6)--++(1,0) node [midway, yshift=-7pt] {$R$};
		\draw[<->] (0,-.3)--++(.2,0) node [midway, yshift=-10pt] {$\tfrac{1}{2}$};
		\draw[dotted] (.2,-.1)--(.2,-.4);
	\end{tikzpicture}
	\hspace{20pt}
		\begin{tikzpicture}
		[scale=3.2, thick]
		\def\pt{0.02}
		\def\q{.61}
		\def\ss{.56}
		\draw[->] (-.4,0)--(1.2,0);
		\draw[->] (0,-.8)--(0,.8);
		\draw[very thick] (0,0) circle (.2);
		\draw[very thick] (.8,0) arc (0:20:.8);
		\draw[very thick] (.8,0) arc (0:-20:.8);
		\draw[very thick] (0.187939, 0.068404)--(0.751754, 0.273616);
		\draw[very thick] (0.187939, -0.068404)--(0.751754, -0.273616);
		\draw[dotted] (.2,0) --++(0,-.4);
		\draw[<->] (0,-.3)--++(.2,0) node [midway, yshift=-10pt] {$q^R$};
		\draw[fill] (1,0) circle (.6pt) node[below right] {1};
		\draw[fill] (.8,0) circle (.6pt) node[below left] {$q^{\frac{1}{2}}$};
		\draw[dotted] (1,0) -- (0.751754, 0.273616) --++ ( 1.5*0.751754-1.5, 1.5*0.273616);
		\draw[dotted] (1,0) -- (0.751754, -0.273616) --++ ( 1.5*0.751754-1.5, -1.5*0.273616);
		\node at (-.4,.3) {$q^{\mathsf{u}}$, $\mathsf{u}\in D_{R,d}$};
		\draw[<->] (.87,0) arc (180:130:.13) node[above, anchor=west, yshift=5pt, xshift=-2pt] 
			{angle $<b=\frac{\pi}{4}-\frac{\varphi}{2}$};
	\end{tikzpicture}
	\caption{%
		Left: the contour $D_{R,d}$. 
		Right: the points $q^\mathsf{u}$ for $\mathsf{u}\in D_{R,d}$.
		The angle indicated in the picture should be less than $b=\pi/4-\varphi/2$ 
		in order to satisfy the condition
		$\arg(w(q^{\mathsf{u}}-1))\in(\pi/2+b,3\pi/2-b)$.%
	}
	\label{fig:infinite_contour_D_definition}
\end{figure}
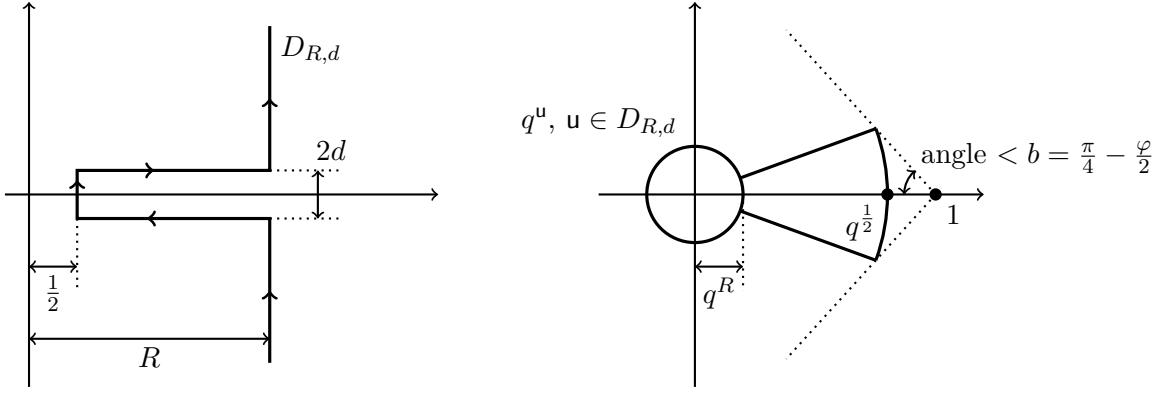

\begin{proposition}
	\label{statement:second_Fredholm_formula_}
	Fix $t>0$ and $k\in\mathbb{Z}_{\ge1}$.
	We have for any $\zeta\in\mathbb{C}\setminus\mathbb{R}_{\ge0}$:
	\begin{equation}
	\label{second_Fredholm_formula}
		\mathop{\mathbb{E}}
		\frac{1}{
		\bigl(
			\zeta q^{\HeightFunction_{\VertexXProcess(t)}(k)};q
		\bigr)_{\infty}}
		=
		\det
		\bigl(
			1+K_{\zeta}^{(2),\mathrm{hc}}
		\bigr)_{L^2(C_{a,\varphi})},
	\end{equation}
	where the contour $C_{a,\varphi}$ is given in
	\Cref{def:c_a_phi_contour_definition}
	with
	$0<a<\min
	\bigl( 
		\Speed(0),
		\inf_{j\ge1}\left\{ -\upxi_j\mathsf{s}_j \right\} 
	\bigr)$ and
	$\varphi\in(0,\pi/2)$. The kernel in \eqref{second_Fredholm_formula}
	is
	\begin{equation}
		\label{second_Fredholm_formula_kernel}
		K_{\zeta}^{(2),\mathrm{hc}}(w,w')=\frac{1}{2\pi\mathbf{i}}\int_{D_w}
		\Gamma(-\mathsf{u})\Gamma(1+\mathsf{u})(-\zeta)^{\mathsf{u}}
		\frac{g^{\mathrm{hc}}(w)}{g^{\mathrm{hc}}(q^{\mathsf{u}}w)}\frac{d\mathsf{u}}{q^{\mathsf{u}}w-w'},
	\end{equation}
	with the contour $D_w$ as in \Cref{def:d_w_contour_definition}
	and
	where $g^{\mathrm{hc}}(w)$ is expressed through $f^{\mathrm{hc}}(w)$ \eqref{f_function_in_moments_definition} via
	\begin{equation}
		\label{second_Fredholm_formula_g_function}
		g^{\mathrm{hc}}(w)
		:=
		f^{\mathrm{hc}}(w)f^{\mathrm{hc}}(qw)\ldots 
		= 
		\frac{e^{-tw}}{(w/\Speed(0);q)_{\infty}}
		\prod_{j=1}^{k-1}
		\frac{(-w\mathsf{s}_j\upxi_j^{-1};q)_{\infty}}{(-w\mathsf{s}_j^{-1}\upxi_j^{-1};q)_{\infty}}.
	\end{equation}
\end{proposition}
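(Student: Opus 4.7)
The plan is to deduce \eqref{second_Fredholm_formula} from \eqref{first_Fredholm_formula} of \Cref{statement:first_Fredholm_formula} by means of a Mellin--Barnes transformation that converts the discrete summation over $n\in\mathbb{Z}_{>0}$ into a contour integral over $D_w$. Except for the precise form of $g^{\mathrm{hc}}$, this matches the now-standard treatment in \cite{BorodinCorwinFerrari2012} and \cite{BorodinCorwin2011Macdonald}, and the key identity specific to this setting is the telescoping
\begin{equation*}
f^{\mathrm{hc}}(w)f^{\mathrm{hc}}(qw)\cdots f^{\mathrm{hc}}(q^{n-1}w)=\frac{g^{\mathrm{hc}}(w)}{g^{\mathrm{hc}}(q^{n}w)},\qquad n\in\mathbb{Z}_{\ge 0},
\end{equation*}
which is immediate from \eqref{second_Fredholm_formula_g_function}.

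First I would compute the residues. By the reflection formula $\Gamma(-u)\Gamma(1+u)=-\pi/\sin(\pi u)$, one has $\mathrm{Res}_{u=n}\Gamma(-u)\Gamma(1+u)(-\zeta)^{u}=-\zeta^{n}$ for $n=1,2,\ldots$\,. Since the contour $D_w$ of \Cref{def:d_w_contour_definition} encircles each of the poles $u=1,2,\ldots$ in the clockwise direction, pushing its rightmost vertical segment to $\Re u=+\infty$ would give
\begin{equation*}
K_\zeta^{(2),\mathrm{hc}}(w,w')=\sum_{n=1}^{\infty}\zeta^{n}\frac{g^{\mathrm{hc}}(w)}{g^{\mathrm{hc}}(q^{n}w)}\frac{1}{q^{n}w-w'}=\sum_{n=1}^{\infty}\frac{\zeta^{n}f^{\mathrm{hc}}(w)\cdots f^{\mathrm{hc}}(q^{n-1}w)}{q^{n}w-w'}.
\end{equation*}
Expanding $\det(1+K_\zeta^{(2),\mathrm{hc}})_{L^2(C_{a,\varphi})}$ as a Fredholm series and interchanging summation and integration would then reproduce the expansion \eqref{Fredholm_first_expansion_need_to_justify_by_estimates} of $\det(1+K_\zeta^{(1),\mathrm{hc}})_{L^2(\mathbb{Z}_{>0}\times C_{a,\varphi})}$ from the proof of \Cref{statement:first_Fredholm_formula}, proving \eqref{second_Fredholm_formula} for small $|\zeta|$.

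The main technical work lies in two analytic justifications. First, that $D_w$ can indeed be pushed to $\Re u=+\infty$: the reflection formula gives a bound
\begin{equation*}
\bigl|\Gamma(-u)\Gamma(1+u)(-\zeta)^{u}\bigr|\le C\,|\zeta|^{\Re u}\exp\bigl(-(\pi-|\arg(-\zeta)|)\,|\Im u|\bigr),
\end{equation*}
which is integrable along vertical lines precisely when $\zeta\in\mathbb{C}\setminus\mathbb{R}_{\ge 0}$, so that $|\arg(-\zeta)|<\pi$. The factor $g^{\mathrm{hc}}(w)/[g^{\mathrm{hc}}(q^{u}w)(q^{u}w-w')]$ remains bounded as $\Re u\to+\infty$ because $q^{u}w\to 0$ and $g^{\mathrm{hc}}(0)=1$, and the geometric conditions built into \Cref{def:d_w_contour_definition} (on $\arg(w(q^{u}-1))$ and on keeping $q^{u}w$ to the left of $C_{a,\varphi}$) ensure that no residues other than those at the positive integers are crossed during the deformation. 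Second, both Fredholm expansions must converge absolutely so that every swap of summation and integration is legitimate; this would be handled by a Hadamard-type bound on the $M\times M$ Cauchy-like determinants exactly as in the proof of \Cref{statement:first_Fredholm_formula}, combined with the above exponential decay. The hard part, which I expect to be the main obstacle, is to carry these uniform estimates through simultaneously in $M$, $n$, and in $u$ along $D_w$, but this is bookkeeping rather than a genuinely new obstacle.

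Finally, the identity would have been established only for $\zeta$ in a small punctured neighborhood of $0$ inherited from \Cref{statement:first_Fredholm_formula}. To extend it to all $\zeta\in\mathbb{C}\setminus\mathbb{R}_{\ge 0}$ one observes that both sides are analytic in $\zeta$ throughout this domain: the left-hand side because $(\zeta q^{\HeightFunction_{\VertexXProcess(t)}(k)};q)_{\infty}$ has no zeros for $\zeta\notin\mathbb{R}_{>0}$ and $\HeightFunction_{\VertexXProcess(t)}(k)$ is bounded by a geometric tail in $k$, yielding uniform convergence of the expectation on compacts; the right-hand side because its Fredholm expansion converges uniformly in $\zeta$ on compact subsets of $\mathbb{C}\setminus\mathbb{R}_{\ge 0}$ by the estimates above. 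Analytic continuation would then complete the proof.
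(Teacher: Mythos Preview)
Your proposal is correct and follows essentially the same two-step approach as the paper: apply the Mellin--Barnes summation to convert the $n$-sum in \Cref{statement:first_Fredholm_formula} into the $D_w$-integral (for small $|\zeta|$), then analytically continue in $\zeta$. The one place where you slightly underestimate the work is the uniform convergence of the Fredholm expansion of the right-hand side on compacts in $\mathbb{C}\setminus\mathbb{R}_{\ge0}$: the estimates you cite control the $\mathsf{u}$-integral, but one also needs that the $q$-Pochhammer products in $g^{\mathrm{hc}}(w)/g^{\mathrm{hc}}(q^{\mathsf{u}}w)$ grow only like $|w|^{c\log|w|+c'}$ along $C_{a,\varphi}$, so that the exponential decay $e^{-c_\varphi t|w|}$ coming from the $e^{-tw}$ factor dominates and the $w$-integrals converge; the paper carries this out explicitly, but as you say it is bookkeeping rather than a new idea.
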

\begin{proof}
	\textbf{Step 1.}
	We first prove \eqref{second_Fredholm_formula} for small $|\zeta|$ by
	rewriting the previous formula of \Cref{statement:first_Fredholm_formula},
	and then will analytically continue in $\zeta$ using the properties of the
	contour $C_{a,\varphi}$. 
	To rewrite the sums coming from the $\mathbb{Z}_{>0}$
	part of the Fredholm determinant in 
	\eqref{first_Fredholm_formula}, we use the Mellin--Barnes summation formula
	(e.g., \cite[Lemma 7.1]{BorodinCorwinFerrari2012})
	which follows from the fact that
	$\mathop{\mathrm{Res}}_{\mathsf{u}=n}
	\Gamma(-\mathsf{u})\Gamma(1+\mathsf{u})=(-1)^{n+1}$:
	\begin{equation}
	\label{Mellin_barnes_summation}
		\sum_{n=1}^{\infty}F(q^n)\zeta^n
		=
		\frac{1}{2\pi\mathbf{i}}\int_{C_{1,2,\ldots }}
		\Gamma(-\mathsf{u})\Gamma(1+\mathsf{u})
		(-\zeta)^{\mathsf{u}}F(q^\mathsf{u})d\mathsf{u},
		\qquad 
		|\zeta|<1,
		\quad
		\zeta\notin\mathbb{R}_{\ge0},
	\end{equation}
	where the contour $C_{1,2,\ldots }$ encircles points $1,2,\ldots $ and winds
	around them in the negative direction, and encircles no other singularities
	of $F(q^{\mathsf{u}})$.
	For the above equality to hold, the series in the left-hand side must
	converge, and the integral in the right-hand side must be able to be
	approximated by integrals over a sequence of (negatively oriented) contours
	$C_\mathsf{k}$, $\mathsf{k}=1,2,\ldots $, such that: 
	\begin{enumerate}[\quad$\bullet$]
		\item Each contour $C_\mathsf{k}$ encircles $1,\ldots, \mathsf{k}$ and no other singularities
			of the integrand;
		\item The contours $C_\mathsf{k}$ partly coincide with $C_{1,2,\ldots }$;
		\item The integral over the symmetric difference of
			$C_\mathsf{k}$ and $C_{1,2,\ldots }$ goes to zero as $\mathsf{k}\to\infty$.
	\end{enumerate}
	
	The Fredholm determinant
	in the right-hand side of \eqref{first_Fredholm_formula} looks as
	\begin{multline}
		\label{first_Fredholm_formula_written_out}
		\det
		\bigl(
			1+K_{\zeta}^{(1),\mathrm{hc}}
		\bigr)_{L^2(\mathbb{Z}_{>0}\times C_{a,\varphi})}
		\\=
		1+\sum_{M=1}^{\infty}
		\frac{1}{M!}
		\sum_{\sigma\in S(M)}
		(-1)^{\sigma}
		\prod_{j=1}^{M}
		\int_{C_{a,\varphi}}
		\frac{dw_j}{2\pi\mathbf{i}}
		\sum_{n_j=1}^{\infty}
		\frac{\zeta^{n_j}}{q^{n_j}w_j-w_{\sigma(j)}}
		\frac{g^{\mathrm{hc}}(w_j)}{g^{\mathrm{hc}}(q^{n_j}w_j)}.
	\end{multline}
	We aim to apply \eqref{Mellin_barnes_summation} to each of the sums over $n_j$ above 
	(which converge for sufficiently small $|\zeta|$,
	as follows from the proof of \Cref{statement:first_Fredholm_formula}), 
	that is, with 
	\begin{equation}
	\label{function_F_in_Mellin_Barnes_application}
		F(q^\mathsf{u})=F_{w,w'}(q^\mathsf{u}):=\frac{g^{\mathrm{hc}}(w)}{(q^\mathsf{u}w-w')g^{\mathrm{hc}}(q^\mathsf{u}w)},
	\end{equation}
	where $w,w'\in C_{a,\varphi}$ are fixed.
	For that we define $C_\mathsf{k}$ to be a closed contour which coincides with $D_w$ of
	\Cref{def:d_w_contour_definition} inside the disc of radius $\mathsf{k}+\frac{1}{2}$
	centered at $0$ and which is closed by an arc of the corresponding circle,
	see \Cref{fig:contour_C_k_approximating_D_w}.
	The contour $D_w$ will then serve as $C_{1,2,\ldots }$.

	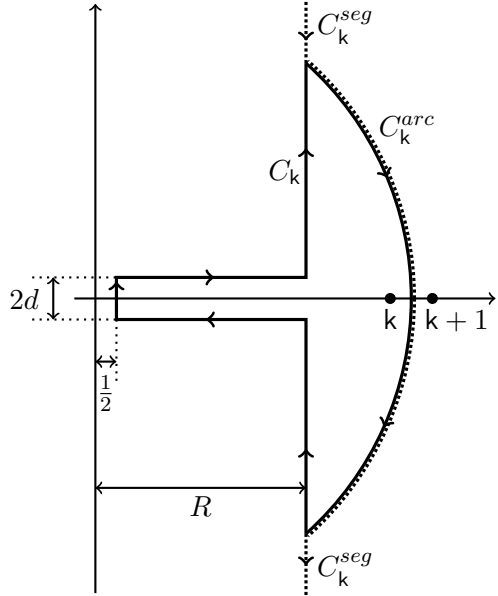
\begin{figure}[htbp]
		\centering
		\begin{tikzpicture}
			[scale=2.8, thick]
			\def\pt{0.02}
			\def\q{.61}
			\def\ss{.56}
			\draw[->] (-.1,0)--(1.9,0);
			\draw[->] (0,-1.4)--(0,1.4);
			\draw[
				very thick,
				decoration={
					markings,
					mark=at position 0.1 with {\arrow{>}},
					mark=at position 0.37 with {\arrow{>}},
					mark=at position 0.52 with {\arrow{>}},
					mark=at position 0.64 with {\arrow{>}},
					mark=at position 0.9 with {\arrow{>}},
				},
				postaction={decorate}
			] 
			(1,-1.118)--(1,-.1)--(.1,-.1)--(.1,.1)--(1,.1)--(1,1.118);
			\node at (0.9,.6) {$C_\mathsf{k}$};
			\draw[<->] (-.2,-.1)--(-.2,.1) node [midway, xshift=-11pt] {$2d$};
			\draw[dotted] (-.3,.1)--++(.5,0);
			\draw[dotted] (-.3,-.1)--++(.5,0);
			\draw[<->] (0,-.9)--++(1,0) node [midway, yshift=-7pt] {$R$};
			\draw[<->] (0,-.3)--++(.1,0) node [midway, yshift=-12pt] {$\tfrac{1}{2}$};
			\draw[dotted] (.1,-.1)--(.1,-.4);
			\draw[very thick,
			decoration={
					markings,
					mark=at position 0.5 with {\arrow{<}},
				},
				postaction={decorate}
			] (1.5,0) arc (0:48.2:1.5);
			\draw[very thick,
			decoration={
					markings,
					mark=at position 0.5 with {\arrow{>}},
				},
				postaction={decorate}
			] (1.5,0) arc (0:-48.2:1.5);
			\draw[fill] (1.4,0) circle (.6pt) node[below] {$\mathsf{k}$};
			\draw[fill] (1.6,0) circle (.6pt) node[below right, xshift=-7pt] {$\mathsf{k}+1$};
			\draw[very thick, densely dotted,
			decoration={
					markings,
					mark=at position 0.5 with {\arrow{<}},
				},
				postaction={decorate}
			] (1,1.118)--++(0,.3) node [below right] {$C_\mathsf{k}^{seg}$};
			\draw[very thick, densely dotted,
			decoration={
					markings,
					mark=at position 0.5 with {\arrow{>}},
				},
				postaction={decorate}
			] (1,-1.118)--++(0,-.3) node [above right] {$C_\mathsf{k}^{seg}$};
			\draw[very thick, densely dotted] (1.515,0) arc (0:-48.2:1.515);
			\draw[very thick, densely dotted] (1.515,0) arc (0:48.2:1.515);
			\node at (1.48,.8) {$C_\mathsf{k}^{arc}$};
		\end{tikzpicture}
		\caption{
			The contour $C_\mathsf{k}$ (solid) used in the proof of
			\Cref{statement:second_Fredholm_formula_}, and the parts
			$C_\mathsf{k}^{seg}$ and $C_{\mathsf{k}}^{arc}$ (dotted) of the symmetric difference between
			$C_\mathsf{k}$ and $D_w$.
		}
		\label{fig:contour_C_k_approximating_D_w}
	\end{figure}
	
	Because of our definitions of $C_{a,\varphi}$ and $D_w$, the contours $C_\mathsf{k}$
	and $D_w$ do not encircle any $\mathsf{u}$ singularities of the integrand
	$\Gamma(-\mathsf{u})\Gamma(1+\mathsf{u})(-\zeta)^{\mathsf{u}}
	F_{w,w'}(q^{\mathsf{u}})$ except for the poles of $\Gamma(-\mathsf{u})$. 
	Indeed, the part of the plane to the right of $D_w$ maps under
	$\mathsf{u}\mapsto q^{\mathsf{u}}$ to the union of the disc and the sector in
	\Cref{fig:infinite_contour_D_definition}, right.  Therefore, because
	$q^{\mathsf{u}}w$ lies to the left of $C_{a,\varphi}\ni w'$, the denominator
	$q^{\mathsf{u}}w-w'$ does not vanish for $\mathsf{u}$ to the right of $D_w$.
	Moreover, poles coming from $g^{\mathrm{hc}}(q^{\mathsf{u}}w)=0$ are
	$q^{\mathsf{u}}w=-\upxi_j\mathsf{s}_j^{-1}q^{-m}$, $m\in\mathbb{Z}_{\ge0}$,
	and they also do not occur for $\mathsf{u}$ to the right of $D_w$ because
	$q^{\mathsf{u}}w$ is to the left of $C_{a,\varphi}$ while the points
	$-\upxi_j\mathsf{s}_j^{-1}q^{-m}$ are all to the right of $C_{a,\varphi}$.
	
	Thus, to apply \eqref{Mellin_barnes_summation} 
	to \eqref{first_Fredholm_formula_written_out}
	it remains to show that the integrals over the symmetric difference of 
	$D_w$ and $C_\mathsf{k}$ go to zero as $\mathsf{k}\to+\infty$.
	The symmetric difference contains two straight lines the union of which is denoted by $C_\mathsf{k}^{seg}$
	and an arc denoted by $C_\mathsf{k}^{arc}$, see \Cref{fig:contour_C_k_approximating_D_w}.
	First, observe that for fixed $w,w'\in C_{a,\varphi}$ the 
	function $F_{w,w'}(q^{\mathsf{u}})$ is uniformly bounded in 
	$\mathsf{u}\in C_\mathsf{k}^{seg}\cup C_\mathsf{k}^{arc}$.

	Consider $C_\mathsf{k}^{seg}$. 
	We have $|(-\zeta)^{\mathsf{u}}|=r^xe^{-\sigma y}$ for
	$-\zeta=re^{\mathbf{i}\sigma}$, $r<1$, $\sigma\in(-\pi,\pi)$, and
	$\mathsf{u}=x+\mathbf{i}y$. 
	Moreover, we have 
	\begin{equation*}
		\left|
		\Gamma(-\mathsf{u})\Gamma(1+\mathsf{u})
		\right|
		=
		\left|
		\frac{\pi}{\sin(-\pi \mathsf{u})}
		\right|
		\le
		\frac{B}{e^{\pi |y|}}, 
		\qquad 
		\mathsf{u}=x+\mathbf{i}y, 
		\qquad 
		\mathop{\mathrm{dist}}(\mathsf{u},\mathbb{Z})\ge\frac{1}{2}.
	\end{equation*}
	Therefore, on $C_\mathsf{k}^{seg}$ one has
	\begin{equation*}
		\left|
		(-\zeta)^{\mathsf{u}}
		\Gamma(-\mathsf{u})\Gamma(1+\mathsf{u})
		\right|
		\le 
		B
		r^{R}e^{-y\sigma}e^{-\pi|y|}
		,\qquad 
		\mathsf{u}=x+\mathbf{i}y.
	\end{equation*}
	Because $|\sigma|<\pi$, the integral of the above expression
	over the infinite vertical contour converges, and so the 
	integral over $C_\mathsf{k}^{seg}$ goes to zero as $\mathsf{k}\to+\infty$.

	Now consider $C_\mathsf{k}^{arc}$. On this contour
	\begin{equation*}
		\left|
		(-\zeta)^{\mathsf{u}}
		\Gamma(-\mathsf{u})\Gamma(1+\mathsf{u})
		\right|
		\le 
		B
		r^x e^{-y\sigma-\pi|y|}
		\le
		B_1
		e^{-c(x+|y|)}.
	\end{equation*}
	Therefore, for $\mathsf{k}\to+\infty$ the integrand decays exponentially in $\mathsf{k}$, 
	while the length of the contour grows only linearly in $\mathsf{k}$,
	and so the integrals over $C_\mathsf{k}^{arc}$ go to zero. 
	Thus, we can apply the Mellin--Barnes summation
	which gives the desired Fredholm determinant 
	\eqref{second_Fredholm_formula} for small $|\zeta|$.
	
\medskip

	\textbf{Step 2.} Now that we have established \eqref{second_Fredholm_formula}
	for sufficiently small $|\zeta|$, it remains to justify that this identity can be
	analytically continued to $\zeta\in\mathbb{C}\setminus\mathbb{R}_{\ge0}$.
	The left-hand side is analytic because it can be represented as a series
	$\sum_{n=0}^{\infty}
	\mathbb{P}(\HeightFunction_{\VertexXProcess(t)}(k)=n)/(\zeta q^n;q)_{\infty}$
	with probabilities in the numerator bounded by~$1$.
	To show that the Fredholm determinant
	$\det(1+K_{\zeta}^{(2),\mathrm{hc}})_{L^2(C_{a,\varphi})}$ in the right-hand
	side of \eqref{second_Fredholm_formula} is analytic, we will show that
	its Fredholm expansion (as a sum over $M\ge0$) is uniformly absolutely
	convergent in $\zeta$ belonging to any closed disc in
	$\mathbb{C}\setminus\mathbb{R}_{\ge0}$.

	We have
	\begin{align}
		\nonumber
		&\det
		\bigl(
			1+
			K_{\zeta}^{(2),\mathrm{hc}}
		\bigr)_{L^2(C_{a,\varphi})}
		\\&\hspace{30pt}=
		1+\sum_{M=1}^{\infty}\frac{1}{M!}
		\int_{C_{a,\varphi}}\frac{dw_1}{2\pi\mathbf{i}}\ldots 
		\int_{C_{a,\varphi}}\frac{dw_M}{2\pi\mathbf{i}}
		\mathop{\mathrm{det}}\limits_{i,j=1}^{M}
		\Bigl[
			K_{\zeta}^{(2),\mathrm{hc}}(w_i,w_j)
		\Bigr]
		\nonumber
		\\&\hspace{30pt}=
		1+
		\sum_{M=1}^{\infty}\frac{1}{M!}
		\int_{C_{a,\varphi}}\frac{dw_1}{2\pi\mathbf{i}}\ldots 
		\int_{C_{a,\varphi}}\frac{dw_M}{2\pi\mathbf{i}}
		\int_{D_{w_1}}\frac{d\mathsf{u}_1}{2\pi\mathbf{i}}\ldots 
		\int_{D_{w_M}}\frac{d\mathsf{u}_M}{2\pi\mathbf{i}}
		\mathop{\mathrm{det}}\limits_{i,j=1}^{M}
		\biggl[
			\frac{1}{q^{\mathsf{u}_i}w_i-w_j}
		\biggr]
		\nonumber
		\\&\hspace{260pt}
		\times
		\prod_{j=1}^{M}
		\Gamma(-\mathsf{u}_j)\Gamma(1+\mathsf{u}_j)(-\zeta)^{\mathsf{u}_j}
		\frac{g^{\mathrm{hc}}(w_j)}{g^{\mathrm{hc}}(q^{\mathsf{u}_j}w_j)},
		\label{second_Fredholm_formula_estimating}
	\end{align}
	where $g^{\mathrm{hc}}(w)$ is given by \eqref{second_Fredholm_formula_g_function}.
	First let us estimate the product of the $q$-Pochhammer symbols coming from
	the product of $g^{\mathrm{hc}}(w_j)/g^{\mathrm{hc}}(q^{\mathsf{u}_j}w_j)$:
	\begin{equation}
	\label{product_of_q_Pochhammers_to_bound}
		\prod_{j=1}^{M}
		\frac{(w_jq^{\mathsf{u}_j}/\Speed(0);q)_{\infty}}{(w_j/\Speed(0);q)_{\infty}}
		\prod_{i=1}^{k-1}
		\frac{(-w_j\mathsf{s}_i\upxi_i^{-1};q)_{\infty}}{(-w_jq^{\mathsf{u}_j}\mathsf{s}_i\upxi_i^{-1};q)_{\infty}}
		\frac{(-w_jq^{\mathsf{u}_j}\mathsf{s}_i^{-1}\upxi_i^{-1};q)_{\infty}}{(-w_j\mathsf{s}_i^{-1}\upxi_i^{-1};q)_{\infty}}.
	\end{equation}
	The $q$-Pochhammer symbols are estimated as follows 
	(where $z\in\mathbb{C}\setminus\left\{ 0 \right\}$ is arbitrary):
	\begin{equation*}
		\left|(z;q)_{\infty}\right|
		=
		\left|(z;q)_{r}(zq^r;q)_{\infty}\right|
		\le
		B_1
		\left|
		(z;q)_r
		\right|
		\le
		B_2
		|z|^{c_2\log|z|}
		,
	\end{equation*}
	where $r$ is such that $|zq^r|\le1$ and hence can be chosen to satisfy 
	$r\le c_1\log|z|$.
	Because $\Re\mathsf{u}_j>0$ for $\mathsf{u}_j\in D_{w_j}$, we have
	$|q^{\mathsf{u}_j}|<1$.
	Thus, we can write for all $w_j\in C_{a,\varphi}$:
	\begin{equation}
	\label{product_of_pochhammers_bounding___ref_in_expon_model}
		\bigl|\eqref{product_of_q_Pochhammers_to_bound}\bigr|
		\le
		B_3^{M}\prod_{j=1}^{M}
		|w_j|^{
		c_3
		\left(
			\log|w_j/\Speed(0)|
			+
			\sum_{i=1}^{k-1}\log|w_j\mathsf{s}_i\upxi_i^{-1}|
			+
			\sum_{i=1}^{k-1}\log|w_j\mathsf{s}_i^{-1}\upxi_i^{-1}|
		\right)}
		\le
		B_4^{M}\prod_{j=1}^{M}
		|w_j|^{c_3 \log|w_j|+c_4}.
	\end{equation}
	Indeed, 
	$|w_j|$ is bounded from below 
	along $C_{a,\varphi}$,
	and so the product of the inverses of the $q$-Pochhammer symbols
	in \eqref{product_of_q_Pochhammers_to_bound}
	can be bounded from above by a constant.

	The product of $g^{\mathrm{hc}}(w_j)/g^{\mathrm{hc}}(q^{\mathsf{u}_j}w_j)$ also contains the following
	exponential terms:
	\begin{equation}
	\label{exponential_part_of_Fredholm_to_bound}
		\exp
		\Bigl\{ 
			t\sum_{j=1}^{M}w_j(q^{\mathsf{u}_j}-1)
		\Bigr\}.
	\end{equation}
	From \Cref{def:d_w_contour_definition} it follows that 
	$\Re\left( w_j(q^{\mathsf{u}_j}-1) \right)\le -c_\varphi|w_j|$
	for all $w_j\in C_{a,\varphi}$ and $\mathsf{u}_j\in D_{w_j}$,
	where $c_\varphi>0$ is a constant depending on $\varphi\in(0,\pi/2)$.
	Therefore, 
	\begin{equation*}
		\bigl|\eqref{exponential_part_of_Fredholm_to_bound}\bigr|\le
		\exp\Bigl\{
			-c_\varphi t\sum_{j=1}^{M}|w_j|
		\Bigr\}.
	\end{equation*}

	Moreover, the expression $|q^{\mathsf{u}_i}w_i-w_j|$ is bounded from below for
	$w_i,w_j\in C_{a,\varphi}$ and $\mathsf{u}_i\in D_{w_i}$, and so Hadamard's 
	inequality allows to bound the determinant in the right-hand side of
	\eqref{second_Fredholm_formula_estimating}
	by $B_5^{M}M^{M/2}$.

	Therefore, one can bound the absolute value of the $M$-th term in the series
	in the right-hand side of \eqref{second_Fredholm_formula_estimating}
	by 
	\begin{equation}
	\label{integral_with_respect_to_arc_length_in_proof_of_second_Fredholm}
		\frac{1}{M!}B_4^MB_5^MM^{M/2}
		\left( 
			\int_{C_{a,\varphi}}|dw|\int_{D_w}|d\mathsf{u}|
			\,\bigl|
				\Gamma(-\mathsf{u})\Gamma(1+\mathsf{u})(-\zeta)^{\mathsf{u}}
			\bigr|\,
			|w|^{c_3\log|w|+c_4}e^{-c_\varphi t |w|}
		\right)^M,
	\end{equation}
	where $|dw|$ and $|d\mathsf{u}|$ stand for integration with respect to the arc length.

	Fix a closed disc in $\mathbb{C}\setminus\mathbb{R}_{\ge0}$ in which $\zeta$ lies, and
	let $\sup|\zeta|=\rho>0$ in that disc.
	The integral with respect to $|d\mathsf{u}|$ can be estimated as follows. 
	For the part 
	$
		(R-\mathbf{i}d,\tfrac{1}{2}-\mathbf{i}d]
		\cup
		(\tfrac{1}{2}-\mathbf{i}d,\tfrac{1}{2}+\mathbf{i}d]
		\cup
		(\tfrac{1}{2}+\mathbf{i}d,R+\mathbf{i}d]
	$
	of the contour $D_w$ 
	(recall that $R=R(w)\sim B_R\log|w|$ and $d=d(w)\sim B_d |w|^{-1}$)
	one can check that $|\sin \pi\mathsf{u}|\ge B_6 d$, and so 
	$|\Gamma(-\mathsf{u})\Gamma(1+\mathsf{u})|\le B_7 |w|$. 
	This part of the integration contour has length of order $R$, and 
	$|(-\zeta)^{\mathsf{u}}|$ is bounded by $\rho^{R}$. 
	Therefore, the integral with respect to $|d\mathsf{u}|$ 
	over this part of the contour is bounded by 
	$B_8|w|\rho^{c_5\log|w|}\log |w|$.
	On the remaining part 
	$(R-\mathbf{i}\infty, R-\mathbf{i}d]
	\cup
	(R+\mathbf{i}d,R+\mathbf{i}\infty)$
	of the contour $D_w$ the integrand decays exponentially in $\mathsf{u}$
	thanks to the presence of the gamma functions.
	Therefore, the integral over this part of the contour
	is estimated by $B_9\rho^{R}\sim B_9\rho^{c_5\log|w|}$.

	Plugging these estimates into \eqref{integral_with_respect_to_arc_length_in_proof_of_second_Fredholm}
	we see that 
	\begin{equation*}
		\eqref{integral_with_respect_to_arc_length_in_proof_of_second_Fredholm}
		\le
		\frac{B^MM^{M/2}}{M!}
		\left( 
			\int_{C_{a,\varphi}}|dw|
			|w|^{c\log|w|+c'}
			\rho^{c''\log|w|}
			\log|w|
			e^{-c_\varphi t |w|}
		\right)^M.
	\end{equation*}
	Due to the exponential term the integral in $w$ converges, and thus the 
	sum over $M$ in \eqref{second_Fredholm_formula_estimating}
	is absolutely convergent, uniformly in $\zeta$ belonging to 
	any fixed closed disc in $\mathbb{C}\setminus\mathbb{R}_{\ge0}$.
	This completes the proof.
\end{proof}

\subsection{Completing the proof of \Cref{thm:exponential_Fredholm}}
\label{sub:proof_of_exponential_Fredholm}

Passing to the continuous space limit in the Fredholm determinantal formula of
\Cref{statement:second_Fredholm_formula_} for the half-continuous vertex model
$\VertexXProcess(t)$ yields \Cref{thm:exponential_Fredholm}. 
Indeed, let the parameters $\upxi_i$ and $\mathsf{s}_i$ depend on $\varepsilon$ and on 
parameters \eqref{Expmodel_data_assumptions} 
of the inhomogeneous exponential jump model $\XProcess(t)$
as explained in \Cref{sub:limit_to_continuous_space}.
Let also $k$ in \eqref{second_Fredholm_formula} depend on $\varepsilon$ 
as $k=\lfloor \varepsilon^{-1}x \rfloor$ with $x\in\mathbb{R}_{>0}$ fixed.
The convergence of the $\varepsilon$-dependent half-continuous vertex model
$\VertexXProcess^{\varepsilon}(t)$
to $\XProcess(t)$
(\Cref{statement:limit_of_processes_to_exponential_model}) readily implies that
\begin{equation*}
	\lim_{\varepsilon\searrow0}
	\mathop{\mathbb{E}}
	\frac{1}{
	\bigl(
		\zeta q^{\HeightFunction_{\VertexXProcess^{\varepsilon}(t)}(\lfloor \varepsilon^{-1}x \rfloor )};q
	\bigr)_{\infty}}
	=
	\mathop{\mathbb{E}}
	\frac{1}{
	\bigl(
		\zeta q^{\HeightFunction_{\XProcess(t)}(x)};q
	\bigr)_{\infty}}
	.
\end{equation*}
Let us show the convergence of the corresponding Fredholm determinants.
The integration contours $C_{a,\varphi}$ and $D_w$ in
\Cref{statement:second_Fredholm_formula_} can be chosen independent of $\varepsilon$ as
long as we take $0<a<\mathcal{W}_x$ and $\varepsilon$ sufficiently small
(recall the notation $\mathcal{W}_x$ from \Cref{def:essential_ranges}).
Observe that
\begin{multline*}
	g^{\mathrm{hc};\varepsilon}(w)
	=
	\frac{e^{-tw}}{(w/\Speed(0);q)_{\infty}}
	\prod_{b\in \RoadblockSet,\,b<x}
	\frac{(w \RoadblockProb(b)/\Speed(b);q)_\infty}
	{(w/\Speed(b);q)_\infty}
	\\\times\exp
	\Biggl\{
		\sum_{\substack{j=1\\j\notin \RoadblockSet^{\varepsilon}}}^{\lfloor \varepsilon^{-1}x \rfloor-1}
		\sum_{i=0}^{\infty}
		\log
		\biggl(
			1+(1-e^{- \RateLambda\varepsilon})
			\frac{q^i w/\Speed(j\varepsilon)}{1-q^i w/\Speed(j\varepsilon)}
		\biggl)
	\Biggr\}
\end{multline*}
(here we take the standard logarithm with cut along $\mathbb{R}_{\le0}$).
We can expand for small $\varepsilon>0$:
\begin{equation*}
	\sum_{i=0}^{\infty}\log
	\biggl(
		1+(1-e^{- \RateLambda\varepsilon})
		\frac{q^i w/\Speed(j\varepsilon)}{1-q^i w/\Speed(j\varepsilon)}
	\biggl)
	=
	\RateLambda\varepsilon
	\sum_{i=0}^{\infty}
	\frac{q^i w/\Speed(j\varepsilon)}{1-q^i w/\Speed(j\varepsilon)}
	+O(\varepsilon^2)
	=
	\RateLambda\varepsilon \,
	\phi_0\biggl(
		\frac{w}{\Speed(j\varepsilon)}
	\biggr)
	+O(\varepsilon^2)
	,
\end{equation*}
where $O(\varepsilon^2)$ is uniform in $j$ and $\phi_0$ is defined in \Cref{sec:appendix_q}. 
Thus, the sum over $j$ in the exponent in $g^{\mathrm{hc};\varepsilon}(w)$ can be approximated
by the integral $\RateLambda\int_0^x\phi_0\bigl(w/\Speed(y)\bigr)dy$,
and so $\lim_{\varepsilon\searrow0}g^{\mathrm{hc};\varepsilon}(w)=g(w)$, 
where $g(w)$ is given in \eqref{g_function_in_Fredholm_for_exponential_model_section2}.
Moreover, for the integrand in the kernel 
$K^{(2),\mathrm{hc};\varepsilon}_{\zeta}$
\eqref{second_Fredholm_formula_kernel} 
corresponding to the $\varepsilon$-dependent 
half-continuous model we have
\begin{equation}
	\label{uniform_convergence_of_integrands}
	\lim_{\varepsilon\searrow0}
	\frac{\Gamma(-\mathsf{u})\Gamma(1+\mathsf{u})(-\zeta)^{\mathsf{u}}}{q^{\mathsf{u}}w-w'}
	\frac{g^{\mathrm{hc};\varepsilon}(w)}{g^{\mathrm{hc};\varepsilon}(q^{\mathsf{u}}w)}
	=
	\frac{\Gamma(-\mathsf{u})\Gamma(1+\mathsf{u})(-\zeta)^{\mathsf{u}}}{q^{\mathsf{u}}w-w'}
	\frac{g(w)}{g(q^{\mathsf{u}}w)},
\end{equation}
where the convergence is uniform in $w,w'\in C_{a,\varphi}$ and $\mathsf{u}\in
D_w$ because of the rapid decay of the pre-limit and the limiting functions for large
$|w|$ or $|\mathsf{u}|$. 
This decay follows from arguments similar to
the proof of step~2 of \Cref{statement:second_Fredholm_formula_}.
The only new estimate needed is
\begin{lemma}
	\label{statement:estimate_decay_phi0}
	For $w\in C_{a,\varphi}$ and any $h>a$ 
	we have the following estimate:
	\begin{equation*}
		\Re \phi_0(w/h)<c \log|w|+c',\qquad c,c'>0.
	\end{equation*}
\end{lemma}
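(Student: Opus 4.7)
The plan is to work from the rearranged series representation
\[
\phi_0(z)=\sum_{j=0}^{\infty}\frac{zq^j}{1-zq^j},
\]
valid as a meromorphic identity on $\mathbb{C}\setminus\{q^{-j}:j\ge0\}$ and reducing to the defining power series for $|z|<1$. Setting $z=w/h$, the point $z$ lies on the scaled contour $C_{a/h,\varphi}$ whose tip $a/h$ is strictly less than $1$, so $z$ stays away from every pole of $\phi_0$. Since $\Re\phi_0\le|\phi_0|$, it suffices to produce an upper bound of the claimed form on $|\phi_0(w/h)|$, and I will do this by a head--tail splitting of the sum after first establishing a uniform geometric separation between the contour and the poles.

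The main ingredient is the estimate
\[
|1-zq^j|\ge C_0\max(|zq^j|,1),\qquad z\in C_{a/h,\varphi},\ j\in\mathbb{Z}_{\ge0},
\]
with $C_0>0$ depending only on $q$, $a/h$, and $\varphi$. This is automatic from $|1-zq^j|\ge\big||zq^j|-1\big|$ whenever $|zq^j|\le 1/2$ or $|zq^j|\ge 2$. The only delicate regime, which I expect to be the main obstacle, is the transition range $|zq^j|\in(1/2,2)$. To handle it I would parameterize $z=a/h+se^{\pm i(\pi/2-\varphi)}$ along the two rays, set $u=q^j a/h\in(0,a/h]$ and $v=sq^j\ge0$, and compute
\[
|zq^j-1|^2=(u+v\sin\varphi-1)^2+v^2\cos^2\varphi.
\]
When $v\ge\epsilon$ the second term forces $|zq^j-1|\ge\epsilon\cos\varphi$; when $v<\epsilon$ one has $\Re(zq^j)\le a/h+\epsilon$, so the first term dominates and is bounded below by $1-a/h-\epsilon$. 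Taking $\epsilon=(1-a/h)/2$ yields $C_0$ as a suitable multiple of $(1-a/h)\cos\varphi$.

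With the inequality in hand we get $|zq^j/(1-zq^j)|\le C_0^{-1}\min(|zq^j|,1)$ for every $j$. Splitting the sum at $j_*:=\lceil\log|z|/\log(1/q)\rceil$, the initial block $j<j_*$ has each term bounded by $1$ and contributes $O(\log|z|)$, while the tail $j\ge j_*$ satisfies $|zq^j|\le1$ and forms a geometric series of total mass at most $(1-q)^{-1}$. Combining these bounds gives $|\phi_0(z)|\le c\log|z|+\tilde c$, and the substitution $\log|z|=\log|w|-\log h$ absorbs the $-c\log h$ into the constant term to yield $\Re\phi_0(w/h)\le c\log|w|+c'$. The entire head--tail argument is routine; only the geometric separation estimate requires actual thought.
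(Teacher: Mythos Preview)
Your proof is correct and follows the same overall architecture as the paper: both use the meromorphic series $\phi_0(z)=\sum_{j\ge0}\frac{zq^j}{1-zq^j}$ and split the sum at the index where $|zq^j|$ crosses $1$, so that the tail is a convergent geometric series contributing $O(1)$ and the head has $O(\log|z|)$ terms each bounded by a constant. The difference lies in how the head terms are controlled. The paper works with the explicit real part
\[
\Re\frac{|w|q^i e^{i\theta}}{1-|w|q^i e^{i\theta}}
=\frac{|w|q^i(\cos\theta-|w|q^i)}{(1-|w|q^i)^2+2|w|q^i(1-\cos\theta)},
\]
drops the $(1-|w|q^i)^2$ from the denominator, and obtains the bound $\frac{\cos\theta}{2(1-\cos\theta)}$; this requires $\theta$ bounded away from $0$, which is why the paper first invokes continuity for bounded $|w|$ and only runs the estimate for large $|w|$, where $\theta\approx\varphi$. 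You instead bound the modulus via the uniform pole-separation estimate $|1-zq^j|\ge C_0\max(|zq^j|,1)$, proved by the elementary case analysis on the contour parameterization. Your route is slightly more geometric and has the minor advantage of being uniform over the whole contour (the tip included) without a separate compactness step, at the cost of bounding $|\phi_0|$ rather than $\Re\phi_0$; either is adequate for the application.
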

\begin{proof}
	For simplicity let us assume that $h=1$ and $a<1$.
	Since $\phi_0(w)$ is continuous on $C_{a,\varphi}$, 
	it suffices to obtain the estimate for large $|w|$. 
	Using representation \eqref{little_phi_series} for $\phi_0$,
	we have for $w=|w|e^{\mathbf{i}\theta}$, $\theta\in(0,\pi/2)$ 
	(here $\theta\approx\varphi$ for large $|w|$; the case when $\Im w<0$ is symmetric):
	\begin{equation*}
		\Re \phi_0(w)
		=
		\sum_{i=0}^{\infty}
		\frac{|w|q^{i}(\cos \theta -|w|q^{i})}
		{
			(1-|w|q^{i})^2
			+
			2|w|q^i(1-\cos\theta)
		}
		\le
		\sum_{i=0}^{\infty}
		\frac{|w|q^{i}\cos \theta }
		{
			(1-|w|q^{i})^2
			+
			2|w|q^i(1-\cos\theta)
		}
		.
	\end{equation*}
	Let $m$ be the smallest integer
	such that $|w|q^m<\frac{1}{2}$ (thus, $m$ is of order $\log|w|$).
	When $i>m$, we can estimate each term above by
	$\frac{|w|q^i\cos\theta}{(1-|w|q^i)^2}$, and the sum of these terms over $i>m$ 
	is bounded from above by 
	$\cos\theta\,\phi_1(|w|q^m)\le \cos\theta\,\phi_1(\frac{1}{2})$,
	which is a constant. 
	Next, when $i\le m$, we can write 
	\begin{equation*}
		\frac{|w|q^{i}\cos \theta }
		{
			(1-|w|q^{i})^2
			+
			2|w|q^i(1-\cos\theta)
		}
		\le
		\frac{\cos\theta}{2(1-\cos\theta)}, 
	\end{equation*}
	which is a constant. 
	Thus, the sum over $i\le m$ is bounded by a constant times $\log|w|$.
\end{proof}
The uniform convergence in \eqref{uniform_convergence_of_integrands}
plus the absolute convergence of the series in $M$ for the Fredholm determinant
\eqref{second_Fredholm_formula} uniformly in $\zeta$ belonging
to closed discs in $\mathbb{C}\setminus\mathbb{R}_{\ge0}$
(established in the proof of \Cref{statement:second_Fredholm_formula_})
implies that
\begin{equation*}
	\lim_{\varepsilon\searrow0}
	\det
	\bigl(
		1+K_{\zeta}^{(2),\mathrm{hc};\varepsilon}
	\bigr)_{L^2(C_{a,\varphi})}
	=
	\det
	\bigl(
		1+K_{\zeta}
	\bigr)_{L^2(C_{a,\varphi})}
	,
\end{equation*}
where $K_\zeta$ is given in \eqref{Fredholm_for_exponential_model_section2_main_formula}.
This completes the proof of \Cref{thm:exponential_Fredholm}.

\begin{remark}
	\label{rmk:assumptions_on_continuity_of_Speed_function}
	It seems likely that the continuity conditions on the speed function
	$\Speed(x)$ in \eqref{Expmodel_data_assumptions} can be relaxed, and
	\Cref{thm:exponential_Fredholm} together with our asymptotic results of
	\Cref{sec:asymptotic_analysis} would still hold. 
	However, these conditions are relatively general, and are convenient for
	taking the limit of the half-continuous vertex model formulas because
	one avoids pathologies in approximating the exponential model
	by the $\varepsilon$-dependent half-continuous models in discrete space.
\end{remark}

\section{Asymptotic analysis}
\label{sec:asymptotic_analysis}

In this section we perform the asymptotic analysis of the inhomogeneous
exponential jump model $\XProcess(t)$ described in
\Cref{sub:definition_of_the_model} in the regime $\RateLambda\to+\infty$ and
$t=\tau\RateLambda$ (with $\tau>0$ fixed), and prove the main result of the
paper, \Cref{thm:main_theorem_on_fluctuations}.

\subsection{Setup of the asymptotic analysis}
\label{sub:setup_of_asymptotic_analysis}

The starting point of our asymptotic analysis is the Fredholm determinantal
formula 
\begin{equation}
	\label{Fredholm_for_exponential_model_section4_copy}
	\mathop{\mathbb{E}}
	\bigl(
		\zeta q^{\HeightFunction_{\XProcess(t)}(x)};q
	\bigr)_{\infty}^{-1}
	=
	\det
	\bigl(
		1+K_{\zeta}
	\bigr)_{L^2(C_{a,\varphi})}
\end{equation}
of \Cref{thm:exponential_Fredholm}.
Set
\begin{equation}
	\label{zeta_depends_on_lambda_for_asymptotics}
	\zeta=\zeta(\RateLambda):=-q^{- \RateLambda \LimitShape(\tau,x)+r \RateLambda^{\beta}}.
\end{equation}
The function $\LimitShape(\tau,x)$ will be chosen so that both sides of
\eqref{Fredholm_for_exponential_model_section4_copy} have nontrivial limits,
and will eventually coincide with the limit shape described in
\Cref{sub:intro_limit_shape}.
The term $r \RateLambda^{\beta}$ (with $r\in\mathbb{R}$) is a lower order
correction capturing the distribution of fluctuations. 
The exponent $\beta$ is
equal to $\frac{1}{3}$ or $\frac{1}{2}$ depending on the phase (Tracy--Widom or
Gaussian, respectively, cf. \Cref{def:phases_of_the_limit_shape_TW_Gaussian}).
With this choice of $\zeta$, the asymptotic behavior of the left-hand side of
\eqref{Fredholm_for_exponential_model_section4_copy} is as follows:
\begin{lemma}
	\label{statement:limit_q_exponential_to_probability}
	With $\zeta(\RateLambda)$ given by
	\eqref{zeta_depends_on_lambda_for_asymptotics} we have
	\begin{equation}
	\label{limit_q_exponential_to_probability}
		\lim_{\RateLambda\to+\infty}
		\mathop{\mathbb{E}}
		\frac{1}{
			\bigl(
				\zeta(\RateLambda) q^{\HeightFunction_{\XProcess(\tau\RateLambda)}(x)};q
			\bigr)_{\infty}
		}
		=
		\lim_{\RateLambda\to+\infty}
		\mathbb{P}
		\left( 
			\frac{\HeightFunction_{\XProcess(\tau\RateLambda)}(x)-
			\RateLambda \LimitShape(\tau,x)}{\RateLambda^{\beta}}
			\ge -r
		\right).
	\end{equation}
	Equality \eqref{limit_q_exponential_to_probability}
	is understood in the sense that
	if one of the limits exists, then the other one also exists
	and they are equal to each other.
\end{lemma}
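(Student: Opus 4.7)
The plan is to rewrite both sides of the claimed identity in terms of the rescaled random variable $Y_{\RateLambda}:=(\HeightFunction_{\XProcess(\tau\RateLambda)}(x)-\RateLambda\LimitShape(\tau,x))/\RateLambda^{\beta}$. Plugging $\zeta(\RateLambda)=-q^{-\RateLambda\LimitShape(\tau,x)+r\RateLambda^{\beta}}$ into $\zeta q^{\HeightFunction_{\XProcess(\tau\RateLambda)}(x)}$ gives $-q^{\RateLambda^{\beta}(Y_{\RateLambda}+r)}$, so the left-hand side of \eqref{limit_q_exponential_to_probability} becomes $\mathbb{E}[\psi_{\RateLambda}(Y_{\RateLambda})]$, where
\[
\psi_{\RateLambda}(y):=\frac{1}{(-q^{\RateLambda^{\beta}(y+r)};q)_{\infty}}=\prod_{i=0}^{\infty}\frac{1}{1+q^{\RateLambda^{\beta}(y+r)+i}}.
\]
On the right-hand side, the probability can be written as $\mathbb{E}[\mathbf{1}_{\{Y_{\RateLambda}\ge -r\}}]$. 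The proof then reduces to comparing $\mathbb{E}\psi_{\RateLambda}(Y_{\RateLambda})$ with this indicator expectation as $\RateLambda\to+\infty$.

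The two structural features of $\psi_{\RateLambda}$ that I would exploit are the following. First, because $q\in(0,1)$ every factor in the product is $\ge 1$, so $\psi_{\RateLambda}$ takes values in $(0,1]$ and is strictly increasing in $y$. Second, $\psi_{\RateLambda}$ converges pointwise to $\mathbf{1}_{y>-r}$ on $\mathbb{R}\setminus\{-r\}$: when $y>-r$ the exponent $\RateLambda^{\beta}(y+r)$ tends to $+\infty$ so the product collapses to $1$; when $y<-r$ the exponent tends to $-\infty$, the very first factor blows up, and $\psi_{\RateLambda}(y)\to 0$. With a brief estimate one sees that this convergence is in fact uniform on $\{|y+r|\ge\delta\}$ for any $\delta>0$, with remainder decaying at least like $q^{\RateLambda^{\beta}\delta}$.

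Equipped with this, I would run a standard squeezing argument. For any $\delta>0$, splitting the expectation on $\{Y_{\RateLambda}\ge -r+\delta\}$, $\{|Y_{\RateLambda}+r|<\delta\}$, $\{Y_{\RateLambda}\le -r-\delta\}$ and using $0<\psi_{\RateLambda}\le 1$ together with the uniform convergence above yields, for all sufficiently large $\RateLambda$,
\[
\mathbb{P}(Y_{\RateLambda}\ge -r+\delta)-\varepsilon_{\RateLambda}\le \mathbb{E}[\psi_{\RateLambda}(Y_{\RateLambda})]\le \mathbb{P}(Y_{\RateLambda}\ge -r-\delta)+\varepsilon_{\RateLambda},
\]
with $\varepsilon_{\RateLambda}\to 0$. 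Taking $\liminf$ and $\limsup$ and then letting $\delta\to 0$ produces the desired equality at every $r$ that is a continuity point of the limit; the one-directional existence assumption of the lemma together with tightness (automatic from the uniform bound $\psi_{\RateLambda}\le 1$ and the squeeze) guarantees one can indeed take $\delta\to 0$ on the needed side.

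The only subtle point, and the main obstacle, is handling potential mass of $Y_{\RateLambda}$ exactly at $-r$: since $\HeightFunction_{\XProcess(\tau\RateLambda)}(x)\in\mathbb{Z}_{\ge 0}$, the variable $Y_{\RateLambda}$ lives on a lattice shifting with $\RateLambda$, and for generic $r$ no mass accumulates at $-r$. Once this is observed, the squeeze gives the equivalence of the two limits in both directions, finishing the proof. The step that requires actual input about the model, namely that one (and hence both) of the limits really exists, will be supplied downstream by the asymptotic analysis of the Fredholm determinant in \eqref{Fredholm_for_exponential_model_section4_copy}; the present lemma only translates between the analytic and probabilistic formulations.
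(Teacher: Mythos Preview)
Your sketch is correct and is precisely the argument contained in the references the paper cites for this lemma (\cite[Lemma~4.1.39]{BorodinCorwin2011Macdonald} and \cite[Lemmas~5.1--5.2]{FerrariVeto2013}); the paper itself does not reproduce the proof but simply invokes those results. The only point worth tightening is the $\delta\to 0$ step, which in the cited lemmas is handled by knowing the limit on one side for all $r$ with a continuous limiting function---exactly the situation produced by the Fredholm determinant asymptotics downstream.
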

\begin{proof}
	See {\cite[Lemma 4.1.39]{BorodinCorwin2011Macdonald} and \cite[Lemmas 5.1 and 5.2]{FerrariVeto2013}}.
\end{proof}

Setting $t=\tau\RateLambda$ and plugging $\zeta=\zeta(\RateLambda)$ into the
Fredholm determinant in the right-hand side of
\eqref{Fredholm_for_exponential_model_section4_copy} we have in the integrand
in $K_\zeta$
\eqref{Fredholm_for_exponential_model_section2_main_formula_kernel}:
\begin{equation}
	\begin{split}
	&(-\zeta)^{\mathsf{u}}\frac{g(w)}{g(q^{\mathsf{u}}w)}
	=
	\frac{(z/\Speed(0);q)_{\infty}}{(w/\Speed(0);q)_{\infty}}
	\prod_{b\in \RoadblockSet,\,b<x}
	\frac{(z/\Speed(b);q)_\infty}{(w/\Speed(b);q)_\infty}
	\frac{(w \RoadblockProb(b)/\Speed(b);q)_\infty}{(z \RoadblockProb(b)/\Speed(b);q)_\infty}
	\\&\hspace{7pt}\times
	\exp
	\biggl\{
		\tau\RateLambda(z-w)
		+
		(\RateLambda \LimitShape(\tau,x)-r \RateLambda^{\beta})\log(w/z)
		+
		\RateLambda
		\int_0^x
		\left( 
			\phi_0
			\biggl(
				\frac{w}{\Speed(y)}
			\biggr)
			-
			\phi_0
			\biggl(
				\frac{z}{\Speed(y)}
			\biggr)
		\right)
		dy
	\biggr\},
	\end{split}
	\label{plugging_into_Fredholm_for_asymptotics}
\end{equation}
where we denoted $z=q^{\mathsf{u}}w$, and hence $\mathsf{u}=\log (z/w)/\log q$.
In fact, the change of variables form $\mathsf{u}$ to $z$ is not one to one,
and in \Cref{sub:extra_residues} below we will take care of this issue.
For now, observe that the terms above which may grow exponentially in $\RateLambda$
have the form
$\exp\bigl(\RateLambda(G_{\tau,x}(w)-G_{\tau,x}(z))\bigr)$, 
where 
\begin{equation}
	\label{G_function_definition}
	G_{\tau,x}(w):=
	-\tau w+\LimitShape(\tau,x)\log w
	+
	\int_0^x
	\phi_0
	\biggl(
		\frac{w}{\Speed(y)}
	\biggr)
	dy
\end{equation}
(since $\beta=\frac{1}{3}$ or $\frac{1}{2}$, the terms containing $r$ grow slower).
Note that replacing $\log(w/z)$ by $\log w-\log z$ may introduce
additional imaginary terms, but they do not contribute to the exponential growth.

In 
\Cref{sub:limit_shape_formulas}
we investigate critical points of the function $G_{\tau,x}(w)$, 
and in \Cref{sub:steep_descent_contours}
discuss steep descent or ascent contours for this function.
Using these results, in
\Cref{sub:extra_residues} we will return to the 
analysis of the whole Fredholm determinant in the right-hand side
of \eqref{Fredholm_for_exponential_model_section4_copy}
by the steepest descent method.

\subsection{Critical points of $G_{\tau,x}$ and limit shape formulas}
\label{sub:limit_shape_formulas}

Here we explain how formulas for the limit shape $\LimitShape(\tau,x)$ 
given in \Cref{def:limit_shape_definition_H_tau_x}
arise from \eqref{plugging_into_Fredholm_for_asymptotics}--\eqref{G_function_definition}.
For shorter notation, denote
\begin{equation}
	\label{Big_Phi_n_definition}
	\Phi_n(w\mid x):=
	\int_0^x
	\phi_n
	\biggl(
		\frac{w}{\Speed(y)}
	\biggr)
	dy,\qquad n=0,1,2,\ldots .
\end{equation}
Observe that the derivatives of $G_{\tau,x}$ have the following
form (using \Cref{statement:Phi_n_behavior}):
\begin{align}
	\nonumber
	G'_{\tau,x}(w)&=
	\frac{1}{w}\LimitShape(\tau,x)
	-\tau
	+
	\frac{1}{w}
	\Phi_1(w\mid x);
	\\
	G''_{\tau,x}(w)&=
	-\frac{1}{w^2}\LimitShape(\tau,x)
	+
	\frac{1}{w^2}
	\left( 
		\Phi_2(w\mid x)-\Phi_1(w\mid x)
	\right);
	\label{G_derivatives_formulas}
	\\
	G'''_{\tau,x}(w)&=
	\frac{2}{w^3}\LimitShape(\tau,x)+
	\frac{1}{w^{3}}
	\left( 
		\Phi_3(w\mid x)
		-
		3\Phi_2(w\mid x)
		+
		2\Phi_1(w\mid x)
	\right).
	\nonumber
\end{align}

We first consider double critical points of $G_{\tau,x}$ which in the end will
correspond to the Tracy--Widom phase. 
Equations $G'_{\tau,x}(w)=G''_{\tau,x}(w)=0$
for double critical points can be equivalently written as
\begin{align}
	\label{Equation_for_double_critical_points_1}
	\tau w
	&=
	\Phi_2(w\mid x);
	\\
	\LimitShape(\tau,x)
	&=
	\tau w
	-
	\Phi_1(w\mid x),
	\label{Equation_for_double_critical_points_2}
\end{align}
that is, we can separately find $w$ from the first equation 
\eqref{Equation_for_double_critical_points_1}
and then plug it into \eqref{Equation_for_double_critical_points_2}
to get the value of $\LimitShape(\tau,x)$ leading to a double critical point.
Existence and uniqueness of a solution to \eqref{Equation_for_double_critical_points_1}
on $(0,\mathcal{W}^\circ_x)$ 
(with $\mathcal{W}^\circ_x$ given in \eqref{essential_range_of_Speed_notation})
is afforded by \Cref{statement:TW_root_exists_and_is_unique} which we now prove.

\begin{proof}[Proof of \Cref{statement:TW_root_exists_and_is_unique}]
	Equation \eqref{Equation_for_double_critical_points_1} (which is the same as
	\eqref{critical_points_second_equation}) can be rewritten as
	\begin{equation}\label{Equation_for_double_critical_points_1_modified}
		\tau
		=
		\frac{\partial}{\partial w}\,
		\Phi_1(w\mid x).
	\end{equation}
	We need to show that this equation has a unique solution
	$w=\upomega^\circ_{\tau,x}$ in $w\in(0,\mathcal{W}^\circ_{x})$.
	We will use properties of the functions $\phi_n(w)$
	summarized in \Cref{statement:Phi_n_behavior}.
	The functions $\Phi_n(w\mid x)$ are smooth on $(-\infty,\mathcal{W}^\circ_x)$.
	Therefore, \eqref{Equation_for_double_critical_points_1_modified} 
	is equivalent to finding a point in $(0,\mathcal{W}^\circ_{x})$
	at which the tangent line to the graph of the function $\Phi_1(w\mid x)$ has slope $\tau$.

	The function $\Phi_1(w\mid x)$ is positive, strictly increasing, and 
	strictly convex on $(0,\mathcal{W}^\circ_{x})$. 
	Indeed, the positivity and monotonicity follow from the facts that $\phi_1(w)$ and $\phi_1'(w)=w^{-1}\phi_2(w)$
	are positive on $(0,1)$. 
	To get convexity, observe that
	$\phi_1''(w)=w^{-2}(\phi_3(w)-\phi_2(w))$, 
	and
	\begin{equation}
	\label{phi_3_minus_phi_2_positive}
		\phi_3(w)-\phi_2(w)
		=
		\sum_{k=0}^{\infty}
		\bigg(\frac{q^kw (1+4 q^kw +q^{2 k}w^2 )}{(1-q^kw)^4}
		-\frac{q^{k}w(1+q^{k}w)}{(1-q^kw)^{3}}\bigg)
		=
		\sum_{k=0}^{\infty}
		\frac{2 q^{2k} w^2 (2+q^kw)}{(1-q^kw)^4}
		>0
	\end{equation}
	for $w\in(0,1)$. Thus, if a solution to \eqref{Equation_for_double_critical_points_1_modified} 
	exists, it is unique.

	At $\mathcal{W}^\circ_x$ the function $\Phi_1(w\mid x)$ and all its derivatives go to
	infinity. 
	On the other hand, the slope of the tangent line to the graph of
	$\Phi_1(w\mid x)$ at $w=0$ is
	\begin{equation*}
		\frac{\partial}{\partial w}\bigg\vert_{w=0}
		\int_0^x
		\phi_1
		\biggl(
			\frac{w}{\Speed(y)}
		\biggr)
		dy
		=
		\int_{0}^{x}\frac{dy}{(1-q)\Speed(y)}=\tau_e(x)<\tau
	\end{equation*}
	because $x<x_e$ (recall \Cref{def:edge_of_the_limit_shape}). 
	Thus, $\tau$ is greater than the slope at $0$, so the solution
	$\upomega^\circ_{\tau,x}$ exists.
	All other claims 
	in \Cref{statement:TW_root_exists_and_is_unique} are straightforward.
\end{proof}
Note that equation \eqref{Equation_for_double_critical_points_1} 
can have other roots outside the interval $w\in(0,\mathcal{W}_x^\circ)$.

If $\upomega^\circ_{\tau,x}$ is accessible by contour deformations (see
\Cref{sub:extra_residues} below for details), we say that the space-time point
$(\tau,x)$ is in the \emph{Tracy--Widom phase}.
In this case $\LimitShape(\tau,x)$ should be chosen in such a way 
that $\upomega^\circ_{\tau,x}$ is a double critical point of $G_{\tau,x}$, 
i.e., equation \eqref{Equation_for_double_critical_points_2} should also hold.
This leads to the limit shape 
$\LimitShape(\tau,x)=
\tau \upomega^\circ_{\tau,x}-\Phi_1(\upomega^\circ_{\tau,x}\mid x)$
in the Tracy--Widom phase.

On the other hand, the point $\upomega^\circ_{\tau,x}$ may be inaccessible 
by contour deformations due to the presence of denominators in 
\eqref{plugging_into_Fredholm_for_asymptotics}. 
Then we say that the space-time point $(\tau,x)$ is in the \emph{Gaussian phase}.
The smallest of the poles in these denominators, 
$w=\mathcal{W}_x$ with $\mathcal{W}_x$ given in \eqref{range_of_Speed_notation},
is the first of the obstacles preventing the contour deformations 
(the obstacle exists if and only if $\mathcal{W}_x < \upomega^\circ_{\tau,x}$).
Then we can choose $\LimitShape(\tau,x)$ so that $\mathcal{W}_x$ becomes a simple critical
point (equation \eqref{Equation_for_double_critical_points_2} is equivalent to $G'_{\tau,x}(w)=0$).
Therefore, $\LimitShape(\tau,x)=\tau\mathcal{W}_x-\Phi_1(\mathcal{W}_x\mid x)$
is the limit shape in the Gaussian phase.
Thus, the function $\LimitShape(\tau,x)$ given in \eqref{limit_shape_definition_H_tau_x}
serves in both Tracy--Widom and Gaussian phases.

We have now explained how \Cref{def:phases_of_the_limit_shape_TW_Gaussian,def:limit_shape_definition_H_tau_x}
arise from looking at the integrand in the kernel in the Fredholm determinantal formula
\eqref{Fredholm_for_exponential_model_section4_copy}. 
Let us establish the monotonicity properties of 
$\LimitShape(\tau,x)$ \eqref{limit_shape_definition_H_tau_x}
listed in
\Cref{statement:properties_of_limit_shape_after_definition}:

\begin{proof}[Proof of \Cref{statement:properties_of_limit_shape_after_definition}]
	The left continuity of $x\mapsto\LimitShape(\tau,x)$ follows from the left continuity in $x$
	of $\mathcal{W}_x$ and $\Phi_{1,2}(w\mid x)$, which also implies that $\upomega^\circ_{\tau,x}$ 
	is left continuous in $x$.

	The claim that 
	$\LimitShape(\tau,x_e(\tau))=0$ 
	follows from the fact that 
	$\upomega^\circ_{\tau,x_e(\tau)}=0$.
	Note that this implies that points 
	$(\tau,x)$ 
	in a neighborhood of the edge, i.e., for
	$x\in(x_e(\tau)-\delta,x_e(\tau))$,
	are always in the Tracy--Widom phase.
	
	For the monotonicity of $\LimitShape(\tau,x)$ in $x$, observe that 
	$x\mapsto\min(\upomega^\circ_{\tau,x},\mathcal{W}_x)$
	is decreasing: in the Gaussian phase is it piecewise constant and decreases,
	and in the Tracy--Widom phase it strictly decreases by
	\Cref{statement:TW_root_exists_and_is_unique}.
	Denote 
	\begin{equation}
		\label{w_cr_notation}
		\upomega_{\mathsf{cr}}=\upomega_{\mathsf{cr}}(x)
		:=\min(\upomega^\circ_{\tau,x},\mathcal{W}_x)>0
	\end{equation}
	for simpler notation,
	and observe that for any 
	$0<x<x'<x_e(\tau)$
	we have
	\begin{equation*}
		\LimitShape(\tau,x)
		-
		\LimitShape(\tau,x')
		=
		\tau(\underbrace{\upomega_{\mathsf{cr}}(x)- \upomega_{\mathsf{cr}}(x')}_{\ge0})
		+
		\Phi_1(\upomega_{\mathsf{cr}}(x')\mid x)
		-
		\Phi_1(\upomega_{\mathsf{cr}}(x)\mid x)
		+
		\int_x^{x'}
		\phi_1
		\biggl(
			\frac{\upomega_{\mathsf{cr}}(x')}{\Speed(y)}
		\biggr)dy.
	\end{equation*}
	The last integral is positive.
	Since for fixed $x$ the function 
	$w\mapsto \Phi_1(w\mid x)$
	is differentiable, 
	there exists $w_0$ between $\upomega_{\mathsf{cr}}(x)$ and $\upomega_{\mathsf{cr}}(x')$ such that 
	\begin{equation*}
		\Phi_1(\upomega_{\mathsf{cr}}(x')\mid x)-\Phi_1(\upomega_{\mathsf{cr}}(x)\mid x)
		=
		(\upomega_{\mathsf{cr}}(x')-\upomega_{\mathsf{cr}}(x))
		\frac{\partial}{\partial w}
		\Big\vert_{w=w_0}\Phi_1(w\mid x).
	\end{equation*}
	To prove monotonicity it suffices to show that
	\begin{equation*}
		\tau
		-
		\frac{\partial}{\partial w}
		\Big\vert_{w=w_0}\Phi_1(w\mid x)\ge0.
	\end{equation*}
	But from \Cref{statement:TW_root_exists_and_is_unique}
	it follows that $\tau$
	is equal to the same derivative of $\Phi_1$,
	but at $w=\upomega^\circ_{\tau,x}$. 
	Since 
	$w_0\le \upomega_{\mathsf{cr}}(x)\le \upomega^\circ_{\tau,x}$,
	the above inequality holds by the convexity of $\Phi_1$, which completes the
	proof.
\end{proof}

\subsection{Steep descent and steep ascent contours}
\label{sub:steep_descent_contours}

In this subsection we
show that certain contours are steep descent or ascent for the
function $\Re G_{\tau,x}(w)$
\eqref{G_function_definition}
in the sense that on these contours $\Re G_{\tau,x}(w)$ 
attains its only maximum or
minimum, respectively, at a critical point of $G_{\tau,x}$.
For shorter formulas in the rest of the section
we will continue to use the notation $\upomega_{\mathsf{cr}}$
\eqref{w_cr_notation}.
Recall that $\upomega_{\mathsf{cr}}$
is a double critical point of $G_{\tau,x}$
in the Tracy--Widom phase or a simple critical
point of $G_{\tau,x}$ in the Gaussian phase.
Let also $\Gamma_{\mathsf{cr}}$ be the clockwise oriented circle
centered at zero of radius $\upomega_{\mathsf{cr}}$.

\subsubsection{Tracy--Widom phase}

Recall the contour $C_{a,\varphi}$ of \Cref{def:c_a_phi_contour_definition}.
\begin{proposition}
	\label{statement:TW_contour_C_is_steep_descent}
	If the space-time point $(\tau,x)$ is in the Tracy--Widom phase, then the 
	contour $C_{\upomega^\circ_{\tau,x},\frac{\pi}{4}}$ is steep descent for the function $\Re G_{\tau,x}$
	in the sense that 
	\begin{equation*}
		\Re G_{\tau,x}(w)<\Re G_{\tau,x}(\upomega^\circ_{\tau,x}),
		\qquad 
		w\in C_{\upomega^\circ_{\tau,x},\frac{\pi}{4}}\setminus\left\{ \upomega^\circ_{\tau,x} \right\}.
	\end{equation*}
\end{proposition}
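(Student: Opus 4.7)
The plan is to reduce the steep descent property to a sign condition on the third derivative of $G_{\tau,x}$, exploiting that $\omega := \upomega^\circ_{\tau,x}$ is a double critical point of $G_{\tau,x}$ in the Tracy--Widom phase. Since $G_{\tau,x}$ is holomorphic and takes real values on the positive real axis (by the definition \eqref{G_function_definition}), we have $\overline{G_{\tau,x}(w)} = G_{\tau,x}(\bar w)$, so $\Re G_{\tau,x}$ is invariant under complex conjugation. Because the contour $C_{\omega,\pi/4}$ is symmetric under $w\mapsto\bar w$, it suffices to establish the strict inequality on the upper ray, which I parametrize as $w(t) = \omega + te^{\mathbf{i}\pi/4}$ for $t>0$.

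The critical-point relations \eqref{Equation_for_double_critical_points_1}--\eqref{Equation_for_double_critical_points_2} give $G'_{\tau,x}(\omega) = G''_{\tau,x}(\omega)=0$, so Taylor's formula with integral remainder yields $G'_{\tau,x}(w) = \int_\omega^w (w-u)\,G'''_{\tau,x}(u)\,du$. Substituting this into $G_{\tau,x}(w(t))-G_{\tau,x}(\omega) = \int_0^t G'_{\tau,x}(w(s'))\,e^{\mathbf{i}\pi/4}\,ds'$ and swapping the order of integration produces the identity
\begin{equation*}
\Re G_{\tau,x}(w(t)) - G_{\tau,x}(\omega) \;=\; \tfrac{1}{2}\int_0^t (t-s)^2\,\Re\!\bigl[e^{\mathbf{i} 3\pi/4}\,G'''_{\tau,x}(v(s))\bigr]\,ds, \qquad v(s):=\omega + s e^{\mathbf{i}\pi/4}.
\end{equation*}
It thus suffices to show that $\Re[e^{\mathbf{i}3\pi/4}\,G'''_{\tau,x}(v)] < 0$ for every $v$ on the open upper ray.

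At $v=\omega$ this sign is verified directly: combining the formulas \eqref{G_derivatives_formulas} with the critical-point equations gives $G'''_{\tau,x}(\omega) = (\Phi_3(\omega|x) - \Phi_2(\omega|x))/\omega^3 > 0$, thanks to the pointwise inequality $\phi_3 > \phi_2$ on $(0,1)$ established in \eqref{phi_3_minus_phi_2_positive} and the fact that $\omega/\Speed(y)\in(0,1)$ almost everywhere on $(0,x)$. For $v$ further along the ray, I would use the series $\phi_n(z) = \sum_{k\geq 1}k^n z^k/(1-q^k)$ combined with the critical-point expression for $\LimitShape(\tau,x)$ to write $v^3 G'''_{\tau,x}(v)$ as a Taylor series in $(v-\omega)$ with strictly positive coefficients; then, using $(v-\omega)^j = t^j e^{\mathbf{i} j\pi/4}$ on the ray and $1/v^3 = \bar v^3/|v|^6$, the required sign condition becomes a real trigonometric sum in $t$ and $\omega$ whose weights are values of cosine at multiples of $\pi/4$ applied to the (positive) Taylor coefficients. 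The principal obstacle is that these cosine weights are $8$-periodic in the summation index, so the individual terms do not all contribute negatively, and the desired global negativity must be obtained by grouping consecutive terms into blocks and exploiting the uniform bound $\omega \leq \mathcal{W}_x^\circ \leq \Speed(y)$ almost everywhere, which controls both the convergence of the series and the relative magnitudes of the coefficients. Once this global sign is established, the identity of the previous paragraph yields $\Re G_{\tau,x}(w(t)) < G_{\tau,x}(\omega)$ for all $t > 0$, and complex conjugation symmetry completes the proof on the full contour.
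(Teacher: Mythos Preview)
Your reduction via the Taylor remainder is correct and clean: since $\omega:=\upomega^\circ_{\tau,x}$ is a double critical point, the identity
\[
\Re G_{\tau,x}(w(t)) - G_{\tau,x}(\omega) \;=\; \tfrac{1}{2}\int_0^t (t-s)^2\,\Re\bigl[e^{\mathbf{i}3\pi/4}\,G'''_{\tau,x}(v(s))\bigr]\,ds
\]
does hold, and negativity of the integrand would suffice. However, the rest of the argument has two genuine gaps.

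First, the power series $\phi_n(z)=\sum_{k\ge1}k^nz^k/(1-q^k)$ converges only for $|z|<1$, and the Taylor expansion of $v^3G'''_{\tau,x}(v)$ around $v=\omega$ converges only in a disc of radius at most $\mathcal{W}_x^\circ-\omega$ (the functions $\phi_n(w/\Speed(y))$ have poles at $w=\Speed(y)q^{-k}$). But the contour $C_{\omega,\pi/4}$ is \emph{unbounded}, so you need the sign condition on the entire infinite ray, far outside any such disc. Your sketch does not address how to handle large $|v|$.

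Second, even inside the disc of convergence the argument is only a hope. The claim that the Taylor coefficients of $v^3G'''_{\tau,x}(v)$ at $\omega$ are strictly positive is not obvious (the combination $\Phi_3-3\Phi_2+2\Phi_1$ already mixes signs), and you yourself note that the cosine weights in the resulting trigonometric sum are $8$-periodic and hence not all negative; ``grouping consecutive terms into blocks'' is a plan, not a proof. Moreover, your pointwise sign condition on $G'''$ is strictly stronger than what is needed (negativity of a weighted integral), and it is not clear a priori that it even holds globally on the ray.

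The paper sidesteps both issues by differentiating once rather than three times: it computes $\partial_v\Re G_{\tau,x}(\omega+ve^{\mathbf{i}\pi/4})$ directly, using the \emph{globally convergent} representation \eqref{little_phi_series} for $\phi_0$ (a sum over $j\ge0$ of rational functions, valid for all $w$ away from the poles, not just $|w|<1$). After substituting the critical-point relations for $\tau$ and $\LimitShape(\tau,x)$, this derivative becomes $\int_0^x\sum_{j\ge0}(\text{negative prefactor})\cdot P(q^j,\Speed(y),v,\omega)\,dy$, where $P$ is an explicit polynomial; its positivity for all $v>0$ is then verified by elementary (if laborious) inequalities after the change of variable $\Speed(y)=\Omega+q^j\omega$, $\Omega>0$. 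The crucial difference is that this representation is valid for every $v>0$, so no convergence obstacle arises.
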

\begin{proof}
	We aim to write down 
	the $v$-derivative
	of the real part of
	$G_{\tau,x}(\upomega^\circ_{\tau,x}+ve^{\mathbf{i}\varphi})$,
	where $v>0$ and $0<\varphi<\pi/2$ 
	(%
		the case of the negative imaginary part
		is symmetric%
	),
	and show that it is negative for $\varphi=\frac{\pi}{4}$.
	We have for the first two terms in 
	$G_{\tau,x}$
	\eqref{G_function_definition}:
	\begin{align*}
		&\frac{\partial}{\partial v}
		\Re
		\Bigl( 
			-\tau(\upomega^\circ_{\tau,x}+ve^{\mathbf{i}\varphi})
			+
			\bigl(
				\underbrace{\tau\upomega^\circ_{\tau,x}-\Phi_1(\upomega^\circ_{\tau,x}\mid x)}
				_{\LimitShape(\tau,x)}
			\bigr)
			\log
			(\upomega^\circ_{\tau,x}+ve^{\mathbf{i}\varphi})
		\Bigr)
		\\&\hspace{20pt}=
		-\tau \cos\varphi+
		\bigl(
			\tau\upomega^\circ_{\tau,x}-\Phi_1(\upomega^\circ_{\tau,x}\mid x)
		\bigr)
		\frac{v+
		\upomega^\circ_{\tau,x}
		\cos \varphi}{v^2+2 v \upomega^\circ_{\tau,x} \cos \varphi+(\upomega^\circ_{\tau,x})^2}
		\\&\hspace{20pt}=
		-v\,\frac{\Phi_2(\upomega^\circ_{\tau,x}\mid x)}{\upomega^\circ_{\tau,x}}
		\frac{v \cos \varphi+\upomega^\circ_{\tau,x}\cos (2 \varphi )}
		{v^2+2 v \upomega^\circ_{\tau,x} \cos\varphi+(\upomega^\circ_{\tau,x})^2}
		-
		\Phi_1(\upomega^\circ_{\tau,x}\mid x)\,
		\frac{v+
		\upomega^\circ_{\tau,x}
		\cos \varphi}{v^2+2 v \upomega^\circ_{\tau,x} \cos \varphi+(\upomega^\circ_{\tau,x})^2}.
	\end{align*}
	The advantage of expressing everything through the integrals $\Phi_{1,2}$
	using \eqref{Equation_for_double_critical_points_1}--\eqref{Equation_for_double_critical_points_2}
	is that it then suffices to prove the desired negativity under the integral.
	For the term in \eqref{G_function_definition} containing $\phi_0$ we have
	(using \eqref{little_phi_series})
	\begin{multline}
		\label{proof_of_TW_steepest_decent_shorthand}
		\frac{\partial}{\partial v}
		\Re\left( 
			\sum_{j=0}^{\infty}
			\frac{(\upomega^\circ_{\tau,x}+ve^{\mathbf{i}\varphi})q^j}
			{\Speed(y)-(\upomega^\circ_{\tau,x}+ve^{\mathbf{i}\varphi})q^j}
		\right)
		\\=
		\sum_{j=0}^{\infty}
		\frac
		{
			q^j \Speed(y)
			\left( 
				q^{2j}
				\bigl(
					2v \upomega^\circ_{\tau,x}+(v^2+(\upomega^\circ_{\tau,x}))\cos\varphi
				\bigr)
				-2 q^j\Speed(y)(v+\upomega^\circ_{\tau,x}\cos\varphi)
				+
				\Speed(y)^2\cos\varphi
			\right)
		}
		{
			\left( 
				q^{2j}
				\bigl(
					v^2 + (\upomega^\circ_{\tau,x})^2 + 2 v \upomega^\circ_{\tau,x} \cos\varphi
				\bigr)
				-
				2q^j \Speed(y) (\upomega^\circ_{\tau,x} + v \cos\varphi)
				+
				\Speed(y)^2
			\right)^2
		}.
	\end{multline}
	To shorten notation, let $D_{\ref{statement:TW_contour_C_is_steep_descent}}
	:=
	v^2+2 v \upomega^\circ_{\tau,x} \cos \varphi+(\upomega^\circ_{\tau,x})^2$,
	and let $\tilde D_{\ref{statement:TW_contour_C_is_steep_descent}}$ 
	stand for the denominator in \eqref{proof_of_TW_steepest_decent_shorthand}.
	Clearly, both $D_{\ref{statement:TW_contour_C_is_steep_descent}}$ 
	and $\tilde D_{\ref{statement:TW_contour_C_is_steep_descent}}$ are positive.
	Using \Cref{statement:Phi_n_behavior} and adding two previous
	expressions, we see that
	\begin{equation}
		\label{polynomial_P_previous_formula}
		\frac{\partial}{\partial v}
		\Re G_{\tau,x}(\upomega^\circ_{\tau,x}+ve^{\mathbf{i}\varphi})
		=
		\int_{0}^{x}
		\Bigg(
			\sum_{j=0}^{\infty}
			\underbrace{\frac{q^{2j}v^2 \Speed(y)}
			{D_{\ref{statement:TW_contour_C_is_steep_descent}}\tilde D_{\ref{statement:TW_contour_C_is_steep_descent}}
			\bigl(q^j \upomega^\circ_{\tau,x}-\Speed(y)\bigr)^3}}_{
			\text{$<0$ because $\upomega^\circ_{\tau,x}<\Speed(y)$}}
			P_{\ref{statement:TW_contour_C_is_steep_descent}}(q^j,\Speed(y),v,\upomega^\circ_{\tau,x},\varphi)
		\Biggr)
		dy,
	\end{equation}
	where $P_{\ref{statement:TW_contour_C_is_steep_descent}}$ is an explicit polynomial in $q^j$, $\Speed(y)$, $v$, and $\upomega^\circ_{\tau,x}$
	also containing $\cos(k\varphi)$, $k=1,2,3,4$.
	Recall that 
	$q^j\upomega^\circ_{\tau,x}-\Speed(y)<0$
	for all $j$. 
	To incorporate this condition into the analysis of $P_{\ref{statement:TW_contour_C_is_steep_descent}}$,
	let us change variables as $\Speed(y)=\Omega+q^j\upomega^\circ_{\tau,x}$,
	where $\Omega>0$. 
	Then the polynomial $P_{\ref{statement:TW_contour_C_is_steep_descent}}$ takes the form
	\begin{align}
		\nonumber
		&P_{\ref{statement:TW_contour_C_is_steep_descent}}(Q,\Omega+Q\upomega,v,\upomega,\varphi)
		=
		- 
		(2 \Omega+3 Q \upomega)\Omega Q^2 v^3
		\\&\hspace{90pt}+
		Q v^2 \left(4 \Omega^3+4 \Omega^2 Q \upomega+\Omega Q^2 v^2-3 \Omega Q^2 \upomega^2+2
		Q^3 v^2 \upomega\right)
		\cos\varphi
		\nonumber\\&\hspace{90pt}
		-v \left(
			2 \Omega^4-\Omega^3 Q \upomega+2 \Omega^2 Q^2 v^2-6 \Omega^2 Q^2 \upomega^2+3
			\Omega Q^3 v^2 \upomega-2 Q^4 v^2 \upomega^2
		\right)
		\cos(2\varphi)
		\nonumber\\&\hspace{90pt}
		-\Omega \left(2 \Omega^3 \upomega-\Omega^2 Q v^2+3 \Omega^2 Q \upomega^2+4 Q^3 v^2
		\upomega^2\right)
		\cos(3\varphi)
		\nonumber\\&\hspace{90pt}+
		\Omega^2 Q v \upomega (\Omega+2 Q \upomega)
		\cos(4\varphi). 
		\label{polynomial_P}
	\end{align}
	Let us now substitute $\varphi=\pi/4$ and check that $P_{\ref{statement:TW_contour_C_is_steep_descent}}$ is always positive.
	First, one can readily verify that 
	\begin{equation*}
		\frac{\partial^2}{\partial v^2}
		P_{\ref{statement:TW_contour_C_is_steep_descent}}(Q,\Omega+Q\upomega,v,\upomega,\tfrac{\pi}{4})>0
		\qquad \textnormal{for all $v\in\mathbb{R}$}
	\end{equation*}
	(this derivative is a quadratic polynomial in $v$). 
	Therefore, $P_{\ref{statement:TW_contour_C_is_steep_descent}}$ 
	is strictly convex in $v$ and thus has a only minimum at $v=v_{\min}$.
	If 
	$v_{\min}\le 0$,
	then we are done, because
	\begin{equation*}
		P_{\ref{statement:TW_contour_C_is_steep_descent}}(Q,\Omega+Q\upomega,0,\upomega,\tfrac{\pi}{4})
		=
		\frac{\Omega^3 \upomega (2 \Omega+3 Q \upomega)}{\sqrt{2}}>0.
	\end{equation*}
	If $v_{\min}>0$, observe that
	\begin{equation*}
		P_{\ref{statement:TW_contour_C_is_steep_descent}}(Q,\Omega+Q\upomega,v_{\min},0,\tfrac{\pi}{4})=
		\frac{1}{2} \Omega Q v_{\min}^2 
		\bigl(3 \sqrt{2} \Omega^2-4 \Omega Q
		v_{\min}+\sqrt{2} Q^2 v_{\min}^2\bigr)>0,
	\end{equation*}
	and 
	\begin{multline*}
		\frac{\partial}{\partial \upomega}P_{\ref{statement:TW_contour_C_is_steep_descent}}
		(Q,\Omega+Q\upomega,v_{\min},\upomega,\tfrac{\pi}{4})
		=
		\Omega Q \upomega 
		\bigl(
			3 \sqrt{2} \Omega^2-4 \Omega Q v_{\min}+\sqrt{2} Q^2
			v_{\min}^2
		\bigr)
		\\+
		\sqrt{2} \Omega^4-\Omega^3 Q v_{\min}+2 \sqrt{2} \Omega^2 Q^2 v_{\min}^2-3 \Omega
		Q^3 v_{\min}^3+\sqrt{2} Q^4 v_{\min}^4.
	\end{multline*}
	The polynomials 
	$3 \sqrt{2} a^2-4 a+\sqrt{2}$ 
	and 
	$\sqrt{2} a^4-a^3+2 \sqrt{2} a^2-3 a+\sqrt{2}$ 
	have no real roots and hence are always positive, 
	so the above $\upomega$-derivative
	is also always positive.
	This implies that 
	$P_{\ref{statement:TW_contour_C_is_steep_descent}}(Q,\Omega+Q\upomega,v_{\min},\upomega,\tfrac{\pi}{4})>0$,
	which completes the proof of the proposition.
\end{proof}

\begin{proposition}
	\label{statement:TW_circle_steep_ascent}
	If the space-time point $(\tau,x)$ is in the Tracy--Widom phase then
	the circle $\Gamma_{\mathsf{cr}}$ centered at the origin of radius $\upomega^\circ_{\tau,x}$ 
	is steep ascent for $\Re G_{\tau,x}$ in the sense that 
	\begin{equation*}
		\Re G_{\tau,x}(w)>\Re G_{\tau,x}(\upomega^\circ_{\tau,x}),
		\qquad 
		w\in 
		\Gamma_{\mathsf{cr}}
		\setminus\left\{ \upomega^\circ_{\tau,x} \right\}.
	\end{equation*}
\end{proposition}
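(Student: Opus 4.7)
The plan is to parametrize the circle as $w = \upomega e^{\mathbf{i}\theta}$ with $\upomega := \upomega^\circ_{\tau,x}$ and $\theta\in(-\pi,\pi]$, and to show that the real-valued function $h(\theta):=\Re G_{\tau,x}(\upomega e^{\mathbf{i}\theta})$ attains its unique minimum on this interval at $\theta=0$. Since $|w|=\upomega$ is constant on $\Gamma_{\mathsf{cr}}$, the term $\LimitShape(\tau,x)\log w$ contributes only $\LimitShape(\tau,x)\log\upomega$ to the real part and is therefore irrelevant for monotonicity in $\theta$. The whole problem reduces to analyzing
\begin{equation*}
	h(\theta)=-\tau\upomega\cos\theta+\LimitShape(\tau,x)\log\upomega
	+\int_0^x\Re\,\phi_0\bigl(\upomega e^{\mathbf{i}\theta}/\Speed(y)\bigr)dy.
\end{equation*}

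Next I would differentiate in $\theta$ using the series representation $\phi_0(z)=\sum_{j=0}^{\infty}\frac{zq^j}{1-zq^j}$ from $\eqref{little_phi_series}$. Writing $r_j(y):=\upomega q^j/\Speed(y)\in(0,1)$ (which is legitimate because $\upomega<\mathcal{W}_x^\circ\le \Speed(y)$ almost everywhere in $(0,x)$), a direct calculation gives
\begin{equation*}
	\frac{d}{d\theta}\Re\frac{r_j e^{\mathbf{i}\theta}}{1-r_j e^{\mathbf{i}\theta}}
	=\frac{-r_j(1-r_j^{2})\sin\theta}{\bigl(1-2r_j\cos\theta+r_j^{2}\bigr)^{2}},
\end{equation*}
so that $h'(\theta)=\sin\theta\cdot\bigl(\tau\upomega-I(\theta)\bigr)$, where
\begin{equation*}
	I(\theta):=\int_0^x\sum_{j=0}^{\infty}\frac{r_j(y)\bigl(1-r_j(y)^{2}\bigr)}{\bigl(1-2r_j(y)\cos\theta+r_j(y)^{2}\bigr)^{2}}\,dy.
\end{equation*}
Summing the $\theta=0$ series in closed form using $\phi_2(w)=\sum_j wq^j(1+wq^j)/(1-wq^j)^3$ (cf.\ \Cref{statement:Phi_n_behavior}) yields $I(0)=\Phi_2(\upomega\mid x)$, and the Tracy--Widom critical point equation \eqref{Equation_for_double_critical_points_1} gives exactly $\Phi_2(\upomega\mid x)=\tau\upomega$. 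Hence $h'(0)=0$.

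To conclude, I would use the elementary identity $1-2r\cos\theta+r^{2}=(1-r)^{2}+4r\sin^{2}(\theta/2)$, which is strictly increasing in $|\theta|\in[0,\pi]$ for every $r\in(0,1)$. Thus each integrand in the definition of $I(\theta)$ is a strictly positive, strictly decreasing function of $|\theta|$ on $[0,\pi]$, so $I(\theta)<I(0)=\tau\upomega$ for all $\theta\in(-\pi,\pi)\setminus\{0\}$. Combined with the factor $\sin\theta$, this gives $h'(\theta)>0$ on $(0,\pi)$ and $h'(\theta)<0$ on $(-\pi,0)$, so $h$ attains its unique minimum on $[-\pi,\pi]$ at $\theta=0$, which is the claim of the proposition.

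I expect no serious obstacle here: once one notices that $\Re G_{\tau,x}$ on the circle is governed by a sum of terms of the universal form $r\cos\theta/(1-2r\cos\theta+r^{2})$, the sign analysis becomes completely elementary, in sharp contrast to the rather involved polynomial positivity argument needed for the steep descent contour $C_{\upomega^\circ_{\tau,x},\pi/4}$ in \Cref{statement:TW_contour_C_is_steep_descent}. The only minor point to check is that $\upomega<\Speed(y)$ almost everywhere in $(0,x)$ so that each $r_j(y)<1$, which is immediate from $\upomega<\mathcal{W}_x^\circ$ and the definition of the essential range in \Cref{def:essential_ranges}.
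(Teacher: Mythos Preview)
Your proof is correct and follows the same overall strategy as the paper---differentiate $\Re G_{\tau,x}(\upomega e^{\mathbf{i}\theta})$ in $\theta$ and show the derivative is positive on $(0,\pi)$---but the execution is cleaner. The paper substitutes $\tau\upomega=\Phi_2(\upomega\mid x)$ term-by-term into the derivative, brings everything over a common denominator $(Q\upomega-\xi)^3(\xi^2+Q^2\upomega^2-2\xi Q\upomega\cos\theta)^2$, and then explicitly factors the resulting numerator as a product involving $\sin^3(\theta/2)\cos(\theta/2)$ and a quadratic form in $\xi,Q\upomega$ that must be checked positive. You instead keep $\tau\upomega$ separate, write $h'(\theta)=\sin\theta\,(\tau\upomega-I(\theta))$, identify $I(0)=\Phi_2(\upomega\mid x)=\tau\upomega$ from the critical-point equation, and deduce the sign of $\tau\upomega-I(\theta)$ from the elementary monotonicity of $(1-r)^2+4r\sin^2(\theta/2)$ in $|\theta|$. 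This avoids any polynomial factorization or case analysis, and it makes transparent why the double critical point condition is exactly what is needed. The paper's approach has the minor advantage of being uniform in style with the harder \Cref{statement:TW_contour_C_is_steep_descent}, but yours is more direct for this particular contour.
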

\begin{proof}
	Similarly to the proof of \Cref{statement:TW_contour_C_is_steep_descent},
	let us show that the 
	$\theta$-derivative of 
	$\Re G_{\tau,x}(\upomega^\circ_{\tau,x}e^{\mathbf{i}\theta})$
	is positive for $\theta\in(0,\pi)$
	(the case of the negative imaginary part is symmetric).
	We have
	\begin{equation*}
		\frac{\partial}{\partial\theta}\Re G_{\tau,x}
		(\upomega^\circ_{\tau,x}e^{\mathbf{i}\theta})
		=
		\int_{0}^{x}
			\sum_{j=0}^{\infty}
			\frac{P_{\ref{statement:TW_circle_steep_ascent}}(q^j,\Speed(y),\upomega^\circ_{\tau,x},\theta)
			}{D_{\ref{statement:TW_circle_steep_ascent}}(q^j,\Speed(y),\upomega^\circ_{\tau,x},\theta)}
		\,
		dy,
	\end{equation*}
	where (using $\xi>Q\upomega$)
	\begin{align*}
		D_{\ref{statement:TW_circle_steep_ascent}}(Q,\xi,\upomega,\theta)
		&=
		(Q \upomega-\xi )^3 \left(\xi ^2+Q^2 \upomega^2-2 \xi  Q \upomega \cos \theta \right)^2<0;\\
		P_{\ref{statement:TW_circle_steep_ascent}}(Q,\xi,\upomega,\theta)
		&=
		-16 \xi ^2 Q^2 \upomega^2 
		(\xi +Q \upomega) 
		\left(
			\xi ^2+Q^2 \upomega^2
			-\xi  Q\upomega(1+\cos\theta)
		\right)
		\sin ^3(\theta/2)
		\cos (\theta/2)<0,
	\end{align*}
	which completes the proof.
\end{proof}

\begin{remark}
	\label{rmk:problem_to_define_G_for_negative_reals}
	In connection with \Cref{statement:TW_circle_steep_ascent} 
	note that 
	the real part 
	$\Re G_{\tau,x}(w)$
	is well-defined for negative real $w$
	regardless of the branch of the logarithm.
\end{remark}

\subsubsection{Gaussian phase}

Let us now turn to the Gaussian phase
(%
	we also include into the consideration 
	the transition case when 
	$\upomega^\circ_{\tau,x}=\mathcal{W}_x$%
).
Then the limit shape is 
\begin{equation*}
	\LimitShape(\tau,x)
	=
	\tau\mathcal{W}_x
	-
	\Phi_1(\mathcal{W}_x\mid x), 
\end{equation*}
where 
$0<\mathcal{W}_x \le \upomega^\circ_{\tau,x}$
can be arbitrary.
Recall that by
\Cref{statement:TW_root_exists_and_is_unique}
the root 
$\upomega^\circ_{\tau,x}$
of \eqref{Equation_for_double_critical_points_1}
is defined regardless of which phase $(\tau,x)$ is in.
Let us use this root and rewrite 
$G_{\tau,x}$ 
as
\begin{equation}
	\label{G_of_w_in_Gaussian_phase_using_Phi}
	G_{\tau,x}(w)
	=
	\frac{\Phi_2(\upomega^\circ_{\tau,x}\mid x)}{\upomega^\circ_{\tau,x}}
	\bigl( \mathcal{W}_x\log w-w \bigr)
	-\Phi_1(\mathcal{W}_x\mid x)\log w
	+\Phi_0(w\mid x).
\end{equation}
\begin{proposition}
	\label{statement:Gaussian_contour_C_is_steep_descent}
	If the space-time point $(\tau,x)$ is in the Gaussian phase, then the 
	contour $C_{\mathcal{W}_x,\frac{\pi}{4}}$ is steep descent for 
	$\Re G_{\tau,x}$
	in the sense that on this contour the function
	$\Re G_{\tau,x}(w)$
	attains its only maximum at $w=\mathcal{W}_x$.
\end{proposition}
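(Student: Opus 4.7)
The plan is to mimic the proof of \Cref{statement:TW_contour_C_is_steep_descent}, with the expansion point moved from the double critical point $\upomega^\circ_{\tau,x}$ to the simple critical point $\mathcal{W}_x$, and using the convenient rewriting \eqref{G_of_w_in_Gaussian_phase_using_Phi} of $G_{\tau,x}(w)$. Parametrize the upper half of $C_{\mathcal{W}_x,\pi/4}$ as $w=\mathcal{W}_x+ve^{\mathbf{i}\varphi}$, $v\ge0$, with $\varphi=\pi/4$ (the lower half is symmetric), and show that $\frac{\partial}{\partial v}\Re G_{\tau,x}(\mathcal{W}_x+ve^{\mathbf{i}\varphi})<0$ for all $v>0$.

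First I would note that in the Gaussian phase we have the strict inequalities $\mathcal{W}_x<\upomega^\circ_{\tau,x}<\mathcal{W}_x^{\circ}\le\Speed(y)$ for (a.e.) $y\in(0,x)$, so every integrand appearing below is manifestly well-defined and smooth in $v\ge0$ on a neighborhood of the contour. Using \eqref{G_of_w_in_Gaussian_phase_using_Phi}, the contribution of the two non-integral terms $\frac{\Phi_2(\upomega^\circ_{\tau,x}\mid x)}{\upomega^\circ_{\tau,x}}\bigl(\mathcal{W}_x\log w-w\bigr)-\Phi_1(\mathcal{W}_x\mid x)\log w$ to $\frac{\partial}{\partial v}\Re G_{\tau,x}$ is an explicit rational function of $v$, $\mathcal{W}_x$, $\cos\varphi$ and $\cos(2\varphi)$; the contribution of the integral term $\Phi_0(w\mid x)$ is, after interchanging differentiation with summation and integration exactly as in \eqref{proof_of_TW_steepest_decent_shorthand}, of the form $\int_0^x\sum_{j\ge0} N_j(y,v,\varphi)\,D_j(y,v,\varphi)^{-2}dy$ with $D_j(y,v,\varphi)=|\Speed(y)-(\mathcal{W}_x+ve^{\mathbf{i}\varphi})q^j|^2>0$ and $N_j$ a degree-two trigonometric polynomial in $\varphi$.

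Next I would put these two pieces over the common denominator $D_{j}(y,v,\varphi)^2\cdot \bigl(v^2+2v\mathcal{W}_x\cos\varphi+\mathcal{W}_x^2\bigr)$ (both factors are strictly positive for $v>0$) and pull out the factor $(q^j\mathcal{W}_x-\Speed(y))^3<0$ that appears in the analogue of \eqref{polynomial_P_previous_formula}. The crucial step will then be to exhibit, after the substitution $\Speed(y)=\Omega+q^j\mathcal{W}_x$ with $\Omega>0$, an explicit polynomial $P_{\mathrm{G}}(q^j,\Omega,v,\mathcal{W}_x,\varphi)$ — analogous to $P_{\ref{statement:TW_contour_C_is_steep_descent}}$ in \eqref{polynomial_P} — such that
\begin{equation*}
\frac{\partial}{\partial v}\Re G_{\tau,x}(\mathcal{W}_x+ve^{\mathbf{i}\varphi})
=\int_0^x\sum_{j=0}^{\infty}\frac{q^{2j}v\,\Speed(y)}{D_j(y,v,\varphi)^2\,(q^j\mathcal{W}_x-\Speed(y))^3}\,P_{\mathrm{G}}(q^j,\Omega,v,\mathcal{W}_x,\varphi)\,dy,
\end{equation*}
plus a contribution from $-\Phi_1(\mathcal{W}_x\mid x)\log w$ which, because $\Phi_1(\mathcal{W}_x\mid x)>0$, is itself visibly negative for $\varphi=\pi/4$ and $v>0$ (its sign is that of $-(v+\mathcal{W}_x\cos\varphi)$). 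The main obstacle is the positivity check $P_{\mathrm{G}}(Q,\Omega,v,\mathcal{W}_x,\pi/4)\ge0$, but this will follow by the same convexity-in-$v$ and monotonicity-in-$\mathcal{W}_x$ argument used at the end of the proof of \Cref{statement:TW_contour_C_is_steep_descent}: the discriminants of the resulting one-variable quadratics evaluate to strictly positive universal constants at $\varphi=\pi/4$, so the usual reduction to checking $v=0$ and $\mathcal{W}_x=0$ applies.

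Finally I would verify strict monotonicity at the endpoint $v=0$. Since $\mathcal{W}_x$ is only a simple critical point of $G_{\tau,x}$ in the Gaussian phase, the derivative at $v=0$ equals $G'_{\tau,x}(\mathcal{W}_x)\cos\varphi=0$, and the second $v$-derivative at $v=0$ equals $\Re\bigl(e^{2\mathbf{i}\varphi}G''_{\tau,x}(\mathcal{W}_x)\bigr)=\cos(\pi/2)\,G''_{\tau,x}(\mathcal{W}_x)=0$, which is why the leading behavior actually comes from the cubic; this is harmless because the global negativity of the first derivative for $v>0$ is precisely what is proved above, and yields $\Re G_{\tau,x}(w)<\Re G_{\tau,x}(\mathcal{W}_x)$ throughout $C_{\mathcal{W}_x,\pi/4}\setminus\{\mathcal{W}_x\}$, as claimed.
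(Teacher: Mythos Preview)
Your plan has a real gap: the polynomial $P_{\mathrm{G}}$ you propose cannot depend only on $(q^j,\Omega,v,\mathcal{W}_x,\varphi)$. In the Gaussian phase the relation $\tau\mathcal W_x=\Phi_2(\mathcal W_x\mid x)$ is false, so when you put the $\tau(\mathcal W_x\log w-w)$ contribution under the $\int_0^x\sum_{j\ge0}$ and combine it with the $\Phi_0$ part termwise, you are forced to express $\tau$ as $\int_0^x \sum_j\phi_2$-terms evaluated at $\upomega^\circ_{\tau,x}/\Speed(y)$, not at $\mathcal W_x/\Speed(y)$. The resulting integrand therefore carries an extra parameter $\upomega^\circ_{\tau,x}$ (with the constraint $\mathcal W_x<\upomega^\circ_{\tau,x}<\Speed(y)$), and the ``same convexity-in-$v$ and monotonicity-in-$\mathcal W_x$ argument'' from \Cref{statement:TW_contour_C_is_steep_descent} does not apply verbatim to this larger polynomial---you would need a genuinely new positivity check in more variables, which you have not supplied.

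The paper's proof sidesteps this entirely by a reduction rather than a recomputation. It first checks the elementary fact that $\Re(\mathcal W_x\log w-w)$ alone already attains its unique maximum on $C_{\mathcal W_x,\pi/4}$ at $w=\mathcal W_x$. Since $w\mapsto w^{-1}\Phi_2(w\mid x)$ is increasing on $(0,\mathcal W_x^\circ)$, one has $\tau\ge\tilde\tau:=\Phi_2(\mathcal W_x\mid x)/\mathcal W_x$, so lowering the coefficient of $(\mathcal W_x\log w-w)$ from $\tau$ to $\tilde\tau$ can only make the descent property harder to satisfy. But with coefficient $\tilde\tau$ the function is exactly $G_{\tilde\tau,x}$, for which $\mathcal W_x=\upomega^\circ_{\tilde\tau,x}$ is a \emph{double} critical point, and \Cref{statement:TW_contour_C_is_steep_descent} applies directly. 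No new polynomial positivity is needed.
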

\begin{proof}
	The function $w\mapsto w^{-1}\Phi_2(w\mid x)$ 
	is strictly increasing in $w\in(0,\mathcal{W}^\circ_x)$, 
	so
	\begin{equation}\label{PHI2_increasing_for_proof_of_descent}
		\tau=\frac{\Phi_2(\upomega^\circ_{\tau,x}\mid x)}{\upomega^\circ_{\tau,x}}
		\ge
		\frac{\Phi_2(\mathcal{W}_x\mid x)}{\mathcal{W}_x}.
	\end{equation}
	On the other hand, 
	$\Re(\mathcal{W}_x\log w-w)$
	on our contour 
	attains its only maximum at 
	$w=\mathcal{W}_x$.
	Indeed, we have for $v>0$:
	\begin{equation*}
		\frac{\partial}{\partial v}\Re\left( 
			\mathcal{W}_x
			\log (\mathcal{W}_x+v e^{\mathbf{i}\varphi})
			-
			(\mathcal{W}_x+v e^{\mathbf{i}\varphi})
		\right)
		=
		-
		\frac{v (v \cos\varphi +\mathcal{W}_x \cos (2 \varphi ))}
		{v^2+2 v \mathcal{W}_x \cos \varphi +\mathcal{W}_x^2}<0
	\end{equation*}
	for $\varphi=\pi/4$.
	Therefore, 
	\begin{equation*}
		\biggl(
			\frac{\Phi_2(\upomega^\circ_{\tau,x}\mid x)}{\upomega^\circ_{\tau,x}}
			-
			\frac{\Phi_2(\mathcal{W}_x\mid x)}{\mathcal{W}_x}
		\biggr)
		\Bigl(
			\Re(\mathcal{W}_x\log w-w)
			-
			\Re(\mathcal{W}_x\log \mathcal{W}_x-\mathcal{W}_x)
		\Bigr)\le0,
	\end{equation*}
	so
	it suffices to prove that the contour 
	$C_{\mathcal{W}_x,\frac{\pi}{4}}$
	is steep descent for a modification 
	$\tilde G_{\tau,x}$
	of 
	$G_{\tau,x}$ \eqref{G_of_w_in_Gaussian_phase_using_Phi}
	obtained by replacing 
	$\Phi_2(\upomega^\circ_{\tau,x}\mid x)/\upomega^\circ_{\tau,x}$
	by 
	$\Phi_2(\mathcal{W}_x\mid x)/\mathcal{W}_x$
	in the first summand.
	Denote
	$\tilde\tau:=\Phi_2(\mathcal{W}_x\mid x)/\mathcal{W}_x$
	and note that $\mathcal{W}_x=\upomega^\circ_{\tilde\tau,x}$,
	so the modified function
	$\tilde G_{\tau,x}$ 
	is simply the same as 
	$G_{\tilde\tau,x}$ corresponding to the Tracy--Widom phase. 
	Therefore, the desired statement now follows from
	\Cref{statement:TW_contour_C_is_steep_descent},
	and so we are done.
\end{proof}

\begin{proposition}
	\label{statement:Gaussian_steep_ascent_contour}
	If the point $(\tau,x)$ is in the Gaussian phase then 
	the circle $\Gamma_{\mathsf{cr}}$ centered at the origin of radius $\mathcal{W}_x$ 
	is steep ascent for 
	$\Re G_{\tau,x}$
	in the sense that on this contour the function
	$\Re G_{\tau,x}(w)$
	attains its only minimum at $w=\mathcal{W}_x$.
\end{proposition}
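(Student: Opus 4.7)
The plan is to reduce this statement to the already-established Tracy--Widom ascent result (Proposition \ref{statement:TW_circle_steep_ascent}) by exploiting the decomposition \eqref{G_of_w_in_Gaussian_phase_using_Phi} of $G_{\tau,x}$, in parallel with the strategy used in the proof of Proposition \ref{statement:Gaussian_contour_C_is_steep_descent}.

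First, I would introduce the auxiliary parameter $\tilde\tau:=\Phi_2(\mathcal{W}_x\mid x)/\mathcal{W}_x$, which by monotonicity of $w\mapsto w^{-1}\Phi_2(w\mid x)$ satisfies $\tilde\tau\le \tau$ in the Gaussian (or transition) phase, and for which $\mathcal{W}_x=\upomega^\circ_{\tilde\tau,x}$ is the genuine double critical point. Writing $\tilde G_{\tau,x}(w)$ for the function obtained from \eqref{G_of_w_in_Gaussian_phase_using_Phi} by replacing $\Phi_2(\upomega^\circ_{\tau,x}\mid x)/\upomega^\circ_{\tau,x}$ by $\tilde\tau$, one sees that $\tilde G_{\tau,x}$ coincides with $G_{\tilde\tau,x}$ in the Tracy--Widom configuration whose double critical point is exactly $\mathcal{W}_x$. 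Hence
\begin{equation*}
G_{\tau,x}(w)-G_{\tau,x}(\mathcal{W}_x)
=
\bigl[\tilde G_{\tau,x}(w)-\tilde G_{\tau,x}(\mathcal{W}_x)\bigr]
+
(\tau-\tilde\tau)\bigl[(\mathcal{W}_x\log w-w)-(\mathcal{W}_x\log \mathcal{W}_x-\mathcal{W}_x)\bigr].
\end{equation*}

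Next, I would parametrize the circle $\Gamma_{\mathsf{cr}}$ by $w=\mathcal{W}_x e^{\mathbf{i}\theta}$, $\theta\in(-\pi,\pi]$, and handle the two bracketed terms separately. For the first bracket, Proposition \ref{statement:TW_circle_steep_ascent} applied to $\tilde\tau$ (for which $\mathcal{W}_x$ plays the role of $\upomega^\circ_{\tilde\tau,x}$) gives $\Re \tilde G_{\tau,x}(w)>\Re \tilde G_{\tau,x}(\mathcal{W}_x)$ for $w\in\Gamma_{\mathsf{cr}}\setminus\{\mathcal{W}_x\}$. For the second bracket, one computes
\begin{equation*}
\Re\bigl(\mathcal{W}_x\log w-w\bigr)\big\vert_{w=\mathcal{W}_x e^{\mathbf{i}\theta}}
=\mathcal{W}_x\log\mathcal{W}_x-\mathcal{W}_x\cos\theta,
\end{equation*}
whose $\theta$-derivative is $\mathcal{W}_x\sin\theta$, so this expression attains its strict minimum on $\Gamma_{\mathsf{cr}}$ at $\theta=0$. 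Since $\tau-\tilde\tau\ge 0$, the second bracket contributes nonnegatively.

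Combining the two pieces, the sum is strictly positive off $w=\mathcal{W}_x$: if $\tau>\tilde\tau$ both terms contribute strictly in $(0,\pi)\cup(-\pi,0)$ and the second is in fact strict on all of $\Gamma_{\mathsf{cr}}\setminus\{\mathcal{W}_x\}$, while in the transition case $\tau=\tilde\tau$ the first term alone supplies strict positivity via Proposition \ref{statement:TW_circle_steep_ascent}. I expect the only subtlety to be the transition case $\upomega^\circ_{\tau,x}=\mathcal{W}_x$, where the second bracket degenerates to zero and the argument must lean entirely on the Tracy--Widom ascent statement; this works because the decomposition \eqref{G_of_w_in_Gaussian_phase_using_Phi} continues to hold and coincides with the Tracy--Widom expression in this borderline situation, so no separate estimate is required.
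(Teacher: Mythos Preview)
Your argument is correct and takes a genuinely different route from the paper. The paper proves Proposition~\ref{statement:Gaussian_steep_ascent_contour} by a direct computation: it differentiates $\Re G_{\tau,x}(\mathcal{W}_x e^{\mathbf{i}\theta})$ in $\theta$, writes the result as an integral over $y\in(0,x)$ of a sum over $j\ge 0$, and then analyzes an explicit polynomial $P_{\ref{statement:Gaussian_steep_ascent_contour}}(Q,\xi,\upomega,\mathcal{W},\theta)$, viewed as a quadratic in $\cos\theta$, to show positivity. You instead reduce to the already-proven Proposition~\ref{statement:TW_circle_steep_ascent} via the decomposition \eqref{G_of_w_in_Gaussian_phase_using_Phi}, exactly paralleling the paper's own reduction strategy in the proof of Proposition~\ref{statement:Gaussian_contour_C_is_steep_descent}. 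Your approach is cleaner in that it avoids a second polynomial analysis and makes the structural analogy between the descent and ascent statements transparent; the paper's direct computation, on the other hand, is self-contained and does not rely on the (mild) observation that the proof of Proposition~\ref{statement:TW_circle_steep_ascent} goes through verbatim at the transition point $(\tilde\tau,x)$ where $\upomega^\circ_{\tilde\tau,x}=\mathcal{W}_x$. You should note this explicitly: Proposition~\ref{statement:TW_circle_steep_ascent} is stated for the Tracy--Widom phase, but its proof only uses $\upomega^\circ<\mathcal{W}^\circ_x$ together with the critical-point relations \eqref{Equation_for_double_critical_points_1}--\eqref{Equation_for_double_critical_points_2}, all of which hold at $(\tilde\tau,x)$ since $\mathcal{W}_x<\upomega^\circ_{\tau,x}<\mathcal{W}^\circ_x$ in the Gaussian phase. (The paper itself relies on the same observation in its proof of Proposition~\ref{statement:Gaussian_contour_C_is_steep_descent}.)
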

\begin{proof}
	The function $\Re G_{\tau,x}$ is well-defined on all of
	$\Gamma_{\mathsf{cr}}$, cf. \Cref{rmk:problem_to_define_G_for_negative_reals}.
	We have using \eqref{G_of_w_in_Gaussian_phase_using_Phi}:
	\begin{equation*}
		\frac{\partial}{\partial\theta}G_{\tau,x}(\mathcal{W}_x e^{\mathbf{i}\theta})
		=
		\int_0^x
		\Bigg(
			\sum_{j=0}^{\infty}
			\frac{-q^{2j}\Speed(y)\mathcal{W}_x\sin\theta}
			{D_{\ref{statement:Gaussian_steep_ascent_contour}}
			(q^j,\Speed(y),\upomega^\circ_{\tau,x},\mathcal{W}_x,\theta)}\,
			P_{\ref{statement:Gaussian_steep_ascent_contour}}\bigl(
				q^j,\Speed(y),\upomega^\circ_{\tau,x},\mathcal{W}_x,\theta
			\bigr)
		\Biggr)
		dy,
	\end{equation*}
	where 
	$D_{\ref{statement:Gaussian_steep_ascent_contour}}(\cdots)<0$, 
	and 
	$P_{\ref{statement:Gaussian_steep_ascent_contour}}$ 
	has the form
	\begin{multline*}
		P_{\ref{statement:Gaussian_steep_ascent_contour}}(Q,\xi,\upomega,\mathcal{W},\theta)
		=
		-
		Q^4 \upomega^3 \mathcal{W}^2
		+
		Q^4 \upomega \mathcal{W}^4
		+
		3 \xi  Q^3 \upomega^2 \mathcal{W}^2
		+
		\xi  Q^3\mathcal{W}^4
		+
		\xi ^2 Q^2 \upomega^3
		\\+
		\xi ^2 Q^2 \upomega \mathcal{W}^2
		-
		3 \xi ^3 Q \upomega^2
		+
		5 \xi^3 Q \mathcal{W}^2
		+
		4 \xi ^4 \upomega
		\\
		-
		4 \xi  \mathcal{W} (\xi +Q \upomega) \left(\xi ^2+Q^2 \mathcal{W}^2\right)
		\cos\theta
		+
		2 \xi ^2 Q \mathcal{W}^2 (\xi +Q \upomega)\cos(2\theta).
	\end{multline*}
	It suffices to show that
	$P_{\ref{statement:Gaussian_steep_ascent_contour}}$ is positive for all $\theta$
	(due to the factor $\sin\theta$ in front of $P_{\ref{statement:Gaussian_steep_ascent_contour}}$
	which changes sign in the lower half plane).
	Viewing $P_{\ref{statement:Gaussian_steep_ascent_contour}}$ as a quadratic polynomial
	$\mathsf{a}U^2 + \mathsf{b}U + \mathsf{c}$
	in 
	$U=\cos\theta$,
	one can check that $\mathsf{a}>0$, 
	that the polynomial is positive for
	$U=1$ and $U=-1$ 
	(here one should use 
	$\xi>Q\upomega>Q\mathcal{W}$), 
	and that 
	the half-sum of its roots is
	$-\frac{\mathsf{b}}{2\mathsf{a}}
	=
	\frac{Q \mathcal{W}}{2 \xi }+\frac{\xi }{2 Q \mathcal{W}}
	\ge 1$.
	This implies that the polynomial is positive for
	$U\in[-1,1]$, and completes the proof.
\end{proof}

\subsection{Contour deformations and extra residues}
\label{sub:extra_residues}

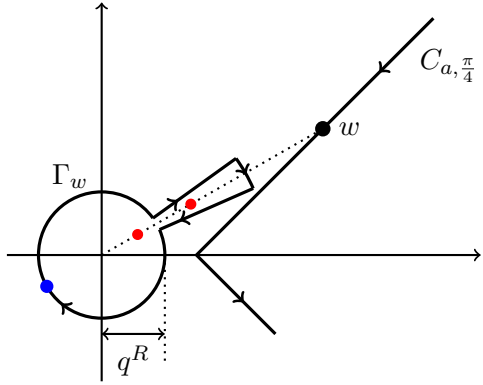
\begin{figure}[htbp]
	\centering
	\begin{tikzpicture}
		[scale=4.2, thick]
		\def\pt{0.02}
		\def\q{.61}
		\def\ss{.56}
		\draw[->] (-.3,0)--(1.2,0);
		\draw[->] (0,-.4)--(0,.8);
		\draw[
			very thick,
			decoration={markings,mark=at position 0.8 with {\arrow{<}}},
			postaction={decorate}
		] (.3,0)--++(.75,.75);
		\draw[
			very thick,
			decoration={markings,mark=at position 0.6 with {\arrow{>}}},
			postaction={decorate}
		] (.3,0)--++(.25,-.25);
		\node at (1.1,.6) {$C_{a,\frac{\pi}{4}}$};
		\draw[fill] (.3+.4,.4) circle (.6pt) node (w) [right,xshift=2pt] {$w$};
		\draw[dotted] (0,0) -- (.7,.4);
		\def\angle{6};
		\def\qqq{.65};
		\draw[
			very thick,
			decoration={markings,mark=at position 0.1 with {\arrow{>}}},
			postaction={decorate}
		] (.7*\qqq,.4*\qqq) arc (29.7449:29.7449-\angle:0.524047);
		\draw[very thick] (.7*\qqq,.4*\qqq) arc (29.7449:29.7449+\angle:0.524047);
		\draw[very thick] (.2,0) arc (0:29.7449-\angle:.2);
		\draw[
			very thick,
			decoration={
				markings,
				mark=at position 0.4 with {\arrow{>}}
			},
			postaction={decorate}
		] (.2,0) arc (0:-360+29.7449+\angle:.2);
		\draw[
			very thick,
			decoration={markings,mark=at position 0.3 with {\arrow{>}}},
			postaction={decorate}
		] (0.162325, 0.116835) -- (0.42533, 0.306136);
		\draw[
			very thick,
			decoration={markings,mark=at position 0.3 with {\arrow{<}}},
			postaction={decorate}
		] (0.183069, 0.080533) -- (0.479685, 0.211015);
		\node at (-.1,.25) {$\Gamma_w$};
		\draw[dotted] (.2,0) --++(0,-.35);
		\draw[<->] (0,-.25)--++(.2,0) node [midway, yshift=-10pt] {$q^R$};
		\def\qqqz{.635};
		\draw[fill, color=red] (.7*\qqqz*\qqqz,.4*\qqqz*\qqqz) circle (0.4pt);
		\draw[fill, color=red]
		(.7*\qqqz*\qqqz*\qqqz*\qqqz,.4*\qqqz*\qqqz*\qqqz*\qqqz) circle (0.4pt);
		\draw[fill,color=blue] (-0.173649, -0.0992278) circle(.5pt);
	\end{tikzpicture}
	\caption{%
		The contour 
		$C_{a,\frac{\pi}{4}}\ni w$ 
		and the 
		contour
		$\Gamma_w$ 
		which is the image of
		$D_{w}^{(0)}\ni\mathsf{u}$
		under the change of variables
		$z=q^{\mathsf{u}}w$.
		The red dots inside $\Gamma_w$ are $qw$ and $q^2w$. 
		The blue dot on $\Gamma_w$ corresponds to the branch cut of $\log(z/w)$.
	}
	\label{fig:Gamma_0_contour}
\end{figure}

Our next goal is to understand the asymptotic behavior of the whole
kernel 
$K_{\zeta}(w,w')$ \eqref{Fredholm_for_exponential_model_section2_main_formula_kernel}
with $\zeta$ given by \eqref{zeta_depends_on_lambda_for_asymptotics}.
To this end, let us split the integration contour 
$D_w$ (described in \Cref{def:d_w_contour_definition})
into parts
\begin{equation*}
	D^{(k)}_w
	:=
	\left\{ 
		\mathsf{u}\in D_w\colon 
		\frac{\pi}{\log q^{-1}}(2k-1)
		\le
		\Im \mathsf{u}
		<
		\frac{\pi}{\log q^{-1}}(2k+1)
	\right\}
	,\qquad 
	k\in\mathbb{Z}.
\end{equation*}
By taking $d=d(w)$ in $D_w$ smaller if needed
we can make sure that 
$\Re\mathsf{u}=R(w)$
for 
$k\ne 0$
and all 
$\mathsf{u}\in D_w^{(k)}$.
(While in 
\Cref{def:d_w_contour_definition}
the parameters
$R(w)$ and $d(w)$ in $D_w$
depend on $w$, 
the upper bound on $d(w)$ required for
the latter condition $\Re \mathsf{u}=R(w)$
can be taken independent of $w$.)
Later we will see that only the 
contribution from the part $D_w^{(0)}$
matters in the $\RateLambda\to+\infty$ limit.

On each $D_w^{(k)}$ the map 
$\mathsf{u}\mapsto q^{\mathsf{u}}$ 
is one to one, so we can change 
the variables as $z=q^{\mathsf{u}}w$, or
\begin{equation*}
	\mathsf{u}
	=
	\frac{\log(z/w)}{\log q}+\frac{2\pi\mathbf{i}k}{\log q},
\end{equation*}
where $\log(z/w)$ is the standard branch of the 
logarithm with argument $\in(-\pi,\pi)$, 
so thus defined $\mathsf{u}$ belongs to $D_w^{(k)}$.
The variable $z$ is integrated over a circle of radius $q^{R(w)}|w|$
for $k\ne 0$ or, for $k=0$, over a more complicated contour 
which we denote by $\Gamma_w$, see \Cref{fig:Gamma_0_contour}.

With this splitting of the $\mathsf{u}$ contour 
and under the above change of variables
the kernel 
$K_\zeta(w,w')$, $w,w'\in C_{a,\frac{\pi}{4}}$ 
(with $0<a<\mathcal{W}_x$),
becomes
\begin{multline}
	\label{Fredholm_kernel_periodic_expansion_section44}
	K_{\zeta(\RateLambda)}(w,w')
	=
	-
	\frac{1}{2\mathbf{i}}
	\int_{\Gamma_w}
	\frac{e^{(- \RateLambda \LimitShape(\tau,x)+r \RateLambda^{\beta})\log(z/w)}}
	{\sin\bigl(\pi\log(z/w)/\log q\bigr)}
	\frac{g(w)}{g(z)}\frac{dz}{(z-w')z\log q}
	\\-
	\sum_{k\in\mathbb{Z}\setminus\left\{ 0 \right\}}
	\frac{1}{2\mathbf{i}}
	\int_{|z|=q^{R(w)}|w|}
	\frac{e^{(- \RateLambda \LimitShape(\tau,x)+r \RateLambda^{\beta})
	(\log(z/w)+2\pi\mathbf{i}k)}}
	{\sin \left(\pi
		\left( 
			\frac{\log(z/w)}{\log q}+
			\frac{2\pi\mathbf{i}k}{\log q}
		\right)\right)
	}
	\frac{g(w)}{g(z)}\frac{dz}{(z-w')z\log q},
\end{multline}
where 
$(-\zeta)^{\mathsf{u}}g(w)/g(z)
=
e^{(- \RateLambda \LimitShape(\tau,x)+r \RateLambda^{\beta})\log(z/w)}g(w)/g(z)$
is given by \eqref{plugging_into_Fredholm_for_asymptotics}.

Take $a=\upomega_{\mathsf{cr}}$ \eqref{w_cr_notation} 
in the contour $C_{a,\frac{\pi}{4}}\ni w,w'$. 
Then by
\Cref{statement:TW_contour_C_is_steep_descent,statement:Gaussian_contour_C_is_steep_descent}
this contour is a steep descent one.
In the Gaussian phase or at the transition point
(when $\upomega_{\mathsf{cr}}=\mathcal{W}_x$)
we need to modify the contour
in a small neighborhood of $\mathcal{W}_x$ to avoid the pole at this point 
(%
	i.e., the contour will pass slightly to the left of $\mathcal{W}_x$, see
	\Cref{sub:Gaussian_contribution,sub:BBP_contribution} below for details on
	local structure of contours%
).
Let us keep the same notation 
$C_{\upomega_{\mathsf{cr}},\frac{\pi}{4}}$ 
for this modified contour.
This choice of the steep descent contour
does not change 
$\det(1+K_\zeta)_{L^2(C_{\upomega_{\mathsf{cr}},\frac{\pi}{4}})}$.

Next, we aim to deform the contour 
$\Gamma_w$ for $z$
to the steep ascent contour 
$\Gamma_{\mathsf{cr}}$
(%
	again, we need to locally modify the latter contour in a small neighborhood
	of the critical point to avoid the intersection with
	$C_{\upomega_{\mathsf{cr}},\frac{\pi}{4}}$, see
	\Cref{sub:TW_contribution,sub:Gaussian_contribution,sub:BBP_contribution}
	below%
).
When $k\ne0$ in \eqref{Fredholm_kernel_periodic_expansion_section44},
this deformation does not encounter any poles. 
However, for $k=0$ such a deformation may pass through
a pole coming from the sine in the denominator. 
These poles have the form 
$z=q^{n}w$, $n\in\mathbb{Z}$, 
but for fixed $w$ one can 
encounter only a finite (logarithmic in $|w|$) 
number of poles corresponding to 
$n=1,2,\ldots N_w$ 
(for some $N_w\in\mathbb{Z}_{\ge0}$; if $N_w=0$ then it means that the deformation
encounters no poles).
Taking the residue at $z=q^nw$ makes the terms under the integral which may
grow exponentially in $\RateLambda$ look as 
$\exp\bigl(
	\RateLambda(G_{\tau,x}(w)-G_{\tau,x}(q^nw))
\bigr)$.
Comparing 
$\Re G_{\tau,x}(w)$
and 
$\Re G_{\tau,x}(q^nw)$
by moving along the contours similarly to
\cite{barraquand2015phase}
we will show that these extra residues are asymptotically negligible:

\begin{proposition}
	\label{prop:extra_negligible}
	For all
	$w\in C_{\upomega_{\mathsf{cr}},\frac{\pi}{4}}$ 
	and 
	$n=1,\ldots,N_w $ we have
	\begin{equation}\label{proof_Fredholm_does_not_change_if_Gamma_0}
		\Re(G_{\tau,x}(w)-G_{\tau,x}(q^nw))<-\delta_1|w|-\delta_2,
	\end{equation}
	where $\delta_1,\delta_2>0$ do not depend on $w$ and $\RateLambda$.
\end{proposition}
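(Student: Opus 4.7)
The plan is to use the explicit decomposition
\begin{equation*}
	\Re\bigl(G_{\tau,x}(w)-G_{\tau,x}(q^nw)\bigr)=-\tau(1-q^n)\Re w+n\log(1/q)\LimitShape(\tau,x)+\Re\int_0^x\bigl[\phi_0\bigl(w/\Speed(y)\bigr)-\phi_0\bigl(q^nw/\Speed(y)\bigr)\bigr]dy
\end{equation*}
which follows directly from \eqref{G_function_definition}, and to split the analysis according to the magnitude of $|w|$.  The main ingredients are the steep descent of $C_{\upomega_{\mathsf{cr}},\pi/4}$ and the steep ascent of $\Gamma_{\mathsf{cr}}$ established in \Cref{sub:steep_descent_contours}, the bound $N_w=O(\log|w|)$ inherited from the contour deformation of \Cref{sub:extra_residues}, and the logarithmic estimate of \Cref{statement:estimate_decay_phi0}.

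For $|w|\ge M$ with $M$ a sufficiently large threshold, the parametrization of the contour gives $\Re w\ge|w|/\sqrt{2}\cdot(1+o(1))$, so using $1-q^n\ge 1-q$ for $n\ge 1$ the first term is at most $-\tau(1-q)|w|/(2\sqrt{2})$. The middle term is bounded by $n\log(1/q)\LimitShape(\tau,x)\le N_w\log(1/q)\LimitShape(\tau,x)=O(\log|w|)$, while \Cref{statement:estimate_decay_phi0} yields that the integral term is likewise $O(\log|w|)$. Choosing $M$ large enough in terms of $\tau,q,\LimitShape(\tau,x)$, and the universal constants, the linear decay dominates the sub-linear error, producing a bound $\Re(G_{\tau,x}(w)-G_{\tau,x}(q^nw))\le-\delta_1|w|$ with $\delta_1>0$ uniform in $n\in\{1,\ldots,N_w\}$.

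For the bounded regime $\upomega_{\mathsf{cr}}\le|w|\le M$, $N_w$ is uniformly bounded by some $N_0=N_0(M)$, and the relevant poles satisfy $|q^nw|\ge\upomega_{\mathsf{cr}}$ since they lie outside $\Gamma_{\mathsf{cr}}$ after the deformation. Decomposing
\begin{equation*}
	\Re G_{\tau,x}(w)-\Re G_{\tau,x}(q^nw)=\bigl[\Re G_{\tau,x}(w)-\Re G_{\tau,x}(\upomega_{\mathsf{cr}})\bigr]-\bigl[\Re G_{\tau,x}(q^nw)-\Re G_{\tau,x}(\upomega_{\mathsf{cr}})\bigr],
\end{equation*}
the first bracket is nonpositive (strict for $w\neq\upomega_{\mathsf{cr}}$) by \Cref{statement:TW_contour_C_is_steep_descent} or \Cref{statement:Gaussian_contour_C_is_steep_descent}. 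For the second bracket, introduce $z^{*}:=\upomega_{\mathsf{cr}}q^nw/|q^nw|\in\Gamma_{\mathsf{cr}}$; by \Cref{statement:TW_circle_steep_ascent} or \Cref{statement:Gaussian_steep_ascent_contour} one has $\Re G_{\tau,x}(z^{*})\ge\Re G_{\tau,x}(\upomega_{\mathsf{cr}})$ (strict for $\arg w\neq 0$), so it remains to compare $\Re G_{\tau,x}(q^nw)$ with $\Re G_{\tau,x}(z^{*})$ along the ray of constant argument $\arg w$, which reduces to controlling the radial derivative $\frac{d}{dr}\Re G_{\tau,x}(re^{i\arg w})=-\tau\cos(\arg w)+r^{-1}[\LimitShape(\tau,x)+\Re\Phi_1(re^{i\arg w}\mid x)]$ on $r\in[\upomega_{\mathsf{cr}},|w|]$.

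The main obstacle is this last radial estimate: establishing a definite sign of the radial derivative for all $r\in[\upomega_{\mathsf{cr}},|w|]$ and all $w$ in the contour. I plan to handle it by the same polynomial-positivity strategy used in the proofs of \Cref{statement:TW_contour_C_is_steep_descent,statement:TW_circle_steep_ascent,statement:Gaussian_contour_C_is_steep_descent,statement:Gaussian_steep_ascent_contour}: using \eqref{G_derivatives_formulas} and the critical-point identity $\LimitShape(\tau,x)+\Phi_1(\upomega_{\mathsf{cr}}\mid x)=\tau\upomega_{\mathsf{cr}}$, rewrite the derivative as an integral over $y\in(0,x)$ of an explicit rational function of $q^j$, $\Speed(y)$, $r$, $\upomega_{\mathsf{cr}}$, and $\arg w$, and check its sign using $\Speed(y)\ge\mathcal{W}_x\ge\upomega_{\mathsf{cr}}$. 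Once strict pointwise inequality is obtained, a continuity-and-compactness argument on the compact set $\{(w,n):w\in C_{\upomega_{\mathsf{cr}},\pi/4},\upomega_{\mathsf{cr}}\le|w|\le M,1\le n\le N_0\}$ yields a uniform positive gap $\delta_2>0$. Combining with the large-$|w|$ regime and adjusting the constants completes the proof.
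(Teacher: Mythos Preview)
Your large-$|w|$ argument is essentially right and is a cleaner alternative to what the paper does there (the paper instead tracks the derivative of $\Re G_{\tau,x}$ along the connecting path and uses that it has a strictly negative limit). One small gap: \Cref{statement:estimate_decay_phi0} only gives an \emph{upper} bound on $\Re\phi_0$, whereas for your difference you also need a lower bound on $\Re\phi_0(q^n w/\Speed(y))$. This is easy to supply by the same term-by-term estimate as in the proof of \Cref{statement:estimate_decay_phi0}, using that $\arg(q^n w)=\arg w$ stays bounded away from $0$ for large $|w|$, but you should say so.

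The bounded-$|w|$ part, however, does not work as stated. Your plan hinges on showing $\Re G_{\tau,x}(q^n w)\ge \Re G_{\tau,x}(z^*)$ via a radial-derivative-of-definite-sign argument on $r\in[\upomega_{\mathsf{cr}},|w|]$. That sign cannot be constant. On the outer portion $r\in[\upomega_{\mathsf{cr}}/\cos\theta_w,\,|w|]$ the radial derivative is \emph{negative} (this is exactly \Cref{statement:linear_part}), while near $r=\upomega_{\mathsf{cr}}$ in the Gaussian phase a second-order Taylor expansion at $\mathcal W_x$ gives $\frac{d}{dr}\Re G_{\tau,x}(re^{i\theta_w})\big|_{r=\upomega_{\mathsf{cr}}}\approx -G''_{\tau,x}(\mathcal W_x)\,\mathcal W_x\,\theta_w\sin\theta_w>0$, so the derivative is \emph{positive} there. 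Hence no polynomial-positivity argument can produce a single sign on the whole interval. Worse, the underlying inequality you are aiming for, $\Re G_{\tau,x}(q^n w)\ge \Re G_{\tau,x}(\upomega_{\mathsf{cr}})$, is simply false in general: for $n=1$ and $|w|$ large (still in your ``bounded'' window $|w|\le M$, since $M$ must be taken large), $\Re G_{\tau,x}(qw)\sim -\tau q\,\Re w\to -\infty$ while $\Re G_{\tau,x}(\upomega_{\mathsf{cr}})$ is a fixed finite number. So the decomposition through $\upomega_{\mathsf{cr}}$ and $z^*$ cannot close.

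The paper avoids this by never comparing $q^n w$ to $\upomega_{\mathsf{cr}}$ or to $z^*$. Instead it connects $w$ directly to $q^n w$ by a three-piece path $V_1\cup V_2\cup V_3$ (see \Cref{fig:contours_extra_residues}): first move along $C_{\upomega_{\mathsf{cr}},\pi/4}$ toward $\upomega_{\mathsf{cr}}$ to reduce the argument, then radially inward but only while $\Re>\upomega_{\mathsf{cr}}$ (this is \Cref{statement:linear_part}), then along an arc of fixed radius to rotate the argument back up to $\theta_w$, but only while $\Re<\upomega_{\mathsf{cr}}$ (this is \Cref{statement:circle_part}). Splitting at the vertical line $\Re w=\upomega_{\mathsf{cr}}$ is exactly what separates the regime where the radial lemma applies from the regime where the circular lemma applies, and gives monotonicity of $\Re G_{\tau,x}$ along every piece. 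Uniform derivative bounds along $V_1,V_2,V_3$ then yield both the $-\delta_2$ for bounded $|w|$ and the $-\delta_1|w|$ for large $|w|$. Your radial shortcut collapses $V_2$ and $V_3$ into a single ray and loses precisely this separation.
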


\Cref{prop:extra_negligible} will imply that the deformations
of the contours explained above do not affect
the asymptotics of the Fredholm determinant
(a detailed statement is \Cref{statement:Fredholm_does_not_change_if_Gamma_0} below).
The proof of \Cref{prop:extra_negligible} is based
on \Cref{statement:linear_part,statement:circle_part}
which we establish first.
Before discussing these lemmas, observe that
it suffices to consider only the case $\Im w>0$. 
Indeed, the case $\Im w<0$ is symmetric, 
and $\Im w=0$ corresponds to $w=\upomega_{\mathsf{cr}}$, in which case 
all points of the form $q^n \upomega_{\mathsf{cr}}$, $n\in \mathbb{Z}_{\ge1}$,
are inside $\Gamma_{\mathsf{cr}}$ and thus are not encountered by our contour
deformation.

\begin{figure}[htbp]
	\centering
	\begin{tikzpicture}
		[scale=4.2, thick]
		\def\pt{0.02}
		\def\q{.61}
		\def\ss{.56}
		\draw[->] (-.3,0)--(1.2,0);
		\draw[->] (0,-.4)--(0,.8);
		\draw[
			decoration={markings,mark=at position 0.2 with {\arrow{<}}},
			postaction={decorate}
		] (.5,0)--++(.95,.95);
		\draw[
			decoration={markings,mark=at position 0.6 with {\arrow{>}}},
			postaction={decorate}
		] (.5,0)--++(.25,-.25);
		\node at (.86,.17) {$C_{\upomega_{\mathsf{cr}},\frac{\pi}{4}}$};
		\draw[dashed] (.5,-.2) --++ (0,1);
		\draw[dashed] (.5,0) arc (0:90:.5);
		\draw[ultra thick] (.5,.2) arc (21.8014:32.5604:0.538516);
		\draw[dotted] (0,0) -- (.5,.2);
		\draw[ultra thick] (.5,.2) --++ (.5*2/3,.2*2/3);
		\draw[ultra thick] (.5*5/3,.2*5/3) --++(.55,.55);
		\draw[fill] (.5*5/3+.55,.2*5/3+.55) circle(.4pt) node [right,xshift=1pt] {$w$};
		\draw[dotted] (0,0)--(.5*5/3+.55,.2*5/3+.55);
		\draw[fill,opacity=.1] (.5,0) arc (0:32.5604:.5) -- (.5*5/3+.55,.2*5/3+.55) -- cycle;
		\node at (1.12,.5) {$V_1$};
		\node at (.7,.354) {$V_2$};
		\node at (.44,.37) {$V_3$};
		\node at (.43,-.07) {$\upomega_{\mathsf{cr}}$};
		\draw[fill] (.5,0) circle (.6pt);
	\end{tikzpicture}
	\caption{%
		For any $w\in C_{\upomega_{\mathsf{cr}},\frac{\pi}{4}}$
		and any $\tilde w$ belonging to the (closed) shaded region 
		we have $\Re G_{\tau,x}(w)<\Re G_{\tau,x}(\tilde w)$.
		Indeed, any such $\tilde w$ can be reached from $w$ by moving 
		along a combination of contours $V_1$ (part of $C_{\upomega_{\mathsf{cr}},\frac{\pi}{4}}$), 
		$V_2$ (part of a line passing through the origin 
		in the half plane $\Re \tilde w>\Re \upomega_{\mathsf{cr}}$), and $V_3$
		(arc of a circle centered at the origin
		in the half plane $\Re \tilde w<\Re \upomega_{\mathsf{cr}}$).
		The increase of $\Re G_{\tau,x}$ in the direction from $w$ to $\tilde w$ 
		along these contours follows from
		\Cref{statement:TW_contour_C_is_steep_descent,statement:Gaussian_contour_C_is_steep_descent}
		(for~$V_1$),
		\Cref{statement:linear_part}
		(for~$V_2$),
		and
		\Cref{statement:circle_part}
		(for~$V_3$).
		The dashed arc centered at the origin is a part of the 
		steep ascent contour $\Gamma_{\mathsf{cr}}$.%
	}
	\label{fig:contours_extra_residues}
\end{figure}
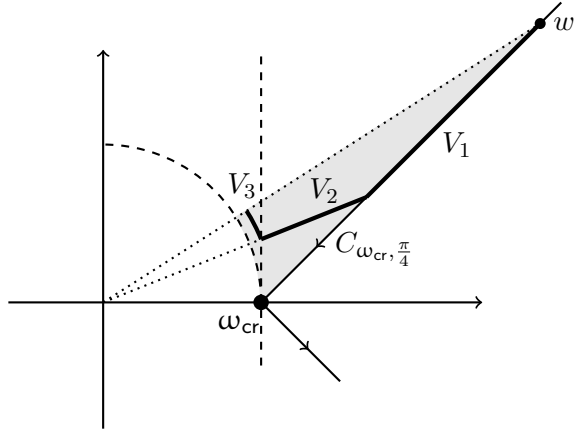

\begin{lemma}
	\label{statement:circle_part}
	On any contour of the form
	$w=(\upomega_{\mathsf{cr}}+\mathbf{i}v)e^{\mathbf{i}\theta}$,
	where $v\in(0,\upomega_{\mathsf{cr}})$ 
	is fixed and $\theta$ increases from
	$0$ to $\frac{\pi}{4}$
	the function $w\mapsto \Re G_{\tau,x}(w)$ is increasing in $\theta$.
\end{lemma}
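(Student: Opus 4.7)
The plan is to differentiate $\Re G_{\tau,x}(w)$ directly with respect to $\theta$ and reduce the resulting expression to a polynomial positivity check. The key simplification is that $|w|=\sqrt{\upomega_{\mathsf{cr}}^2+v^2}=:r$ stays constant along this contour, so the summand $\LimitShape(\tau,x)\log|w|$ of $\Re G_{\tau,x}$ contributes nothing to the $\theta$-derivative. Writing $w=re^{\mathbf{i}\psi}$ with $\psi=\theta+\arctan(v/\upomega_{\mathsf{cr}})$, the constraint $v\in(0,\upomega_{\mathsf{cr}})$ forces $\arctan(v/\upomega_{\mathsf{cr}})<\pi/4$, hence $\psi\in(0,\pi/2)$ throughout and $\Im w=r\sin\psi>0$. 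A direct termwise calculation using $\phi_0(z)=\sum_{j\ge 0}zq^j/(1-zq^j)$ then yields
\begin{equation*}
\frac{\partial}{\partial\theta}\Re G_{\tau,x}(w)=r\sin\psi\cdot \biggl[\tau + \int_0^x\sum_{j\ge 0}\frac{q^j\Speed(y)\bigl(r^2q^{2j}-\Speed(y)^2\bigr)}{\bigl(\Speed(y)^2-2rq^j\Speed(y)\cos\psi+r^2q^{2j}\bigr)^2}\,dy\biggr].
\end{equation*}

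Since $r\sin\psi>0$, the task is to show that the bracket is positive. I substitute for $\tau$ via $\tau\ge\Phi_2(\upomega_{\mathsf{cr}}\mid x)/\upomega_{\mathsf{cr}}$, which holds with equality in the Tracy--Widom phase by Proposition~\ref{statement:TW_root_exists_and_is_unique} and as a bona fide inequality in the Gaussian phase by the monotonicity of $w\mapsto\Phi_2(w\mid x)/w$ recorded in~\eqref{PHI2_increasing_for_proof_of_descent}. Writing $\tau$ as the corresponding integral of a series in $q^j$, the bracket is bounded below by a single integrated sum, and it suffices to establish the pointwise inequality
\begin{equation*}
\frac{\Speed(y)+\upomega_{\mathsf{cr}} q^j}{(\Speed(y)-\upomega_{\mathsf{cr}} q^j)^3} \;\ge\; \frac{\Speed(y)^2-r^2q^{2j}}{\bigl(\Speed(y)^2-2rq^j\Speed(y)\cos\psi+r^2q^{2j}\bigr)^2}
\end{equation*}
for a.e.\ $y\in(0,x)$ and all $j\ge 0$, where a.e.\ $\Speed(y)\ge\mathcal{W}_x^\circ\ge\mathcal{W}_x\ge\upomega_{\mathsf{cr}}$, so that the left-hand side is positive.

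The main obstacle will be the verification of this pointwise inequality. When $\Speed(y)\le rq^j$ the right-hand side is nonpositive while the left-hand side is positive, so the inequality is immediate; this handles the small-$j$ regime where $rq^j$ may exceed $\Speed(y)$. In the nontrivial case $\Speed(y)>rq^j$ I would clear denominators to obtain a polynomial inequality in the variables $q^j$, $\Speed(y)$, $\upomega_{\mathsf{cr}}$, $v$, $\cos\psi$. Following the strategy in the proofs of Propositions~\ref{statement:TW_contour_C_is_steep_descent} and~\ref{statement:TW_circle_steep_ascent}, I anticipate making the substitution $\Speed(y)=\Omega+q^j\upomega_{\mathsf{cr}}$ with $\Omega>0$ (which encodes $\Speed(y)>\upomega_{\mathsf{cr}} q^j$) and then verifying positivity by exploiting $\cos\psi\ge 0$ (from $\psi\in(0,\pi/2)$) together with the bound $r^2=\upomega_{\mathsf{cr}}^2+v^2<2\upomega_{\mathsf{cr}}^2$ coming from $v<\upomega_{\mathsf{cr}}$.
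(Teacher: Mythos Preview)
Your strategy is sound and in fact cleaner than the paper's: the observation that $|w|=r$ is constant along the arc kills the $\LimitShape(\tau,x)\log|w|$ contribution outright, and your derivative formula and the pointwise reduction via $\tau\ge\Phi_2(\upomega_{\mathsf{cr}}\mid x)/\upomega_{\mathsf{cr}}$ are both correct. The paper instead keeps all terms, expresses the $\theta$-derivative as a ratio with an explicit numerator polynomial $P(q^j,\Speed(y),\upomega_{\mathsf{cr}},v,\theta)$ having five trigonometric coefficients $R_0,\dots,R_4$, and then checks the sign of $P$ by a case analysis on those coefficients; the Gaussian phase is handled by a separate reduction to the Tracy--Widom case, which your uniform inequality for $\tau$ absorbs in one stroke.

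There is, however, a genuine gap in your final verification plan. The pointwise inequality
\[
\frac{\Speed(y)+\upomega_{\mathsf{cr}} q^j}{(\Speed(y)-\upomega_{\mathsf{cr}} q^j)^3}
\;\ge\;
\frac{\Speed(y)^2-r^2q^{2j}}{\bigl(\Speed(y)^2-2rq^j\Speed(y)\cos\psi+r^2q^{2j}\bigr)^2}
\]
is \emph{false} under the constraints you name (``$\cos\psi\ge0$'' and ``$r^2<2\upomega_{\mathsf{cr}}^2$''). Indeed, as $\cos\psi\to 1$ with $\Speed(y)>rq^j$, the right-hand side tends to $(\Speed(y)+rq^j)/(\Speed(y)-rq^j)^3$, which is strictly larger than the left-hand side because $r>\upomega_{\mathsf{cr}}$. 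The constraint you actually need is
\[
\cos\psi\;\le\;\frac{\upomega_{\mathsf{cr}}}{r},
\]
which is precisely $\theta\ge0$ (since $\cos\psi_0=\upomega_{\mathsf{cr}}/r$ at $\theta=0$). With this in hand the verification is immediate: the right-hand side is increasing in $\cos\psi$ (the denominator $D_j=\Speed(y)^2-2rq^j\Speed(y)\cos\psi+r^2q^{2j}$ decreases), so it suffices to check the boundary $\cos\psi=\upomega_{\mathsf{cr}}/r$, where $D_j=(\Speed(y)-\upomega_{\mathsf{cr}} q^j)^2+v^2q^{2j}$ and a direct computation shows the difference $\text{LHS}-\text{RHS}$ equals a positive multiple of $v^2q^{2j}$. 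No substitution $\Speed(y)=\Omega+q^j\upomega_{\mathsf{cr}}$ or use of $r^2<2\upomega_{\mathsf{cr}}^2$ is needed.
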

See the contour $V_3$ in \Cref{fig:contours_extra_residues};
note that $V_3$ is a part of the larger contour corresponding to $\theta\in(0,\frac{\pi}{4})$
considered in \Cref{statement:circle_part}.
\begin{proof}[Proof of \Cref{statement:circle_part}]
	Conditions $v<\upomega_{\mathsf{cr}}$ and $0<\theta<\frac{\pi}{4}$ reflect the geometry
	of the contour.

	In the Tracy--Widom phase (where $\upomega_{\mathsf{cr}}=\upomega^\circ_{\tau,x}$)
	we have 
	\begin{equation*}
		\frac{\partial}{\partial\theta}\Re G_{\tau,x}
		\bigl((
			\upomega^\circ_{\tau,x}+\mathbf{i}v)e^{\mathbf{i}\theta}
		\bigr)
		=
		\int_0^x
		\Bigg(
			\sum_{j=0}^{\infty}
			\frac{-q^{2j} \Speed(y) (v \cos\theta + \upomega^\circ_{\tau,x}\sin\theta) }
			{D_{\ref{statement:circle_part}}(q^j,\Speed(y),\upomega^\circ_{\tau,x},v,\theta)}
			\,
			P_{\ref{statement:circle_part}}(q^j,\Speed(y),\upomega^\circ_{\tau,x},v,\theta)
		\Biggr)dy,
	\end{equation*}
	where 
	$D_{\ref{statement:circle_part}}(\cdots)<0$, 
	and we want to show that the polynomial
	$P_{\ref{statement:circle_part}}$ is positive.
	It has the form
	\begin{equation*}
		P_{\ref{statement:circle_part}}
		(Q,\xi,\upomega,v,\theta)
		=
		R_0
		+
		R_1\cos\theta
		+
		R_2\cos(2\theta)
		+
		R_3\sin \theta
		+
		R_4\sin (2\theta),
	\end{equation*}
	where $R_i=R_i(Q,\xi,\upomega,v)$ are the following polynomials:
	\begin{align*}
		R_0&=
		Q^4 v^2 \upomega \left(v^2+\upomega^2\right)+\xi  Q^3 \left(v^4+5 v^2 \upomega^2+4
		\upomega^4\right)
		\\&\hspace{130pt}
		+\xi ^2 Q^2 \upomega \left(v^2+2 \upomega^2\right)+\xi ^3 Q \left(5 v^2+2
		\upomega^2\right)+4 \xi ^4 \upomega,
		\\
		R_1&=
		-4 \xi  \upomega (\xi +Q \upomega) \left(\xi ^2+
			Q^2v^2+Q^2\upomega^2\right),
		\\
		R_2&=
		-2 \xi ^2 Q \left(v^2-\upomega^2\right) (\xi +Q \upomega),
		\\
		R_3&=
		4
		\xi  v (\xi +Q \upomega) \left(\xi ^2+Q^2 v^2+Q^2\upomega^2\right),
		\\
		R_4&=
		-4 \xi ^2 Q v
		\upomega (\xi +Q \upomega).
	\end{align*}
	One can readily check that these polynomials satisfy
	(for our range of parameters)
	\begin{equation*}
		R_0>0,\qquad 
		R_1<0,\qquad 
		R_2>0,\qquad 
		R_3>0,\qquad 
		R_4<0,
	\end{equation*}
	and
	\begin{equation*}
		R_3
		+
		2R_4
		=
		4 \xi  v (\xi +Q \upomega) \left(\xi ^2+Q^2 \left(v^2+\upomega^2\right)-2 \xi  Q \upomega\right)
		>0,
	\end{equation*}
	which implies that
	\begin{equation*}
		R_3\sin \theta+R_4\sin(2\theta)
		=
		\left( R_3+2R_4\cos\theta \right)\sin\theta>0.
	\end{equation*}
	Moreover, after substituting $\xi=\Omega+Q\upomega$ 
	with $\Omega>0$ (since $Q\upomega<\xi$), we have
	\begin{multline*}
		7R_0+5R_1
		=
		8 \Omega^4 \upomega+35 \Omega^3 Q v^2+26 \Omega^3 Q \upomega^2+92 \Omega^2 Q^2 v^2
		\upomega+24 \Omega^2 Q^2 \upomega^3
		+7 \Omega Q^3 v^4
		\\
		+94 \Omega Q^3 v^2 \upomega^2+10
		\Omega Q^3 \upomega^4+14 Q^4 v^4 \upomega+44 Q^4 v^2 \upomega^3+4 Q^4 \upomega^5
		>0,
	\end{multline*}
	so
	\begin{equation*}
		7(R_0+R_1\cos\theta)\ge
		7R_0+\frac{7}{\sqrt2}R_1>
		\left( -5+\frac{7}{\sqrt 2} \right)R_1>0,
	\end{equation*}
	which establishes the claim in the Tracy--Widom phase.
	
	The proof in the Gaussian phase
	(when $\upomega_{\mathsf{cr}}=\mathcal{W}_x\le \upomega^\circ_{\tau,x}$)
	is similar to how \Cref{statement:Gaussian_contour_C_is_steep_descent}
	was reduced to \Cref{statement:TW_contour_C_is_steep_descent}.
	Namely, the function $w\mapsto \Re(\mathcal{W}_x\log w-w)$
	for $w=(\mathcal{W}_x+\mathbf{i}v)e^{\mathbf{i}\theta}$
	is increasing in $\theta$, and so to show that
	the function $G_{\tau,x}$ \eqref{G_of_w_in_Gaussian_phase_using_Phi}
	is increasing on the desired contour
	is suffices to replace $\upomega^\circ_{\tau,x}$ by $\mathcal{W}_x$ in the first summand in 
	\eqref{G_of_w_in_Gaussian_phase_using_Phi} due to 
	\eqref{PHI2_increasing_for_proof_of_descent}.
	The statement for the modified function $\tilde G_{\tau,x}$
	(i.e., with $\upomega^\circ_{\tau,x}$ replaced by $\mathcal{W}_x$)
	is the same as for the Tracy--Widom phase with a different 
	time $\tilde\tau=\Phi_2(\mathcal{W}_x\mid x)/\mathcal{W}_x$.
	This completes the proof.
\end{proof}

\begin{lemma}
	\label{statement:linear_part}
	On any contour of the form $w=s e^{\mathbf{i}\theta}$, where $\theta\in(0,\frac{\pi}{4})$
	is fixed and $s$ increases from 
	$\frac{\upomega_{\mathsf{cr}}}{\cos\theta}$ 
	to
	$\frac{\upomega_{\mathsf{cr}}}{\cos\theta-\sin\theta}$,
	the function $w\mapsto\Re G_{\tau,x}(w)$
	is decreasing in $s$.
\end{lemma}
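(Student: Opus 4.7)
The plan is to compute $\frac{d}{ds}\Re G_{\tau,x}(se^{\mathbf{i}\theta})$ directly and show it is negative on the prescribed range, using the critical point relations as in the proofs of \Cref{statement:TW_contour_C_is_steep_descent,statement:circle_part}. Starting from the expression \eqref{G_function_definition} and the identity $\partial_w\Phi_0(w\mid x) = w^{-1}\Phi_1(w\mid x)$ (see \Cref{statement:Phi_n_behavior}), one obtains
\begin{equation*}
	\frac{d}{ds}\Re G_{\tau,x}(se^{\mathbf{i}\theta})
	=
	-\tau\cos\theta+\frac{\LimitShape(\tau,x)}{s}+\frac{\Re\Phi_1(se^{\mathbf{i}\theta}\mid x)}{s}.
\end{equation*}
In the Tracy--Widom phase, substituting $\LimitShape=\tau\upomega^\circ_{\tau,x}-\Phi_1(\upomega^\circ_{\tau,x}\mid x)$ and $\tau=\Phi_2(\upomega^\circ_{\tau,x}\mid x)/\upomega^\circ_{\tau,x}$ turns the right-hand side into an integral over $y\in[0,x]$ of a term involving $\phi_1,\phi_2$ evaluated at $\upomega^\circ_{\tau,x}/\Speed(y)$ and at $se^{\mathbf{i}\theta}/\Speed(y)$. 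Writing each $\phi_n$ as its defining series, interchanging the integral with the summation, and combining the four fractions over a common denominator yields, term by term, an expression of the form
\begin{equation*}
	\frac{d}{ds}\Re G_{\tau,x}(se^{\mathbf{i}\theta})
	=
	\int_0^x \sum_{j=0}^{\infty}
	\frac{P_{\ref{statement:linear_part}}(q^j,\Speed(y),\upomega^\circ_{\tau,x},s,\theta)}
	{D_{\ref{statement:linear_part}}(q^j,\Speed(y),\upomega^\circ_{\tau,x},s,\theta)}\,dy,
\end{equation*}
where $D_{\ref{statement:linear_part}}$ is a manifestly signed product of $(\Speed(y)-q^j\upomega^\circ_{\tau,x})^3$ and $|\Speed(y)-q^j se^{\mathbf{i}\theta}|^4$, and $P_{\ref{statement:linear_part}}$ is a polynomial in its arguments with trigonometric coefficients $\cos(k\theta)$, $k=0,1,2,3,4$.

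The second step is to establish the required sign of $P_{\ref{statement:linear_part}}$ on the parameter region $\Speed(y)>q^j\upomega^\circ_{\tau,x}$, $\theta\in(0,\pi/4)$, $s\in[\upomega^\circ_{\tau,x}/\cos\theta,\;\upomega^\circ_{\tau,x}/(\cos\theta-\sin\theta)]$. Following the strategy of \Cref{statement:TW_contour_C_is_steep_descent,statement:circle_part}, I would change variables $\Speed(y)=\Omega+q^j\upomega^\circ_{\tau,x}$ with $\Omega>0$ to eliminate the inequality between $\Speed(y)$ and $q^j\upomega^\circ_{\tau,x}$, reducing the claim to the non-negativity of a multivariate polynomial in $(Q,\Omega,\upomega,s,\theta)$. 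The parametric range of $s$ is equivalent to the two inequalities $s\cos\theta\ge\upomega^\circ_{\tau,x}$ and $s(\cos\theta-\sin\theta)\le\upomega^\circ_{\tau,x}$; these are used to check the sign of $P_{\ref{statement:linear_part}}$ at the endpoints of the $s$-interval, where the desired negativity follows respectively from Lemma \Cref{statement:circle_part} applied on the circle of radius $s$ (at the lower endpoint the ray crosses the vertical line, but the derivative can be related to the angular derivative on the circle by Cauchy--Riemann and compared with \Cref{statement:circle_part}) and from \Cref{statement:TW_contour_C_is_steep_descent} itself (at the upper endpoint the ray meets the steep descent contour $C_{\upomega^\circ_{\tau,x},\pi/4}$). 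Convexity or concavity of the integrand as a function of $s$ on the intermediate interval would then interpolate between the two endpoints.

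For the Gaussian phase ($\upomega_{\mathsf{cr}}=\mathcal{W}_x\le\upomega^\circ_{\tau,x}$), the reduction used in \Cref{statement:Gaussian_contour_C_is_steep_descent,statement:circle_part} carries over verbatim: since the first summand of $G_{\tau,x}$ in \eqref{G_of_w_in_Gaussian_phase_using_Phi} is $\tau_\bullet(\mathcal{W}_x\log w-w)$ for some coefficient $\tau_\bullet$, the derivative $\frac{d}{ds}\Re(\mathcal{W}_x\log(se^{\mathbf{i}\theta})-se^{\mathbf{i}\theta})=\mathcal{W}_x/s-\cos\theta$ is nonpositive on the prescribed range (it vanishes at the lower endpoint and equals $-s^{-1}(s-\mathcal{W}_x/\cos\theta)\cos\theta$ elsewhere). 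Replacing $\Phi_2(\upomega^\circ_{\tau,x}\mid x)/\upomega^\circ_{\tau,x}$ by $\Phi_2(\mathcal{W}_x\mid x)/\mathcal{W}_x$ in the first summand of $G_{\tau,x}$ only changes the derivative by a nonpositive amount thanks to \eqref{PHI2_increasing_for_proof_of_descent}, so the problem reduces to the Tracy--Widom case with effective time $\tilde\tau=\Phi_2(\mathcal{W}_x\mid x)/\mathcal{W}_x$, for which we have just treated.

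The main obstacle is the algebraic step of verifying the sign of $P_{\ref{statement:linear_part}}$: unlike the polynomials in \Cref{statement:TW_contour_C_is_steep_descent,statement:circle_part}, which did not depend on $s$, here $P_{\ref{statement:linear_part}}$ involves the additional parameter $s$ constrained to a nontrivial interval, so establishing global positivity requires exploiting both endpoint inequalities simultaneously. I expect this to be handled by expressing $P_{\ref{statement:linear_part}}$ as a polynomial in $(s\cos\theta-\upomega^\circ_{\tau,x})$ and $(\upomega^\circ_{\tau,x}-s(\cos\theta-\sin\theta))$, so that both quantities have a fixed sign on the interval, and then verifying positivity of the coefficients by a computation analogous to that in \eqref{polynomial_P}.
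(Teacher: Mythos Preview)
Your setup is correct and matches the paper: you obtain the same integral representation with a polynomial $P_{\ref{statement:linear_part}}$ over a signed denominator, and your reduction of the Gaussian phase to the Tracy--Widom case via \eqref{PHI2_increasing_for_proof_of_descent} is exactly what the paper does. The problem is entirely in the sign analysis of $P_{\ref{statement:linear_part}}$, which you leave as a plan rather than an argument.

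The endpoint reasoning does not work as written. Cauchy--Riemann for a holomorphic function $f$ gives $\partial_\theta f(se^{\mathbf{i}\theta}) = \mathbf{i}s\,\partial_s f(se^{\mathbf{i}\theta})$, hence $\partial_\theta \Re f = -s\,\Im(\partial_s f)$: the angular derivative of the real part controls the \emph{imaginary} part of the radial derivative, not its real part. So \Cref{statement:circle_part} at the inner endpoint tells you nothing directly about the sign of $\partial_s\Re G_{\tau,x}$. Likewise, at the outer endpoint \Cref{statement:TW_contour_C_is_steep_descent} concerns the derivative along $C_{\upomega^\circ_{\tau,x},\pi/4}$, which is a different direction from the ray. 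The subsequent appeal to ``convexity or concavity in $s$'' is not established and, given the form of the integrand, is not obviously true; and the proposed decomposition of $P_{\ref{statement:linear_part}}$ into a polynomial in $(s\cos\theta-\upomega^\circ_{\tau,x})$ and $(\upomega^\circ_{\tau,x}-s(\cos\theta-\sin\theta))$ with coefficients of one sign is pure speculation.

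The paper's route is different and more direct. Instead of the shift $\Speed(y)=\Omega+q^j\upomega^\circ_{\tau,x}$, it introduces the rational change of variables
\[
s=\frac{\upomega}{\cos\theta\,(1-V\tan\theta)},\qquad \theta=\arctan U,\qquad T=\frac{q^j\upomega}{\Speed(y)},
\]
which maps the awkward parameter region $\theta\in(0,\pi/4)$, $s\in[\upomega/\cos\theta,\,\upomega/(\cos\theta-\sin\theta)]$, $\Speed(y)>q^j\upomega$ bijectively onto the open cube $(T,U,V)\in(0,1)^3$. After clearing positive factors, the sign question becomes positivity of an explicit polynomial $\mathscr{P}(T,U,V)$ on $[0,1]^3$. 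This polynomial is cubic in $U$ with negative discriminant (so a single real root in $U$), and the paper then checks positivity at $U=0$ and $U=1$; the latter requires a further two--variable boundary/critical-point analysis of $\mathscr{P}(T,1,1-V)$ on $[0,1]^2$. This is the missing idea in your plan: a parametrization that removes the interval constraints on $s$ and reduces everything to a polynomial inequality on a box, where standard elimination tools apply.
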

The contour in \Cref{statement:linear_part} 
is exactly the contour $V_2$ in \Cref{fig:contours_extra_residues},
where $\theta$ corresponds to the angle between $V_2$ and the real line.
\begin{proof}[Proof of \Cref{statement:linear_part}]
	Again, conditions $0<\theta<\frac{\pi}{4}$ and 
	$\frac{\upomega_{\mathsf{cr}}}{\cos\theta}
	<s<
	\frac{\upomega_{\mathsf{cr}}}{\cos\theta-\sin\theta}$
	reflect the geometry of the contour.

	Consider the Tracy--Widom phase, so $\upomega_{\mathsf{cr}}=\upomega^\circ_{\tau,x}$. 
	We have
	\begin{equation*}
		\frac{\partial}{\partial s}\Re G_{\tau,x}(se^{\mathbf{i}\theta})
		=
		\int_0^x
		\Bigg(
			\sum_{j=0}^{\infty}
			\frac{
				q^{2j}\Speed(y)
			}
			{D_{\ref{statement:linear_part}}(q^j,\Speed(y),\upomega^\circ_{\tau,x},s,\theta)}
			\,
			P_{\ref{statement:linear_part}}(q^j,\Speed(y),\upomega^\circ_{\tau,x},s,\theta)
		\Biggr)
		dy,
	\end{equation*}
	where
	$D_{\ref{statement:linear_part}}(\cdots)<0$
	and 
	we would like to show that the polynomial
	$P_{\ref{statement:linear_part}}(Q,\xi,\upomega,s,\theta)$ is positive
	for our range of parameters.
	To see this, change the variables as
	\begin{equation*}
		s=\frac{\upomega}{\cos\theta(1-V\tan\theta)}, \qquad 0<V<1,
	\end{equation*}
	and let $\theta=\arctan U$ with $0<U<1$.
	With these substitutions we have
	\begin{equation}
	\label{big_3var_polynomial}
		\begin{split}
			&\frac{(1-U V)^5}{U^2 \upomega^2 \xi^4}\,
			P_{\ref{statement:linear_part}}
			(Q,\xi,\upomega,s(U,V),\theta(U))
			\\&\hspace{40pt}=
			Q^4\upomega^4\xi^{-4} \left(U^2+1\right)  (2 U V+V^2-1 )
			+
			 Q^3 \upomega^3\xi^{-3} (1+U^2) 
			(2 U V^3-V^2+1 )
			\\&\hspace{60pt}+
			 Q^2 \upomega^2\xi^{-2} \left(-2 U^3 V^3+U^2 (V^2+11 ) V^2
			+2 U (V^2-6 ) V-3 V^2+3\right)
			\\&\hspace{60pt}+
			Q \upomega\xi^{-1} (1-U V)^2 (6 U V+5 V^2-5 )
			+
			2 (1-V^2) (1-U V)^3.
		\end{split}
	\end{equation}
	Denote $T:=Q\upomega/\xi$, and observe that $0<T<1$.
	The right-hand side of \eqref{big_3var_polynomial}
	becomes a polynomial in $T,U,V\in(0,1)$,
	denote it by $\mathscr{P}(T,U,V)$.
	One can see that $\mathscr{P}$ is cubic in $U$.
	Its discriminant in $U$ is
	\begin{multline*}
		-4 (1-T)^6 T^6 
		(1+V^2 )^3
		\Big(T^4 (1+V^2)+2 T^3 (1+9 V^2)
			\\+
			T^2
			(11 V^2+59 ) V^2+24 T (3+V^2 ) V^2-V^6+10 V^4+27 V^2 
		\Big)
		<0,
	\end{multline*}
	hence $\mathscr{P}$ has one real root in $U$.
	We have
	\begin{equation*}
		\mathscr{P}(T,0,V)=
		(1-T)^3 (T+2) (1-V^2 )>0,
	\end{equation*}
	Thus, it suffices to show that 
	$\mathscr{P}(T,1,V)>0$. 
	We have
	\begin{multline}
		\mathscr{P}(T,1,1-V)
		=
		4 T^3 (T+1)
		-
		8 T^2 (T^2+T+1 ) V
		+
		2 T (T+1)^2(T+3) V^2
		\\-
		4 T (T^2+T+4 ) V^3
		+
		(T+1) (T+4) V^4
		-
		2 V^5.
		\label{2var_positive_polynomial}
	\end{multline}
	Let us minimize \eqref{2var_positive_polynomial}
	in $(T,V)\in[0,1]^2$ and show that the minimum is positive.
	Solving 
	$\frac{\partial}{\partial T}\mathscr{P}(T,1,1-V)
	=
	\frac{\partial}{\partial V}\mathscr{P}(T,1,1-V)=0$
	numerically, we see that the only critical points 
	of \eqref{2var_positive_polynomial} on $[0,1]^2$ are
	$(T,V)=(0,0)$ and
	$(T,V)\approx(0.646, 0.829)$, and the values
	of
	$\mathscr{P}(T,1,1-V)$
	at both points are nonnegative.
	The critical point inside $(0,1)^2$ is a saddle,
	so the polynomial attains its minimum on the boundary.
	Further looking at the univariate polynomials on the boundary
	one can readily check 
	that the minimum of \eqref{2var_positive_polynomial}
	on $[0,1]^2$
	is 
	$\mathscr{P}(0,1,1)=0$, 
	which shows that 
	$P_{\ref{statement:linear_part}}$ is positive, as desired.
	
	Now assume that the space-time point $(\tau,x)$ is in the Gaussian phase. 
	Observe that
	the function $w\mapsto \Re(\mathcal{W}_x\log w-w)$
	decreases along the contour 
	$\{s e^{\mathbf{i}\theta}\colon
	s>\frac{\mathcal{W}_x}{\cos\theta}\}$ 
	(with $0<\theta<\frac{\pi}{4}$ fixed).
	Thus, it suffices to replace $\upomega^\circ_{\tau,x}$ 
	by $\mathcal{W}_x$ in the first summand in 
	\eqref{G_of_w_in_Gaussian_phase_using_Phi} due to 
	\eqref{PHI2_increasing_for_proof_of_descent},
	and prove the statement for the resulting 
	modified function $\tilde G_{\tau,x}$.
	Taking 
	$\tilde\tau=\Phi_2(\mathcal{W}_x\mid x)/\mathcal{W}_x$
	we have $\tilde G_{\tau,x}=G_{\tilde\tau,x}$,
	and for the latter function the desired statement follows
	from the Tracy--Widom case just established.
	This completes the proof.
\end{proof}

\begin{proof}[Proof of \Cref{prop:extra_negligible}]
	We need to show that 
	$\Re(G_{\tau,x}(w)-G_{\tau,x}(q^nw))<-\delta_1|w|-\delta_2$
	for all
	$w\in C_{\upomega_{\mathsf{cr}},\frac{\pi}{4}}$ 
	and 
	$n=1,\ldots,N_w $.
	First, observe that the distance from $w$ to every $q^nw$, $n=1,\ldots,N_w$, 
	along the contours
	$V_1,V_2$, and $V_3$ as in \Cref{fig:contours_extra_residues}
	is bounded from below uniformly in $|w|$
	because $N_w=0$ for $w$ close to $\upomega_{\mathsf{cr}}$.
	For $|w|$ bounded from above by a constant independent of $\RateLambda$
	the 
	bounds on the derivative of $\Re G_{\tau,x}$
	along the contours $V_i$ in 
	\Cref{statement:circle_part,statement:linear_part}
	can be made uniform in $|w|$, which leads to
	\eqref{proof_Fredholm_does_not_change_if_Gamma_0} without $\delta_1|w|$.
	However, if $|w|$ is bounded from above then this implies the full desired estimate
	\eqref{proof_Fredholm_does_not_change_if_Gamma_0}.
	
	For large $|w|$ the path as in \Cref{fig:contours_extra_residues} 
	from $w$ to any of the points 
	$q^nw$, $n=1,\ldots, N_w$ 
	has length of order $|w|$.
	One can readily check that 
	both derivatives 
	\begin{equation*}
		\frac{\partial}{\partial s}\Re G_{\tau,x}(\upomega_{\mathsf{cr}}+s e^{\mathbf{i}\frac{\pi}{4}})
		\qquad \textnormal{and}\qquad 
		\frac{\partial}{\partial s}\Re G_{\tau,x}(s e^{\mathbf{i}\theta})
	\end{equation*}
	along contours $V_1$ and $V_2$, respectively, have strictly negative limits as $s\to\infty$.
	Together with \Cref{statement:linear_part,statement:circle_part}
	this implies \eqref{proof_Fredholm_does_not_change_if_Gamma_0} for large $|w|$,
	and thus completes the proof.
\end{proof}

We need one more definition to formulate 
the main result of this subsection:

\begin{definition}\label{def:K_steep}
	Let $K_{\zeta}^{\mathrm{steep}}(w,w')$ stand for the kernel 
	as in \eqref{Fredholm_kernel_periodic_expansion_section44}
	but with the $z$ integration contours 
	replaced by $\Gamma_{\mathsf{cr}}$ for all $k\in\mathbb{Z}$
	(%
		recall that the latter is the clockwise
		oriented circle centered at $0$ with radius $\upomega_{\mathsf{cr}}$
		modified in a neighborhood of $\upomega_{\mathsf{cr}}$ to avoid 
		poles, see \Cref{sub:TW_contribution,sub:Gaussian_contribution,sub:BBP_contribution}
		below for details%
	).
\end{definition}

\begin{proposition}
	\label{statement:Fredholm_does_not_change_if_Gamma_0}
	With the above notation and with $\zeta=\zeta(\RateLambda)$
	given by \eqref{zeta_depends_on_lambda_for_asymptotics} we have
	\begin{equation*}
		\lim_{\RateLambda\to+\infty}
		\det\bigl(1+K_{\zeta(\RateLambda)}\bigr)_{L^2(C_{\upomega_{\mathsf{cr}},\frac{\pi}{4}})}
		=
		\lim_{\RateLambda\to+\infty}
		\det\bigl(1+K^{\mathrm{steep}}_{\zeta(\RateLambda)}\bigr)_{L^2(C_{\upomega_{\mathsf{cr}},\frac{\pi}{4}})}
	\end{equation*}
	The above equality
	is understood in the sense that
	if one of the limits exists, then the other one also exists
	and they are equal to each other.
\end{proposition}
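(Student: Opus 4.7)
The plan is to expand both Fredholm determinants using \eqref{Fredholm_expansion}, compare the two kernels entry by entry, and show that their difference is exponentially small in $\RateLambda$ in a way that is summable uniformly over the Fredholm expansion.

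First, inside each kernel entry $K_{\zeta(\RateLambda)}(w,w')$ written as in \eqref{Fredholm_kernel_periodic_expansion_section44}, I would perform two contour deformations in the $z$-plane: the $k=0$ contour $\Gamma_w$ is deformed to $\Gamma_{\mathsf{cr}}$, and each $k\ne0$ circle $|z|=q^{R(w)}|w|$ is deformed to $\Gamma_{\mathsf{cr}}$. For $k\ne0$ the deformation crosses no singularities, since the sine in the denominator is bounded away from zero on the annulus (its argument has imaginary part proportional to $2\pi k/\log q$), and the zeros of $g(z)$ lie on the positive real axis at $z=\Speed(\cdot)q^{-m}\ge \mathcal{W}_x\ge \upomega_{\mathsf{cr}}$, which one can ensure are outside both contours by taking $R(w)$ large enough. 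For $k=0$ the deformation sweeps past the simple poles at $z=q^nw$, $n=1,\ldots,N_w$, coming from the sine in the denominator. Collecting these residues produces an additional kernel $K^{\mathrm{res}}_{\zeta(\RateLambda)}(w,w')$ with $K_{\zeta(\RateLambda)}=K^{\mathrm{steep}}_{\zeta(\RateLambda)}+K^{\mathrm{res}}_{\zeta(\RateLambda)}$.

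Next, I would bound $|K^{\mathrm{res}}_{\zeta(\RateLambda)}(w,w')|$. The potentially exponentially growing part of the $n$-th residue equals $\exp(\RateLambda(G_{\tau,x}(w)-G_{\tau,x}(q^nw)))$ up to subexponential prefactors controlled by the $q$-Pochhammer estimate \eqref{product_of_pochhammers_bounding___ref_in_expon_model} and \Cref{statement:estimate_decay_phi0}. By \Cref{prop:extra_negligible} this exponential is bounded by $e^{-\RateLambda(\delta_1|w|+\delta_2)}$ uniformly in $n=1,\ldots,N_w$ and $w\in C_{\upomega_{\mathsf{cr}},\pi/4}$. Since $N_w=O(\log|w|)$, summing over $n$ preserves this bound. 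Applying Hadamard's inequality to the $M$-th term of the Fredholm expansion of $\det(1+K^{\mathrm{steep}}_{\zeta}+K^{\mathrm{res}}_{\zeta})-\det(1+K^{\mathrm{steep}}_{\zeta})$, I would obtain a bound of the form $B^M M^{M/2}e^{-c\RateLambda}/M!$ after integration against the polynomial-times-exponential majorant on $C_{\upomega_{\mathsf{cr}},\pi/4}^M$. This series is summable in $M$ and vanishes as $\RateLambda\to+\infty$, giving the claimed equality of limits.

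The main obstacle is producing an $L^2$-majorant of the kernels that is integrable on $C_{\upomega_{\mathsf{cr}},\pi/4}$ and uniform in $\RateLambda$. The steep-descent property of $C_{\upomega_{\mathsf{cr}},\pi/4}$ with respect to $\Re G_{\tau,x}$, established in \Cref{statement:TW_contour_C_is_steep_descent} and \Cref{statement:Gaussian_contour_C_is_steep_descent}, supplies an exponential decay factor in $|w|$ for the integrand once the common factor $e^{\RateLambda G_{\tau,x}(\upomega_{\mathsf{cr}})}$ is normalized out. Combined with the polynomial estimates already used in Step~2 of the proof of \Cref{statement:second_Fredholm_formula_}, this yields the required uniform majorant. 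A secondary point of care is the local modification of $C_{\upomega_{\mathsf{cr}},\pi/4}$ and $\Gamma_{\mathsf{cr}}$ in a neighborhood of $\upomega_{\mathsf{cr}}$ needed to keep the $w$- and $z$-contours disjoint and to avoid spurious residues at the critical point itself; these local choices are compatible with the phase-specific analyses carried out in \Cref{sub:TW_contribution,sub:Gaussian_contribution,sub:BBP_contribution}.
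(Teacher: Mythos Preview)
Your proposal is correct and follows essentially the same route as the paper: decompose $K_{\zeta(\RateLambda)}=K^{\mathrm{steep}}_{\zeta(\RateLambda)}+R$ via the contour deformation, invoke \Cref{prop:extra_negligible} to obtain the uniform bound $e^{-\RateLambda(\delta_1|w|+\delta_2)}$ on the residue part, and use this to kill the difference of Fredholm determinants. The paper's write-up is terser---it reduces the claim to showing that $\bigl|\int_{C_{\upomega_{\mathsf{cr}},\pi/4}}R(w,w')\,dw\bigr|$ decays exponentially and notes that the $\frac{1}{q^{\mathsf{u}}w-w'}$ factor contributes only a power of $\RateLambda$ because the locally modified contours are separated by $\RateLambda^{-\delta}$---while you spell out the Hadamard-inequality comparison of the two Fredholm expansions more explicitly; but the substance is the same.
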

\begin{proof}
	We have
	\begin{equation*}
		K_{\zeta(\RateLambda)}(w,w')
		=
		K_{\zeta(\RateLambda)}^{\mathrm{steep}}(w,w')
		+
		R(w,w'),
	\end{equation*}
	where $R(w,w')$ is the sum of residues corresponding to the first integral
	in \eqref{Fredholm_kernel_periodic_expansion_section44}
	at $z=q^nw$, $n=1,\ldots,N_w$ (recall that $N_w$ is of order $\log |w|$).
	As will follow from the analysis in a small neighborhood
	of $\upomega_{\mathsf{cr}}$ in 
	\Cref{sub:TW_contribution,sub:Gaussian_contribution,sub:BBP_contribution}
	below, 
	the terms corresponding to 
	$\frac{1}{q^{\mathsf{u}}w-w'}$ under the integral in the kernel
	can be bounded from above by a power of $\RateLambda$, 
	uniformly in $q^{\mathsf{u}}w$ and $w'$.
	Indeed, this is because the steep descent and ascent integration contours
	$C_{\upomega_{\mathsf{cr}},\frac{\pi}{4}}$ and $\Gamma_{\mathsf{cr}}$, respectively,
	are going to be separated by a distance of order $\RateLambda^{-\delta}$ for
	some $\delta>0$.
	Thus, to get the desired claim it suffices to show that 
	$\Bigl|\int_{C_{\upomega_{\mathsf{cr}},\frac{\pi}{4}}}R(w,w')dw\Bigr|$ 
	decays exponentially in $\RateLambda$.
	Having estimate \eqref{proof_Fredholm_does_not_change_if_Gamma_0}
	of \Cref{prop:extra_negligible},
	we can write
	\begin{equation*}
		\biggl|\int_{C_{\upomega_{\mathsf{cr}},\frac{\pi}{4}}}R(w,w')dw\biggr|
		\le
		\RateLambda^{c_1}
		\int_{w\in C_{\upomega_{\mathsf{cr}},\frac{\pi}{4}},
		\; |\Im w| >c_2}
		e^{- \RateLambda \delta_1|w|}\log|w|
		dw
	\end{equation*}
	for some $c_1,c_2>0$ independent of $\RateLambda$.
	Here the condition $|\Im w|>c_2$ arises from the fact that for 
	$w$ close to $\upomega_{\mathsf{cr}}$ 
	(up to distance
	which depends on $q,\Speed,\tau,x$ but not on $\RateLambda$) 
	we have $N_w=0$
	and so no residues are picked during the contour deformation.
	Thus, the integral of $R$ decays exponentially in $\RateLambda$, 
	which yields the statement.
\end{proof}

\Cref{statement:Fredholm_does_not_change_if_Gamma_0} and results of
\Cref{sub:steep_descent_contours} on contours $C_{\upomega_{\mathsf{cr}},\frac{\pi}{4}}$
and $\Gamma_{\mathsf{cr}}$ being steep descent and steep ascent, respectively, imply
that the asymptotic behavior of the Fredholm determinant
$\det(1+K_{\zeta(\RateLambda)})_{L^2(C_{\upomega_{\mathsf{cr}},\frac{\pi}{4}})}$ as
$\RateLambda\to+\infty$ is governed by the contribution coming from a small
neighborhood of the critical point $\upomega_{\mathsf{cr}}$.
Therefore, to finish the proof of our main results
(\Cref{thm:main_theorem_on_fluctuations}) it remains to compute the
contributions from a small neighborhood of $\upomega_{\mathsf{cr}}$ in each of the
phases.  
This is performed in
\Cref{sub:TW_contribution,sub:Gaussian_contribution,sub:BBP_contribution}
below.

\subsection{Contribution in the Tracy--Widom phase}
\label{sub:TW_contribution}

In the Tracy--Widom phase 
($\upomega_{\mathsf{cr}}=\upomega^\circ_{\tau,x}<\mathcal{W}_x$) 
we take the power
in \eqref{zeta_depends_on_lambda_for_asymptotics}
to be $\beta=\frac{1}{3}$. 
Then $w=\upomega^\circ_{\tau,x}$ 
is a double critical point of the function
$G_{\tau,x}(w)$. 
\begin{lemma}
	\label{statement:third_derivative_G_critical_point}
	We have 
	$G_{\tau,x}'''(\upomega^\circ_{\tau,x})>0$.
\end{lemma}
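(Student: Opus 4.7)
The plan is to substitute the critical point relations directly into the explicit expression for $G'''_{\tau,x}(w)$ from \eqref{G_derivatives_formulas} and reduce everything to a positivity statement already proven in the excerpt.

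First I would recall from the definition of the Tracy--Widom phase and the derivation in \Cref{sub:limit_shape_formulas} that at $w = \upomega^\circ_{\tau,x}$ the two critical point equations \eqref{Equation_for_double_critical_points_1}--\eqref{Equation_for_double_critical_points_2} simultaneously hold, which means
\begin{equation*}
  \tau\,\upomega^\circ_{\tau,x} = \Phi_2(\upomega^\circ_{\tau,x}\mid x),
  \qquad
  \LimitShape(\tau,x) = \Phi_2(\upomega^\circ_{\tau,x}\mid x) - \Phi_1(\upomega^\circ_{\tau,x}\mid x).
\end{equation*}
Plugging the second identity into the third line of \eqref{G_derivatives_formulas} gives, after the $2\Phi_2$ and $-2\Phi_1$ contributions combine with the last three $\Phi$-terms, the compact formula
\begin{equation*}
  G'''_{\tau,x}(\upomega^\circ_{\tau,x})
  = \frac{1}{(\upomega^\circ_{\tau,x})^3}
    \Bigl( \Phi_3(\upomega^\circ_{\tau,x}\mid x) - \Phi_2(\upomega^\circ_{\tau,x}\mid x) \Bigr).
\end{equation*}

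Next I would reduce positivity of this quantity to positivity under the integral sign using the definition \eqref{Big_Phi_n_definition}, so that it suffices to show $\phi_3(w/\Speed(y)) - \phi_2(w/\Speed(y)) > 0$ for almost every $y \in (0,x)$ when $w = \upomega^\circ_{\tau,x}$. Since we are in the Tracy--Widom phase, we have $\upomega^\circ_{\tau,x} < \mathcal{W}_x \le \mathcal{W}_x^\circ$, so by the definition of the essential range we have $\upomega^\circ_{\tau,x}/\Speed(y) \in (0,1)$ for almost every $y \in (0,x)$. Then the pointwise positivity is exactly the identity \eqref{phi_3_minus_phi_2_positive} already established in the proof of \Cref{statement:TW_root_exists_and_is_unique}, namely
\begin{equation*}
  \phi_3(v) - \phi_2(v)
  = \sum_{k=0}^{\infty} \frac{2 q^{2k} v^2 (2 + q^k v)}{(1-q^k v)^4} > 0,
  \qquad v \in (0,1).
\end{equation*}

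There is no real obstacle here; the only thing to check carefully is that the Tracy--Widom phase condition $\upomega^\circ_{\tau,x} < \mathcal{W}_x$ really delivers the pointwise inequality $\upomega^\circ_{\tau,x}/\Speed(y) < 1$ on a set of full measure in $(0,x)$, which is immediate from the essential range interpretation of $\mathcal{W}_x^\circ$ in \eqref{essential_range_of_Speed_notation}. Combining this with the algebraic simplification above gives $G'''_{\tau,x}(\upomega^\circ_{\tau,x}) > 0$, as claimed.
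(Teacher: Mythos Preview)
Your proof is correct and follows essentially the same approach as the paper: substitute the double critical point relations into the formula for $G'''_{\tau,x}$ from \eqref{G_derivatives_formulas} to obtain $(\upomega^\circ_{\tau,x})^3 G'''_{\tau,x}(\upomega^\circ_{\tau,x}) = \Phi_3(\upomega^\circ_{\tau,x}\mid x) - \Phi_2(\upomega^\circ_{\tau,x}\mid x)$, and then invoke \eqref{phi_3_minus_phi_2_positive}. One small remark: you did not actually need the Tracy--Widom phase hypothesis $\upomega^\circ_{\tau,x} < \mathcal{W}_x$, since $\upomega^\circ_{\tau,x} \in (0,\mathcal{W}_x^\circ)$ already holds by \Cref{statement:TW_root_exists_and_is_unique} in any phase, which is all that is needed to ensure $\upomega^\circ_{\tau,x}/\Speed(y)\in(0,1)$ almost everywhere.
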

\begin{proof}
	From \eqref{G_derivatives_formulas} and 
	\eqref{Equation_for_double_critical_points_1}--\eqref{Equation_for_double_critical_points_2}
	we have
	\begin{equation*}
		(\upomega^\circ_{\tau,x})^3
		G_{\tau,x}'''(\upomega^\circ_{\tau,x})
		=
		\Phi_3(\upomega^\circ_{\tau,x}\mid x)
		-
		\Phi_2(\upomega^\circ_{\tau,x}\mid x),
	\end{equation*}
	which is positive by \eqref{phi_3_minus_phi_2_positive}.
\end{proof}
This implies that locally in a neighborhood of 
$\upomega^\circ_{\tau,x}$
the regions where 
$\Re G_{\tau,x}(w)-\Re G_{\tau,x}(\upomega^\circ_{\tau,x})$
has constant sign look as in 
\Cref{fig:locally_TW}. 
Deform the $z$ and $w$ contours
in the kernel $K_{\zeta(\RateLambda)}(w,w')$ \eqref{Fredholm_kernel_periodic_expansion_section44}
to $\Gamma_{\mathsf{cr}}$ and $C_{\upomega^\circ_{\tau,x},\frac{\pi}{4}}$, respectively,
to get the kernel $K_{\zeta(\RateLambda)}^{\mathrm{steep}}(w,w')$
of \Cref{def:K_steep}.
Here $\Gamma_{\mathsf{cr}}$ is a circle centered at zero with radius $\upomega^\circ_{\tau,x}$
modified in a small neighborhood of $\upomega^\circ_{\tau,x}$
to look as the left contour in \Cref{fig:locally_TW}.
Define 
(recall notation \eqref{Big_Phi_n_definition})
\begin{equation}
	\label{dispersion_TW}
	d^{\mathrm{TW}}_{\tau,x}:=
	\sqrt[3]{
		\frac{G_{\tau,x}'''(\upomega^\circ_{\tau,x})}{2}
	}
	=
	\frac{2^{-\frac{1}{3}}}{\upomega^\circ_{\tau,x}}
	\left( 
		\Phi_3(\upomega^\circ_{\tau,x}\mid x)
		-
		\Phi_2(\upomega^\circ_{\tau,x}\mid x)
	\right)^{\frac{1}{3}}>0
\end{equation}
and make a change of variables 
\begin{equation}\label{TW_change_of_variables}
	w=\upomega^\circ_{\tau,x}+\RateLambda^{-\frac{1}{3}}(d^{\mathrm{TW}}_{\tau,x})^{-1}\tilde w,
	\qquad 
	w'=\upomega^\circ_{\tau,x}+\RateLambda^{-\frac{1}{3}}(d^{\mathrm{TW}}_{\tau,x})^{-1}\tilde w',
	\qquad 
	z=\upomega^\circ_{\tau,x}+\RateLambda^{-\frac{1}{3}}(d^{\mathrm{TW}}_{\tau,x})^{-1}\tilde z,
\end{equation}
where 
$\tilde z\in \gamma_{z}^{\mathrm{Ai}}$
and 
$\tilde w,\tilde w'\in \gamma_{w}^{\mathrm{Ai}}$ 
(these contours are made out of straight half lines 
depicted in \Cref{fig:locally_TW}).
Let us work in the neighborhood of $\upomega^\circ_{\tau,x}$
of size $\RateLambda^{-\frac{1}{6}}$,
so that $|\tilde w|$, $|\tilde w'|$, and $|\tilde z|$ are $O(\RateLambda^{\frac{1}{6}})$.
In this neighborhood the Taylor expansion of $G_{\tau,x}$ has the form
\begin{equation*}
	G_{\tau,x}(w)
	=
	G_{\tau,x}(\upomega^\circ_{\tau,x})
	+\frac{\tilde w^3}{3 \RateLambda}+o(\RateLambda^{-1}).
\end{equation*}

\begin{figure}[htbp]
	\begin{tikzpicture}
		[scale=1]
		\def\sq{0.866025}
		\filldraw[draw=none,right color=black!60!white, left color=white] (-3*\sq,-1.5) -- (0,0) -- (-3*\sq,1.5);
		\filldraw[draw=none,top color=black!60!white, bottom color=white] (0,-3) -- (0,0) -- (3*\sq,-1.5);
		\filldraw[draw=none,bottom color=black!60!white, top color=white] (0,3) -- (0,0) -- (3*\sq,1.5);
		\draw[very thick, ->] (-3,0)--(3.5,0);
		\draw[fill] (0,0) circle (4pt) node (w1) {};
	  	\node (w1l) at (-1.1,3.2) 
	  	[draw=black, line width=.5, rectangle, 
	  	rounded corners, text centered] 
			{$\upomega^\circ_{\tau,x}$};
        \draw[->, line width=.9,densely dashed] (w1l)--(w1);
        \draw (3*\sq,-1.5)--(-3*\sq,1.5);
        \draw (-3*\sq,-1.5)--(3*\sq,1.5);
        \draw (0,-3)--(0,3);
				\node at (1.95,3*\sq) {\Large$\gamma^{\mathrm{Ai}}_{w}$};
				\node at (-2.65,3*\sq) {\Large$\gamma^{\mathrm{Ai}}_{z}$};
				\draw[
					line width=2.2,
					decoration={
						markings,
						mark=at position 0.25 with {\arrow{>}},
						mark=at position 0.75 with {\arrow{>}}
					},
					postaction={decorate}
				] (-2.2,3)--++(1.73205,-3)--(-2.2,-3);
				\draw[
					line width=2.2,
					decoration={
						markings,
						mark=at position 0.25 with {\arrow{>}},
						mark=at position 0.75 with {\arrow{>}}
					},
					postaction={decorate}
				] (3,3)--++(-3,-3)--(3,-3);
	\end{tikzpicture}
	\caption{%
		Behavior in a neighborhood of the double critical point
		$\upomega^\circ_{\tau,x}$ of $G_{\tau,x}$. 
		Shaded are regions where 
		$\Re G_{\tau,x}(z)<\Re G_{\tau,x}(\upomega^\circ_{\tau,x})$. 
		The $z$ and $w$ contours in a neighborhood of $\upomega^\circ_{\tau,x}$ are
		also shown. 
		The contour $\gamma^{\mathrm{Ai}}_{z}$ is shifted in order to avoid the pole
		at $z=w'$ coming from the denominator in \eqref{Fredholm_kernel_periodic_expansion_section44}.%
	}
	\label{fig:locally_TW}
\end{figure}
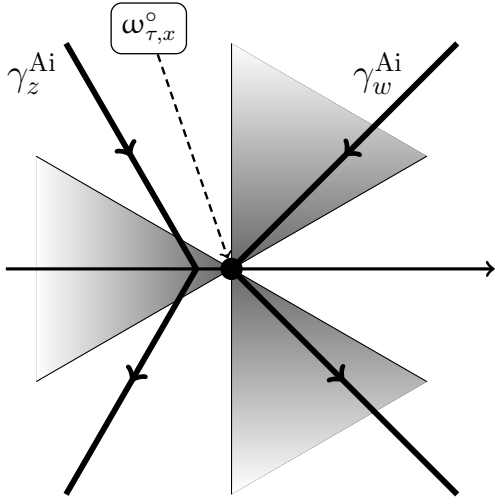

Thus, from 
\eqref{Fredholm_kernel_periodic_expansion_section44} 
and
\eqref{plugging_into_Fredholm_for_asymptotics} 
we obtain the following scaled kernel
which now contains $r\in\mathbb{R}$ as a 
parameter:\footnote{%
	The change of variables in a neighborhood of $\upomega^\circ_{\tau,x}$
	introduces
	a scaling 
	factor $\RateLambda^{-\frac{2}{3}}(d^{\mathrm{TW}}_{\tau,x})^{-2}$
	coming from $dz$ in the kernel itself and from the integrals over $w,w'$,
	cf. \eqref{Fredholm_expansion}.
	This scaling of the kernel is reflected in the notation $\sqrt{dw\,dw'}$
	which we will also use below in similar situations.%
}
\begin{align}
		&K_{\zeta(\RateLambda)}^{\mathrm{steep}}(w,w')\sqrt{dw\,dw'}
		\nonumber\\\nonumber&\hspace{10pt}=
		-
		\frac{1}{2\mathbf{i}}
		\sum_{k\in\mathbb{Z}}
		\int_{\Gamma_{\mathsf{cr}}}
		\frac{e^{(- \RateLambda \LimitShape(\tau,x)+r \RateLambda^{\frac{1}{3}})
		(\log(z/w)+2\pi\mathbf{i}k)}}
		{\sin \left(\pi
			\left( 
				\frac{\log(z/w)}{\log q}+
				\frac{2\pi\mathbf{i}k}{\log q}
			\right)\right)
		}
		\frac{g(w)}{g(z)}\frac{dz}{(z-w')z\log q}
		\sqrt{dw\,dw'}
		\\
		\label{TW_limit_of_periodic_expansion}
		&\hspace{10pt}=
		-
		\frac{\RateLambda^{-\frac{1}{3}}(d^{\mathrm{TW}}_{\tau,x})^{-1}}{2\mathbf{i}}
		\sum_{k\in\mathbb{Z}}
		\int_{\tilde\Gamma_{\mathsf{cr}}}
		\frac{
			e^{
				2\pi\mathbf{i}k
				(- \RateLambda \LimitShape(\tau,x)+r \RateLambda^{\frac{1}{3}})
			}
		}
		{\sin \left(\pi
			\left( 
				\frac{\tilde z-\tilde w}{\upomega^\circ_{\tau,x}d^{\mathrm{TW}}_{\tau,x}\log q}
				\RateLambda^{-\frac{1}{3}}(1+o(1))+
				\frac{2\pi\mathbf{i}k}{\log q}
			\right)\right)
		}\\\nonumber&\hspace{35pt}\times
		\exp\left\{ 
			\frac{\tilde w^3}{3}-\frac{\tilde z^3}{3}
			+\frac{r}{\upomega^\circ_{\tau,x}d^{\mathrm{TW}}_{\tau,x}}(\tilde z-\tilde w)
			+o(1)
		\right\}
		\frac{\bigl(1+o(1)\bigr)d\tilde z}
		{(\tilde z-\tilde w')(\upomega^\circ_{\tau,x}+o(1))\log q}
		\sqrt{d\tilde w\,d\tilde w'}
		\\\nonumber&\hspace{10pt}=:
		\tilde K^{\mathrm{Ai}}_{r}(\tilde w,\tilde w')\sqrt{d\tilde w\,d\tilde w'},
\end{align}
where $\tilde\Gamma_{\mathsf{cr}}$
is the contour $\Gamma_{\mathsf{cr}}$ 
under the above change of variables $z\to\tilde z$.
Here we used the fact that in the Tracy--Widom phase 
the non-exponential prefactor in \eqref{plugging_into_Fredholm_for_asymptotics}
is regular in $w$ and thus behaves as $1+o(1)$ as $\RateLambda\to+\infty$.
When $k\ne 0$, the sine in the denominator is regular:
\begin{equation}
	\label{sine_in_the_denominator_is_regular}
	\sin \left(\pi
		\left( 
			\frac{\tilde z-\tilde w}{\upomega^\circ_{\tau,x}d^{\mathrm{TW}}_{\tau,x}\log q}
			\RateLambda^{-\frac{1}{3}}(1+o(1))+
			\frac{2\pi\mathbf{i}k}{\log q}
		\right)\right)
	=
	\sin\left( \frac{2\pi^2\mathbf{i}k}{\log q} \right)\left( 1+o(1) \right)\ne 0,
\end{equation}
and so all summands in \eqref{TW_limit_of_periodic_expansion} 
with $k\ne 0$ vanish as $\RateLambda\to+\infty$ due to the prefactor $\RateLambda^{-\frac{1}{3}}$.
On the other hand, for $k=0$ the sine behaves as
\begin{equation*}
	\sin \left(\pi\,
		\frac{\tilde z-\tilde w}{\upomega^\circ_{\tau,x}d^{\mathrm{TW}}_{\tau,x}\log q}
		\RateLambda^{-\frac{1}{3}}(1+o(1))
	\right)
	=
	\pi\,
	\frac{\tilde z-\tilde w}{\upomega^\circ_{\tau,x}d^{\mathrm{TW}}_{\tau,x}\log q}
	\RateLambda^{-\frac{1}{3}}(1+o(1)).
\end{equation*}
Therefore, 
\begin{equation*}
	\lim_{\RateLambda\to+\infty}\tilde K^{\mathrm{Ai}}_{r}(\tilde w,\tilde w')=
	-\frac{1}{2\pi\mathbf{i}}\int_{\gamma_{z}^{\mathrm{Ai}}}
	\exp\left\{ 
		\frac{\tilde w^3}{3}-\frac{\tilde z^3}{3}
		+\frac{r}{\upomega^\circ_{\tau,x}d^{\mathrm{TW}}_{\tau,x}}(\tilde z-\tilde w)
	\right\}
	\frac{d\tilde z}{(\tilde z-\tilde w)(\tilde z-\tilde w')},
\end{equation*}
where $\gamma_{z}^{\mathrm{Ai}}$ is the left contour
in \Cref{fig:locally_TW}.
Denote the kernel in the right-hand side above by 
$K_r^{\mathrm{Ai}}(\tilde w,\tilde w')$, 
where $\tilde w,\tilde w'$ belong to $\gamma_{w}^{\mathrm{Ai}}$,
the right contour in \Cref{fig:locally_TW}.

Combining the above computation in a neighborhood of $\upomega^\circ_{\tau,x}$
with the results of \Cref{sub:steep_descent_contours,sub:extra_residues}, we conclude that
\begin{equation*}
	\lim_{\RateLambda\to\infty}
	\det 
	\left( 1+K_{\zeta(\RateLambda)} \right)_{L^2(C_{\upomega^\circ_{\tau,x},\frac{\pi}{4}})}
	=
	\det 
	\left( 1+K_r^{\mathrm{Ai}} \right)_{L^2(\gamma_w^{\mathrm{Ai}})}.
\end{equation*}
The Fredholm determinant in the right-hand side is readily identified with a
Fredholm determinant of the Airy kernel, producing the
GUE Tracy--Widom distribution function of \Cref{def:fluctuation_distributions}
(cf.
\cite{TW_ASEP2}, \cite[Lemma C.1]{BorodinCorwinFerrari2012}):
\begin{equation*}
	\det 
	\left( 1+K_r^{\mathrm{Ai}} \right)_{L^2(\gamma_w^{\mathrm{Ai}})}
	=
	F_2
	\left( 
		\frac{r}{\upomega^\circ_{\tau,x}d^{\mathrm{TW}}_{\tau,x}}
	\right),
\end{equation*}
which completes the proof of the first part of \Cref{thm:main_theorem_on_fluctuations}.

\subsection{Contribution in the Gaussian phase}
\label{sub:Gaussian_contribution}

In the Gaussian phase $(\upomega_{\mathsf{cr}}=\mathcal{W}_x<\upomega^\circ_{\tau,x})$
the function $G_{\tau,x}$ has a simple critical point at
$\mathcal{W}_x$, and 
we take the power $\beta=\frac{1}{2}$.
Define 
\begin{equation}
	\label{m_x_definition}
	m_{x}:=
	\#
	\Bigl\{ 
		y\in\left\{ 0 \right\}\cup\left\{ b\in\RoadblockSet\colon 0<b<x \right\} 
		\colon \Speed(y)=\mathcal{W}_x
	\Bigr\}
\end{equation}
(cf. \eqref{range_of_Speed_notation}),
this is the multiplicity of the pole at $w=\mathcal{W}_x$
in \eqref{plugging_into_Fredholm_for_asymptotics}.

\begin{lemma}
	We have $G_{\tau,x}''(\mathcal{W}_x)<0$.
	\label{statement:second_derivative_G_critical_point}
\end{lemma}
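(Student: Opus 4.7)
The plan is to reduce the claim to the strict monotonicity of $w\mapsto w^{-1}\Phi_2(w\mid x)$ that was already exploited in the proof of \Cref{statement:Gaussian_contour_C_is_steep_descent}. First I would substitute $w=\mathcal{W}_x$ into the formula for $G''_{\tau,x}(w)$ coming from \eqref{G_derivatives_formulas}, using that in the Gaussian phase $\LimitShape(\tau,x)=\tau \mathcal{W}_x-\Phi_1(\mathcal{W}_x\mid x)$ by \Cref{def:limit_shape_definition_H_tau_x}. A direct computation yields
\begin{equation*}
	\mathcal{W}_x^2\, G''_{\tau,x}(\mathcal{W}_x)
	=
	-\bigl(\tau\mathcal{W}_x-\Phi_1(\mathcal{W}_x\mid x)\bigr)
	+\Phi_2(\mathcal{W}_x\mid x)-\Phi_1(\mathcal{W}_x\mid x)
	=
	\Phi_2(\mathcal{W}_x\mid x)-\tau \mathcal{W}_x,
\end{equation*}
so it suffices to show that $\Phi_2(\mathcal{W}_x\mid x)<\tau \mathcal{W}_x$.

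Next, I would invoke the characterization of $\upomega^\circ_{\tau,x}$ from \Cref{statement:TW_root_exists_and_is_unique}, which gives
\begin{equation*}
	\tau=\frac{\Phi_2(\upomega^\circ_{\tau,x}\mid x)}{\upomega^\circ_{\tau,x}},
\end{equation*}
together with the fact, noted at the start of the proof of \Cref{statement:Gaussian_contour_C_is_steep_descent}, that the function $w\mapsto w^{-1}\Phi_2(w\mid x)$ is strictly increasing on $(0,\mathcal{W}^\circ_{x})$. Since $(\tau,x)$ is in the Gaussian phase, \Cref{def:phases_of_the_limit_shape_TW_Gaussian} gives the strict inequality $\mathcal{W}_x<\upomega^\circ_{\tau,x}$ (and of course $\mathcal{W}_x\le \mathcal{W}^\circ_x$), so
\begin{equation*}
	\frac{\Phi_2(\mathcal{W}_x\mid x)}{\mathcal{W}_x}
	<
	\frac{\Phi_2(\upomega^\circ_{\tau,x}\mid x)}{\upomega^\circ_{\tau,x}}=\tau.
\end{equation*}
Multiplying by $\mathcal{W}_x>0$ and combining with the displayed formula for $\mathcal{W}_x^2 G''_{\tau,x}(\mathcal{W}_x)$ yields the result.

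The only step requiring care is the strict monotonicity of $w\mapsto w^{-1}\Phi_2(w\mid x)$, which amounts to $\Phi_3(w\mid x)-\Phi_2(w\mid x)>0$ on $(0,\mathcal{W}_x^\circ)$; but this was already verified pointwise via \eqref{phi_3_minus_phi_2_positive} and is exactly the positivity that drives \Cref{statement:third_derivative_G_critical_point}. Thus no genuine obstacle remains, and the lemma follows in a few lines.
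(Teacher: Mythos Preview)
Your proof is correct and follows essentially the same route as the paper's: compute $\mathcal{W}_x^2 G''_{\tau,x}(\mathcal{W}_x)=\Phi_2(\mathcal{W}_x\mid x)-\tau\mathcal{W}_x$ from \eqref{G_derivatives_formulas} and the Gaussian-phase value of $\LimitShape(\tau,x)$, then conclude negativity from the strict monotonicity of $w\mapsto w^{-1}\Phi_2(w\mid x)$ together with $\mathcal{W}_x<\upomega^\circ_{\tau,x}$. The paper compresses this into two lines by citing \eqref{PHI2_increasing_for_proof_of_descent} directly, but the argument is the same.
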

\begin{proof}
	We have from \eqref{Equation_for_double_critical_points_2}
	\begin{equation*}
		\mathcal{W}_x G_{\tau,x}''(\mathcal{W}_x)
		=
		-\tau+\frac{1}{\mathcal{W}_x}\Phi_2(\mathcal{W}_x\mid x),
	\end{equation*}
	which is nonpositive by \eqref{PHI2_increasing_for_proof_of_descent}
	and cannot be zero because $\mathcal{W}_x>\upomega^\circ_{\tau,x}$.
\end{proof}
Thus, locally in a neighborhood of $\mathcal{W}_x$ the regions where 
$\Re G_{\tau,x}(w)-\Re G_{\tau,x}(\mathcal{W}_x)$
has constant sign look as in \Cref{fig:locally_G}.
Deform the $z$ and $w$ contours in the kernel 
$K_{\zeta(\RateLambda)}(w,w')$ \eqref{Fredholm_kernel_periodic_expansion_section44}
to $\Gamma_{\mathsf{cr}}$ and $C_{\mathcal{W}_x,\frac{\pi}{4}}$,
respectively, modified in a small neighborhood of $\mathcal{W}_x$
to look as in \Cref{fig:locally_G}.
Define
(recall notation \eqref{Big_Phi_n_definition})
\begin{equation}
	\label{dispersion_G}
	d^{\mathrm{G}}_{\tau,x}:=\sqrt{-G_{\tau,x}''(\mathcal{W}_x)}=
	\mathcal{W}_x^{-1}
	\sqrt{\tau\mathcal{W}_x-\Phi_2(\mathcal{W}_x\mid x)}>0,
\end{equation}
and make a change of variables
\begin{equation*}
	w=\mathcal{W}_x+\RateLambda^{-\frac{1}{2}}(d^{\mathrm{G}}_{\tau,x})^{-1}\tilde w
	,\qquad 
	w'=\mathcal{W}_x+\RateLambda^{-\frac{1}{2}}(d^{\mathrm{G}}_{\tau,x})^{-1}\tilde w'
	,\qquad 
	z=\mathcal{W}_x+\RateLambda^{-\frac{1}{2}}(d^{\mathrm{G}}_{\tau,x})^{-1}\tilde z,
\end{equation*}
where $\tilde z\in\gamma_z^{\mathrm{G}}$
and $\tilde w,\tilde w'\in \gamma_w^{\mathrm{G}}$
(these contours are made out of straight half lines, see \Cref{fig:locally_G}).
Let us work in the neighborhood of $\mathcal{W}_x$ of size $\RateLambda^{-\frac{1}{3}}$,
so that $|\tilde w|$, $|\tilde w'|$, and $|\tilde z|$ are $O(\RateLambda^{\frac{1}{6}})$.
In this neighborhood the Taylor expansion of of $G_{\tau,x}$
looks as
\begin{equation*}
	G_{\tau,x}(w)
	=
	G_{\tau,x}(\mathcal{W}_x)
	-
	\frac{\tilde w^2}{2 \RateLambda}+o(\RateLambda^{-1}).
\end{equation*}

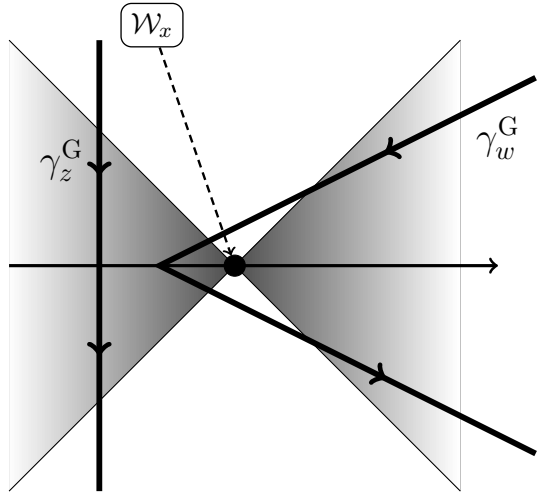
\begin{figure}[htbp]
	\begin{tikzpicture}
		[scale=1]
		\def\sq{0.866025}
		\filldraw[draw=none,left color=black!70!white, right color=white] (0,0) -- (3,3) -- (3,-3);
		\filldraw[draw=none,right color=black!70!white, left color=white] (0,0) -- (-3,3) -- (-3,-3);
		\draw (-3,-3)--(3,3);
		\draw (-3,3)--(3,-3);
		\draw[very thick, ->] (-3,0)--(3.5,0);
		\draw[fill] (0,0) circle (4pt) node (w1) {};
	  	\node (w1l) at (-1.1,3.2) 
	  	[draw=black, line width=.5, rectangle, 
	  	rounded corners, text centered] 
			{$\mathcal{W}_x$};
        \draw[->, line width=.9,densely dashed] (w1l)--(w1);
				\node at (3.5,1.7) {\Large$\gamma^{\mathrm{G}}_{w}$};
				\node at (-2.3,1.4) {\Large$\gamma^{\mathrm{G}}_{z}$};
				\draw[
					line width=2.2,
					decoration={
						markings,
						mark=at position 0.3 with {\arrow{>}},
						mark=at position 0.7 with {\arrow{>}}
					},
					postaction={decorate}
				] (-1.8,3)--++(0,-6);
				\draw[
					line width=2.2,
					decoration={
						markings,
						mark=at position 0.2 with {\arrow{>}},
						mark=at position 0.8 with {\arrow{>}}
					},
					postaction={decorate}
				] (4,2.5)--(-1,0)--(4,-2.5);
	\end{tikzpicture}
	\caption{%
		Behavior in a neighborhood of the simple critical point
		$\mathcal{W}_x$ of $G_{\tau,x}$.
		Shaded are regions where
		$\Re G_{\tau,x}(z)<\Re G_{\tau,x}(\mathcal{W}_x)$.
		The $z$ and $w$ contours in a neighborhood of $\mathcal{W}_x$ are
		also shown, modified in this neighborhood to avoid
		the pole in the kernel.%
	}
	\label{fig:locally_G}
\end{figure}

Thus, we obtain the following scaled kernel (which contains $r\in\mathbb{R}$ as a parameter):
\begin{align}
		\nonumber&
		K_{\zeta(\RateLambda)}^{\mathrm{steep}}(w,w')\sqrt{dw\,dw'}
		\\
		\nonumber&\hspace{20pt}=
		-
		\frac{1}{2\mathbf{i}}
		\sum_{k\in\mathbb{Z}}
		\int_{\Gamma_{\mathsf{cr}}}
		\frac{e^{(- \RateLambda \LimitShape(\tau,x)+r \RateLambda^{\frac{1}{2}})
		(\log(z/w)+2\pi\mathbf{i}k)}}
		{\sin \left(\pi
			\left( 
				\frac{\log(z/w)}{\log q}+
				\frac{2\pi\mathbf{i}k}{\log q}
			\right)\right)
		}
		\frac{g(w)}{g(z)}\frac{dz}{(z-w')z\log q}
		\sqrt{dw\,dw'}
		\\
		\nonumber&\hspace{20pt}=
		-
		\frac{\RateLambda^{-\frac{1}{2}}(d_{\tau,x}^{\mathrm{G}})^{-1}}{2\mathbf{i}}
		\sum_{k\in\mathbb{Z}}
		\int_{\tilde\Gamma_{\mathsf{cr}}}
		\frac{
			e^{
				2\pi\mathbf{i}k
				(- \RateLambda \LimitShape(\tau,x)+r \RateLambda^{\frac{1}{2}})
			}
		}
		{\sin \left(\pi
			\left( 
				\frac{\tilde z-\tilde w}{\mathcal{W}_x d_{\tau,x}^{\mathrm{G}}\log q}
				\RateLambda^{-\frac{1}{2}}(1+o(1))+
				\frac{2\pi\mathbf{i}k}{\log q}
			\right)\right)
		}
		\frac
		{\bigl( 1+\frac{\tilde z}{\mathcal{W}_xd_{\tau,x}^{\mathrm{G}}}\RateLambda^{-\frac{1}{2}} ;q \bigr)_{\infty}^{m_x}}
		{\bigl( 1+\frac{\tilde w}{\mathcal{W}_xd_{\tau,x}^{\mathrm{G}}}\RateLambda^{-\frac{1}{2}} ;q \bigr)_{\infty}^{m_x}}
		\\\nonumber
		&\hspace{40pt}\times
		\exp\left\{ 
			-\frac{\tilde w^2}{2}+\frac{\tilde z^2}{2}+
			\frac{r}{\mathcal{W}_xd_{\tau,x}^{\mathrm{G}}}
			(\tilde z-\tilde w)+o(1)
		\right\}
		\frac{\bigl(1+o(1)\bigr)d\tilde z}{(\tilde z-\tilde w')
		(\mathcal{W}_x+o(1))\log q}
		\sqrt{d\tilde w\,d\tilde w'}
		\\&\hspace{20pt}
		=:
		\tilde K^{\mathrm{G}}_{r,m_x}(\tilde w,\tilde w')\sqrt{d \tilde w\,d \tilde w'},
	\label{Gaussian_scaling_of_periodic_kernel}
\end{align}
where $\tilde \Gamma_{\mathsf{cr}}$
is the image of $\Gamma_{\mathsf{cr}}$
under the change of variables $z\to\tilde z$.
Here we used the fact that among 
the factors in \eqref{plugging_into_Fredholm_for_asymptotics}
corresponding to $\Speed(0)$ or the roadblocks, 
exactly $m_x$ contain a simple pole at $w=\mathcal{W}_x$,
while other factors are regular and thus 
behave as $1+o(1)$ as $\RateLambda\to+\infty$.
We have for one such factor with a pole:
\begin{equation*}
	\frac
	{\bigl( 1+\frac{\tilde z}{\mathcal{W}_xd_{\tau,x}^{\mathrm{G}}}\RateLambda^{-\frac{1}{2}} ;q \bigr)_{\infty}}
	{\bigl( 1+\frac{\tilde w}{\mathcal{W}_xd_{\tau,x}^{\mathrm{G}}}\RateLambda^{-\frac{1}{2}} ;q \bigr)_{\infty}}
	=
	\frac{\tilde z}{\tilde w}\bigl( 1+o(1) \bigr).
\end{equation*}
Similarly to \Cref{sub:TW_contribution} one sees that the terms in 
\eqref{Gaussian_scaling_of_periodic_kernel}
corresponding to $k\ne 0$ vanish in the limit, and so
\begin{equation*}
	\lim_{\RateLambda\to+\infty}
	\tilde K_{r,m_x}^{\mathrm{G}}(\tilde w,\tilde w')
	=
		-
		\frac{1}{2\pi\mathbf{i}}
		\int_{\gamma_z^{\mathrm{G}}}
		\exp\left\{ 
			-\frac{\tilde w^2}{2}+\frac{\tilde z^2}{2}+
			\frac{r}{\mathcal{W}_xd_{\tau,x}^{\mathrm{G}}}
			(\tilde z-\tilde w)
		\right\}
		\left( \frac{\tilde z}{\tilde w} \right)^{m_x}
		\frac{d\tilde z}
		{
			(\tilde z-\tilde w)
			(\tilde z-\tilde w')
		},
\end{equation*}
where $\gamma_z^{\mathrm{G}}$ is the left contour in \Cref{fig:locally_G}.
Denote the kernel in the right-hand side above by $K^{\mathrm{G}}_{r,m_x}(\tilde w,\tilde w')$,
where 
$\tilde w,\tilde w'$ belong to $\gamma_w^{\mathrm{G}}$, the 
right contour in \Cref{fig:locally_G}.

Combining the above computation in a neighborhood of $\mathcal{W}_x$
with the results of \Cref{sub:steep_descent_contours,sub:extra_residues}, we conclude that
\begin{equation*}
	\lim_{\RateLambda\to\infty}
	\det 
	\left( 1+K_{\zeta(\RateLambda)} \right)_{L^2(C_{\mathcal{W}_x,\frac{\pi}{4}})}
	=
	\det 
	\left( 1+K_{r,m_x}^{\mathrm{G}} \right)_{L^2(\gamma_w^{\mathrm{G}})}.
\end{equation*}
The Fredholm determinant in the right-hand side can be identified with 
the distribution function of the largest eigenvalue
of an $m_x\times m_x$ GUE random matrix 
(cf. \cite{barraquand2015phase}):
\begin{equation*}
	\det 
	\left( 1+K_{r,m_x}^{\mathrm{G}} \right)_{L^2(\gamma_w^{\mathrm{G}})}
	=
	G_{m_x}\left( \frac{r}{\mathcal{W}_xd_{\tau,x}^{\mathrm{G}}} \right),
\end{equation*}
which completes the proof of the third part of 
\Cref{thm:main_theorem_on_fluctuations}.

\subsection{Contribution at a transition point}
\label{sub:BBP_contribution}

Finally, we consider the case when 
the double critical point coming from the function $G_{\tau,x}$ coincides
with a pole outside the exponent in \eqref{plugging_into_Fredholm_for_asymptotics}, 
that is,
$\upomega_{\mathsf{cr}}=\mathcal{W}_x=\upomega^\circ_{\tau,x}$.
We use the notation $m_x$ \eqref{m_x_definition}
for the multiplicity of this pole.
Take the power $\beta=\frac{1}{3}$, 
consider the same change of variables \eqref{TW_change_of_variables}
as in the Tracy--Widom phase, where $\tilde z\in\gamma_{z}^{\mathrm{BBP}}$
and $\tilde w,\tilde w'\in\gamma_{w}^{\mathrm{BBP}}$.
The only difference between the contours 
$\gamma_{z,w}^{\mathrm{BBP}}$
and 
$\gamma_{z,w}^{\mathrm{Ai}}$
in \Cref{fig:locally_TW}
is that the former contours should pass to the left of $\upomega^\circ_{\tau,x}$
to avoid the pole (in particular, $\gamma_z^{\mathrm{BBP}}$ is the same as $\gamma_z^{\mathrm{Ai}}$).
The regions where $\Re G_{\tau,x}(w)-\Re G_{\tau,x}(\upomega^\circ_{\tau,x})$
has constant sign in a neighborhood of $\upomega^\circ_{\tau,x}$
look exactly the same as in \Cref{fig:locally_TW}.
Thus, arguing similarly to \Cref{sub:TW_contribution,sub:Gaussian_contribution}, we see that 
\begin{equation}\label{BBP_Fredholm_convergence}
	\lim_{\RateLambda\to+\infty}\det
	\left( 1+K_{\zeta(\RateLambda)} \right)_{L^2(C_{\upomega^\circ_{\tau,x},\frac{\pi}{4}})}
	=
	\det
	\left( 1+K^{\mathrm{BBP}}_{r,m_x} \right)_{L^2(\gamma_w^{\mathrm{BBP}})},
\end{equation}
where the latter kernel has the form
\begin{equation*}
	K^{\mathrm{BBP}}_{r,m_x}(\tilde w,\tilde w')=
	-\frac{1}{2\pi\mathbf{i}}\int_{\gamma_{z}^{\mathrm{BBP}}}
	\exp\left\{ 
		\frac{\tilde w^3}{3}-\frac{\tilde z^3}{3}
		+\frac{r}{\upomega^\circ_{\tau,x}d^{\mathrm{TW}}_{\tau,x}}(\tilde z-\tilde w)
	\right\}
	\left( \frac{\tilde z}{\tilde w} \right)^{m_x}
	\frac{d\tilde z}{(\tilde z-\tilde w)(\tilde z-\tilde w')}.
\end{equation*}
The Fredholm determinant in the right-hand side of 
\eqref{BBP_Fredholm_convergence} can be identified with the 
BBP distribution function of \Cref{def:fluctuation_distributions}
(cf. \cite[Lemma C.2]{BorodinCorwinFerrari2012}):
\begin{equation*}
	\det
	\left( 1+K^{\mathrm{BBP}}_{r,m_x} \right)_{L^2(\gamma_w^{\mathrm{BBP}})}
	=
	F_{\mathrm{BBP},m_x,\mathbf{b}}
	\left( \frac{r}{\upomega^\circ_{\tau,x}d^{\mathrm{TW}}_{\tau,x}} \right).
\end{equation*}
The particular distribution we obtain 
in this limit regime has
order $m_x$ and $\mathbf{b}=(0,0,\ldots,0 )$.
This completes the proof of the second part of \Cref{thm:main_theorem_on_fluctuations}.

\appendix 

\section{$q$-polygamma functions}
\label{sec:appendix_q}

Here we list a number of formulas related to the $q$-gamma and $q$-polygamma
functions which are used throughout the paper.
The $q$-gamma function is defined by
(we always assume $q\in(0,1)$)
\begin{equation*}
	\Gamma_q(z) := (1-q)^{1-z} \frac{(q;q)_{\infty}}{(q^z;q)_{\infty}}.
\end{equation*}
We have $\lim_{q\nearrow 1}\Gamma_q(z)=\Gamma(z)$. The log-derivative of $\Gamma_q(z)$ (the $q$-digamma function) is denoted by
\begin{equation*}
	\psi_q(z) := \frac{1}{\Gamma_q(z)}\frac{\partial \Gamma_q(z)}{\partial z}.
\end{equation*}
It is straightforward that
\begin{equation}\label{q_digamma}
	\psi_q(z) = -\log(1-q) + \log q  \sum_{k=0}^{\infty} \frac{q^{k+z}}{1-q^{k+z}},
\end{equation}
which is a meromorphic function in $z$ having poles when $q^{z+k}=1$
(and the series converges for any $z$ except these poles
thanks to the factors $q^k$).

The following formula is an alternative series representation for derivatives
of $\psi_q(z)$ (the so-called $q$-polygamma functions):
\begin{equation}\label{q_polygamma}
	\psi_q^{(n)}(z) = (\log q)^{n+1} \sum_{k=1}^{\infty} \frac{k^n q^{kz}}{1-q^k},
	\qquad n\ge1,
\end{equation}
e.g., see \cite[Lemma 2.1]{barraquand2015q} for the computation. 
In contrast with \eqref{q_digamma}, this series converges only when
$|q^{z}|<1$, i.e., when $\Re z>0$.

It is convenient to replace $q^{z}$ by $w$, and define for any $n\ge0$:
\begin{equation}\label{Phi_n_function}
	\phi_{n}(w):=\sum_{k=1}^{\infty} \frac{k^n w^{k}}{1-q^k},\qquad |w|<1.
\end{equation}
We thus have
\begin{equation}\label{Phi_n_function_analytic_continuation}
	\phi_{n}(w) = \frac{\log(1-q)}{\log q}\,\mathbf{1}_{n=0}
	+ \frac{1}{(\log q)^{n+1}}\psi_q^{(n)}(\log_{q}w),\qquad n\ge0.
\end{equation}
The latter formula gives an analytic continuation of the series
\eqref{Phi_n_function} to a meromorphic function of $w\in\mathbb{C}$ having poles of
order $n+1$ at $w=q^{-k}$, $k\in\mathbb{Z}_{\ge0}$.

Several useful properties of the functions $\phi_n$ are summarized below:
\begin{proposition}
	\label{statement:Phi_n_behavior}
	We have
	\begin{equation}
		\label{little_phi_n_derivative}
		\frac{\partial}{\partial w}\phi_{n}(w)=\frac{1}{w}\phi_{n+1}(w),\qquad n\ge0.
	\end{equation}
	The functions $\phi_{0}(w)$ and $\phi_{1}(w)$ are negative for
	negative real $w$, and $\phi_{1}(w)$ and $\phi_{3}(w)$ are positive
	for positive real $w\notin q^{\mathbb{Z}_{\le 0}}$, while $\phi_{2}(w)$ is
	positive for $w\in(0,1)$. 
	Moreover, $\phi_{n}(0)=0$ for all $n$.
\end{proposition}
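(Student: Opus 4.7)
The plan is to establish the five claims by direct manipulation of the defining series \eqref{Phi_n_function} and its analytic continuation.

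First, claim \eqref{little_phi_n_derivative} follows for $|w|<1$ by termwise differentiation of the absolutely convergent series,
\[
	\frac{\partial}{\partial w}\phi_n(w)
	=\sum_{k=1}^{\infty}\frac{k^{n+1}w^{k-1}}{1-q^k}
	=\frac{1}{w}\phi_{n+1}(w),
\]
and then extends to the common domain of analyticity of both sides via \eqref{Phi_n_function_analytic_continuation}. The vanishing $\phi_n(0)=0$ is immediate from the series, which starts at $k=1$.

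For the sign statements, I would first rewrite each $\phi_n$ in a more convenient way. Expanding $(1-q^k)^{-1}=\sum_{j=0}^{\infty}q^{jk}$ and interchanging summation (justified by absolute convergence for $|w|<1$), then using the standard identities $\sum_{k\ge 1}x^k=\frac{x}{1-x}$, $\sum_{k\ge 1}kx^k=\frac{x}{(1-x)^2}$, $\sum_{k\ge 1}k^2x^k=\frac{x(1+x)}{(1-x)^3}$, and $\sum_{k\ge 1}k^3x^k=\frac{x(1+4x+x^2)}{(1-x)^4}$, one obtains
\begin{align*}
	\phi_0(w)&=\sum_{j=0}^{\infty}\frac{wq^j}{1-wq^j},
	&\phi_1(w)&=\sum_{j=0}^{\infty}\frac{wq^j}{(1-wq^j)^2},\\
	\phi_2(w)&=\sum_{j=0}^{\infty}\frac{wq^j(1+wq^j)}{(1-wq^j)^3},
	&\phi_3(w)&=\sum_{j=0}^{\infty}\frac{wq^j\bigl(1+4wq^j+(wq^j)^2\bigr)}{(1-wq^j)^4}.
\end{align*}
For negative real $w$ and for positive real $w\notin q^{\mathbb{Z}_{\le 0}}$, each of these $j$-series converges (its terms decay like $wq^j$), and the sum defines the analytic continuation of $\phi_n$ on a neighborhood of the respective real ray, so these representations coincide with the meromorphic function in \eqref{Phi_n_function_analytic_continuation}.

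Given these formulas, the sign claims reduce to inspecting individual summands. For $w<0$: in the $\phi_0$ representation $wq^j<0$ and $1-wq^j>1>0$, so every term is negative, hence $\phi_0(w)<0$; in the $\phi_1$ representation $wq^j<0$ while $(1-wq^j)^2>0$, giving $\phi_1(w)<0$. For $w>0$, $w\notin q^{\mathbb{Z}_{\le 0}}$: in $\phi_1$ every summand is manifestly positive, so $\phi_1(w)>0$; for $\phi_3$ one checks that $1+4x+x^2>0$ for all $x>0$ (it is a polynomial with roots $-2\pm\sqrt{3}$, both negative), so every term is positive and $\phi_3(w)>0$. Finally for $\phi_2$ on $(0,1)$, the simplest route is the original series \eqref{Phi_n_function}, whose summands $k^2w^k/(1-q^k)$ are all strictly positive. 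The only mildly delicate point is verifying that the rearranged representations give the same analytic function beyond $|w|<1$, which follows from the identity theorem once one notes that both the original and rearranged series are analytic on overlapping neighborhoods of the real axis away from the poles $w\in q^{-\mathbb{Z}_{\ge 0}}$.
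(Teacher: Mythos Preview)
Your proof is correct and follows essentially the same approach as the paper: deriving the alternative series representations $\phi_n(w)=\sum_{j\ge 0}(\cdots)$ valid on all of the real line away from the poles, and then reading off the signs term by term. The only cosmetic difference is that the paper obtains these representations by differentiating the $q$-digamma series \eqref{q_digamma} rather than by expanding $(1-q^k)^{-1}$ geometrically and resumming, but the resulting formulas and the subsequent sign-checking are identical.
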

\begin{proof}
	The claim about the derivatives is straightforward from either
	\eqref{Phi_n_function} or \eqref{Phi_n_function_analytic_continuation}. 
	To check the signs of the $\phi_n$'s, let us use the series \eqref{q_digamma}
	and its derivatives to get formulas for $\phi_n(w)$ valid for all $w$. 
	We have
	\begin{align}
		\label{little_phi_series}
		\phi_{0}(w) & = \sum_{k = 0}^{\infty}\frac{q^kw}{1-q^kw},\qquad
		\phi_{1}(w)   = \sum_{k = 0}^{\infty}\frac{q^kw}{(1-q^kw)^{2}},\\
		\phi_{2}(w) & = \sum_{k = 0}^{\infty}\frac{q^{k}w(1+q^{k}w)}{(1-q^kw)^{3}},\qquad
		\phi_{3}(w)   = \sum_{k = 0}^{\infty}\frac{q^kw (1+4 q^kw +q^{2 k}w^2 )}{(1-q^kw)^4}.
		\label{little_phi_series2}
	\end{align}
	This immediately implies all the remaining claims.
\end{proof}

\section{Translation invariant stationary distributions}
\label{sec:appendix_stationary_distributions}

\subsection{Preliminaries}
\label{sub:appendix_preliminaries}

Here we perform computations related to translation invariant stationary
distributions of homogeneous versions of our particle systems on the whole
(discrete or continuous) line.
Classification of translation invariant stationary distributions for rather
general zero range processes (in the sense of \cite{Spitzer1970}) on
$\mathbb{Z}$ is well-known, e.g., see \cite{andjel1982invariant}.
In particular, under mild conditions on the process every translation invariant
stationary distribution is a mixture of product measures.
Here by a product measure we mean assigning random independent identically
distributed numbers of particles at each location in $\mathbb{Z}$ (such a random
configuration is clearly translation invariant).

While neither the half-continuous stochastic higher spin six vertex model nor
the exponential jump model are zero range, the existence (for suitable initial
configurations) of these processes on $\mathbb{Z}$ and $\mathbb{R}$,
respectively, can be established similarly to
\cite{Liggett1973infinite_zerorange}, \cite{andjel1982invariant}.
The main observation is that our process on $\mathbb{Z}$ (denote it by
$\VertexXProcess^{\mathbb{Z}}(t)$) is ``slower'' than the zero range process
(with the geometric jumping distribution) obtained by dropping the interaction
of the flying particles with the sitting ones. 
In the case of the continuous space, our process (denote it by
$\XProcess^{\mathbb{R}}(t)$) is ``slower'' than 
simply the process of independent particles each of which jumps to the 
right by an exponentially distributed random distance after exponentially distributed time intervals.
Therefore, to show the existence of both $\VertexXProcess^{\mathbb{Z}}$ and
$\XProcess^{\mathbb{R}}$ one can essentially repeat the estimates of
\cite{Liggett1973infinite_zerorange} or \cite{andjel1982invariant} (with
suitable modifications in the case of the continuous space).
This also implies that product measures on $\mathbb{Z}$ or their analogues on
$\mathbb{R}$, marked Poisson processes (see \Cref{def:continuous_product}
below), can serve as (random) initial configurations for
$\VertexXProcess^{\mathbb{Z}}$ and $\XProcess^{\mathbb{R}}$, respectively,
provided that the random number of points at a single location has, say, 
two finite first moments.

Here we do not attempt to classify all translation invariant stationary
distributions of $\VertexXProcess^{\mathbb{Z}}$ and $\XProcess^{\mathbb{R}}$,
but instead show that certain specific product measures or marked Poisson
processes are indeed stationary under our systems.
We also obtain formulas for particle density and particle current (sometimes
also called particle flux) for these measures which (in the case of
$\XProcess^{\mathbb{R}}$) are employed in the heuristic derivation of the
macroscopic limit shape for the height function in
\Cref{sub:intro_hydrodynamics} (%
	based on the assumption that these measures describe local behavior of the
	inhomogeneous exponential jump model%
).

\subsection{Stationary distributions for half-continuous vertex model}
\label{sub:appendix_half_cont}

Let $\VertexXProcess^{\mathbb{Z}}(t)$ be the homogeneous version of the
half-continuous stochastic higher spin six vertex model (described in
\Cref{sub:continuous_vertex_coordinate}) which is well-defined on a suitable
subset of $\mathrm{Conf}_{\bullet}^{\sim}(\mathbb{Z})$, the space of possibly
countably infinite particle configurations in $\mathbb{Z}$ with multiple
(but finitely many)
particles per location allowed.
This process depends on parameters $\upxi_i\equiv \upxi>0$ and
$\mathsf{s}_i\equiv \mathsf{s}\in(-1,0)$.

For any $c\ge0$, let $\HahnDistribution_{c,\mathsf{s}^2}$ be the probability
distribution on $\left\{ 0,1,2,\ldots \right\}$ defined as
\begin{equation}\label{infinite_q_binomial_distribution}
	\HahnDistribution_{c,\mathsf{s}^2}(j)
	:=
	c^j\frac{(\mathsf{s}^2;q)_j}{(q;q)_j}
	\frac{(c;q)_{\infty}}{(c\mathsf{s}^2;q)_{\infty}}
	,\qquad j\ge0.
\end{equation}
The fact that these quantities sum to one follows from the $q$-binomial theorem
\cite[(1.3.2)]{GasperRahman}. 
When $\mathsf{s}=0$, $\HahnDistribution_{c,\mathsf{s}^2}$ turns into the
distribution $(c;q)_{\infty}c^j/(q;q)_j$ which is often called the $q$-geometric
distribution. 
The latter in turn becomes the usual geometric distribution $(1-c)c^j$ when
$q=0$. 

Let
${\mathfrak{m}}^{\mathrm{hc}}_{c,\mathsf{s}^2}=(\HahnDistribution_{c,\mathsf{s}^2})^{\otimes\mathbb{Z}}$
be the product probability measure on $\left\{ 0,1,2,\ldots
\right\}^{\mathbb{Z}}$ corresponding to the number of particles at each
location distributed as
$\HahnDistribution_{c,\mathsf{s}^2}$, all of them being independent.
\begin{proposition}
	\label{statement:discrete_invariant}
	For any $c\ge0$, the measure ${\mathfrak{m}}^{\mathrm{hc}}_{c,\mathsf{s}^2}$ on
	$\mathrm{Conf}_{\bullet}^{\sim}(\mathbb{Z})$ is stationary under the process
	$\VertexXProcess^{\mathbb{Z}}(t)$ with the matching parameter $\mathsf{s}$
	and arbitrary $\upxi$.
\end{proposition}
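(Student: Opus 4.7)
The plan is a direct generator computation. Writing $\mathcal{L}^{\mathrm{hc}}$ for the generator of $\VertexXProcess^{\mathbb{Z}}(t)$ acting on bounded local functions $f$, I aim to show $\int \mathcal{L}^{\mathrm{hc}} f\,d\mathfrak{m}^{\mathrm{hc}}_{c,\mathsf{s}^2}=0$. The generator decomposes as a sum over source--target pairs $(i,j)$ with $i<j$, with jump rates
\begin{equation*}
R_{i,j}(\eta)=\upxi(-\mathsf{s})(1-q^{\eta(i)})\,\mathsf{s}^{2(j-i-1)}\,q^{\eta(i+1)+\cdots+\eta(j-1)}\,(1-\mathsf{s}^2 q^{\eta(j)}).
\end{equation*}

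First I would record the algebraic facts about $\HahnDistribution_{c,\mathsf{s}^2}$ that drive the computation. The $q$-binomial theorem gives $\mathbb{E}_{\HahnDistribution_{c,\mathsf{s}^2}}[q^\eta]=(1-c)/(1-c\mathsf{s}^2)$, and directly from \eqref{infinite_q_binomial_distribution} one obtains the recursion
\begin{equation*}
(1-q^{n+1})\,\HahnDistribution_{c,\mathsf{s}^2}(n+1)=c(1-\mathsf{s}^2 q^n)\,\HahnDistribution_{c,\mathsf{s}^2}(n),
\end{equation*}
which is the key detailed-balance identity. Also $\mathsf{s}^2\,\mathbb{E}[q^\eta]=\mathsf{s}^2(1-c)/(1-c\mathsf{s}^2)<1$, which will ensure convergence of the geometric sums over intermediate sites appearing below.

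Next I would verify balance at a reference site $0$. By the product structure of $\mathfrak{m}^{\mathrm{hc}}_{c,\mathsf{s}^2}$, the departure rate from $0$ conditional on $\eta(0)=n$ is $\upxi(-\mathsf{s})(1-q^n)$ since the destination probabilities in \eqref{jumping_distribution_of_moving_particle_in_discrete_system} sum to one. The arrival rate at $0$ conditional on $\eta(0)=n$ is a double expectation that factors as
\begin{equation*}
(1-\mathsf{s}^2 q^n)\,\upxi(-\mathsf{s})\,\mathbb{E}[1-q^\eta]\,\sum_{m\ge 0}\bigl(\mathsf{s}^2\,\mathbb{E}[q^\eta]\bigr)^m=\upxi(-\mathsf{s})\,c\,(1-\mathsf{s}^2 q^n),
\end{equation*}
using $\mathbb{E}[1-q^\eta]=c(1-\mathsf{s}^2)/(1-c\mathsf{s}^2)$ and summing the geometric series. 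Substituting these arrival and departure rates into the single-site balance equation for $\mathbb{P}(\eta(0)=n)$ and applying the above recursion twice produces cancellation term by term.

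The main obstacle is upgrading this one-site balance to preservation of the full product structure, not merely the marginals, since the flight-over factor $q^{\sum\eta}$ couples many sites. My plan is to generalize the computation above to an arbitrary cylinder function $f(\eta(a),\ldots,\eta(b))$: arriving and departing particles whose endpoints both lie outside $\{a,\dots,b\}$ do not affect $f$, while the remaining pairs contribute sums that factor under the product measure into a product of local terms times the same geometric series, and this collapses via the detailed-balance recursion exactly as in the one-site case, run in parallel at each $i\in\{a,\ldots,b\}$. As an alternative I would note that the result can be deduced by taking the $u\searrow 0$ limit of the discrete-time stochastic higher spin six vertex model, whose translation-invariant product-form stationary measures with $\HahnDistribution_{c,\mathsf{s}^2}$ marginals are established by fusion and Yang--Baxter arguments in \cite{Amol2016Stationary, borodin2016stochastic_MM}.
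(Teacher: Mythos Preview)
Your approach is essentially the same as the paper's: both compute the arrival rate at a fixed site by summing the geometric series in $\mathsf{s}^2\,\mathbb{E}[q^\eta]$ to get $\upxi(-\mathsf{s})\,c\,(1-\mathsf{s}^2 q^n)$, and then invoke the recursion $(1-q^{n+1})\HahnDistribution_{c,\mathsf{s}^2}(n+1)=c(1-\mathsf{s}^2 q^n)\HahnDistribution_{c,\mathsf{s}^2}(n)$ to verify single-site balance. The paper stops there, asserting at the outset that ``it suffices to consider the evolution of the distribution of the number of particles at a given location,'' whereas you correctly flag that this only gives preservation of marginals and sketch the extension to cylinder functions; in this respect your write-up is more careful than the paper's, though the computational core is identical.
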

\begin{proof}
	It suffices to consider the evolution of the distribution of the number of
	particles at a given location in $\mathbb{Z}$. 
	If there are $k\ge0$ particles at this location, then one particle leaves it
	at rate 
	\begin{equation*}
		\mathop{\mathrm{Rate}}(k\to k-1)=-\upxi\mathsf{s}(1-q^k).
	\end{equation*}
	Let us compute $\mathop{\mathrm{Rate}}(k\to k+1)$. 
	Let $Y^{\mathrm{hc}},Y_1^{\mathrm{hc}},Y_2^{\mathrm{hc}},\ldots $ be independent random variables distributed as
	$\HahnDistribution_{c,\mathsf{s}^2}$. 
	The rate at which a particle joins a stack of $k$ particles 
	has the following form:
	\begin{equation}
	\label{rate_k_k1_computation}
		\mathop{\mathrm{Rate}}(k\to k+1)
		=
		\mathop{\mathbb{E}}
		\Bigg(
			\sum_{n=0}^{\infty}
			\;
			\underbrace{-\upxi\mathsf{s}(1-q^{Y^{\mathrm{hc}}})}_{
				{\parbox{.17\textwidth}{\textnormal{\scriptsize{}rate at which a particle leaves a stack $n+1$ positions to the left}}}
			}
			\
			\cdot
			\
			\underbrace{q^{{Y}^{\mathrm{hc}}_1}\ldots q^{{Y}^{\mathrm{hc}}_n}(\mathsf{s}^2)^n}_{
				{\parbox{.17\textwidth}{\textnormal{\scriptsize{}probability that the flying particle travels distance $n$}}}
			}
			\
			\cdot
			\
			\underbrace{(1-\mathsf{s}^2q^k)}_{
				{\parbox{.17\textwidth}{\textnormal{\scriptsize{}probability that the flying particle stops at our location having $k$
				sitting particles}}}
			}
		\;\Bigg).
	\end{equation}
	One readily sees that 
	\begin{equation*}
		\mathop{\mathbb{E}}q^{Y^{\mathrm{hc}}}
		=
		\sum_{j=0}^{\infty}(cq)^j
		\frac{(q;q)_j}{(\mathsf{s^2};q)_j}
		\frac{(c;q)_{\infty}}{(c\mathsf{s}^2;q)_{\infty}}
		=
		\frac{(cq\mathsf{s}^2;q)_{\infty}}{(cq;q)_{\infty}}
		\frac{(c;q)_{\infty}}{(c\mathsf{s}^2;q)_{\infty}}
		=
		\frac{1-c}{1-c\mathsf{s}^2},
	\end{equation*}
	and thus the sum in \eqref{rate_k_k1_computation} simplifies to 
	\begin{equation*}
		\mathop{\mathrm{Rate}}(k\to k+1)
		=
		-\upxi\mathsf{s}c(1-\mathsf{s}^2q^k).
	\end{equation*}
	The desired stationarity now follows from the identity which can be readily
	verified:
	\begin{equation*}
		\left(
			\HahnDistribution_{c,\mathsf{s}^2}(k-1)
			-
			\HahnDistribution_{c,\mathsf{s}^2}(k)
		\right)
		\mathop{\mathrm{Rate}}(k-1\to k)
		+
		\left(
			\HahnDistribution_{c,\mathsf{s}^2}(k+1)
			-
			\HahnDistribution_{c,\mathsf{s}^2}(k)
		\right)
		\mathop{\mathrm{Rate}}(k+1\to k)=0
	\end{equation*}
	(%
		if $k=0$, then by agreement
		$
			\mathop{\mathrm{Rate}}(0\to -1)
			=
			\mathop{\mathrm{Rate}}(-1\to 0)
			=
			0
		$%
	).
\end{proof}

Let us now compute the particle density ${\rho}^{\mathrm{hc}}(c)$ and the
particle current ${\jmath}^{\mathrm{hc}}(c)$ associated with the product measure
${\mathfrak{m}}^{\mathrm{hc}}_{c,\mathsf{s}^2}$ (a similar computation appears
in \cite{Veto2014qhahn}).
The particle current is the average number of particles jumping over any given
location per unit time.
This quantity is given by the sum in the right-hand side of
\eqref{rate_k_k1_computation} without the last factor $(1-\mathsf{s}^2q^k)$,
and hence 
\begin{equation*}
	{\jmath}^{\mathrm{hc}}(c)=-\upxi\mathsf{s}c.
\end{equation*}
The particle density is the average number of particles per location, it is
equal to
\begin{multline*}
	{\rho}^{\mathrm{hc}}(c)
	=
	\mathbb{E}\,({Y^{\mathrm{hc}}})
	=
	\sum_{j=0}^{\infty}
	j
	c^j\frac{(\mathsf{s}^2;q)_j}{(q;q)_j}
	\frac{(c;q)_{\infty}}{(c\mathsf{s}^2;q)_{\infty}}
	=
	\frac{(c;q)_{\infty}}{(c\mathsf{s}^2;q)_{\infty}}
	\,
	c\frac{\partial}{\partial c}
	\left( 
		\frac{(c\mathsf{s}^2;q)_{\infty}}{(c;q)_{\infty}}
	\right)
	\\=
	\frac{c\frac{\partial}{\partial c}(c\mathsf{s}^2;q)_{\infty}}{(c\mathsf{s}^2;q)_{\infty}}
	-
	\frac{c\frac{\partial}{\partial c}(c;q)_{\infty}}{(c;q)_{\infty}}
	=
	\sum_{i=0}^{\infty}\frac{-c\mathsf{s}^2q^i}{1-c\mathsf{s}^2q^i}
	+
	\sum_{i=0}^{\infty}\frac{cq^i}{1-cq^i}
	=
	\phi_0(c)-\phi_0(c\mathsf{s}^2),
\end{multline*}
where in the last equality we used \eqref{little_phi_series}.
We will not further pursue formulas for the discrete model, and instead in the
next two subsections turn to the exponential jump model as it is the main
object of the present paper.

\subsection{Stationary distributions for the exponential jump model}
\label{sub:appendix_expon_model}

The homogeneous exponential jump model $\XProcess^{\mathbb{R}}(t)$ on the whole
line depends on parameters $\Speed(x)\equiv \Speed>0$ and $\RateLambda>0$,
and we assume that there are no roadblocks.\footnote{%
	The assumption that there are no roadblocks makes sense because we expect
	that $\XProcess^{\mathbb{R}}$ should describe local behavior
	under the inhomogeneous exponential jump model from \Cref{sub:definition_of_the_model},
	and roadblocks cannot accumulate locally.%
}
We will use the following analogue of product measures on the continuous line:
\begin{definition}
	\label{def:continuous_product}
	A \emph{marked Poisson process} with marks following a probability
	distribution $\ContinuousHahnDistribution$ on $\mathbb{Z}_{\ge1}$ is a 
	probability measure on $\mathrm{Conf}_{\bullet}^{\sim}(\mathbb{R})$
	(i.e., a random particle configuration in $\mathbb{R}$)
	obtained as follows.
	Take a (homogeneous) Poisson process on $\mathbb{R}$ with some rate $\mu>0$,
	and at each point of this Poisson process put a random number of particles
	according to the distribution $\ContinuousHahnDistribution$ on
	$\mathbb{Z}_{\ge1}$, independently for each point of the Poisson process.
\end{definition}

As explained in \Cref{sub:limit_to_continuous_space}, the exponential jump model
arises from the half-continuous vertex model as $\mathsf{s}^2=e^{- \RateLambda
\varepsilon}\to1$, $-\upxi\mathsf{s}=\Speed$ is fixed, and the discrete space
is rescaled by $\varepsilon$ to become continuous.
In this limit regime the distribution $\HahnDistribution_{c,\mathsf{s}^2}$
\eqref{infinite_q_binomial_distribution} behaves as follows:
\begin{equation*}
	\HahnDistribution_{c,\mathsf{s}^2}(0)
	=
	\frac{(c;q)_{\infty}}{(ce^{- \RateLambda\varepsilon};q)_{\infty}}
	=
	\exp \biggl\{ 
		\sum_{i=0}^{\infty}
		(-\varepsilon \RateLambda)\frac{c q^i}{1-c q^i}+O(\varepsilon^2)
	\biggr\}
	=
	1-\varepsilon \RateLambda \phi_0(c)+O(\varepsilon^2),
\end{equation*}
and for any $j\ge1$ we have
\begin{equation*}
	\HahnDistribution_{c,\mathsf{s}^2}\bigl(j\mid j\ge1\bigr)
	=
	c^j\frac{(e^{- \RateLambda\varepsilon};q)_j}{(q;q)_j}
	\frac{1}{\frac{(ce^{- \RateLambda\varepsilon};q)_\infty}{(c;q)_{\infty}}-1}
	=
	c^j\frac{(1-e^{- \RateLambda\varepsilon})}{\varepsilon \RateLambda \phi_0(c)+O(\varepsilon^2)}
	\frac{(qe^{- \RateLambda\varepsilon};q)_{j-1}}{(q;q)_j}
	\to
	\frac{1}{\phi_0(c)}\frac{c^j}{1-q^j}.
\end{equation*}
In other words, the product measure ${\mathfrak{m}}^{\mathrm{hc}}_{c,\mathsf{s}^2}$ from
\Cref{sub:appendix_half_cont} turns into a marked Poisson process with rate 
$\RateLambda\phi_0(c)$ and marking distribution
\begin{equation}
	\ContinuousHahnDistribution_{c}(j)
	:=
	\frac{1}{\phi_0(c)}\frac{c^j}{1-q^j},\qquad j\ge1.
	\label{ContinuousHahnDistribution_def}
\end{equation}
Denote this marked Poisson process by $\mathfrak{m}_{c,\RateLambda}$.
\begin{proposition}
	\label{statement:cont_invariant}
	For any $c\ge0$ the measure $\mathfrak{m}_{c,\RateLambda}$ on
	$\mathrm{Conf}_{\bullet}^{\sim}(\mathbb{R})$ is stationary under the process
	$\XProcess^{\mathbb{R}}(t)$ with the matching parameter $\RateLambda$ and
	arbitrary parameter $\Speed$.
\end{proposition}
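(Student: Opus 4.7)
The plan is to mirror \Cref{statement:discrete_invariant} by verifying directly that, under $\mathfrak{m}_{c,\RateLambda}$, the infinitesimal rates of configuration change are compatible with stationarity. A marked Poisson process is determined by its linear intensity together with its mark distribution, so stationarity decomposes into two conditions: (i) the intensity of occupied locations is time-invariant, and (ii) conditional on a location being occupied, the stack-size distribution satisfies detailed balance with respect to $\ContinuousHahnDistribution_c$.

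The first input is the outgoing rate, which is immediate from the dynamics: a stack of $k\ge1$ particles at $x$ loses one particle at rate $\mathop{\mathrm{Rate}}(k\to k-1) = \Speed(1-q^k)$. The core of the argument is computing the incoming rate $\mathop{\mathrm{Rate}}(k\to k+1)$, which the jumping rule \eqref{jumpmodel_traveling_particle} expresses as
\begin{equation*}
	\mathop{\mathrm{Rate}}(k\to k+1) = (1-q^k)\,\mathbb{E}\Bigg[\,\sum_{i:\,y_i<x}\Speed(1-q^{\eta_i})\,e^{-\RateLambda(x-y_i)}\,q^{N(y_i,x)}\Bigg],
\end{equation*}
where $\{(y_i,\eta_i)\}$ enumerates the marked Poisson points to the left of $x$ and $N(y,x)$ counts particles strictly between $y$ and $x$. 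I will evaluate this expectation via the Slivnyak--Mecke formula, which reduces the sum to $\mu\int_{-\infty}^{x}(\cdots)\,dy$ with the configuration between $y$ and $x$ replaced by an independent copy of $\mathfrak{m}_{c,\RateLambda}$. For a random variable $Y\sim\ContinuousHahnDistribution_c$, the telescoping identity $\phi_0(c)-\phi_0(cq) = c/(1-c)$ (immediate from the series \eqref{little_phi_series}) yields $\mathbb{E}[1-q^Y] = c/((1-c)\phi_0(c))$, and the compound-Poisson generating function turns $\mathbb{E}[q^{N(y,x)}]$ into $\exp(-\RateLambda c(x-y)/(1-c))$. After these substitutions the $y$-integral becomes a single exponential and collapses to $\mathop{\mathrm{Rate}}(k\to k+1) = \Speed c(1-q^k)$.

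With both rates in hand, condition (ii) follows from the identity $\ContinuousHahnDistribution_c(k)\cdot\Speed c(1-q^k) = \Speed c^{k+1}/\phi_0(c) = \ContinuousHahnDistribution_c(k+1)\cdot\Speed(1-q^{k+1})$. Condition (i) is also a detailed-balance check, but now for the creation and destruction of Poisson points. A Poisson point is destroyed only when a $1$-stack empties, which occurs at total linear rate $\RateLambda\phi_0(c)\cdot\ContinuousHahnDistribution_c(1)\cdot\Speed(1-q) = \RateLambda\Speed c$. Conversely, a new Poisson point is born whenever a flying particle stops in a previously empty infinitesimal interval $[x,x+dx]$; the same Slivnyak--Mecke computation performed above (now with the stopping probability $(1-q^k)$ replaced by the exponential stopping density $\RateLambda\,dx$) yields a creation rate of $\RateLambda\Speed c$ per unit length. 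The two rates match, so the spatial intensity is preserved.

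The main technical obstacle is the expectation in the second step: the summand $q^{N(y_i,x)}$ depends on all the other Poisson points, so the reduction to a single integral via Slivnyak--Mecke must be carried out carefully, and the exchange of summation and expectation must be justified. The exponential factor $e^{-\RateLambda(x-y_i)}$ provides uniform integrability and trivializes the absolute convergence, while the Palm characterization of the marked Poisson process handles the conditioning. The existence results recalled in \Cref{sub:appendix_preliminaries} ensure that $\XProcess^{\mathbb{R}}(t)$ is well-defined almost surely on configurations drawn from $\mathfrak{m}_{c,\RateLambda}$, so these infinitesimal rate identities indeed characterize the stationary evolution.
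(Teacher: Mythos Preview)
Your argument is correct and computes the same key quantity as the paper --- the flux of flying particles arriving at a given location equals $\Speed c$ --- leading to the same rates in and out of a stack. The organization differs: the paper works directly with the occupation of an infinitesimal interval $(x,x+dx)$, keeps the stopping factor $(1-q^k e^{-\RateLambda dx})$ to treat empty and occupied intervals uniformly, and then verifies three balance identities ($k=0$, $k=1$, $k\ge2$) up to $O(dx^2)$. You instead split the verification into (i) preservation of the Poisson intensity of occupied sites and (ii) detailed balance for the mark distribution $\ContinuousHahnDistribution_c$, and you compute the flux via the Slivnyak--Mecke formula (integrating over the source location $y$) rather than by summing over the ordered Poisson points and using the i.i.d.\ exponential gap structure as the paper does. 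Your route is slightly more conceptual and avoids the $dx$ bookkeeping; the paper's is more hands-on but entirely self-contained, needing no Palm/Mecke machinery. Both proofs operate at the same level of rigor: each checks only the one-point balance, implicitly relying on the product/marked-Poisson structure to conclude full stationarity.
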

\begin{proof}
	The proof is a continuous space modification of the one of
	\Cref{statement:discrete_invariant}. 
	However, as the continuous space computations are somewhat more involved, let
	us give some details.
	Fix $x\in\mathbb{R}$, and consider the evolution of the distribution
	of the number of particles in $(x,x+dx)$.
	First, if there are $k\ge1$ particles there, then the rate 
	at which one particle leaves is 
	\begin{equation*}
		\mathop{\mathrm{Rate}}(k\to k-1)=(1-q^k)\Speed.
	\end{equation*}
	Let $Y,Y_1,Y_2,\ldots $ be independent random variables distributed as
	$\ContinuousHahnDistribution_{c}$, and also let $x>p_1>p_2>\ldots $
	be all points of the Poisson process of rate $\RateLambda\phi_0(c)$ 
	to the left of our $x$.
	We have, similarly to \eqref{rate_k_k1_computation}:
	\begin{equation}
		\mathop{\mathrm{Rate}}(k\to k+1)
		=
		\mathop{\mathbb{E}}\Bigg( 
			\sum_{i=1}^{\infty}
			\
			\underbrace{
			(1-q^{Y})\Speed}_{
				{\parbox{.17\textwidth}{\textnormal{\scriptsize{}rate at which a particle wakes up at the stack at $p_i$}}}
			}
			\cdot
			\ 
			\underbrace{
				q^{Y_{1}}\ldots q^{Y_{i-1}}e^{- \RateLambda(x-p_i)}
			}_{
				{\parbox{.17\textwidth}{\textnormal{\scriptsize{}probability that the flying particle travels past location $x$}}}
			}
			\
			\cdot
			\underbrace{
				(1-q^k e^{- \RateLambda dx})
			}_{
				{\parbox{.18\textwidth}{\textnormal{\scriptsize{}probability that the flying particle stops in $(x,x+dx)$,
				see below}}}
			}
		\Bigg).
		\label{continuous_computation_of_rate_k_k+1}
	\end{equation}
	The last factor $1-q^k e^{- \RateLambda dx}$ serves two cases: for $k=0$, the
	probability that the flying particle stops in $(x,x+dx)$ is $\RateLambda
	dx+O(dx^2)$ (which is small), while for $k\ge1$ it is $1-q^k+O(dx)$.
	We have
	\begin{equation*}
		\mathop{\mathbb{E}}q^{Y}
		=
		\frac{1}{\phi_0(c)}\sum_{j=1}^{\infty}\frac{(cq)^j}{1-q^j}
		=
		\frac{\phi_0(cq)}{\phi_0(c)}
		=
		1-\frac{c}{(1-c)\phi_0(c)},
	\end{equation*}
	where in the last equality we used \eqref{little_phi_series}.
	Next, observe that $x-p_i$ is a sum of $i$ independent exponential random
	variables with parameter $\RateLambda\phi_0(c)$ (%
		denote one such variable by $Y'$%
	), and so 
	\begin{equation*}
		\mathop{\mathbb{E}}e^{- \RateLambda(x-p_i)}
		=
		\left( \mathop{\mathbb{E}}e^{- \RateLambda Y'} \right)^i
		=
		\left( \frac{\phi_0(c)}{1+\phi_0(c)} \right)^i.
	\end{equation*}
	Thus, \eqref{continuous_computation_of_rate_k_k+1} turns into
	\begin{equation*}
		\mathop{\mathrm{Rate}}(k\to k+1)
		=
		\Speed
		\frac{
			c
			(1-q^k e^{- \RateLambda dx})
		}
		{(1-c)\phi_0(c)}
		\sum_{i=1}^{\infty}
		\left( \frac{\phi_0(c)}{1+\phi_0(c)} \right)^i
		\left( 1-\frac{c}{(1-c)\phi_0(c)} \right)^{i-1}
		=\Speed c
		(1-q^k e^{- \RateLambda dx}).
	\end{equation*}
	The desired stationarity follows from the identities
	\begin{align*}
		k=0:\qquad &
		\displaystyle
		-e^{- \RateLambda\phi_0(c) dx}\Speed c(1-e^{- \RateLambda dx})
		+
		(1-e^{- \RateLambda\phi_0(c) dx})\frac{c}{(1-q)\phi_0(c)}(1-q)\Speed=O(dx^2)
		;
		\\[12pt]
		k=1:\qquad &
		\displaystyle
		-(1-e^{- \RateLambda\phi_0(c) dx})\frac{c}{(1-q)\phi_0(c)}
		\bigl[  
			\Speed c(1-q e^{- \RateLambda dx})
			+
			(1-q)\Speed
		\bigr]
		\\[12pt]
		&\displaystyle
		\hspace{40pt}
		+
		(1-e^{- \RateLambda\phi_0(c) dx})\frac{c^{2}}{(1-q^{2})\phi_0(c)}(1-q^{2})\Speed
		\\[12pt]
		&\displaystyle
		\hspace{80pt}
		+
		e^{- \RateLambda\phi_0(c)dx}\Speed c(1-e^{- \RateLambda dx})=O(dx^2)
		;
		\\[12pt]
		k\ge2:\qquad &
		\displaystyle
		-(1-e^{- \RateLambda\phi_0(c) dx})\frac{c^k}{(1-q^k)\phi_0(c)}
		\bigl[  
			\Speed c(1-q^k e^{- \RateLambda dx})
			+
			(1-q^k)\Speed
		\bigr]
		\\[12pt]
		&\displaystyle
		\hspace{40pt}
		+
		(1-e^{- \RateLambda\phi_0(c) dx})\frac{c^{k+1}}{(1-q^{k+1})\phi_0(c)}(1-q^{k+1})\Speed
		\\[12pt]
		&\displaystyle
		\hspace{80pt}
		+
		(1-e^{- \RateLambda\phi_0(c) dx})\frac{c^{k-1}}{(1-q^{k-1})\phi_0(c)}\Speed c(1-q^{k-1}e^{- \RateLambda dx})
		=O(dx^2),
	\end{align*}
	which are readily verified.
\end{proof}

Let us now write down the particle density $\rho(c)$ and the particle current
$j(c)$ under the measure $\mathfrak{m}_{c,\RateLambda}$. We have
\begin{equation}
	\label{rho_c_expon}
	\rho(c)
	=
	\RateLambda\phi_0(c)\,{\mathbb{E}}\,(Y)
	=
	\RateLambda\sum_{j=1}^{\infty}\frac{jc^j}{1-q^j}
	=
	\RateLambda c\frac{\partial}{\partial c}\phi_0(c)
	=
	\RateLambda \phi_1(c),
\end{equation}
where we employed \eqref{little_phi_n_derivative}.
The particle current is given by the sum in 
the right-hand side of \eqref{continuous_computation_of_rate_k_k+1}
without the last factor $1-q^k e^{- \RateLambda dx}$, 
and so is equal to 
\begin{equation}
	\label{j_c_expon}
	j(c)=\Speed c.
\end{equation}

From \Cref{statement:Phi_n_behavior} it readily follows that
$\phi_1\colon[0,1]\to[0,+\infty]$ is one to one and increasing. 
Let $\phi_1^{-1}$ denote the inverse. 
From \eqref{rho_c_expon} and \eqref{j_c_expon} we see that the dependence
of the current on the density is
\begin{equation}
	j(\rho)=\Speed\phi_1^{-1}(\rho/\RateLambda).
	\label{j_of_rho_expon}
\end{equation}

\subsection{Verification of the macroscopic limit shape}
\label{sub:appendix_verification}

Recall the inhomogeneous exponential jump model on $\mathbb{R}_{>0}$ defined in
\Cref{sub:definition_of_the_model} depending on the speed function $\Speed(x)$
and the jump distance parameter $\RateLambda>0$. 
Let us assume that there are no roadblocks, and, moreover, that
$\Speed(x)$ is continuous at $x=0$.
Let us take a slightly more general model in which $\RateLambda$ also depends
on the location $x\in \mathbb{R}_{>0}$ as $\tilde\RateLambda(x)L$, where $L$ is a
large parameter and $\tilde\RateLambda(x)$ is a positive continuous function
bounded away from zero and infinity.
We also rescale the continuous time as $t=\tau L$.

Assume that in the limit as $L\to+\infty$ there is a limiting density of
particles $\rho(\tau,x)\in[0,+\infty]$. 
Moreover, assume that locally at each $x\in\mathbb{R}_{>0}$ where 
$\rho(\tau,x)< +\infty$ the behavior of the
particle system is described by the translation invariant stationary
distribution $\mathfrak{m}_{c,\tilde\RateLambda(x)}$ defined in
\Cref{sub:appendix_expon_model}.
Under these assumptions and using \eqref{j_of_rho_expon}, one naturally expects
(%
	following the hydrodynamic treatment of driven interacting particle systems
	in, e.g.,
	\cite{Andjel1984},
	\cite{Rezakhanlou1991hydrodynamics}, 
	\cite{Landim1996hydrodynamics},
	\cite{Seppalainen_Discont_TASEP_2010}%
)
that the limiting density satisfies the following partial differential
equation:
\begin{equation}
	\label{PDE_for_density}
	\frac{\partial}{\partial\tau}\rho(\tau,x)
	+
	\frac{\partial}{\partial x}
	\Bigl[ \Speed(x)\phi_1^{-1}\bigl(\rho(\tau,x)/\tilde\RateLambda(x)\bigr) \Bigr]=0,
\end{equation}
with the initial condition $\rho(0,x)=0$ ($x>0$) and the boundary condition $\rho(\tau,0)=+\infty$ ($\tau\ge0$).

\begin{remark}
	\label{rmk:enough_lambda_constant}
	The case $\tilde\RateLambda(x)\equiv1$ considered in the main part of the
	paper does not restrict the generality. 
	Indeed, let $\Lambda(x):=\int_0^x \RateLambda(u)du$. 
	This is a strictly increasing function, and let $\Lambda^{-1}$ denote its inverse.
	If $\rho(\tau,x)$ satisfies \eqref{PDE_for_density} then a straightforward
	computation shows that 
	\begin{equation*}
		\check\rho(\tau,y):=\frac{\rho(\tau,\Lambda^{-1}(y))}{\RateLambda(\Lambda^{-1}(y))}
	\end{equation*}
	satisfies the same equation \eqref{PDE_for_density} in the variables $(\tau,y)$,
	with $\tilde\RateLambda$ replaced by $1$, and with the modified
	speed function $\check\Speed(y):=\Speed(\Lambda^{-1}(y))$.
\end{remark}
By virtue of \Cref{rmk:enough_lambda_constant}, we assume that
$\tilde\RateLambda(x)\equiv1$, and consider the following equation for the
density:
\begin{equation}
	\frac{\partial}{\partial\tau}\rho(\tau,x)
	+
	\frac{\partial}{\partial x}
	\Bigl[ \Speed(x)\phi_1^{-1}\bigl(\rho(\tau,x)\bigr) \Bigr]=0,
	\label{PDE_for_density_improved}
\end{equation}
with initial condition 
$\rho(0,x)=0$ ($x>0$)
and boundary condition
$\rho(\tau,0)=+\infty$
($\tau\ge0$).

\medskip

Let us verify that the limiting density coming from the asymptotic
analysis of the Fredholm determinant in \Cref{sec:asymptotic_analysis}
satisfies \eqref{PDE_for_density_improved}. 
Recall that in the absence of roadblocks 
the macroscopic limit shape $\LimitShape(\tau,x)$ for the height
function is given as follows. 
First, let $x_e=x_e(\tau)\ge0$ be the edge point, i.e., the unique solution to
$\tau=\int_{0}^{x_e}\frac{dy}{(1-q)\Speed(y)}$.
Let also $w=\upomega^{\circ}_{\tau,x}$ for $0<x<x_e$ 
be the root of $\tau w=\int_0^x\phi_2\bigl(w/\Speed(y)\bigr)dy$ on the segment
$\bigl(0,\mathop{\mathrm{ess\,inf}}_{y\in[0,x)}\Speed(y)\bigr)$ 
(%
	by \Cref{statement:TW_root_exists_and_is_unique} this root exists and is unique%
). 
The limit shape is
\begin{equation*}
	\LimitShape(\tau,x)
	=
	\begin{cases}
		\displaystyle\tau \upomega^{\circ}_{\tau,x}
		-\int_0^x \phi_1\bigl(\upomega^{\circ}_{\tau,x}/\Speed(y)\bigr)dy,
		&
		0<x<x_e;
		\\
		0,
		&
		x\ge x_e.
	\end{cases}
\end{equation*}

\begin{proposition}
	\label{statement:PDE_verification}
	The density defined as $\rho(\tau,x):=-\frac{\partial}{\partial x}\LimitShape(\tau,x)$
	satisfies equation \eqref{PDE_for_density_improved}
	(whenever all derivatives involved make sense).
\end{proposition}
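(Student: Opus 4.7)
The plan is to exploit the fact that the critical point $\omega := \upomega^\circ_{\tau,x}$ is defined implicitly by $\tau w = \int_0^x \phi_2(w/\Speed(y))\,dy$, and that $\phi_n'(w) = \phi_{n+1}(w)/w$ by \eqref{little_phi_n_derivative}, so that everything reduces to bookkeeping with chain rules.

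First I would compute $\rho = -\partial_x\LimitShape$ directly. Differentiating
$\LimitShape(\tau,x) = \tau\omega - \int_0^x\phi_1(\omega/\Speed(y))\,dy$
in $x$ gives
\begin{equation*}
\partial_x\LimitShape = \tau\,\partial_x\omega - \phi_1(\omega/\Speed(x)) - \partial_x\omega\int_0^x \frac{\phi_2(\omega/\Speed(y))}{\omega}\,dy.
\end{equation*}
The defining equation for $\omega$ says exactly $\int_0^x \phi_2(\omega/\Speed(y))\,dy = \tau\omega$, so the first and third terms cancel and we get $\rho(\tau,x) = \phi_1(\omega/\Speed(x))$. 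In particular $\Speed(x)\phi_1^{-1}(\rho(\tau,x)) = \omega$, which identifies $\upomega^\circ_{\tau,x}$ as the quantity whose $x$-derivative we must compare with $\partial_\tau\rho$.

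Next I would obtain $\partial_\tau\omega$ and $\partial_x\omega$ by implicit differentiation of $\tau\omega = \int_0^x\phi_2(\omega/\Speed(y))\,dy$. Using $\phi_2'(w)=\phi_3(w)/w$, and letting $J(\tau,x) := \int_0^x \phi_3(\omega/\Speed(y))\,dy - \tau\omega$, I obtain
\begin{equation*}
\partial_\tau\omega = -\frac{\omega^2}{J(\tau,x)},\qquad
\partial_x\omega = -\frac{\omega\,\phi_2(\omega/\Speed(x))}{J(\tau,x)},
\end{equation*}
from which $\partial_\tau\omega = \dfrac{\omega\,\partial_x\omega}{\phi_2(\omega/\Speed(x))}\cdot(-1)$, i.e.
\begin{equation*}
\phi_2(\omega/\Speed(x))\,\partial_\tau\omega + \omega\,\partial_x\omega = 0.
\end{equation*}
(One should note that in the Tracy--Widom phase $J(\tau,x)\neq 0$: in fact the sign of $J$ controls the strict convexity of $\Phi_1$ established in the proof of \Cref{statement:TW_root_exists_and_is_unique}, so this division is legitimate wherever $\omega\in(0,\mathcal{W}_x^\circ)$.)

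Finally I would combine these. From $\rho = \phi_1(\omega/\Speed(x))$ and $\phi_1'(u)=\phi_2(u)/u$,
\begin{equation*}
\partial_\tau\rho = \frac{\phi_2(\omega/\Speed(x))}{\omega}\,\partial_\tau\omega,\qquad
\partial_x[\Speed(x)\phi_1^{-1}(\rho)] = \partial_x\omega,
\end{equation*}
so that
\begin{equation*}
\partial_\tau\rho + \partial_x\bigl[\Speed(x)\phi_1^{-1}(\rho)\bigr] = \frac{1}{\omega}\bigl(\phi_2(\omega/\Speed(x))\,\partial_\tau\omega + \omega\,\partial_x\omega\bigr) = 0,
\end{equation*}
which is the desired identity. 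There is no serious obstacle here: the whole argument is implicit differentiation, and the only thing one must verify is that the key cancellation $\int_0^x\phi_2(\omega/\Speed(y))\,dy=\tau\omega$ is used at exactly the right place (step one), after which the chain-rule formulas for $\partial_\tau\omega,\partial_x\omega$ produce matching expressions. The assumption in the proposition that ``all derivatives make sense'' handles pathologies at the edge $x=x_e(\tau)$, at roadblocks, and at discontinuities of $\Speed$, which is where either $J=0$ or $\phi_1^{-1}(\rho)$ fails to be differentiable.
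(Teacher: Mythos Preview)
Your argument is correct (modulo a harmless sign slip: the correct formula is $\partial_\tau\omega = +\omega^2/J$, consistent with the monotonicity in \Cref{statement:TW_root_exists_and_is_unique}; the key relation $\phi_2(\omega/\Speed(x))\,\partial_\tau\omega + \omega\,\partial_x\omega = 0$ is unaffected).

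The paper takes a slightly shorter route. After computing $\rho = -\partial_x\LimitShape = \phi_1(\omega/\Speed(x))$ by exactly your cancellation, it applies the \emph{same} cancellation to $\partial_\tau\LimitShape$ and finds $\partial_\tau\LimitShape = \omega$. This immediately gives $\partial_\tau\LimitShape = \Speed(x)\phi_1^{-1}(\rho)$; differentiating in $x$ and using $\partial_x\partial_\tau\LimitShape = \partial_\tau\partial_x\LimitShape = -\partial_\tau\rho$ yields the PDE in one line, with no need to implicitly differentiate $\omega$. Your approach unpacks the same identity at the level of $\omega$ rather than $\LimitShape$; it is a bit longer but makes the mechanism (the relation between $\partial_\tau\omega$ and $\partial_x\omega$) more explicit.
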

\begin{proof}
	Assume that $0<x<x_e(\tau)$, otherwise the density is zero and thus trivially
	satisfies the equation.
	Differentiating $\LimitShape(\tau,x)$ in $\tau$
	and $x$ and using \Cref{statement:Phi_n_behavior} together with the definition of
	$\upomega^{\circ}_{\tau,x}$, we obtain
	\begin{align*}
		\frac{\partial}{\partial\tau}\LimitShape(\tau,x)
		&=
		\upomega^\circ_{\tau,x}+\tau \frac{\partial\upomega^\circ_{\tau,x}}{\partial\tau}
		-
		\frac{1}{\upomega^\circ_{\tau,x}}\frac{\partial\upomega^\circ_{\tau,x}}{\partial\tau}
		\int_0^x\phi_2\bigl(\upomega^{\circ}_{\tau,x}/\Speed(y)\bigr)dy=\upomega^{\circ}_{\tau,x};
		\\
		\frac{\partial}{\partial x}\LimitShape(\tau,x)
		&=
		\tau\frac{\partial\upomega^{\circ}_{\tau,x}}{\partial x}
		-
		\phi_1\bigl(\upomega^{\circ}_{\tau,x}/\Speed(x)\bigr)
		-
		\frac{1}{\upomega^{\circ}_{\tau,x}}
		\frac{\partial \upomega^{\circ}_{\tau,x}}{\partial x}
		\int_0^x\phi_2\bigl(\upomega^{\circ}_{\tau,x}/\Speed(y)\bigr)dy
		=
		-
		\phi_1\bigl(\upomega^{\circ}_{\tau,x}/\Speed(x)\bigr).
	\end{align*}
	Therefore, we can write
	\begin{equation*}
		\rho(\tau,x)
		=
		-
		\frac{\partial\LimitShape(\tau,x)}{\partial x}
		=
		\phi_1(\upomega^{\circ}_{\tau,x}/\Speed(x))
		=
		\phi_1\left( \frac{1}{\Speed(x)}
		\frac{\partial\LimitShape(\tau,x)}{\partial \tau}\right),
	\end{equation*}
	or, inverting $\phi_1$:
	\begin{equation*}
		\frac{\partial\LimitShape(\tau,x)}{\partial \tau}
		=
		\Speed(x)\phi_1^{-1}\bigl(\rho(\tau,x)\bigr).
	\end{equation*}
	Differentiating the last equality in $x$ we arrive at equation
	\eqref{PDE_for_density_improved} for $\rho(\tau,x)$.
\end{proof}

\printbibliography

\end{document}